\providecommand{\U}[1]{\protect\rule{.1in}{.1in}}
\newtheorem{theorem}{Theorem}[section]
\theoremstyle{plain}
\newtheorem{corollary}[theorem]{Corollary}
\newtheorem{example}[theorem]{Example}
\newtheorem{exercise}[theorem]{Exercise}
\newtheorem{conjecture}[theorem]{Conjecture}
\newtheorem{Question}{Question}
\newtheorem{problem}[theorem]{Problem}
\newtheorem{remark}[theorem]{Remark}
\newtheorem{lemma}[theorem]{Lemma}
\newtheorem{proposition}[theorem]{Proposition}
\numberwithin{equation}{section}
\numberwithin{Question}{section}
\let\oldtocsection=\tocsection
\let\oldtocsubsection=\tocsubsection
\let\oldtocsubsubsection=\tocsubsubsection
\renewcommand{\tocsection}[2]{\hspace{0em}\oldtocsection{#1}{#2}}
\renewcommand{\tocsubsection}[2]{\hspace{2em}\oldtocsubsection{#1}{#2}}
\renewcommand{\tocsubsubsection}[2]{\hspace{4em}\oldtocsubsubsection{#1}{#2}}
\begin{document}
\title[Chapter 3: Ends, shapes, and boundaries ]{Chapter 3 \medskip{\linebreak} Ends, shapes, and boundaries in manifold
topology and geometric group theory}
\author{C. R. Guilbault}
\address{Department of Mathematical Sciences, University of Wisconsin-Milwaukee,
Milwaukee, Wisconsin 53201}
\email{craigg@uwm.edu}
\thanks{This project was aided by a Simons Foundation Collaboration Grant. }
\date{February 18, 2021 (revised enumeration scheme to match published version)}
\subjclass{Primary 57N15, 57Q12; Secondary 57R65, 57Q10}
\keywords{end, shape, boundary, manifold, group, fundamental group at infinity, tame,
open collar, pseudo-collar, Z-set, Z-boundary, Z-structure}

\begin{abstract}
This survey/expository article covers a variety of topics related to the
\textquotedblleft topology at infinity\textquotedblright\ of noncompact
manifolds and complexes. In manifold topology and geometric group theory, the
most important noncompact spaces are often contractible, so distinguishing one
from another requires techniques beyond the standard tools of algebraic
topology. One approach uses end invariants, such as the number of ends or the
fundamental group at infinity. Another approach seeks nice compactifications,
then analyzes the boundaries. A thread connecting the two approaches is shape theory.

In these notes we provide a careful development of several topics: homotopy
and homology properties and invariants for ends of spaces, proper maps and
homotopy equivalences, tameness conditions, shapes of ends, and various types
of $\mathcal{Z}$-compactifications and $\mathcal{Z}$-boundaries. Classical and
current research from both manifold topology and geometric group theory
provide the context. Along the way, several open problems are encountered. Our
primary goal is a casual but coherent introduction that is accessible to
graduate students and also of interest to active mathematicians whose research
might benefit from knowledge of these topics.

\end{abstract}
\maketitle
\tableofcontents

\newpage

\section*{Preface}

In \cite{Si2}, a paper that plays a role in these notes, Siebenmann mused that
his work was initiated at a time \textquotedblleft when `respectable'
geometric topology was necessarily compact\textquotedblright. That attitude
has long since faded; today's topological landscape is filled with research in
which noncompact spaces are the primary objects. Even so, past traditions have
impacted today's topologists, many of whom developed their mathematical tastes
when noncompactness was viewed more as a nuisance than an area for
exploration. For that and other reasons, many useful ideas and techniques have
been slow to enter the mainstream. One goal of this set of notes is to provide
quick and intuitive access to some of those results and methods by weaving
them together with more commonly used approaches, well-known examples, and
current research. In this way, we attempt to present a coherent
\textquotedblleft theory of ends\textquotedblright\ that will be useful to
mathematicians with a variety of interests.

Numerous topics included here are fundamental to manifold topology and
geometric group theory: Whitehead and Davis manifolds, Stallings'
characterization of Euclidean spaces, Siebenmann's Thesis, Chapman and
Siebenmann's $\mathcal{Z}$-com\-pact\-ific\-at\-ion Theorem, the
Freudenthal-Hopf-Stallings Theorem on ends of groups, and applications of the
Gromov boundary to group theory---to name just a few. We hope these notes give
the reader a better appreciation for some of that work. Many other results and
ideas presented here are relatively new or still under development:
generalizations of Siebenmann's thesis, Bestvina's $\mathcal{Z}$-structures on
groups, use of $\mathcal{Z}$-boundaries in manifold topology, and applications
of boundaries to non-hyperbolic groups, are among those discussed. There is
much room for additional work on these topics; the natural path of our
discussion will bring us near to a number of interesting open problems.

The style of these notes is to provide a lot of motivating examples. Key
definitions are presented in a rigorous manner---often preceded by a
non-rigorous, but (hopefully) intuitive introduction. Proofs or sketches of
proofs are included for many of the fundamental results, while many others are
left as exercises. We have not let issues of mathematical rigor prevent the
discussion of important or interesting work. If a theorem or example is
relevant, we try to include it, even when the proof is too long or deep for
these pages. When possible, an outline or key portions of an argument are
provided---with implied encouragement for the reader to dig deeper.

These notes originated in a series of four one-hour lectures given at the
workshop on \emph{Geometrical Methods in High-dimensional Topology},\emph{
}hosted by Ohio State University in the spring of 2011. Notes from those talks
were expanded into a one-semester topics course at the University of
Wisconsin-Milwaukee in the fall of that year. The author expresses his
appreciation to workshop organizers Jean-Francois Lafont and Ian Leary for the
opportunity to speak, and acknowledges all fellow participants in the OSU
workshop and the UWM graduate students in the follow-up course; their feedback
and encouragement were invaluable. Special thanks go to Greg Friedman and the
anonymous referee who read the initial version of this document, pointed out
numerous errors, and made many useful suggestions for improving both the
mathematics and the presentation. Finally, thanks to my son Phillip Guilbault
who created most of the figures in this document.

\renewcommand{\thesection}{3.\arabic{section}}

\renewcommand{\thefigure}{3.\arabic{figure}}

\section{Introduction}

A fundamental concept in the study of noncompact spaces is the
\textquotedblleft number of ends\textquotedblright. For example, the real line
has two ends, the plane has one end, and the uniformly trivalent tree
$\mathbb{T}_{3}$ has infinitely many ends. Counting ends has proven remarkably
useful, but certainly there is more---after all, there is a qualitative
difference between the single end of the ray $[0,\infty)$ and that of $%
%TCIMACRO{\U{211d} }%
%BeginExpansion
\mathbb{R}
%EndExpansion
^{2}$. This provides an idea: If, in the topological tradition of counting
things, one can (somehow) use the $\pi_{0}$- or $H_{0}$-functors to measure
the number of ends, then maybe the $\pi_{1}$- and $H_{1}$-functors (or, for
that matter $\pi_{k}$ and $H_{k}$), can be used in a similar manner to measure
other properties of those ends. Turning that idea into actual
mathematics---the \textquotedblleft end invariants\textquotedblright\ of a
space---then using those invariants to solve real problems, is one focus of
the early portions of these notes.

Another approach to confronting noncompact spaces is to
compactify.\footnote{Despite our affinity for noncompact spaces, we are not
opposed to the practice of compactification, provided it is done in a
(geometrically) sensitive manner.} The 1-point com\-pact\-ific\-at\-ion of $%
%TCIMACRO{\U{211d} }%
%BeginExpansion
\mathbb{R}
%EndExpansion
^{1}$ is a circle and the 1-point com\-pact\-ific\-at\-ion of $%
%TCIMACRO{\U{211d} }%
%BeginExpansion
\mathbb{R}
%EndExpansion
^{2}$ a $2$-sphere. A \textquotedblleft better\textquotedblright%
\ com\-pact\-ific\-at\-ion of $%
%TCIMACRO{\U{211d} }%
%BeginExpansion
\mathbb{R}
%EndExpansion
^{1}$ adds one point to each end, to obtain a closed interval---a space that
resembles the line far more than does the circle. This is a special case of
\textquotedblleft end-point com\-pact\-ific\-at\-ion\textquotedblright,
whereby a single point is added to each end of a space. Under that procedure,
an entire Cantor set is added to $\mathbb{T}_{3}$, resulting in a compact, but
still tree-like object. Unfortunately, the end-point com\-pact\-ific\-at\-ion
of $%
%TCIMACRO{\U{211d} }%
%BeginExpansion
\mathbb{R}
%EndExpansion
^{2}$ again yields a $2$-sphere. From the point of view of preserving
fundamental properties, a far better com\-pact\-ific\-at\-ion of $%
%TCIMACRO{\U{211d} }%
%BeginExpansion
\mathbb{R}
%EndExpansion
^{2}$ adds an entire circle at infinity. This is a prototypical
\textquotedblleft$\mathcal{Z}$-com\-pact\-ific\-at\-ion\textquotedblright,
with the circle as the \textquotedblleft$\mathcal{Z}$%
-boundary\textquotedblright. (The end-point com\-pact\-ific\-at\-ions of $%
%TCIMACRO{\U{211d} }%
%BeginExpansion
\mathbb{R}
%EndExpansion
^{1}$ and $\mathbb{T}_{3}$ are also $\mathcal{Z}$-com\-pact\-ific\-at\-ions.)
The topic of $\mathcal{Z}$-com\-pact\-ific\-at\-ion and $\mathcal{Z}%
$-boundaries is a central theme in the latter half of these notes.

Shape theory is an area of topology developed for studying compact spaces with
bad local properties, so it may seem odd that \textquotedblleft
shapes\textquotedblright\ is one of three topics mentioned in the title of an
article devoted to \emph{noncompact} spaces with \emph{nice} local properties.
This is not a mistake! As it turns out, the tools of shape theory are easily
adaptable to the study of ends---and the connection is not just a similarity
in approaches. Frequently, the shape of an appropriately chosen compactum
precisely captures the illusive \textquotedblleft topology at the end of a
space\textquotedblright. In addition, shape theory plays a clarifying role by
connecting end invariants hinted at in paragraph one of this introduction to
the $\mathcal{Z}$-boundaries mentioned in paragraph two. To those who know
just enough about shape theory to judge it too messy and set-theoretical for
use in manifold topology or geometric group theory (a belief briefly shared by
this author), patience is encouraged. At the level of generality required for
our purposes, shape theory is actually quite elegant and geometric. In fact,
very little set-theoretic topology is involved---instead spaces with bad
properties are quickly replaced by simplicial and CW complexes, where
techniques are clean and intuitive. A working knowledge of shape theory is one
subgoal of these notes.

\subsection{Conventions and notation}

Throughout this article, all spaces are separable metric. A \emph{compactum
}is a compact space.\emph{ }We often restrict attention to\emph{ absolute
neighborhood retracts} (or \emph{ANRs})---a particularly nice class of spaces,
whose most notable property is local contractibility. In these notes, ANRs are
required to be locally compact. Notable examples of ANRs are: manifolds,
locally finite polyhedra, locally finite CW complexes, proper CAT(0)
spaces\footnote{A \emph{proper }metric space is one in which every closed
metric ball is compact.}, and Hilbert cube manifolds. Due to their unavoidable
importance, a short appendix with precise definitions and fundamental results
about ANRs has been included. Readers anxious get started can safely begin, by
viewing \textquotedblleft ANR\textquotedblright\ as a common label for the
examples just mentioned. An \emph{absolute retract} (or AR) is a contractible
ANR, while an \emph{ENR} [resp., \emph{ER}] is a finite-dimensional ANR
[resp., AR].

The unmodified term \emph{manifold} means \textquotedblleft finite-dimensional
manifold\textquotedblright. A manifold is \emph{closed} if it is compact and
has no boundary and \emph{open} if it is noncompact with no boundary; if
neither is specified, boundary is permitted. For convenience, all manifolds
are assumed to be piecewise-linear (PL); in other words, they may be viewed as
simplicial complexes in which all links are PL homeomorphic to spheres of
appropriate dimensions. A primary application of PL topology will be the
casual use of general position and regular neighborhoods. A good source for
that material is \cite{RS}. Nearly all that we do can be accomplished for
smooth or topological manifolds as well; readers with expertise in those
categories will have little trouble making the necessary adjustments.

Hilbert cube manifolds are entirely different objects. The \emph{Hilbert cube}
is the countably infinite product $\mathcal{Q}=\prod_{i=1}^{\infty}\left[
-1,1\right]  $, endowed with the product topology. A space $X$ is a
\emph{Hilbert cube manifold} if each $x\in X$ has a neighborhood homeomorphic
to $\mathcal{Q}$. Like ANRs, Hilbert cube manifolds play an unavoidably key
role in portions of these notes. For that reason, we have included a short and
simple appendix on Hilbert cube manifolds.

Symbols will be used as follows: $\approx$ denotes homeomorphism, while
$\simeq$ indicates homotopic maps or homotopy equivalent spaces; $\cong$
indicates isomorphism. When $M^{n}$ is a manifold, $n$ indicates its dimension
and $\partial M^{n}$ its manifold boundary. When $A$ is a subspace of $X$,
$\operatorname*{Bd}_{X}A$ (or when no confusion can arise, $\operatorname*{Bd}%
X$) denotes the set-theoretic boundary of $A$. The symbols $\overline{A}$ and
$\operatorname*{cl}_{X}A$ (or just $\operatorname*{cl}A$) denote the closure
of $A$ in $X$, while $\operatorname*{int}_{X}A$ (or just $\operatorname*{int}%
A$) denotes the interior. The symbol $\widetilde{X}$ always denotes the
universal cover of $X$. Arrows denote (continuous) maps or homomorphisms, with
$\hookrightarrow$, $\rightarrowtail$, and $\twoheadrightarrow$ indicating
inclusion, injection and surjection, respectively.

\section{Motivating examples: contractible open
manifolds\label{Section: Motivating examples}}

Let us assume that space-time is a large boundaryless $4$-dimensional
manifold. Recent evidence suggests that this manifold is noncompact (an
\textquotedblleft open universe\textquotedblright). By running time backward
to the Big Bang, we might reasonably conclude that space-time is
\textquotedblleft just\textquotedblright\ a contractible open
manifold\footnote{No expertise in cosmology is being claimed by the author.
This description of space-time is intended only to motivate discussion.}.
Compared to the possibilities presented by a closed universe ($\mathbb{S}^{4}%
$, $\mathbb{S}^{2}\times\mathbb{S}^{2}$, $%
%TCIMACRO{\U{211d} }%
%BeginExpansion
\mathbb{R}
%EndExpansion
P^{4}$, $\mathbb{C}P^{2}$, the $E_{8}$ manifold$,\cdots\ ?$), the idea of a
contractible open universe seems rather disappointing, especially to a
topologist primed for the ultimate example on which to employ his/her tools.
But there is a mistake in this thinking---an implicit assumption that a
contractible open manifold is topologically uninteresting (no doubt just a
blob, homeomorphic to an open ball). In this section we take a quick look at
the surprisingly rich world of contractible open manifolds.

\subsection{Classic examples of exotic contractible open
manifolds\label{Subsection: classic examples contractible manifolds}}

For $n=1$ or $2$, it is classical that every contractible open $n$-manifold is
topologically equivalent to $%
%TCIMACRO{\U{211d} }%
%BeginExpansion
\mathbb{R}
%EndExpansion
^{n}$; but when $n\geq3$, things become interesting. J.H.C Whitehead was among
the first to underestimate contractible open manifolds. In an attempt to prove
the Poincar\'{e} Conjecture, he briefly claimed that, in dimension 3, each is
homeomorphic to $%
%TCIMACRO{\U{211d} }%
%BeginExpansion
\mathbb{R}
%EndExpansion
^{3}$. In \cite{Wh} he corrected that error by constructing the now famous
\emph{Whitehead contractible }$3$\emph{-manifold}---an object surprisingly
easy to describe.

\begin{example}
[Whitehead's contractible open 3-manifold]%
\label{Example: Definition of Whitehead manifold}Let $\mathcal{W}%
^{3}=\mathbb{S}^{3}-T_{\infty}$, where $T_{\infty}$ is the compact set (the
\textbf{Whitehead continuum}) obtained by intersecting a nested sequence
$T_{0}\supseteq T_{1}\supseteq T_{2}\supseteq\cdots$ of solid tori, where each
$T_{i+1}$ is embedded in $T_{i}$ in the same way that $T_{1}$ is embedded in
$T_{0}$. See Figure \ref{Figure: Whitehead}.
%TCIMACRO{\FRAME{ftbpFU}{2.5884in}{1.785in}{0pt}{\Qcb{Constructing the
%Whitehead manifold}}{\Qlb{Figure: Whitehead}}{osu-figwh3gray.eps}%
%{\special{ language "Scientific Word";  type "GRAPHIC";
%maintain-aspect-ratio TRUE;  display "USEDEF";  valid_file "F";
%width 2.5884in;  height 1.785in;  depth 0pt;  original-width 10.5594in;
%original-height 7.2428in;  cropleft "0";  croptop "1";  cropright "1";
%cropbottom "0";  filename '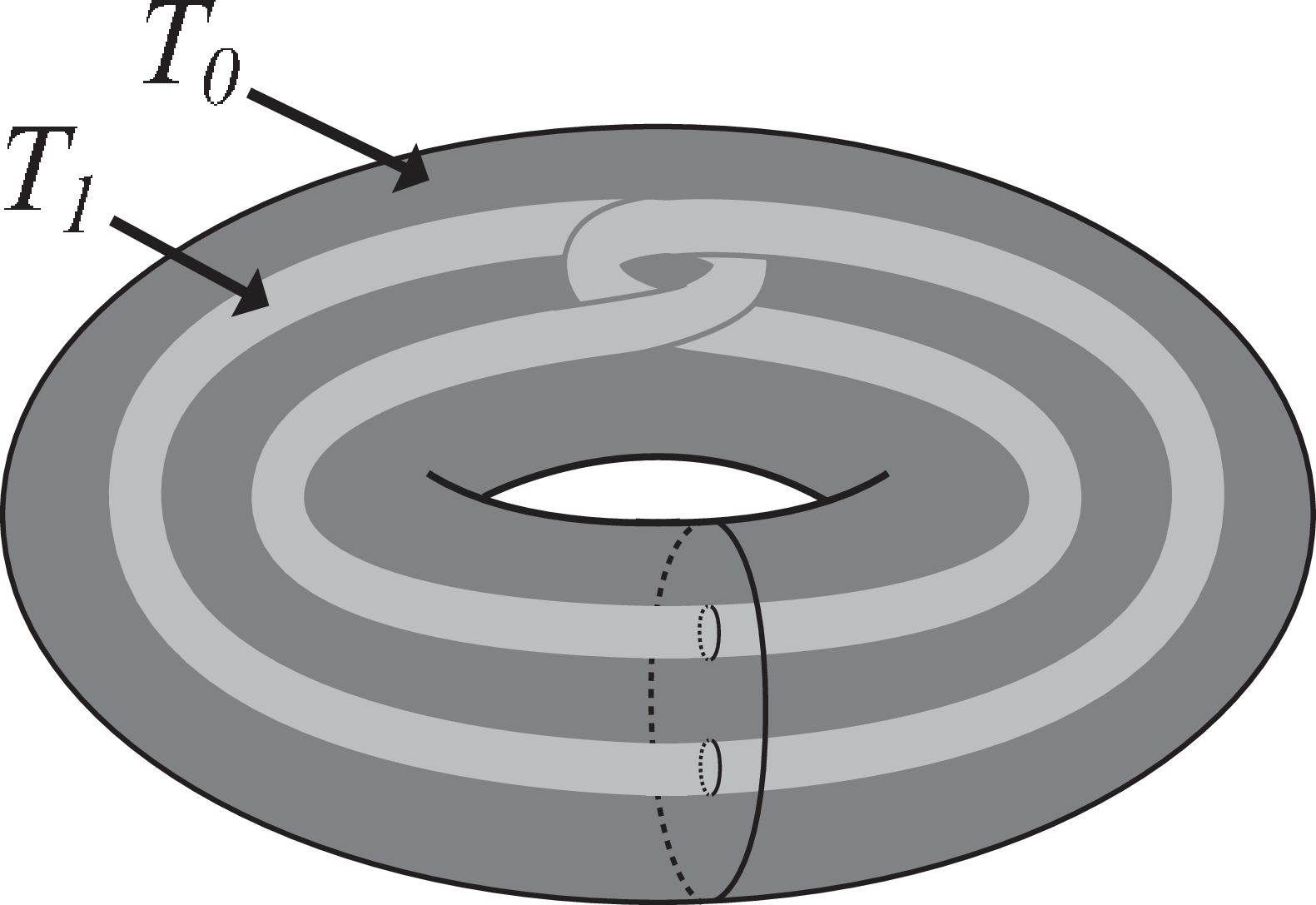';file-properties "XNPEU";}} }%
%BeginExpansion
\begin{figure}[ptb]%
\centering
\includegraphics[
height=1.785in,
width=2.5884in
]%
{}%
\caption{Constructing the Whitehead manifold}%
\label{Figure: Whitehead}%
\end{figure}
%EndExpansion
Standard tools of algebraic topology show that $\mathcal{W}^{3}$ is
contractible. For example, first show that $\mathcal{W}^{3}$ is simply
connected (this takes some thought), then show that it is acyclic with respect
to $%
%TCIMACRO{\U{2124} }%
%BeginExpansion
\mathbb{Z}
%EndExpansion
$-homology.
\end{example}

The most interesting question about $\mathcal{W}^{3}$ is: \emph{Why is it not
homeomorphic to }$%
%TCIMACRO{\U{211d} }%
%BeginExpansion
\mathbb{R}
%EndExpansion
^{3}$\emph{? }Standard algebraic invariants are of little use, since
$\mathcal{W}^{3}$ has the homotopy type of a point. But a variation on the
fundamental group---the \textquotedblleft fundamental group at
infinity\textquotedblright---does the trick. Before developing that notion
precisely, we describe a few more examples of \emph{exotic} contractible open
manifolds, i.e., contractible open manifolds not homeomorphic to a Euclidean space.

It turns out that exotic examples are rather common; moreover, they play
important roles in both manifold topology and geometric group theory. But for
now, let us just think of them as possible universes.

In dimension $\leq2$ there are no exotic contractible open manifolds, but in
dimension 3, McMillan \cite{Mc} constructed uncountably many. In some sense,
his examples are all variations on the Whitehead manifold. Rather than
examining those examples, let us move to higher dimensions, where new
possibilities emerge.

For $n\geq4$, there exist \emph{compact} contractible $n$-manifolds not
homeomorphic to the standard $n$-ball $\mathbb{B}^{n}$. We call these
\emph{exotic compact contractible manifolds}. Taking interiors provides a
treasure trove of easy-to-understand exotic contractible open manifolds. We
provide a simple construction for some of those objects.

Recall that a group is \emph{perfect} if its abelianization is the trivial
group. A famous example, the \emph{binary icosahedral group}, is given by the
presentation $\left\langle s,t\mid\left(  st\right)  ^{2}=s^{3}=t^{5}%
\right\rangle $.

\begin{example}
[Newman contractible manifolds]Let $G$ be a perfect group admitting a finite
presentation with an equal number of generators and relators. The
corresponding presentation $2$-complex, $K_{G}$ has the homology of a point.
Embed $K_{G}$ in $\mathbb{S}^{n}$ ($n\geq5$) and let $N$ be a regular
neighborhood of $K_{G}$. By general position, loops and disks may be pushed
off $K_{G}$, so inclusion induces an isomorphism $\pi_{1}\left(  \partial
N\right)  \cong\pi_{1}\left(  N\right)  \cong G$. By standard algebraic
topology arguments $\partial N$ has the $%
%TCIMACRO{\U{2124} }%
%BeginExpansion
\mathbb{Z}
%EndExpansion
$-homology of an $\left(  n-1\right)  $-sphere and $C^{n}=\mathbb{S}%
^{n}-\operatorname*{int}N$ has the homology of a point. A second general
position argument shows that $C^{n}$ is simply connected, and thus
contractible---but $C^{n}$ is clearly not a ball. A compact contractible
manifold constructed in this manner is called a \textbf{Newman compact
contractible manifold} and its interior an\textbf{ open Newman manifold}.
\end{example}

\begin{exercise}
Verify the assertions made in the above example. Be prepared to use numerous
tools from a first course in algebraic topology: duality, universal
coefficients, the Hurewicz theorem and a theorem of Whitehead (to name a few).
\end{exercise}

The Newman construction can also be applied to acyclic 3-complexes. From that
observation, one can show that every finitely presented \emph{superperfect
group }$G$ (that is, $H_{i}\left(  G;%
%TCIMACRO{\U{2124} }%
%BeginExpansion
\mathbb{Z}
%EndExpansion
\right)  =0$ for $i=1,2$) can be realized as $\pi_{1}\left(  \partial
C^{n}\right)  $ for some compact contractible $n$-manifold ($n\geq7$). A
related result \cite{Ke}, \cite{FQ} asserts that \emph{every} $\left(
n-1\right)  $-manifold with the homology of $\mathbb{S}^{n-1}$ bounds a
compact contractible $n$-manifold. For an elementary construction of
$4$-dimensional examples, see \cite{Maz}.

\begin{exercise}
By applying the various Poincar\'{e} Conjectures, show that a compact
contractible $n$-manifold is topologically an $n$-ball if and only if its
boundary is simply connected. (An additional nontrivial tool, the Generalized
Sch\"{o}nflies Theorem, may also be helpful.)
\end{exercise}

A place where open manifolds arise naturally, even in the study of closed
manifolds, is as covering spaces. A place where contractible open manifolds
arise naturally is as universal covers of \emph{aspherical}
manifolds\emph{\footnote{A connected space $X$ is \emph{aspherical }if
$\pi_{k}\left(  X\right)  =0$ for all $k\geq2$.}. }Until 1982, the following
was a major open problem:\medskip

\begin{quotation}
\emph{Does an exotic contractible open manifold ever cover a closed manifold?
Equivalently: Can the universal cover of a closed aspherical manifold fail to
be homeomorphic to} \emph{$%
%TCIMACRO{\U{211d} }%
%BeginExpansion
\mathbb{R}
%EndExpansion
^{n}$?\medskip}
\end{quotation}

\noindent In dimension $3$ this problem remained open until Perelman's
solution to the Geometrization Conjecture. It is now known that the universal
cover of a closed aspherical $3$-manifold is always homeomorphic to \emph{$%
%TCIMACRO{\U{211d} }%
%BeginExpansion
\mathbb{R}
%EndExpansion
^{3}$}. In all higher dimensions, a remarkable construction by Davis
\cite{Dav} produced aspherical $n$-manifolds with exotic universal covers.

\begin{example}
[Davis' exotic universal covering spaces]%
\label{Example: Construction of Davis manifolds}The construction begins with
an exotic (piecewise-linear) compact contractible oriented manifold $C^{n}$.
Davis' key insight was that a certain Coxeter group $\Gamma$ determined by a
triangulation of $\partial C^{n}$ provides precise instructions for assembling
infinitely many copies of $C^{n}$ into a contractible open n-manifold
$\mathcal{D}^{n}$ with enough symmetry to admit a proper cocompact\footnote{An
action by $\Gamma$ on $X$ is \emph{proper} if, for each compact $K\subseteq X$
at most finitely many $\Gamma$-translates of $K$ intersect $K$. The action is
\emph{cocompact} if there exists a compact $C$ such that $\Gamma C=X$.} action
by $\Gamma$. Figure \ref{Figure: Davis manifold}
%TCIMACRO{\FRAME{ftbpFU}{2.4837in}{1.9346in}{0pt}{\Qcb{A Davis manifold}%
%}{\Qlb{Figure: Davis manifold}}{osu-figdavis.eps}%
%{\special{ language "Scientific Word";  type "GRAPHIC";
%maintain-aspect-ratio TRUE;  display "USEDEF";  valid_file "F";
%width 2.4837in;  height 1.9346in;  depth 0pt;  original-width 8.6689in;
%original-height 6.7317in;  cropleft "0";  croptop "1";  cropright "1";
%cropbottom "0";  filename '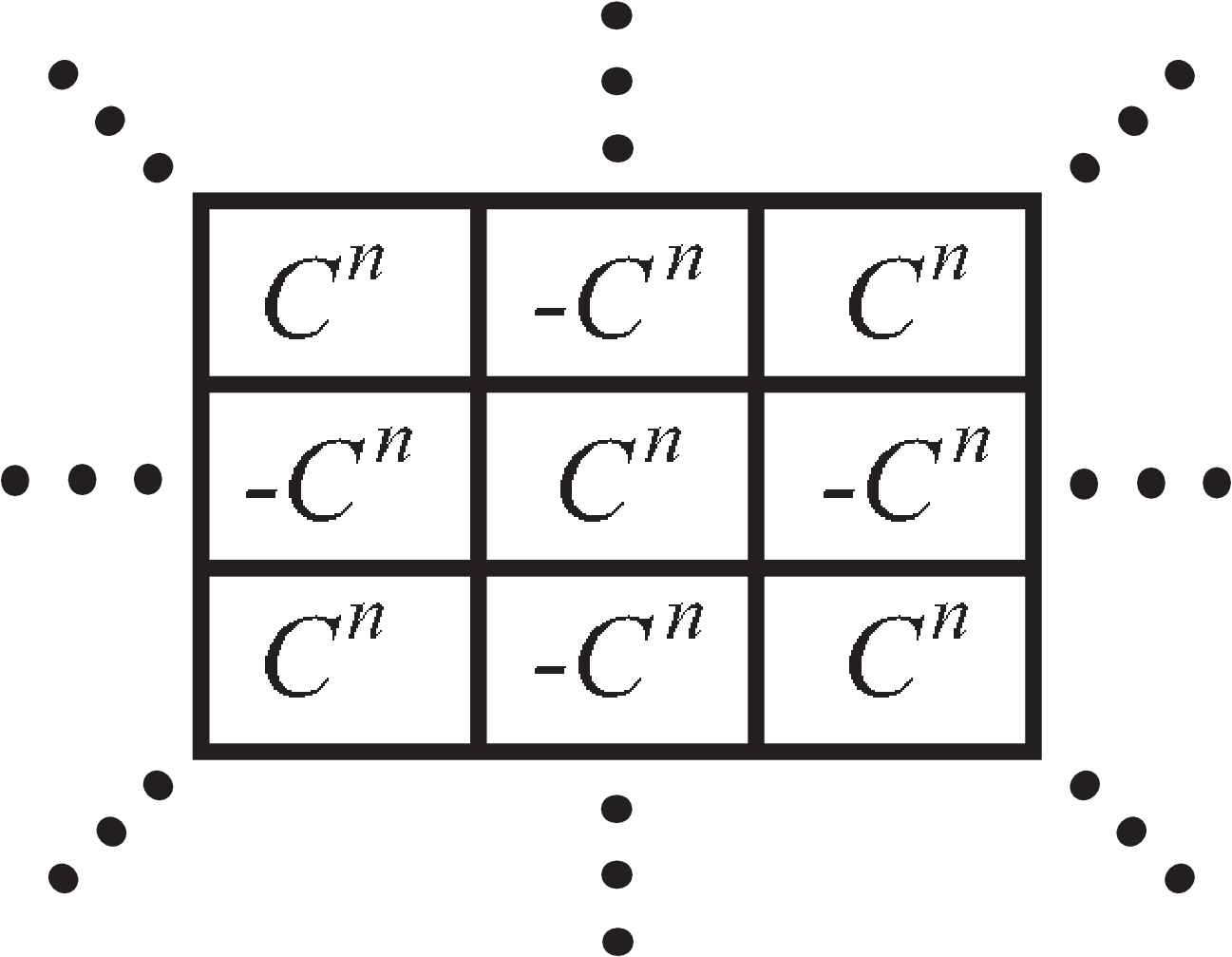';file-properties "XNPEU";}} }%
%BeginExpansion
\begin{figure}[ptb]%
\centering
\includegraphics[
height=1.9346in,
width=2.4837in
]%
{}%
\caption{A Davis manifold}%
\label{Figure: Davis manifold}%
\end{figure}
%EndExpansion
provides a schematic, of $\mathcal{D}^{n}$, where $-C^{n}$ denotes a copy of
$C^{n}$ with reversed orientation. Intuitively, $\mathcal{D}^{n}$ is obtained
by repeatedly reflecting copies of $C^{n}$ across $\left(  n-1\right)  $-balls
in $\partial C^{n}$. The reflections explain the reversed orientations on half
of the copies. By Selberg's Lemma, there is a finite index torsion-free
$\Gamma^{\prime}\leq\Gamma$. By properness, the action of $\Gamma^{\prime}$ on
$\mathcal{D}^{n}$ is \emph{free }(no $\gamma\in\Gamma^{\prime}$ has a fixed
point), so the quotient map $\mathcal{D}^{n}\rightarrow\Gamma^{\prime
}\backslash\mathcal{D}^{n}$ is a covering projection with image a closed
aspherical manifold.\smallskip
\end{example}

Later in these notes, when we prove that $\mathcal{D}^{n}\not \approx
%TCIMACRO{\U{211d} }%
%BeginExpansion
\mathbb{R}
%EndExpansion
^{n}$, an observation by Ancel and Siebenmann will come in handy. By
discarding all of the beautiful symmetry inherent in the Davis construction,
their observation provides a remarkably simple topological picture of
$\mathcal{D}^{n}$. Toward understanding that picture, let $P^{n}$ and $Q^{n}$
be oriented manifolds with connected boundaries, and let $B,B^{\prime}$ be
$\left(  n-1\right)  $-balls in $\partial P^{n}$ and $\partial Q^{n}$,
respectively. A \emph{boundary connected sum} $P^{n}\overset{\partial
}{\#}Q^{n}$ is obtained by identifying $B$ with $B^{\prime}$ via an
orientation reversing homeomorphism. (By using an orientation reversing gluing
map, we may give $P^{n}\overset{\partial}{\#}Q^{n}$ an orientation that agrees
with both original orientations.).

\begin{theorem}
\cite{AS} \label{Theorem: Ancel-Siebenmann}A Davis manifold $\mathcal{D}^{n}$
constructed from copies of an oriented compact contractible manifold $C^{n}$
is homeomorphic to the interior of an infinite boundary connected sum:
\[
C_{0}^{n}\overset{\partial}{\#}\left(  -C_{1}^{n}\right)  \overset{\partial
}{\#}\left(  C_{2}^{n}\right)  \overset{\partial}{\#}\left(  -C_{3}%
^{n}\right)  \overset{\partial}{\#}\cdots
\]
where each $C_{2i}^{n}$ is a copy of $C^{n}$ and each $-C_{2i+1}^{n}$ is a
copy of $-C^{n}$.
\end{theorem}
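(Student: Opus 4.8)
The plan is to bypass the Coxeter-group symmetry entirely and instead produce compatible exhaustions of the two open manifolds by compact contractible pieces, each of which is a finite boundary connected sum of copies of $\pm C^{n}$, and then interleave these exhaustions. First I would record the structural description implicit in the preceding discussion: $\mathcal{D}^{n}$ is assembled from copies $gC^{n}$ (one for each element $g$ of the reflection group $\Gamma$), glued to one another along $\left( n-1\right) $-balls in their boundaries, with each reflection reversing orientation, so that the orientation of $gC^{n}$ is governed by the parity of the word length of $g$. Choosing a nested, connected, tree-like (cycle-free) exhaustion of the indexing set by finite subsets $S_{0}\subseteq S_{1}\subseteq\cdots$, I set $D_{i}=\bigcup_{g\in S_{i}}gC^{n}$. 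Because the constituent copies are attached along balls in a pattern with no cycles, each $D_{i}$ is a boundary connected sum of its copies; since a boundary connected sum of contractible manifolds is contractible (apply van Kampen and Mayer--Vietoris across a ball), each $D_{i}$ is a compact contractible manifold, and the frontier balls along which future copies will be attached lie in $\partial D_{i}$. One then checks that $D_{i}\subseteq\operatorname*{int}D_{i+1}$ and $\mathcal{D}^{n}=\bigcup_{i}D_{i}$, with $D_{i}$ homeomorphic to a standard sum of $a_{i}$ copies of $C^{n}$ and $b_{i}$ copies of $-C^{n}$, where $a_{i},b_{i}\rightarrow\infty$ (both even- and odd-length elements proliferate).

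The second ingredient is a short list of formal properties of the boundary connected sum of compact contractible manifolds. Since $C^{n}$ is contractible, $\partial C^{n}$ has the $\mathbb{Z}$-homology of $\mathbb{S}^{n-1}$ and in particular is connected; this connectivity, together with the homogeneity of connected manifolds (any two locally flat $\left( n-1\right) $-balls in a connected boundary are ambiently isotopic) and the standard analysis of the gluing homeomorphisms of $\mathbb{S}^{n-2}$, shows that $P^{n}\overset{\partial}{\#}Q^{n}$ is well defined up to homeomorphism and is both associative and commutative. Consequently, whenever $a\leq c$ and $b\leq d$, the sum of $a$ copies of $C^{n}$ and $b$ copies of $-C^{n}$ includes, as a sub-boundary-connected-summand, into the sum of $c$ and $d$ such copies, the complementary summand absorbing the extra pieces; moreover this inclusion is well defined up to isotopy. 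These are exactly the moves needed to compare two gluing patterns built from the same pieces.

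Next I would exhaust the target, the interior of the linear sum $W=C_{0}^{n}\overset{\partial}{\#}(-C_{1}^{n})\overset{\partial}{\#}(C_{2}^{n})\overset{\partial}{\#}\cdots$. Writing $V_{m}$ for the first $m+1$ pieces and pushing off a half-open collar of the (connected, noncompact) boundary $\partial W$, one obtains compact contractible $U_{m}\approx V_{m}$ with $U_{m}\subseteq\operatorname*{int}U_{m+1}$ and $\bigcup_{m}U_{m}=\operatorname*{int}W$; here $U_{m}$ is a boundary connected sum of $c_{m}$ copies of $C^{n}$ and $d_{m}$ copies of $-C^{n}$, with $c_{m},d_{m}\rightarrow\infty$. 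Now both $\mathcal{D}^{n}$ and $\operatorname*{int}W$ are increasing unions of compact contractible manifolds, each a finite boundary connected sum of copies of $\pm C^{n}$, with interior-containment inclusions and with the number of copies of each orientation tending to infinity. Using the sub-summand inclusions from the previous paragraph, I would pass to subsequences to build a cofinal interleaving $D_{i_{1}}\hookrightarrow U_{m_{1}}\hookrightarrow D_{i_{2}}\hookrightarrow U_{m_{2}}\hookrightarrow\cdots$ (each arrow a homeomorphism onto a sub-boundary-connected-summand landing in the interior of its target), and then promote this to a homeomorphism $\mathcal{D}^{n}\approx\operatorname*{int}W$ by the usual direct-limit (back-and-forth) argument for increasing unions.

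The main obstacle is precisely this last rearrangement: showing that the homeomorphism type of the interior of an infinite boundary connected sum of compact contractible manifolds is \emph{insensitive to the pattern of gluing}, so that the genuinely branching assembly of $\mathcal{D}^{n}$ is the same open manifold as the linear chain $W$. Commutativity and associativity of $\overset{\partial}{\#}$ make the finite stages rearrangeable, but two points require care. First, one must track orientations: each reflection reverses them, so only the parity, and hence the eventual count---infinitely many of each---matters, which is exactly why the prescribed alternating pattern $C^{n},-C^{n},C^{n},\ldots$ is harmless. Second, one must manage the passage from manifolds-with-boundary to their interiors, via collars, so that the exhausting pieces genuinely nest with interior containment and the interleaving is cofinal on both sides. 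Once this bookkeeping is in place, the back-and-forth argument delivers the desired homeomorphism.
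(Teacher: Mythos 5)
The paper does not actually prove this theorem---it is quoted from \cite{AS}---but the accompanying Remark locates its ``essence'' in the fact that the \emph{interior} of an infinite boundary connected sum is well-defined independently of the gluing pattern. That is precisely the part of your argument that is sound: your interleaving of two exhaustions by finite boundary connected sums, using commutativity, associativity, and uniqueness of the ball along which one sums (all legitimate since $\partial C^{n}$ is connected), followed by a back-and-forth/direct-limit argument, is the intended content of the Remark and the Exercise that follow the theorem.

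The genuine gap is in your first step, and it is exactly the step you announce you will ``bypass.'' You assert that the copies $gC^{n}$, $g\in\Gamma$, are ``attached along $\left(n-1\right)$-balls \ldots{} in a pattern with no cycles,'' so that $D_{i}=\bigcup_{g\in S_{i}}gC^{n}$ is automatically a finite boundary connected sum. But the intuitive picture of ``repeatedly reflecting across $\left(n-1\right)$-balls'' is the \emph{conclusion} of the Ancel--Siebenmann theorem, not a hypothesis you may quote. In the actual Davis construction $\mathcal{D}^{n}=\left(\Gamma\times C^{n}\right)/\sim$, a chamber $gC^{n}$ is glued to $gsC^{n}$ along the panel of $s$, and these panels overlap: whenever two generators $s,t$ satisfy a relation $\left(st\right)^{m}=1$ with $m<\infty$ (which always happens, since the nerve of $\Gamma$ is a triangulation of $\partial C^{n}$ and so has edges), the chambers $g,gs,gst,\ldots$ form a genuine cycle around a codimension-two stratum. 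You cannot evade this by choosing a ``tree-like'' exhaustion of $\Gamma$: once $S_{i}$ contains the elements of such a cycle, their union carries all of the identifications, not just the tree edges. The missing lemma---the real content supplied by \cite{AS} and by the Coxeter combinatorics---is that if chambers are adjoined in order of word length, then $gC^{n}$ meets the union of all earlier chambers in a single $\left(n-1\right)$-ball in $\partial\left(gC^{n}\right)$; this rests on the fact that $\left\{s:\ell\left(gs\right)<\ell\left(g\right)\right\}$ spans a spherical simplex of the nerve and that the corresponding union of panels is a ball. Only after that lemma is in place does your exhaustion $\left\{D_{i}\right\}$ consist of finite boundary connected sums, and only then does your (otherwise correct) interleaving argument finish the proof.
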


\begin{remark}
\emph{The reader is warned that an infinite boundary connected sum is not
topologically well-defined. For example, one could arrange that the result be
2-ended instead of 1-ended. See Figure \ref{Figure: Double bubbles}.
%TCIMACRO{\FRAME{ftbpFU}{2.8007in}{1.8321in}{0pt}{\Qcb{1- and 2-ended boundary
%connected sums}}{\Qlb{Figure: Double bubbles}}{osu-figdouble.eps}%
%{\special{ language "Scientific Word";  type "GRAPHIC";
%maintain-aspect-ratio TRUE;  display "USEDEF";  valid_file "F";
%width 2.8007in;  height 1.8321in;  depth 0pt;  original-width 8.2105in;
%original-height 5.3411in;  cropleft "0";  croptop "1";  cropright "1";
%cropbottom "0";  filename '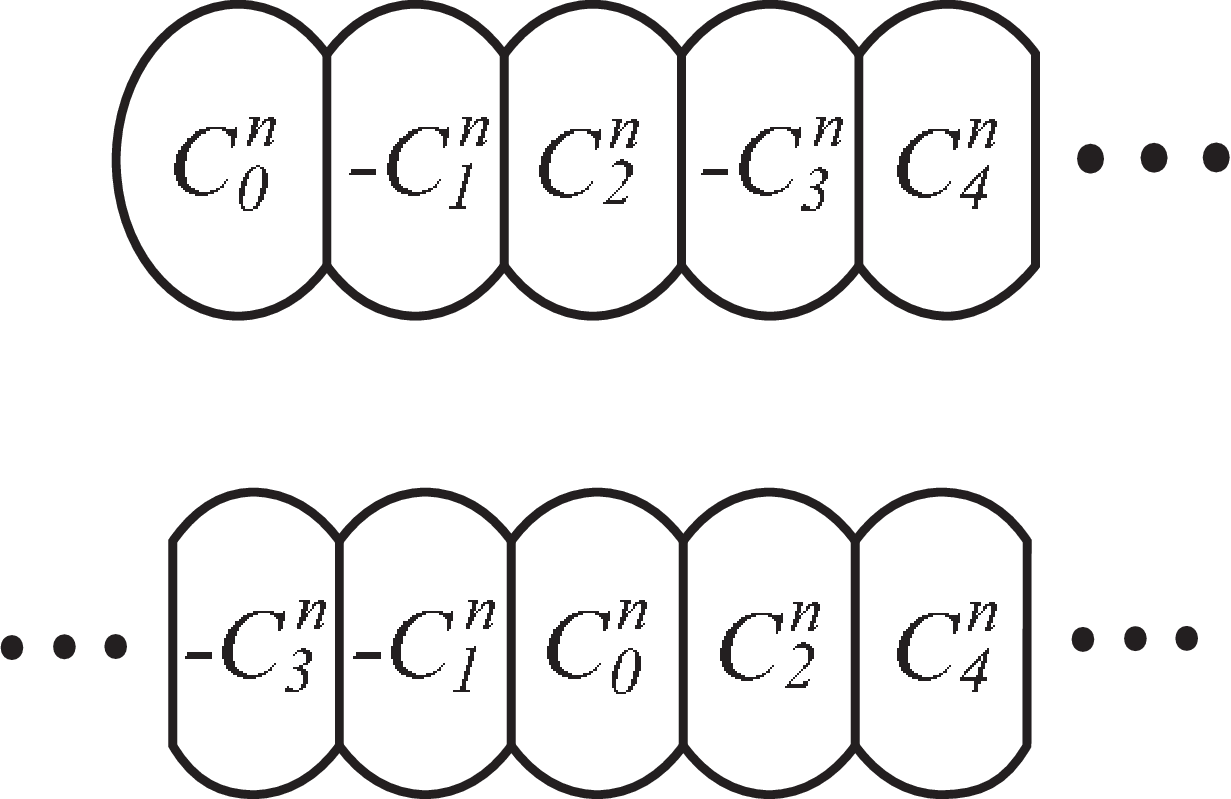';file-properties "XNPEU";}} }%
%BeginExpansion
\begin{figure}[ptb]%
\centering
\includegraphics[
height=1.8321in,
width=2.8007in
]%
{}%
\caption{1- and 2-ended boundary connected sums}%
\label{Figure: Double bubbles}%
\end{figure}
%EndExpansion
Remarkably, the }interior\emph{ of such a sum is well-defined. The proof of
that fact is relatively straight-forward; it contains the essence of Theorem
\ref{Theorem: Ancel-Siebenmann}}.
\end{remark}

\begin{exercise}
Sketch a proof that the 1-ended and 2-ended versions of $C_{0}^{n}%
\overset{\partial}{\#}\left(  -C_{1}^{n}\right)  \allowbreak\overset{\partial
}{\#}\left(  C_{2}^{n}\right)  \allowbreak\overset{\partial}{\#}\left(
-C_{3}^{n}\right)  \allowbreak\overset{\partial}{\#}\cdots$, indicated by
Figure \ref{Figure: Double bubbles} have homeomorphic interiors.
\end{exercise}

\begin{example}
[Asymmetric Davis manifolds]%
\label{An asymmetric variation on the Davis manifolds}To create a larger
collection of exotic contractible open $n$-manifolds (without concern for
whether they are universal covers), the infinite boundary connect sum
construction can be applied to a collection $\left\{  C_{j}^{n}\right\}
_{j=0}^{\infty}$ of non-homeomorphic compact contractible $n$-manifolds. Here
orientations are less relevant, so mention is omitted. Since there are
infinitely many distinct compact contractible $n$-manifolds, this strategy
produces uncountably many examples, which we refer to informally as
\textbf{asymmetric Davis manifolds}. Distinguishing one from another will be a
good test for our soon-to-be-developed tools. Recent applications of these
objects can be found in \cite{Bel} and in the dissertation of P. Sparks.
\end{example}

\begin{exercise}
Show that the interior of an infinite boundary connected sum of compact
contractible $n$-manifolds is contractible.
\end{exercise}

A natural question is motivated by the above discussion:\medskip

\begin{quotation}
\emph{Among the contractible open manifolds described above, which can or
cannot be universal covers of closed }$n$\emph{-manifolds?}\medskip
\end{quotation}

\noindent We will return to this question in
\S \ref{Subsection: Another look at contractible open manifolds}. For now we
settle for a fun observation by McMillan and Thickstun \cite{McTh}.

\begin{theorem}
For each $n\geq3$, there exist exotic contractible open $n$-manifolds that are
not universal covers of any closed $n$-manifold.

\begin{proof}
There are uncountably many exotic open $n$-manifolds and, by \cite{CK}, only
countably many closed $n$-manifolds.
\end{proof}
\end{theorem}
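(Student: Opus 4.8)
The plan is to establish a counting mismatch: the family of exotic contractible open $n$-manifolds is uncountable, whereas the supply of spaces they could cover is only countable. Since any universal cover of a closed $n$-manifold is determined, up to homeomorphism, by that closed manifold (it is the universal cover, hence unique), an exotic contractible open $n$-manifold that covers \emph{some} closed $n$-manifold must be homeomorphic to the universal cover of one of the countably many closed $n$-manifolds. If the pool of exotic examples is uncountable, all but countably many of them fail to arise this way.

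First I would produce an uncountable family of pairwise non-homeomorphic exotic contractible open $n$-manifolds. For $n \geq 4$ this is immediate from the asymmetric Davis construction of Example~\ref{An asymmetric variation on the Davis manifolds}: using infinite boundary connected sums of the infinitely many distinct compact contractible $n$-manifolds yields uncountably many examples, and the soon-to-be-developed end invariants distinguish them. For $n=3$ one instead invokes McMillan's \cite{Mc} uncountable collection of exotic contractible open $3$-manifolds. In both cases I would need the examples to be pairwise non-homeomorphic and genuinely exotic (not homeomorphic to $\mathbb{R}^{n}$), which the cited constructions supply.

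Next I would invoke the cardinality bound from \cite{CK}: there are only countably many closed $n$-manifolds up to homeomorphism. Combined with the uniqueness of universal covers, this means the set of contractible open $n$-manifolds arising as universal covers of closed $n$-manifolds is at most countable. Intersecting with (equivalently, comparing against) the uncountable family of exotic examples forces all but countably many to be non-covers; in particular at least one exotic contractible open $n$-manifold is not the universal cover of any closed $n$-manifold, which is all the statement asserts.

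The main obstacle is not the counting argument itself, which is a routine uncountable-versus-countable comparison, but rather justifying the \emph{uncountability} of the exotic family and the \emph{pairwise non-homeomorphism} needed to make ``uncountably many distinct examples'' precise. For $n \geq 4$ this rests on having infinitely many genuinely distinct compact contractible building blocks and on end-theoretic invariants fine enough to separate the resulting infinite boundary connected sums; for $n=3$ it rests entirely on McMillan's result. Everything else is bookkeeping: the uniqueness of universal covers and the countability statement from \cite{CK} are taken off the shelf. Thus the substantive content lives in the existence of the uncountable family, while the conclusion follows by pure cardinality.
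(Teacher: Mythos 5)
Your argument is exactly the paper's: an uncountable family of pairwise non-homeomorphic exotic contractible open $n$-manifolds versus the countably many closed $n$-manifolds of \cite{CK}, with the cardinality mismatch doing all the work. Your added care about where the uncountability comes from (McMillan for $n=3$, asymmetric Davis manifolds for $n\geq 4$) and about pairwise non-homeomorphism is a reasonable expansion of details the paper leaves implicit, but the route is the same.
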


\subsection{Fundamental groups at infinity for the classic
examples\label{Subsection: Examples of fundamental groups at infinity}}

With an ample supply of examples to work with, we begin defining an algebraic
invariant useful for distinguishing one contractible open manifold from
another. Technical issues will arise, but to keep focus on the big picture, we
delay confronting those until later. Once completed, the new invariant will be
more widely applicable, but for now we concentrate on contractible open manifolds.

Let $W^{n}$ be a contractible open manifold with $n\geq2$. Express $W^{n}$ as
$\cup_{i=0}^{\infty}K_{i}$ where each $K_{i}$ is a connected codimension 0
submanifold and $K_{i}\subseteq\operatorname*{int}K_{i+1}$ for each $i$. With
some additional care, arrange that each $K_{i}$ has connected complement.
(Here one uses the fact that $W^{n}$ is contractible and $n\geq2$. See
Exercise \ref{Exercise: contractible open mflds are 1-ended}.) The
corresponding \emph{neighborhoods of infinity} are the sets $U_{i}%
=\overline{W^{n}-K_{i}}$.

For each $i$, let $p_{i}\in U_{i}$ and consider the inverse sequence of groups:%

\begin{equation}
\pi_{1}\left(  U_{0},p_{0}\right)  \overset{\lambda_{1}}{\longleftarrow}%
\pi_{1}\left(  U_{1},p_{1}\right)  \overset{\lambda_{2}}{\longleftarrow}%
\pi_{1}\left(  U_{2},p_{2}\right)  \overset{\lambda_{3}}{\longleftarrow}%
\cdots\tag{3.1}\label{line: initial fundamental group at infinity}%
\end{equation}
We would \emph{like} to think of the $\lambda_{i}$ as being induced by
inclusion, but since $\cap_{i=0}^{\infty}U_{i}=\varnothing$, a single choice
of base point is impossible. Instead, for each $i$ choose a path $\alpha_{i}$
in $U_{i}$ connecting $p_{i}$ to $p_{i+1}$; then declare $\lambda_{i}$ to be
the composition
\[
\pi_{1}\left(  U_{i-1},p_{i-1}\right)  \overset{\widehat{\alpha}%
_{i-1}}{\longleftarrow}\pi_{1}\left(  U_{i-1},p_{i}\right)  \leftarrow\pi
_{1}\left(  U_{i},p_{i}\right)
\]
where the first map is induced by inclusion and $\widehat{\alpha}_{i-1}$ is
the \textquotedblleft change of base point isomorphism\textquotedblright. By
assembling the $\alpha_{i}$ end-to-end, we can define a map $r:[0,\infty
)\rightarrow X$, called the \emph{base ray}. The entire inverse sequence
(\ref{line: initial fundamental group at infinity}) is taken as a
representation of the \emph{fundamental group at infinity (based at}
$r$\emph{)} of $W^{n}$. Those who prefer a single group can take an inverse
limit (defined in \S \ref{Subsection: defining pro-isomorphism}) to obtain the
\emph{\v{C}ech fundamental group at infinity} (based at $r$). Unfortunately,
that inverse limit typically contains far less information than the inverse
sequence itself---more on that later.

Two primary technical issues are already evident:

\begin{itemize}
\item \textbf{well-definedness: }most obviously, the groups found in
(\ref{line: initial fundamental group at infinity}) depend upon the chosen
neighborhoods of infinity, and

\item \textbf{dependence upon base ray: }the \textquotedblleft bonding
homomorphisms\textquotedblright\ in
(\ref{line: initial fundamental group at infinity}) depend upon the base ray.
\end{itemize}

\noindent We will return to these issues soon; for now we forge ahead and
apply the basic idea to some examples.

\stepcounter{theorem}

\begin{example}
[Fundamental group at infinity for $%
%TCIMACRO{\U{211d} }%
%BeginExpansion
\mathbb{R}
%EndExpansion
^{n}$]\label{Example: Fundamental group at infinity for R^n}Express $%
%TCIMACRO{\U{211d} }%
%BeginExpansion
\mathbb{R}
%EndExpansion
^{n}$ as $\cup_{i=0}^{\infty}i\mathbb{B}^{n}$ where $i\mathbb{B}^{n}$ is the
closed ball of radius $i$. Then, $U_{0}=%
%TCIMACRO{\U{211d} }%
%BeginExpansion
\mathbb{R}
%EndExpansion
^{n}$ and for $i>0$, $U_{i}=\overline{%
%TCIMACRO{\U{211d} }%
%BeginExpansion
\mathbb{R}
%EndExpansion
^{n}-\mathbb{B}_{i}^{n}}$ is homeomorphic to $\mathbb{S}^{n-1}\times\lbrack
i,\infty)$. If we let $r$ be a true ray emanating from the origin and
$p_{i}=r\cap\left(  \mathbb{S}^{n-1}\times\left\{  i\right\}  \right)  $ we
get a representation of the fundamental group at infinity as
\begin{equation}
1\leftarrow1\leftarrow1\leftarrow1\leftarrow\cdots\tag{3.2}%
\label{Sequence: trivial inverse sequence}%
\end{equation}
when $n\geq3$, and when $n=2$, we get (with a slight abuse of notation)
\begin{equation}
1\leftarrow%
%TCIMACRO{\U{2124} }%
%BeginExpansion
\mathbb{Z}
%EndExpansion
\overset{\operatorname*{id}}{\longleftarrow}%
%TCIMACRO{\U{2124} }%
%BeginExpansion
\mathbb{Z}
%EndExpansion
\overset{\operatorname*{id}}{\longleftarrow}\mathbb{%
%TCIMACRO{\U{2124} }%
%BeginExpansion
\mathbb{Z}
%EndExpansion
}\overset{\operatorname*{id}}{\longleftarrow}\cdots\tag{3.3}%
\end{equation}
Modulo the technical issues, we have a modest application of the fundamental
group at infinity---it distinguishes the plane from higher-dimensional
Euclidean spaces.
\end{example}

\stepcounter{theorem}

\stepcounter{theorem}

\begin{example}
[Fundamental group at infinity for open Newman manifolds]%
\label{Example: Fundamental group at infinty for open Newman manifolds}Let
$C^{n}$ be a compact contractible $n$-manifold and $G=\pi_{1}\left(  \partial
C^{n}\right)  $. By deleting $\partial C^{n}$ from a collar neighborhood of
$\partial C^{n}$ in $C^{n}$ we obtain an \emph{open collar }neighborhood of
infinity $U_{0}\approx\partial C^{n}\times\lbrack0,\infty)$ in the open Newman
manifold $\operatorname*{int}C^{n}$. For each $i\geq1$, let $U_{i}$ be the
subcollar corresponding to $\partial C^{n}\times\lbrack i,\infty)$ and let $r$
to be the ray $\left\{  p\right\}  \times\lbrack0,\infty)$, with
$p_{i}=p\times\left\{  i\right\}  $. We get a representation of the
fundamental group at infinity
\[
G\overset{\operatorname*{id}}{\longleftarrow}G\overset{\operatorname*{id}%
}{\longleftarrow}G\overset{\operatorname*{id}}{\longleftarrow}\cdots
\]
The (still-to-be-quantified) difference between this and
(\ref{Sequence: trivial inverse sequence}) verifies that $\operatorname*{int}%
C^{n}$ is not homeomorphic to $%
%TCIMACRO{\U{211d} }%
%BeginExpansion
\mathbb{R}
%EndExpansion
^{n}$.
\end{example}

\begin{example}
[Fundamental group at infinity for Davis manifolds]%
\label{Example: Fundamental group at infinity for Davis manifolds}To aid in
obtaining a representation of the fundamental group at infinity of a Davis
manifold $\mathcal{D}^{n}$, we use Theorem \ref{Theorem: Ancel-Siebenmann} to
view $\mathcal{D}^{n}$ as the interior of $C_{0}\overset{\partial}{\#}\left(
-C_{1}\right)  \overset{\partial}{\#}\left(  C_{2}\right)  \overset{\partial
}{\#}\left(  -C_{3}\right)  \overset{\partial}{\#}\cdots$, where each $C_{i}$
is a copy of a fixed compact contractible n-manifold $C$. (Superscripts
omitted to avoid excessive notation.)

Borrow the setup from Example
\ref{Example: Fundamental group at infinty for open Newman manifolds} to
express $\operatorname*{int}C$ as $\cup_{i=0}^{\infty}K_{i}$ where each
$K_{i}\equiv\overline{\operatorname*{int}C-U_{i}}$ is homeomorphic to $C^{n}$.
We may exhaust $\mathcal{D}^{n}$ by compact contractible manifolds
$L_{i}\approx C_{0}\overset{\partial}{\#}(-C_{1})\overset{\partial}{\#}%
\cdots\overset{\partial}{\#}(\pm C_{i})$ created by \textquotedblleft tubing
together\textquotedblright\ $K_{i}^{0}\cup(-K_{i}^{1})\cup\cdots\cup(\pm
K_{i}^{i})$, where the tubes are copies of $\mathbb{B}^{n-1}\times
\lbrack-1,1]$ and $K_{i}^{j}$ is the copy of $K_{i}$ in $\pm C_{j}$ See Figure
\ref{Figure: Exhaustion by compact contractibles}.%
%TCIMACRO{\FRAME{ftbpFU}{3.3269in}{1.0395in}{0pt}{\Qcb{An exhaustion of
%$\QTR{cal}{D}^{n}$ by compact contractible manifiolds}}%
%{\Qlb{Figure: Exhaustion by compact contractibles}}{osu-figexhaustion.eps}%
%{\special{ language "Scientific Word";  type "GRAPHIC";
%maintain-aspect-ratio TRUE;  display "USEDEF";  valid_file "F";
%width 3.3269in;  height 1.0395in;  depth 0pt;  original-width 9.0978in;
%original-height 2.789in;  cropleft "0";  croptop "1";  cropright "1";
%cropbottom "0";  filename '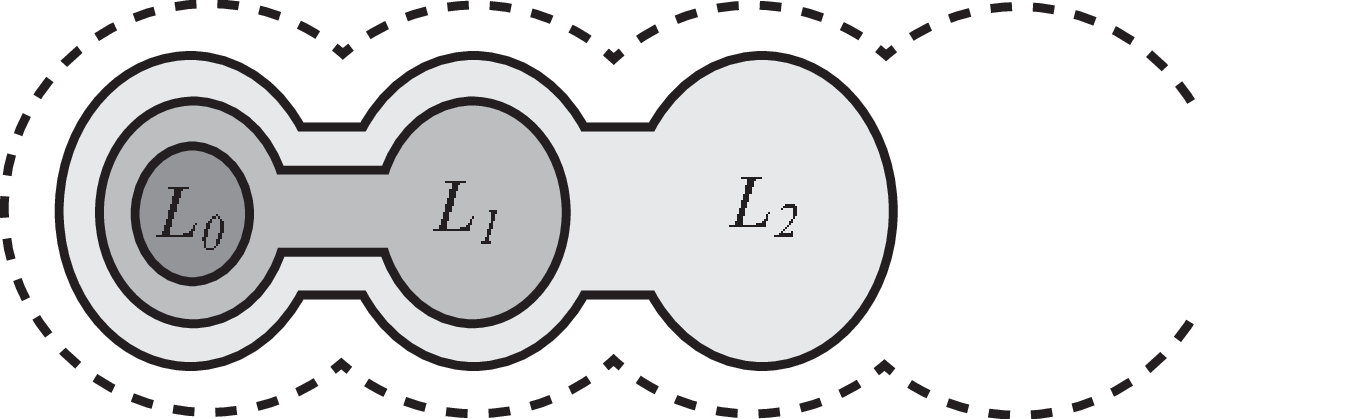';file-properties "XNPEU";}}
%}%
%BeginExpansion
\begin{figure}[ptb]%
\centering
\includegraphics[
height=1.0395in,
width=3.3269in
]%
{}%
\caption{An exhaustion of $\mathcal{D}^{n}$ by compact contractible
manifiolds}%
\label{Figure: Exhaustion by compact contractibles}%
\end{figure}
%EndExpansion
It is easy to see that a corresponding neighborhood of infinity $V_{i}%
=\overline{\mathcal{D}^{n}-L_{i}}$ has fundamental group $G_{0}\ast G_{1}%
\ast\cdots\ast G_{i}$ where each $G_{i}$ is a copy of $G$; moreover, the
homomorphism of $G_{0}\ast G_{1}\ast\cdots\ast G_{i}\ast G_{i+1}$ to
$G_{0}\ast G_{1}\ast\cdots\ast G_{i}$ induced by $V_{i+1}\hookrightarrow
V_{i}$ acts as the identity on $G_{0}\ast G_{1}\ast\cdots\ast G_{i}$ and sends
$G_{i+1}$ to $1$. With appropriate choices of base points and ray, we arrive
at a representation of the fundamental group at infinity of $\mathcal{D}^{n}$
of the form%
\begin{equation}
G_{0}\twoheadleftarrow G_{0}\ast G_{1}\twoheadleftarrow G_{0}\ast G_{1}\ast
G_{2}\twoheadleftarrow G_{0}\ast G_{1}\ast G_{2}\ast G_{3}\twoheadleftarrow
\cdots.\tag{3.4}%
\label{line: fundamental group at infinity for a Davis manifold}%
\end{equation}

\end{example}

\stepcounter{theorem}

\begin{example}
[Fundamental group at infinity for asymmetric Davis manifolds]By proceeding as
in Example \ref{Example: Fundamental group at infinity for Davis manifolds},
but not requiring $C_{j}\approx C_{k}$ for $j\neq k$, we obtain manifolds with
fundamental groups at infinity represented by inverse sequences like
(\ref{line: fundamental group at infinity for a Davis manifold}), except that
the various $G_{i}$ need not be the same. By choosing different sequences of
compact contractible manifolds, we can arrive at an uncountable collection of
inverse sequences. Some work is still necessary in order to claim an
uncountable collection of topologically distinct manifolds.
\end{example}

\begin{example}
[Fundamental group at infinity for the Whitehead manifold]%
\label{Example: Fundamental group at infinity for the Whitehead manifold}%
Referring to Example \ref{Example: Definition of Whitehead manifold} and
Figure \ref{Figure: Whitehead}, for each $i\geq0$, let $A_{i}=\overline
{T_{i}-T_{i+1}}$. Then $A_{i}$ is a compact 3-manifold, with a pair of torus
boundary components $\partial T_{i}$ and $\partial T_{i+1}$. Standard
techniques from 3-manifold topology allow one to show that $G=\pi_{1}\left(
A_{i}\right)  $ is nonabelian and that each boundary component is
incompressible in $A_{i}$, i.e., $\pi_{1}\left(  \partial T_{i}\right)  $ and
$\pi_{1}\left(  \partial T_{i+1}\right)  $ inject into $G$. If we let $A_{-1}$
be the solid torus $\overline{\mathbb{S}^{3}-T_{0}}$, then
\[
\mathcal{W}^{3}=A_{-1}\cup A_{0}\cup A_{1}\cup A_{2}\cup\cdots
\]
where $A_{i}\cap A_{i+1}=T_{i+1}$ for each $i$. Set $U_{i}=A_{i}\cup
A_{i+1}\cup A_{i+2}\cup\cdots$, for each $i\geq0$, to obtain a nested sequence
of homeomorphic neighborhoods of infinity, each having fundamental group
isomorphic to an infinite free product with amalgamation
\[
\pi_{1}\left(  U_{i}\right)  =G_{i}\ast_{\Lambda}G_{i+1}\ast_{\Lambda}%
G_{i+2}\ast_{\Lambda}\cdots
\]
where $\Lambda\cong%
%TCIMACRO{\U{2124} }%
%BeginExpansion
\mathbb{Z}
%EndExpansion
\oplus%
%TCIMACRO{\U{2124} }%
%BeginExpansion
\mathbb{Z}
%EndExpansion
$. Assembling these into an inverse sequence (temporarily ignoring base ray
issues) gives a representation of the fundamental group at infinity%
\[
G_{0}\ast_{\Lambda}G_{1}\ast_{\Lambda}G_{2}\ast_{\Lambda}G_{4}\ast_{\Lambda
}\cdots\leftarrowtail G_{1}\ast_{\Lambda}G_{2}\ast_{\Lambda}G_{3}\ast
_{\Lambda}\cdots\leftarrowtail G_{2}\ast_{\Lambda}G_{3}\ast_{\Lambda}%
\cdots\leftarrowtail\cdots
\]
Combinatorial group theory provides a useful observation: each bonding
homomorphism is injective and none is surjective.
\end{example}

We will return to the calculations from this section after enough mathematical
rigor has been added to make them fully applicable.

\section{Basic notions in the study of noncompact spaces}

An important short-term goal is to confront the issue of well-definedness and
to clarify the role of the base ray in our above approach to the fundamental
group at infinity. Until that is done, the calculations in the previous
section should be viewed with some skepticism. Since we will eventually
broaden our scope to spaces far more general than contractible open manifolds,
we first take some time to lay out a variety general facts and definitions of
use in the study of noncompact spaces.

\subsection{Neighborhoods of infinity and ends of spaces}

A subset $U$ of a space $X$ is a \emph{neighborhood of infinity} if
$\overline{X-U}$ is compact; a subset of $X$ is \emph{unbounded} if its
closure is noncompact. (\emph{Note:} This differs from the metric notion of
\textquotedblleft unboundedness\textquotedblright, which is dependent upon the
metric.) We say that $X$ \emph{has }$k$ \emph{ends}, if $k$ is a least upper
bound on the number of unbounded components in a neighborhood of infinity. If
no such $k$ exists, we call $X$ \emph{infinite-ended}.

\begin{example}
The real line has $2$ ends while, for all $n\geq2$, $%
%TCIMACRO{\U{211d} }%
%BeginExpansion
\mathbb{R}
%EndExpansion
^{n}$ is 1-ended. A space is compact if and only if it is $0$-ended. A common
example of an infinite-ended space is the universal cover of $\mathbb{S}%
^{1}\vee\mathbb{S}^{1}$.
\end{example}

\begin{exercise}
\label{Exercise: Ends of spaces admitting actions}Show that an ANR $X$ that
admits a proper action by an infinite group $G$, necessarily has $1,2,$ or
infinitely many ends. (This is a key ingredient in an important theorem from
geometric group theory. See \S \ref{Section: Exploring the ends of groups}.)
\end{exercise}

\begin{exercise}
\label{Exercise: contractible open mflds are 1-ended}Show that a contractible
open $n$-manifold of dimension $\geq2$ is always 1-ended. \emph{Hint:}\textbf{
}Ordinary singular or simplicial homology will suffice.
\end{exercise}

An \emph{exhaustion of }$X$ \emph{by compacta} is a nested sequence
$K_{0}\subseteq K_{1}\subseteq K_{2}\subseteq$ $\ $\ of compact subsets whose
union is $X$; in this case the corresponding collection of neighborhoods of
infinity $U_{i}=X-K_{i}$ is \emph{cofinal}, i.e., $\cap_{i=0}^{\infty}%
U_{i}=\varnothing$.\footnote{Sometimes closed neighborhood of infinity are
preferable; then we let $U_{i}=\overline{X-K_{i}}$. In many cases the choice
is just a matter of personal preference.} A compactum $K_{i}$ is
\emph{efficient} if it is connected and the corresponding $U_{i}$ has only
unbounded components. An exhaustion of $X$ by efficient compacta with each
$K_{i}\subseteq\operatorname*{int}K_{i+1}$ is called an \emph{efficient
exhaustion}. The following is an elementary, but nontrivial, exercise in
general topology.

\begin{exercise}
Show that every connected ANR $X$ admits an efficient exhaustion by compacta.
\emph{Note:}\textbf{ }For this exercise, one can replace the ANR hypothesis
with the weaker assumption of locally compact and locally path connected.
\end{exercise}

Let $\left\{  K_{i}\right\}  _{i=0}^{\infty}$ be an efficient exhaustion of
$X$ by compacta and, for each $i$, let $U_{i}=X-K_{i}$. Let $\mathcal{E}%
\emph{nds}\left(  X\right)  $ be the set of all sequences $\left(  V_{0}%
,V_{1},V_{2},\cdots\right)  $ where $V_{i}$ is a component of $U_{i}$ and
$V_{0}\supseteq V_{1}\supseteq\cdots$. Give $\overline{X}=X\cup\mathcal{E}%
\emph{nds}\left(  X\right)  $ the topology generated by the basis consisting
of all open subsets of $X$ and all sets $\overline{V}_{i}$ where
\[
\overline{V}_{i}=V_{i}\cup\left\{  \left(  W_{0},W_{1},\cdots\right)
\in\mathcal{E}\emph{nds}\left(  X\right)  \mid W_{i}=V_{i}\right\}  \text{.}%
\]
Then $\overline{X}$ is separable, compact, and metrizable; it is known as the
\emph{Freudenthal com\-pact\-ific\-at\-ion }of $X$.

\begin{exercise}
Verify the assertions made in the final sentence of the above paragraph. Then
show that any two efficient exhaustions of $X$ by compacta result in
com\-pact\-ific\-at\-ions that are canonically homeomorphic.
\end{exercise}

\begin{exercise}
Show that the cardinality of $\mathcal{E}nds\left(  X\right)  $ agrees with
the \textquotedblleft number of ends of $X$\textquotedblright\ defined at the
beginning of this section.\medskip
\end{exercise}

A \emph{closed [open] neighborhood of infinity} in $X$ is one that is closed
[open] as a subset of $X$. If $X$ is an ANR, we often prefer neighborhoods of
infinity to themselves be ANRs. This is automatic for open, but not for closed
neighborhoods of infinity. Call a neighborhood of infinity \emph{sharp }if it
is closed and also an ANR. Call a space $X$ \emph{sharp at infinity} if it
contains arbitrarily small sharp neighborhoods of infinity, i.e., if every
neighborhood of infinity in $X$ contains one that is sharp.

\begin{example}
Manifolds, locally finite polyhedra, and finite-dimensional locally finite CW
complexes are sharp at infinity---they contain arbitrarily small closed
neighborhoods of infinity that are themselves manifolds with boundary, locally
finite polyhedra, and locally finite CW complexes, respectively. In a similar
manner, Hilbert cube manifolds are sharp at infinity by an application of
Theorem \ref{Theorem: Triangulability of HCMs}. The existence of non-sharp
ANRs can be deduced from \cite{Bo0} and \cite{Mol}.
\end{example}

\begin{example}
\label{Example: Proper CAT(0) spaces are sharp at infinity}Every proper CAT(0)
space $X$ is sharp at infinity---but this is not entirely obvious. The most
natural closed neighborhood of infinity, $N_{p,r}=X-B\!\left(  p;r\right)  $,
is an ANR if and only if the metric sphere $S\!\left(  p;r\right)  $ is an
ANR. Surprisingly, it is not known whether this is always the case. However,
we can fatten $N_{p,r}$ to an ANR by applying Exercise
\ref{Exercise: union of balls in a CAT(0) space}.
\end{example}

\begin{problem}
In a proper CAT(0) space $X$, is each $S\!\left(  p;r\right)  $ an ANR? Does
there exist some $p_{0}\in X$ and a sequence of arbitrarily large $r_{i}$ for
which each $S\!\left(  p_{0};r_{i}\right)  $ is an ANR? Does it help to assume
that $X$ is finite-dimensional or that $X$ is a manifold?\medskip
\end{problem}

An especially nice variety of sharp neighborhood of infinity is available in
$n$-manifolds and Hilbert cube manifolds. A closed neighborhood of infinity
$N\subseteq M^{n}$ in an $n$-manifold with compact boundary is \emph{clean} if
it is a codimension 0 submanifold disjoint from $\partial M^{n}$ and $\partial
N=\operatorname*{Bd}_{M^{n}}N$ has a product neighborhood ($\approx\partial
N\times\left[  -1,1\right]  $) in $M^{n}$. In a Hilbert cube manifold $X$,
where there is no intrinsic notion of boundary (recall that $\mathcal{Q}$
itself is homogeneous!), we simply require that $\operatorname*{Bd}_{X}N$ be a
Hilbert cube manifold with a product neighborhood in $X$. In an $n$-manifold
with noncompact boundary a natural, but slightly more complicated, definition
is possible; but it is not needed in these notes.

\subsection{Proper maps and proper homotopy type}

A map $f:X\rightarrow Y$ is \emph{proper\footnote{Yes, this is our third
distinct mathematical use of the word \emph{proper}!}} if $f^{-1}\left(
C\right)  $ is compact for all compact $C\subseteq Y$.

\begin{exercise}
Show that a map $f:X\rightarrow Y$ between locally compact metric spaces is
proper if and only if the obvious extension to their 1-point
com\-pact\-ific\-at\-ions is continuous.
\end{exercise}

Maps $f_{0},f_{1}:X\rightarrow Y$ are \emph{properly homotopic} is there is a
proper map $H:X\times\left[  0,1\right]  \rightarrow Y$, with $H_{0}=f_{0}$
and $H_{1}=f_{1}$. We call $H$ a \emph{proper homotopy} between $f_{0}$ and
$f_{1}$ and write $f_{0}\overset{p}{\simeq}$ $f_{1}$. We say that
$f:X\rightarrow Y$ is a \emph{proper homotopy equivalence} if there exists
$g:Y\rightarrow X$ such that $gf\overset{p}{\simeq}\operatorname*{id}_{X}$ and
$fg\overset{p}{\simeq}Y$. In that case we say $X$ and $Y$ are \emph{proper
homotopy equivalent }and write $X\overset{p}{\simeq}Y$.

\begin{remark}
\label{Remark: importance of properness}\emph{It is immediate that
homeomorphisms are both proper maps and proper homotopy equivalences, but many
pairs of spaces that are homotopy equivalent in the traditional sense are not}
proper \emph{homotopy equivalent. For example, whereas all contractible open
manifolds (indeed, all contractible spaces) are homotopy equivalent, they are
frequently distinguished by their proper homotopy types.}

\emph{It would be impossible to overstate the importance of \textquotedblleft
properness\textquotedblright\ in the study of noncompact spaces. Indeed, it is
useful to think in terms of the} proper categories \emph{where the objects are
spaces (or certain subclasses of spaces) and the morphisms are proper maps or
proper homotopy classes of maps. In the latter case, the isomorphisms are
precisely the proper homotopy equivalences. Most of the invariants defined in
these notes (such as the fundamental group at infinity) can be viewed as
functors on the proper homotopy category of appropriate spaces.}
\end{remark}

The following offers a sampling of the usefulness of proper maps in
understanding noncompact spaces.

\begin{proposition}
\label{Prop: Induced maps on ends}Let $f:X\rightarrow Y$ be a proper map
between ANRs. Then

\begin{enumerate}
\item $f$ induces a canonical function $f^{\ast}:\mathcal{E}\!nds\left(
X\right)  \rightarrow\mathcal{E}\!nds\left(  Y\right)  $ that may be used to
extend $f$ to a map $\overline{f}:\overline{X}\rightarrow\overline{Y}$ between
Freudenthal com\-pact\-ific\-at\-ions,

\item if $f_{0},f_{1}:X\rightarrow Y$ are properly homotopic, then
$f_{0}^{\ast}=f_{1}^{\ast}$, and

\item if $f:X\rightarrow Y$ is a proper homotopy equivalence, then $f^{\ast}$
is a bijection.
\end{enumerate}

\begin{proof}
Begin with efficient exhaustions $\left\{  K_{i}\right\}  $ and $\left\{
L_{i}\right\}  $ of $X$ and $Y$, respectively. The following simple
observations make the uniqueness and well-definedness of $f^{\ast}$ straight-forward:

\begin{itemize}
\item By properness, for each $i$, there is a $k_{i}$ such that $f\left(
X-K_{k_{i}}\right)  \subseteq Y-L_{i}$,

\item By connectedness, a given component $U_{i}$ of $X-K_{k_{i}}$ is sent
into a unique component $V_{i}$ of $Y-L_{i}$,

\item By nestedness, each entry $W_{j}$ of $\left(  W_{0},W_{1},\cdots\right)
\in\mathcal{E}\emph{nds}\left(  X\right)  $ determines all entries of lower
index; hence every subsequence of entries determines that element.
\end{itemize}
\end{proof}
\end{proposition}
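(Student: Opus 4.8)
The plan is to construct $f^*$ directly from the defining data of the Freudenthal compactifications and then obtain parts (2) and (3) formally, via homotopy invariance and functoriality. First I would fix efficient exhaustions $\{K_i\}$ of $X$ and $\{L_i\}$ of $Y$, writing $U_i = X - K_i$ and $U_i' = Y - L_i$. Using properness (observation (1) in the excerpt) I would choose a strictly increasing sequence $k_0 < k_1 < \cdots$ with $f(U_{k_i}) \subseteq U_i'$. Given an end $(V_0, V_1, \ldots) \in \mathcal{E}nds(X)$, the set $f(V_{k_i})$ is connected and lies in $U_i'$, so (observation (2)) it is contained in a unique component $W_i$ of $U_i'$; I would set $f^*\left((V_j)\right) = (W_0, W_1, \ldots)$. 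The inclusions $V_{k_{i+1}} \subseteq V_{k_i}$ force $W_{i+1} \subseteq W_i$, so the output is a genuine end, and nestedness (observation (3)) shows the answer is independent of the particular choice of the $k_i$, making $f^*$ canonical.

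Next I would extend $f$ to $\overline{f} \colon \overline{X} \to \overline{Y}$ by setting $\overline{f}|_X = f$ and $\overline{f}|_{\mathcal{E}nds(X)} = f^*$, and then check continuity. Continuity on $X$ is immediate; at an end $\varepsilon = (V_j)$ with image $(W_i)$, a basic neighborhood of $\overline{f}(\varepsilon)$ is $\overline{W}_i$, and I would verify that $\overline{f}(\overline{V}_{k_i}) \subseteq \overline{W}_i$. Points of $V_{k_i}$ map into $W_i$ by construction, while an end in $\overline{V}_{k_i}$ has $k_i$-th entry $V_{k_i}$, so its image has $i$-th entry equal to the component of $U_i'$ containing $f(V_{k_i})$, namely $W_i$; both cases land in $\overline{W}_i$. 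This completes (1). Along the way I would record the two formal properties that drive the rest: $(\mathrm{id}_X)^* = \mathrm{id}_{\mathcal{E}nds(X)}$ and functoriality $(g \circ f)^* = g^* \circ f^*$, both of which fall straight out of the construction.

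For (2), given a proper homotopy $H \colon X \times [0,1] \to Y$ from $f_0$ to $f_1$, I would use properness of $H$ to pick, for each $i$, an index $m_i$ with $H((X - K_{m_i}) \times [0,1]) \subseteq U_i'$. For an end $(V_j)$, the slab $V_{m_i} \times [0,1]$ is connected, so its $H$-image is a connected subset of $U_i'$ lying in a single component $W_i$; since this image contains both $f_0(V_{m_i})$ and $f_1(V_{m_i})$, and $m_i$ is a legitimate index for computing both $f_0^*$ and $f_1^*$, the $i$-th entries of $f_0^*((V_j))$ and $f_1^*((V_j))$ both equal $W_i$. As $i$ is arbitrary, $f_0^* = f_1^*$. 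Part (3) is then purely formal: if $g$ is a proper homotopy inverse to $f$, then $g^* f^* = (g f)^* = (\mathrm{id}_X)^* = \mathrm{id}$ and likewise $f^* g^* = \mathrm{id}$ by (2) and functoriality, so $f^*$ is a bijection.

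I expect the only real friction to be the bookkeeping in (1): pinning down that $f^*$ is genuinely independent of the auxiliary index sequence $\{k_i\}$ (and, ideally, of the chosen exhaustions up to the canonical homeomorphism of Freudenthal compactifications), together with the continuity check for $\overline{f}$ at the ends. These are not deep, but they are exactly where the hypotheses must all be used carefully: properness to control images, connectedness to land in a single component, and nestedness to keep the data coherent. Once they are in place, homotopy invariance and functoriality make (2) and (3) essentially automatic.
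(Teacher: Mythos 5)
Your construction follows the paper's proof exactly: it is built on the same three observations (properness to shift indices, connectedness to land in a unique component, nestedness for coherence and well-definedness), and your treatment of (2) via connected slabs of a proper homotopy and of (3) via functoriality is the standard completion of the details the paper leaves as an exercise. The argument is correct.
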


\begin{exercise}
Fill in the remaining details in the proof of Proposition
\ref{Prop: Induced maps on ends}.
\end{exercise}

The following observation is a key sources of proper maps and proper homotopy equivalences.

\begin{proposition}
\label{Proposition: proper lifts}Let $f:X\rightarrow Y$ be a proper map
between connected ANRs inducing an isomorphism on fundamental groups. Then the
lift $\widetilde{f}:\widetilde{X}\rightarrow\widetilde{Y}$ to universal covers
is a proper map. If $f:X\rightarrow Y$ is a proper homotopy equivalence, then
$\widetilde{f}:\widetilde{X}\rightarrow\widetilde{Y}$ is a proper homotopy equivalence.
\end{proposition}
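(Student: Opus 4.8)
The plan is to prove the two assertions in order, using the first to drive the second.

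\textbf{Properness of the lift.} Fix basepoints and write $p_X\colon\widetilde X\to X$ and $p_Y\colon\widetilde Y\to Y$ for the covering projections. Since $f$ induces an isomorphism on fundamental groups, standard covering-space theory produces a lift $\widetilde f\colon\widetilde X\to\widetilde Y$ with $p_Y\widetilde f=fp_X$, and this lift is equivariant with respect to an isomorphism $\phi\colon G_X\to G_Y$ of the deck (covering) transformation groups; that is, $\widetilde f(\alpha z)=\phi(\alpha)\widetilde f(z)$ for $\alpha\in G_X$. The deck actions of $G_X$ on $\widetilde X$ and $G_Y$ on $\widetilde Y$ are proper, with quotients $X$ and $Y$, so I would deduce properness of $\widetilde f$ directly from equivariance plus properness of the actions.

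\textbf{The finiteness step.} Given a compactum $C\subseteq\widetilde Y$, set $K=f^{-1}(p_Y(C))$, which is compact because $p_Y(C)$ is compact and $f$ is proper. Commutativity gives $\widetilde f^{-1}(C)\subseteq p_X^{-1}(K)$. A routine covering-space fact lets me lift $K$ to a compactum $\widetilde K\subseteq\widetilde X$ with $p_X(\widetilde K)=K$ (cover $K$ by finitely many evenly covered sets with compact closure and take local sections), whence $p_X^{-1}(K)=\bigcup_{\alpha\in G_X}\alpha\widetilde K$. Put $\widetilde C'=\widetilde f(\widetilde K)$, a compactum in $\widetilde Y$. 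If $x\in\alpha\widetilde K\cap\widetilde f^{-1}(C)$, then equivariance gives $\widetilde f(x)\in\phi(\alpha)\widetilde C'\cap C$, so $\phi(\alpha)\widetilde C'\cap C\neq\varnothing$. Properness of the $G_Y$-action (only finitely many translates of a compactum meet a given compactum), together with bijectivity of $\phi$, forces the set of such $\alpha$ to be finite; hence $\widetilde f^{-1}(C)$ lies in a finite union of translates $\alpha\widetilde K$, making it a closed subset of a compactum and therefore compact. This marriage of equivariance to properness of the deck action is the technical heart of the proposition.

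\textbf{Proper homotopy equivalences.} A proper homotopy equivalence induces a $\pi_1$-isomorphism, so the first part already makes $\widetilde f$ proper, and likewise makes proper a chosen lift $\widetilde g$ of a proper homotopy inverse $g$ (here $g_\ast=f_\ast^{-1}$). The crucial observation is that a proper homotopy $H\colon X\times[0,1]\to X$ from $gf$ to $\operatorname{id}_X$ is itself a proper map between connected ANRs, and since $H|_{X\times\{1\}}=\operatorname{id}_X$ it induces an isomorphism on $\pi_1(X\times[0,1])\cong\pi_1(X)$. As the universal cover of $X\times[0,1]$ is $\widetilde X\times[0,1]$ and that of the target is $\widetilde X$, the first part applies \emph{to $H$}: every lift $\widetilde H\colon\widetilde X\times[0,1]\to\widetilde X$ is proper. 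Choosing, via the homotopy lifting property, the lift with $\widetilde H_0=\widetilde g\widetilde f$ gives a proper homotopy from $\widetilde g\widetilde f$ to $\widetilde H_1$, where $\widetilde H_1$ is a lift of $\operatorname{id}_X$ and hence a deck transformation $\gamma$. Replacing $\widetilde g$ by $\gamma^{-1}\widetilde g$ (still a proper lift of $g$) yields $\widetilde g\widetilde f\overset{p}{\simeq}\operatorname{id}_{\widetilde X}$.

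\textbf{Conclusion.} Running the identical argument on a proper homotopy $K\colon Y\times[0,1]\to Y$ from $fg$ to $\operatorname{id}_Y$, started at $\widetilde f\widetilde g$, produces $\widetilde f\widetilde g\overset{p}{\simeq}\delta$ for some deck transformation $\delta$ of $\widetilde Y$, so that $\widetilde f(\widetilde g\delta^{-1})\overset{p}{\simeq}\operatorname{id}_{\widetilde Y}$. Thus $\widetilde f$ possesses a proper left homotopy inverse $\widetilde g$ and a proper right homotopy inverse $\widetilde g\delta^{-1}$; the standard two-inverse argument, carried out verbatim in the proper homotopy category (every map and homotopy in sight is proper), shows these are properly homotopic and that $\widetilde f$ is a proper homotopy equivalence. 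I expect the genuine obstacle to be the compactness verification of the second step; once it is secured, the second assertion is essentially bookkeeping, the single idea being to recognize the homotopies as proper $\pi_1$-isomorphisms and feed them back into the first part.
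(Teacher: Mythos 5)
Your proof is correct and rests on exactly the mechanism the paper uses in its Lemma~\ref{Lemma: lift of map between ANRs} (equivariance of the lift together with properness of the deck actions); the paper proves only the compact, cocompact special case in detail and leaves the general Proposition as an exercise, and your argument---producing the compactum $K=f^{-1}\left(p_Y(C)\right)$ from properness of $f$, lifting it to a compact $\widetilde K$, and using properness of the $G_Y$-action to bound the relevant translates---is a correct completion of that exercise. The second assertion, handled by applying the first part to the proper homotopies themselves (viewed as proper $\pi_1$-isomorphisms out of $X\times[0,1]$) and adjusting the lifted inverses by deck transformations, is likewise sound.
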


\begin{corollary}
\label{Corollary: proper homotopy equivalence of covering spaces}If
$f:X\rightarrow Y$ is a homotopy equivalence between compact connected ANRs,
then $\widetilde{f}:\widetilde{X}\rightarrow\widetilde{Y}$ is a proper
homotopy equivalence.
\end{corollary}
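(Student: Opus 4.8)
The plan is to deduce this as an essentially immediate consequence of Proposition~\ref{Proposition: proper lifts}, the key point being that compactness of $X$ and $Y$ upgrades ordinary maps and homotopies to \emph{proper} maps and \emph{proper} homotopies for free.

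First I would record the elementary observation that any continuous map whose domain is compact is automatically proper: the preimage of a compact (hence closed) subset of the target is a closed subset of a compact space, and is therefore compact. Applied to $f$ itself, this shows $f:X\to Y$ is proper. Applied to the relevant homotopies, it is the crux of the reduction. If $g:Y\to X$ is a homotopy inverse, then the homotopies $H:X\times[0,1]\to X$ realizing $gf\simeq\operatorname{id}_{X}$ and $K:Y\times[0,1]\to Y$ realizing $fg\simeq\operatorname{id}_{Y}$ have \emph{compact} domains $X\times[0,1]$ and $Y\times[0,1]$, so they too are proper. Hence $gf\overset{p}{\simeq}\operatorname{id}_{X}$ and $fg\overset{p}{\simeq}\operatorname{id}_{Y}$, and $f$ is a proper homotopy equivalence in the sense defined above.

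With that in hand, the conclusion follows directly from the second assertion of Proposition~\ref{Proposition: proper lifts}: a proper homotopy equivalence between connected ANRs lifts to a proper homotopy equivalence $\widetilde{f}:\widetilde{X}\to\widetilde{Y}$ of universal covers. I would note in passing that the remaining hypotheses of that proposition are met without extra work. Any homotopy equivalence induces an isomorphism on $\pi_{1}$, and the lift $\widetilde{f}$ exists for any choice of base points because $\widetilde{X}$ is simply connected, so the lifting criterion is vacuous. These points are, however, already subsumed by the statement of the proposition.

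I do not expect a genuine obstacle here; the only step requiring care is the verification that the homotopies, and not merely $f$, are proper, and this is precisely where compactness of the domains $X\times[0,1]$ and $Y\times[0,1]$ is used. In short, the corollary is exactly the special case of Proposition~\ref{Proposition: proper lifts} in which all of the required properness conditions hold automatically.
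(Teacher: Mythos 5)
Your argument is correct, and your opening reduction---that compactness of $X$, $Y$, $X\times[0,1]$ and $Y\times[0,1]$ makes $f$, $g$ and both homotopies automatically proper, so that $f$ is a proper homotopy equivalence for free---is exactly why the corollary is a special case of Proposition~\ref{Proposition: proper lifts}. The difference from the paper lies in where the real work sits. The paper does not derive the corollary from Proposition~\ref{Proposition: proper lifts} (whose proof it leaves as an exercise); instead it proves Lemma~\ref{Lemma: lift of map between ANRs}, which isolates the one genuinely nontrivial ingredient: that the lift of a $\pi_{1}$-isomorphism-inducing map between compact connected ANRs is proper, established via the proper, cocompact deck-transformation actions of $G=\pi_{1}$ on the two universal covers. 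Applying that lemma to $f$, to a homotopy inverse $g$, and to the two homotopies (whose domains $X\times[0,1]$ and $Y\times[0,1]$ are again compact connected ANRs with universal covers $\widetilde{X}\times[0,1]$ and $\widetilde{Y}\times[0,1]$) yields the corollary directly. Your version is logically sound as a formal reduction, but note that the step you describe as the only one ``requiring care''---properness of the unlifted homotopies---is the trivial one; the delicate point is properness of the lifted maps and lifted homotopies, which is far from automatic since $\widetilde{X}$ and $\widetilde{Y}$ are generally noncompact, and your proof outsources it entirely to the proposition. The paper's decision to prove the lemma is precisely to supply that step explicitly.
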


We prove a simpler Lemma that leads directly to Corollary
\ref{Corollary: proper homotopy equivalence of covering spaces} and contains
the ideas needed for Proposition \ref{Proposition: proper lifts}. A different
approach and more general results can be found in \cite[\S 10.1]{Ge2}.

\begin{lemma}
\label{Lemma: lift of map between ANRs}If $k:A\rightarrow B$ is a map between
compact connected ANRs inducing an isomorphism on fundamental groups, then the
lift $\widetilde{k}:\widetilde{A}\rightarrow\widetilde{B}$ between universal
covers is proper.
\end{lemma}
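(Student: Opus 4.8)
The plan is to exploit the deck-transformation actions on the universal covers, together with the observation that an isomorphism on $\pi_1$ makes the lift equivariant with respect to a \emph{bijective} homomorphism of deck groups. Write $G=\pi_1(A)$ and $H=\pi_1(B)$ for the deck groups of $p_A\colon\widetilde A\to A$ and $p_B\colon\widetilde B\to B$. Since $A$ and $B$ are compact ANRs they are locally contractible and locally compact, so their universal covers exist as genuine covering spaces and the deck actions are free, properly discontinuous, and (by compactness of the bases) cocompact. First I would record the equivariance of the lift: for each $g\in G$ the map $\widetilde k\circ g$ is again a lift of $p_B\circ\widetilde k=k\circ p_A$, so it equals $\phi(g)\circ\widetilde k$ for a unique $\phi(g)\in H$; the resulting homomorphism $\phi\colon G\to H$ is precisely $k_{\ast}$, hence a bijection by hypothesis. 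Thus $\widetilde k(gx)=\phi(g)\,\widetilde k(x)$ for all $g\in G$ and $x\in\widetilde A$. (Any two lifts differ by a deck transformation, so properness will not depend on the choice of lift.)

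Next I would use compactness of $A$ to fix a compact set $D\subseteq\widetilde A$ with $p_A(D)=A$, equivalently $G\cdot D=\widetilde A$; this exists because finitely many evenly covered sets cover $A$. Set $K=\widetilde k(D)$, a compact subset of $\widetilde B$. Now let $C\subseteq\widetilde B$ be an arbitrary compactum; I must show $\widetilde k^{-1}(C)$ is compact. Since $\widetilde k$ is continuous and $C$ is closed, $\widetilde k^{-1}(C)$ is closed, so it suffices to show it is contained in a finite union of translates $gD$.

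The heart of the argument is a finiteness count. For $g\in G$, if $gD\cap\widetilde k^{-1}(C)\neq\varnothing$ then some $x\in D$ satisfies $\widetilde k(gx)=\phi(g)\,\widetilde k(x)\in C$ with $\widetilde k(x)\in K$, so $\phi(g)K\cap C\neq\varnothing$. By properness of the $H$-action on the locally compact space $\widetilde B$, only finitely many $h\in H$ satisfy $hK\cap C\neq\varnothing$ (apply the standard ``finitely many deck translates meet a compactum'' fact to the compact set $K\cup C$). Because $\phi$ is a bijection, only finitely many $g\in G$ can satisfy $\phi(g)K\cap C\neq\varnothing$; call this finite set $F$. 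Then $\widetilde k^{-1}(C)=\bigcup_{g\in G}\bigl(\widetilde k^{-1}(C)\cap gD\bigr)=\bigcup_{g\in F}\bigl(\widetilde k^{-1}(C)\cap gD\bigr)\subseteq\bigcup_{g\in F}gD$, a finite union of compacta; being closed, $\widetilde k^{-1}(C)$ is therefore compact. This shows $\widetilde k$ is proper.

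The step I expect to be the main obstacle is the finiteness count, and specifically making sure both actions are proper in the ``compact sets'' sense rather than merely properly discontinuous at points—this is exactly where local compactness (part of the ANR hypothesis) enters. The one genuinely essential use of the $\pi_1$-isomorphism hypothesis is the \emph{bijectivity} of $\phi$: without surjectivity the passage from ``finitely many $h$'' to ``finitely many $g$'' can fail, which reflects the fact that a lift covering a map that only injects on $\pi_1$ need not be proper. Everything else—existence of the compact fundamental domain $D$ and the equivariance of $\widetilde k$—is routine covering-space theory, so I would keep those steps brief.
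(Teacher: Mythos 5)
Your proof is correct and follows essentially the same route as the paper's: both arguments rest on the equivariance of $\widetilde{k}$ with respect to the (bijectively identified) deck groups, a compact fundamental domain for each action, and the properness of the deck action on $\widetilde{B}$ to bound the number of translates meeting a given compactum. The only cosmetic difference is that you run the finiteness count directly for an arbitrary compact $C$, while the paper reduces to the fundamental domain $L$ and argues by contradiction with a sequence of distinct deck transformations.
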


\begin{proof}
[Proof of Lemma \ref{Lemma: lift of map between ANRs}]Let $G$ denote $\pi
_{1}\left(  A\right)  \cong\pi_{1}\left(  B\right)  $. Then $G$ acts by
covering transformations (properly, cocompactly and freely) on $\widetilde{A}$
and $\widetilde{B}$ so that $\widetilde{k}$ is $G$-equivariant. Let
$K\subseteq\widetilde{A}$ and $L\subseteq\widetilde{B}$ be compacta such that
$GK=\widetilde{A}$ and $GL=\widetilde{B}$; without loss of generality, arrange
that $G\cdot\operatorname*{int}\left(  L\right)  =\widetilde{B}$ and
$\widetilde{k}\left(  K\right)  \subseteq L$. The assertion follows easily if
$\widetilde{k}^{-1}\left(  L\right)  $ is compact. Suppose otherwise. Then
there exists a sequence $\left\{  g_{i}\right\}  _{i=1}^{\infty}$ of distinct
element of $G$ for which $g_{i}K\cap\widetilde{k}^{-1}\left(  L\right)
\neq\varnothing$. But then each $g_{i}L$ intersects $L$, contradicting properness.
\end{proof}

\begin{exercise}
Fill in the remaining details for a proof for Proposition
\ref{Proposition: proper lifts}.
\end{exercise}

\subsection{Proper rays}

Henceforth, we refer to any proper map $r:[a,\infty)\rightarrow X$ as a
\emph{proper ray} in $X$. In particular, we do not require a proper ray to be
\textquotedblleft straight\textquotedblright\ or even an embedding. A
\emph{reparametrization} $r^{\prime}$ of a proper ray $r$ is obtained
precomposing $r$ with a homeomorphism $h:[b,\infty)\rightarrow\lbrack
a,\infty)$. Note that a reparametrization of a proper ray is proper.

\begin{exercise}
\label{Exercise: Fixing a proper ray}Show that the base ray $r:[0,\infty
)\rightarrow X$ described in
\S \ref{Subsection: Examples of fundamental groups at infinity} is proper.
Conversely, let $s:[0,\infty)\rightarrow X$ be a proper ray, $\left\{
K_{i}\right\}  _{i=0}^{\infty}$ an efficient exhaustion of $X$ by compacta,
and for each $i$, $U_{i}=X-K_{i}$. Show that, by omitting an initial segment
$[0,a)$ and then reparametrizing $\left.  s\right\vert _{[a,\infty)}$, we may
obtain a corresponding proper ray $r:[0,\infty)\rightarrow X$ with $r\left(
\left[  i,i+1\right]  \right)  \subseteq U_{i}$ for each $i$. In this way, any
proper ray in $X$ can be used as a base ray for a representation of the
fundamental group at infinity.
\end{exercise}

Declare proper rays $r,s:[0,\infty)\rightarrow X$ to be \emph{strongly
equivalent} if they are properly homotopic, and \emph{weakly equivalent }if
there is a proper homotopy $K:%
%TCIMACRO{\U{2115} }%
%BeginExpansion
\mathbb{N}
%EndExpansion
\times\lbrack0,1]\rightarrow X$ between $\left.  r\right\vert _{%
%TCIMACRO{\U{2115} }%
%BeginExpansion
\mathbb{N}
%EndExpansion
}$ and $\left.  s\right\vert _{%
%TCIMACRO{\U{2115} }%
%BeginExpansion
\mathbb{N}
%EndExpansion
}$. Equivalently, $r$ and $s$ are weakly equivalent if there is a proper map
$h$ of the \emph{infinite ladder} $L_{[0,\infty)}=([0,\infty)\times\left\{
0,1\right\}  )\cup(%
%TCIMACRO{\U{2115} }%
%BeginExpansion
\mathbb{N}
%EndExpansion
\times\lbrack0,1])$ into $X$, with $\left.  h\right\vert _{[0,\infty)\times
0}=r$ and $\left.  h\right\vert _{[0,\infty)\times1}=s$. Properness ensures
that rungs near the end of $L_{[0,\infty)}$ map toward the end of $X$. When
the squares in the ladder can be filled in with a proper collection of 2-disks
in $X$, a weak equivalence can be promoted to a strong equivalence.

For the set of all proper rays in $X$ with domain $[0,\infty)$, let
$\mathcal{E}\left(  X\right)  $ be the set of weak equivalence classes and
$\mathcal{SE}\left(  X\right)  $ the set of strong equivalence classes. There
is an obvious surjection $\Phi:\mathcal{SE}\left(  X\right)  \rightarrow
\mathcal{E}\left(  X\right)  $. We say that $X$ is \emph{connected at
infinity} if $\left\vert \mathcal{E}\left(  X\right)  \right\vert =1$ and
\emph{strongly connected at infinity} if $\left\vert \mathcal{SE}\left(
X\right)  \right\vert =1$.

\begin{exercise}
Show that, for ANRs, there is a one-to-one correspondence between
$\mathcal{E}\left(  X\right)  $ and $\mathcal{E}\emph{nds}\left(  X\right)  $.
(Hence, proper rays provide an alternative, and more geometric, method for
defining the ends of a space.)
\end{exercise}

\begin{exercise}
Show that, for the infinite ladder $L_{[0,\infty)}$, $\Phi:\mathcal{SE}\left(
L_{[0,\infty)}\right)  \rightarrow\mathcal{E}\left(  L_{[0,\infty)}\right)  $
is not injective. In fact $\mathcal{SE}\left(  L_{[0,\infty)}\right)  $ is
uncountable. (This is the prototypical example where $\mathcal{SE}\left(
X\right)  $ differs from $\mathcal{E}\left(  X\right)  .$)
\end{exercise}

\subsection{Finite domination and homotopy type}

In addition to properness, there are notions related to homotopies and
homotopy types that are of particular importance in the study of noncompact
spaces. We introduce some of those here.

A space $Y$ has \emph{finite homotopy type} if it is homotopy equivalent to a
finite CW complex; it is \emph{finitely dominated }if there is a finite
complex $K$ and maps $u:Y\rightarrow K$ and $d:K\rightarrow Y$ such that
$d\circ u\simeq\operatorname*{id}_{Y}$. In this case, the map $d$ is called a
\emph{domination} and we say that $K$ dominates $Y.$

\begin{proposition}
\label{Prop: properties of finitely dominated spaces}Suppose $Y$ is finitely
dominated with maps $u:Y\rightarrow K$ and $d:K\rightarrow Y$ satisfying the
definition. Then

\begin{enumerate}
\item $H_{k}\left(  Y;%
%TCIMACRO{\U{2124} }%
%BeginExpansion
\mathbb{Z}
%EndExpansion
\right)  $ is finitely generated for all $k$,

\item $\pi_{1}(Y,y_{0})$ is finitely presentable, and

\item if $Y^{\prime}$ is homotopy equivalent to $Y$, then $Y^{\prime}$ is
finitely dominated.
\end{enumerate}

\begin{proof}
Since $d$ induces surjections on all homology and homotopy groups, the finite
generation of $H_{k}\left(  Y;%
%TCIMACRO{\U{2124} }%
%BeginExpansion
\mathbb{Z}
%EndExpansion
\right)  $ and $\pi_{1}(Y,y_{0})$ are immediate. The finite presentability of
the latter requires some elementary combinatorial group theory; an argument
(based on \cite{Wa65}) can be found in \cite[Lemma 2]{Gu1}. The final item is
left as an exercise.
\end{proof}
\end{proposition}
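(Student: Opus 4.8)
The plan is to extract everything from the single relation $d\circ u\simeq\operatorname{id}_{Y}$, which makes $Y$ a homotopy retract of the finite complex $K$. Applying the functor $H_{k}(-;\mathbb{Z})$ gives $d_{\ast}\circ u_{\ast}=\operatorname{id}$ on $H_{k}(Y;\mathbb{Z})$, so $d_{\ast}\colon H_{k}(K;\mathbb{Z})\to H_{k}(Y;\mathbb{Z})$ is surjective; since $K$ is finite each $H_{k}(K;\mathbb{Z})$ is finitely generated, and a quotient of a finitely generated abelian group is finitely generated, which settles (1). The same reasoning applied to $\pi_{1}$ exhibits $\pi_{1}(Y,y_{0})$ as a retract of the finitely presented group $\pi_{1}(K)$, with $u_{\ast}$ a section of the surjection $d_{\ast}$; in particular it is finitely generated.

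The hard part is (2): upgrading finite generation of $\pi_{1}(Y,y_{0})$ to finite presentability. I would isolate the purely group-theoretic statement that a retract of a finitely presented group is finitely presented. Write $G=\pi_{1}(K)=\langle x_{1},\ldots,x_{n}\mid r_{1},\ldots,r_{m}\rangle$, put $\rho=d_{\ast}$ and $\iota=u_{\ast}$ with $\rho\circ\iota=\operatorname{id}$, and set $N=\ker\rho$, so that $H:=\pi_{1}(Y,y_{0})\cong G/N$. The crux is to show that $N$ is finitely generated \emph{as a normal subgroup}. Consider the idempotent endomorphism $\phi=\iota\circ\rho$ and the elements $n_{j}=x_{j}\,\phi(x_{j})^{-1}$, each of which lies in $N$. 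Setting $\psi(g)=g\,\phi(g)^{-1}$, one checks the cocycle-type identity $\psi(gh)=\psi(g)\cdot\bigl(\phi(g)\,\psi(h)\,\phi(g)^{-1}\bigr)$; an induction on word length then shows every $\psi(g)$ lies in the normal closure of $\{n_{1},\ldots,n_{n}\}$. Since $\phi(g)=1$ and hence $\psi(g)=g$ whenever $g\in N$, that finite set normally generates $N$. Adjoining the $n_{j}$ to the presentation of $G$ yields the finite presentation $H\cong\langle x_{1},\ldots,x_{n}\mid r_{1},\ldots,r_{m},n_{1},\ldots,n_{n}\rangle$.

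Item (3) is a short diagram chase and presents no obstacle: if $f\colon Y^{\prime}\to Y$ and $g\colon Y\to Y^{\prime}$ are inverse homotopy equivalences, then $u\circ f\colon Y^{\prime}\to K$ and $g\circ d\colon K\to Y^{\prime}$ satisfy $(g\circ d)\circ(u\circ f)=g\circ(d\circ u)\circ f\simeq g\circ f\simeq\operatorname{id}_{Y^{\prime}}$, so $K$ dominates $Y^{\prime}$. Thus the entire difficulty is concentrated in (2), and specifically in the normal-closure computation: it is essential to produce a \emph{finite normal} generating set for $N$, since finite generation of $N$ as an ordinary subgroup neither holds in general nor would suffice, and the bare semidirect splitting $G\cong N\rtimes H$ does not by itself deliver finite presentability.
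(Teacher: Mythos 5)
Your proposal is correct and follows the same route as the paper: surjectivity of $d_{\ast}$ on homology and $\pi_{1}$ gives (1) and finite generation, the composition trick gives (3), and the only real content is that a retract of a finitely presented group is finitely presented --- which the paper does not prove but delegates to a citation (an argument based on \cite{Wa65}, written out in \cite[Lemma 2]{Gu1}), and which your normal-closure computation with $\psi(g)=g\,\phi(g)^{-1}$ correctly supplies. The only point worth tidying is that the free homotopy $d\circ u\simeq\operatorname{id}_{Y}$ realizes $\pi_{1}(Y,y_{0})$ as a retract of $\pi_{1}(K)$ only after composing with a change-of-basepoint isomorphism along the track of the homotopy, which does not affect the argument.
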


\begin{exercise}
Show that if $Y^{\prime}$ is homotopy equivalent to $Y$ and $Y$ is finitely
dominated, then $Y^{\prime}$ is finitely dominated.
\end{exercise}

The next proposition adds some intuitive meaning to finite domination.

\begin{proposition}
\label{Prop: finitely dominated = pulling homotopy}An ANR $Y$ is finitely
dominated if and only if there exists a self-homotopy that \textquotedblleft
pulls $Y$ into a compact subset\textquotedblright, i.e., $H:Y\times
\lbrack0,1]\rightarrow Y$ such that $H_{0}=\operatorname*{id}_{Y}$ and
$\overline{H_{1}\left(  Y\right)  }$ is compact.

\begin{proof}
If $u:Y\rightarrow K$ and $d:K\rightarrow Y$ satisfy the definition of finite
domination, then the homotopy between $\operatorname*{id}_{Y}$ and $d\circ u$
pulls $Y$ into $d\left(  K\right)  $.

For the converse, begin by assuming that $Y$ is a locally finite polyhedron.
If $H:Y\times\lbrack0,1]\rightarrow X$ such that $H_{0}=\operatorname*{id}%
_{X}$ and $\overline{H_{1}\left(  Y\right)  }$ is compact, then any compact
polyhedral neighborhood $K$ of $\overline{H_{1}\left(  Y\right)  }$ dominates
$Y$, with $u=H_{1}$ and $d$ the inclusion.

For the general case, we use some Hilbert cube manifold magic. By Theorem
\ref{Theorem: Edwards HCM Theorem}, $Y\times\mathcal{Q}$ is a Hilbert cube
manifold, so by Theorem \ref{Theorem: Triangulability of HCMs}, $Y\times
\mathcal{Q}\approx P\times\mathcal{Q}$, where $P$ is a locally finite
polyhedron. The homotopy that pulls $Y$ into a compact set can be used to pull
$P\times\mathcal{Q}$ into a compact subset of the form $K\times\mathcal{Q}$,
where $K$ is a compact polyhedron. It follows easily that $K$ dominates
$P\times\mathcal{Q}$. An application of Proposition
\ref{Prop: properties of finitely dominated spaces} completes the proof.
\end{proof}
\end{proposition}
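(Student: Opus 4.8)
The plan is to prove the two implications separately. The forward direction is immediate: if $Y$ is finitely dominated by maps $u:Y\to K$ and $d:K\to Y$ with $d\circ u\simeq\operatorname{id}_Y$, I would take $H$ to be any homotopy from $\operatorname{id}_Y$ to $d\circ u$. Then $H_1=d\circ u$ has image contained in $d(K)$, and since the finite complex $K$ is compact, $\overline{H_1(Y)}$ is a closed subset of the compactum $d(K)$ and hence compact. This uses nothing beyond the definitions.

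For the converse I would first settle the case in which $Y$ is a locally finite polyhedron. Given a pulling homotopy $H$ with $A=\overline{H_1(Y)}$ compact, local finiteness lets me choose a compact polyhedral neighborhood $K$ of $A$. Setting $u=H_1:Y\to K$ (the corestriction is continuous since $H_1(Y)\subseteq A\subseteq K$) and letting $d:K\hookrightarrow Y$ be the inclusion, I get $d\circ u=H_1\simeq\operatorname{id}_Y$ via $H$, so the finite complex $K$ dominates $Y$.

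The real content---and the step I expect to be the main obstacle---is the general ANR case, where $Y$ carries no combinatorial structure and the phrase ``polyhedral neighborhood of $A$'' is meaningless. My plan is to manufacture such structure by stabilizing with the Hilbert cube. By Edwards' theorem the product $Y\times\mathcal{Q}$ is a Hilbert cube manifold, and by the triangulation theorem for such manifolds there is a homeomorphism $Y\times\mathcal{Q}\approx P\times\mathcal{Q}$ with $P$ a locally finite polyhedron. The homotopy $H\times\operatorname{id}_{\mathcal{Q}}$ pulls $Y\times\mathcal{Q}$ into the compactum $A\times\mathcal{Q}$; transporting it across the homeomorphism produces a pulling homotopy on $P\times\mathcal{Q}$ whose terminal image lies in a compactum $B$. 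Because $P$ is locally finite, the compact projection of $B$ to $P$ has a compact polyhedral neighborhood $K$ in $P$, so $B\subseteq K\times\mathcal{Q}$.

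To finish, corestricting the terminal map to $K\times\mathcal{Q}$ and taking the inclusion $K\times\mathcal{Q}\hookrightarrow P\times\mathcal{Q}$ shows, exactly as in the polyhedral step, that $K\times\mathcal{Q}$ dominates $P\times\mathcal{Q}$; since $\mathcal{Q}$ is contractible, $K\times\mathcal{Q}\simeq K$, so the \emph{finite} complex $K$ dominates $P\times\mathcal{Q}$, which is therefore finitely dominated. Contractibility of $\mathcal{Q}$ also gives $Y\simeq Y\times\mathcal{Q}\approx P\times\mathcal{Q}$, whereupon item (3) of Proposition~\ref{Prop: properties of finitely dominated spaces}---homotopy invariance of finite domination---transfers the conclusion back to $Y$. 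The two points I would watch most carefully are these last maneuvers: securing the \emph{product} form $B\subseteq K\times\mathcal{Q}$ from local finiteness of $P$, and remembering that finite domination of $Y$ is not obtained directly but must be descended from $Y\times\mathcal{Q}$ through homotopy invariance.
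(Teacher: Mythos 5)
Your proposal is correct and follows essentially the same route as the paper: the forward direction via the domination homotopy, the polyhedral case via a compact polyhedral neighborhood of $\overline{H_1(Y)}$, and the general case by stabilizing with $\mathcal{Q}$, triangulating the resulting Hilbert cube manifold, and descending through homotopy invariance of finite domination. The extra care you take with the corestriction, the product form $B\subseteq K\times\mathcal{Q}$, and the final descent merely fills in details the paper leaves implicit.
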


At this point, the natural question becomes: \emph{Does there exist a finitely
dominated space }$Y$\emph{ that does not have finite homotopy type?} A version
of this question was initially posed by Milnor in 1959 and answered
affirmatively by Wall.

\begin{theorem}
[Wall's finiteness obstruction, \cite{Wa65}]For each finitely dominated space
$Y$, there is a well-defined obstruction $\sigma\left(  Y\right)  $, lying in
the reduced projective class group $\widetilde{K}_{0}\left(
%TCIMACRO{\U{2124} }%
%BeginExpansion
\mathbb{Z}
%EndExpansion
\left[  \pi_{1}\left(  Y\right)  \right]  \right)  $, which vanishes if and
only if $Y$ has finite homotopy type. Moreover, all elements of $\widetilde{K}%
_{0}\left(
%TCIMACRO{\U{2124} }%
%BeginExpansion
\mathbb{Z}
%EndExpansion
\left[  \pi_{1}\left(  Y\right)  \right]  \right)  $ can be realized as
finiteness obstructions of a finitely dominated CW complex.
\end{theorem}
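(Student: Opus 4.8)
The plan is to convert this geometric statement into a question in the $K$-theory of the group ring $\Lambda=\mathbb{Z}[\pi_1(Y)]$ by passing to cellular chains on the universal cover, and then to read off the obstruction as an Euler-characteristic class of a finite projective chain complex. Since $Y$ is finitely dominated it has the homotopy type of a CW complex, so I may assume $Y$ is itself a CW complex and work with the cellular chain complex $C_*(\widetilde{Y})$ of its universal cover, regarded as a complex of $\Lambda$-modules.

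First I would upgrade the domination $d\circ u\simeq\operatorname{id}_Y$ to the chain level. Lifting $u\colon Y\to K$ and $d\colon K\to Y$ to universal covers and passing to cellular chains produces $\Lambda$-chain maps $C_*(\widetilde{Y})\to C_*(\widetilde{K})\to C_*(\widetilde{Y})$ whose composite is chain homotopic to the identity, with $C_*(\widetilde{K})$ a finite complex of finitely generated free $\Lambda$-modules. Thus $C_*(\widetilde{Y})$ is a \emph{finitely dominated} $\Lambda$-chain complex. The key algebraic input---and the step I expect to be the main obstacle---is to show that such a complex is $\Lambda$-chain homotopy equivalent to a finite complex $P_*$ of finitely generated \emph{projective} modules. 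This amounts to splitting the homotopy idempotent carried by the domination; finite generation is inherited from $C_*(\widetilde{K})$, while projectivity comes from the fact that a homotopy retract of a finitely generated free module is a direct summand of a free module, hence projective. Performing this splitting without resorting to an Eilenberg-swindle is where the real work lies.

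With a finite projective model $P_*$ in hand, I would define
\[
\sigma(Y)=\sum_i(-1)^i\,[P_i]\in\widetilde{K}_0(\Lambda).
\]
Well-definedness reduces to the standard fact that chain-homotopy-equivalent finite projective $\Lambda$-complexes have equal alternating sums of classes in $K_0(\Lambda)$, together with the observation that finitely generated free modules represent $0$ in the \emph{reduced} group $\widetilde{K}_0(\Lambda)$; so the ambiguity in the choice of $P_*$ and of CW model disappears. For the vanishing criterion, if $Y$ already has finite homotopy type then $C_*(\widetilde{Y})$ is chain homotopy equivalent to a finite \emph{free} complex and $\sigma(Y)=0$ follows at once. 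Conversely, if $\sigma(Y)=0$ then the distinguished module is stably free, and I would convert this algebraic cancellation back into geometry: using that $\pi_1(Y)$ is finitely presented (Proposition~\ref{Prop: properties of finitely dominated spaces}), build a finite complex with a highly connected map to $Y$ by attaching cells until the only surviving obstruction is the class $[P]$, then trade the stably free module for genuine cancelling cells to complete a finite complex homotopy equivalent to $Y$; the cell-trading works cleanly once the relevant dimension is at least three, with the low-dimensional cases handled by hand.

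Finally, for the realization statement I would start from a finite presentation $2$-complex $L$ with $\pi_1(L)\cong G$, so that $\Lambda=\mathbb{Z}[G]$, and given a target class represented by a finitely generated projective $P$, attach cells to $\widetilde{L}$ equivariantly (or form a suitable infinite mapping telescope) so as to manufacture a CW complex $Y$ whose distinguished projective module is exactly $P$. Proposition~\ref{Prop: finitely dominated = pulling homotopy} provides a convenient check that the resulting $Y$ is finitely dominated, and the construction is arranged so that $\sigma(Y)=\pm[P]$; letting $P$ range over representatives then realizes every element of $\widetilde{K}_0(\Lambda)$.
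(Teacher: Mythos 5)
The paper does not prove this theorem: it explicitly defers to Wall's original paper and to the exposition in \cite{Fe}, so there is no in-text argument to compare against. Your outline is the standard one (chain complex of the universal cover as a finitely dominated $\mathbb{Z}[\pi_1 Y]$-complex, replacement by a finite projective complex, Euler characteristic in $\widetilde{K}_0$, cell-trading for the converse, and the idempotent-matrix construction for realization), and you have correctly flagged where the genuine work lies---splitting the chain-homotopy idempotent to obtain the finite projective model, and converting stable freeness back into cells in dimensions at least three. As a proof sketch it is faithful to Wall's argument; a complete write-up would still need to supply those two steps in detail.
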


A development of Wall's obstruction is interesting and entirely
understandable, but outside the scope of these notes. The interested reader is
referred to Wall's original paper or the exposition in \cite{Fe}. For late
use, we note that $\widetilde{K}_{0}$ determines a functor from $\mathcal{G}%
$\emph{roups }to $\mathcal{A}$\emph{belian groups}; in particular, if
$\lambda:G\rightarrow H$ is a group homomorphism, then there is an induced
homomorphism $\lambda_{\ast}:\widetilde{K}_{0}\left(
%TCIMACRO{\U{2124} }%
%BeginExpansion
\mathbb{Z}
%EndExpansion
\left[  G\right]  \right)  \rightarrow\widetilde{K}_{0}\left(
%TCIMACRO{\U{2124} }%
%BeginExpansion
\mathbb{Z}
%EndExpansion
\left[  H\right]  \right)  $ between the corresponding projective class groups.

\begin{example}
Every compact ENR $A$ is easily seen to be finitely dominated. Indeed, if
$U\subseteq%
%TCIMACRO{\U{211d} }%
%BeginExpansion
\mathbb{R}
%EndExpansion
^{n}$ is a neighborhood of $A$ and $r:U\rightarrow A$ a retraction, let
$K\subseteq U$ be a polyhedral neighborhood of $A$, $d:K\rightarrow A$ the
restriction, and $u$ the inclusion.

Although this is a nice example, it is made obsolete by a major result of West
(see Proposition \ref{Proposition: ANR facts}), showing that every compact ANR
has finite homotopy type.
\end{example}

\subsection{Inward tameness}

Modulo a slight change in terminology, we follow \cite{CS} by defining an ANR
$X$ to be \emph{inward tame} if, for each neighborhood of infinity $N$ there
exists a smaller neighborhood of infinity $N^{\prime}$ so that, up to
homotopy, the inclusion $N^{\prime}\overset{j}{\hookrightarrow}N$ factors
through a finite complex $K$. In other words, there exist maps $f:N^{\prime
}\rightarrow K$ and $g:K\rightarrow N$ such that $gf\simeq j$.

\begin{exercise}
\label{Exercise: inward tame is a proper homotopy invariant}Show that if
$X\overset{p}{\simeq}Y$ and $X$ is inward tame, then $Y$ is inward tame.
\end{exercise}

For the remainder of this section, our goals are as follows:\medskip

\noindent\textbf{a)} to obtain a more intrinsic and intuitive characterization
of inward tameness, and\smallskip

\noindent\textbf{b)} to clarify the (apparent) relationship between inward
tameness and finite dominations.\medskip

The following is our answer to Goal a).

\begin{lemma}
\label{Lemma: inward tame implies pulling inward}An ANR $X$ is inward tame if
and only if, for every closed neighborhood of infinity $N$ in $X$, there is a
homotopy $S:N\times\left[  0,1\right]  \rightarrow N$ with $S_{0}%
=\operatorname*{id}_{N}$ and $\overline{S_{1}\left(  N\right)  }$ compact (a
homotopy pulling $N$ into a compact subset).\ 

\begin{proof}
For the forward implication, let $N^{\prime}$ be a closed neighborhood of
infinity contained in $\operatorname*{int}N$ so that $N^{\prime}%
\hookrightarrow\operatorname*{int}N$ factors through a compact polyhedron $K$.
Then there is a homotopy $H:N^{\prime}\times\left[  0,1\right]  \rightarrow
\operatorname*{int}N$ with $H_{0}$ the inclusion and $\overline{H_{1}\left(
N^{\prime}\right)  }\subseteq g\left(  K\right)  $. Choose an open
neighborhood $U$ of $N^{\prime}$ with $\overline{U}\cap\operatorname*{Bd}%
_{X}N=\varnothing$, then let $A=\operatorname*{int}N-U$ and $J$ be the
identity homotopy on $A$. Since $\operatorname*{int}N$ is an ANR, Borsuk's
Homotopy Extension Property (see Prop. \ref{Proposition: ANR facts}) allows us
to extend $H\cup J$ to a homotopy $S:\operatorname*{int}N\times\left[
0,1\right]  \rightarrow\operatorname*{int}N$ with $S_{0}=\operatorname*{id}%
_{\operatorname*{int}N}$. This in turn may be extended via the identity over
$\operatorname*{Bd}_{X}N$ to obtain a homotopy $S$ that pulls $N$ into a
compact subset of itself.

We will return for the converse after addressing Goal b).
\end{proof}
\end{lemma}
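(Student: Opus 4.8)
The plan is to supply the deferred converse, turning the hypothesized pulling homotopy into the homotopy factorization required by the definition of inward tameness (the forward implication having already been established above). Fix an arbitrary neighborhood of infinity $N$. Since one can always find a \emph{closed} neighborhood of infinity $M$ with $M\subseteq\operatorname{int}N$ (e.g.\ $M=\overline{X-\operatorname{int}K_{i}}$ for a sufficiently large term $K_{i}$ of an exhaustion), it suffices to produce a smaller neighborhood of infinity $N'$, a finite complex $K$, and maps $f\colon N'\to K$ and $g\colon K\to N$ with $gf\simeq(N'\hookrightarrow N)$. Applying the hypothesis to the closed neighborhood of infinity $M$ yields a homotopy $S\colon M\times[0,1]\to M$ with $S_{0}=\operatorname{id}_{M}$ and $C:=\overline{S_{1}(M)}$ compact; note $C\subseteq M\subseteq N$.

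Take $N'=\operatorname{int}_{X}M$, which is again a neighborhood of infinity contained in $N$ (indeed $\overline{X-N'}=\overline{X-M}$ is compact). Restricting $S$ to $N'$ gives a homotopy, carried out inside $M\subseteq N$, from the inclusion $j\colon N'\hookrightarrow N$ to the map $s:=S_{1}|_{N'}\colon N'\to C$; that is, $j\simeq i_{C}\circ s$, where $i_{C}\colon C\hookrightarrow N$ is the inclusion. So up to homotopy the inclusion of $N'$ has been pushed into the compact set $C$. The whole problem now reduces to replacing the compact target $C$ by a finite complex: if I can produce a finite complex $K$ with maps $\phi\colon C\to K$ and $g\colon K\to N$ satisfying $g\circ\phi\simeq i_{C}$, then setting $f=\phi\circ s$ gives $gf=g\phi s\simeq i_{C}s\simeq j$, and inward tameness follows.

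The remaining step---factoring the inclusion of a compactum $C$ of the ANR $X$ through a finite complex, with the factoring maps supported in a prescribed neighborhood of $C$ inside $N$---is the heart of the matter and the step I expect to be the main obstacle. For finite-dimensional $X$ it is elementary: realize $X$ as a neighborhood retract in some $\mathbb{R}^{m}$ with retraction $r$, choose a compact polyhedral neighborhood $K$ of $C$ small enough that $r(K)\subseteq N$, and take $\phi$ the inclusion and $g=r|_{K}$, so that $g\phi=i_{C}$ on the nose. For a general ANR no such polyhedral neighborhood is available, so I would argue exactly as in the proof of Proposition~\ref{Prop: finitely dominated = pulling homotopy}, using Hilbert cube manifold magic: by Theorem~\ref{Theorem: Edwards HCM Theorem} and Theorem~\ref{Theorem: Triangulability of HCMs} one has $X\times\mathcal{Q}\approx P\times\mathcal{Q}$ for a locally finite polyhedron $P$, the image of $C$ then lies in $K'\times\mathcal{Q}$ for some finite subcomplex $K'\subseteq P$, and the evident retraction onto $K'\times\mathcal{Q}$ (followed by the homotopy equivalence $K'\times\mathcal{Q}\simeq K'$) furnishes the factorization through the finite complex $K'$. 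This is precisely the standard fact that compacta in ANRs are dominated by finite complexes, recorded among the ANR facts of Proposition~\ref{Proposition: ANR facts}. Once this finite-complex replacement is in hand the converse is complete, which together with the forward implication finishes the proof of the lemma.
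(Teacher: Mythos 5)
Your converse is correct, but it takes a genuinely different route from the paper's. The paper disposes of the reverse implication by changing spaces rather than arguing directly: the pulling-in property passes from $X$ to $X\times\mathcal{Q}$, which is a Hilbert cube manifold and hence sharp at infinity, so Proposition \ref{Prop: finitely dominated = pulling homotopy} and Lemma \ref{Lemma: tame = nbds of infinity are f.d.} show that $X\times\mathcal{Q}$ is inward tame, and Exercise \ref{Exercise: inward tame is a proper homotopy invariant} transfers that conclusion back to $X$. You instead verify the definition of inward tameness directly in $X$: pull a closed neighborhood of infinity $M\subseteq\operatorname{int}N$ into a compactum $C$, observe that $N'=\operatorname{int}M$ then includes into $N$ through $C$ up to homotopy in $N$, and reduce everything to factoring $C\hookrightarrow N$ through a finite complex by maps and homotopies supported in $N$. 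That reduction is clean and it isolates the real content of the lemma; your ENR argument for the remaining step is fine. In the general ANR case your sketch is slightly loose: after identifying $X\times\mathcal{Q}$ with $P\times\mathcal{Q}$, there is no global retraction of $P\times\mathcal{Q}$ onto $K'\times\mathcal{Q}$, and a finite subcomplex $K'$ whose thickening contains the image of $C\times\mathcal{Q}$ need not pull back inside $N\times\mathcal{Q}$. The repair is to apply Theorems \ref{Theorem: Edwards HCM Theorem} and \ref{Theorem: Triangulability of HCMs} to $(\operatorname{int}N)\times\mathcal{Q}$ itself (an open subset of a Hilbert cube manifold is again one), or equivalently to take a small compact Hilbert cube manifold regular neighborhood of $C\times\mathcal{Q}$ inside $(\operatorname{int}N)\times\mathcal{Q}$ as in Proposition \ref{Proposition: Basic properties of HCMs} and replace it by a finite complex via West's theorem; either way all maps and homotopies stay in $N$. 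With that adjustment your proof is complete. The trade-off is that the paper's version is shorter because it leans on already-proved machinery, while yours is more self-contained and makes explicit exactly where the Hilbert cube magic enters.
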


Recall that an ANR $X$ is sharp at infinity if it contains arbitrarily small
closed ANR neighborhoods of infinity.

\begin{lemma}
\label{Lemma: tame = nbds of infinity are f.d.}A space $X$ that is sharp at
infinity is inward tame if and only if each of its sharp neighborhoods of
infinity is finitely dominated.

\begin{proof}
Assume $X$ is sharp at infinity and inward tame. By Lemma
\ref{Lemma: inward tame implies pulling inward} each closed neighborhood of
infinity can be pulled into a compact subset, so by Proposition
\ref{Prop: finitely dominated = pulling homotopy}, those which are ANRs are
finitely dominated. The converse is immediate by the definitions.
\end{proof}
\end{lemma}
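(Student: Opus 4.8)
The plan is to read off both directions from the two immediately preceding results, which recast the two notions in play---inward tameness and finite domination---in terms of the common geometric operation of pulling a neighborhood of infinity into a compact subset. The hypothesis that $X$ is sharp at infinity is what makes these translations interlock: every neighborhood of infinity then contains a sharp (closed, ANR) one, and it is precisely for ANRs that Proposition~\ref{Prop: finitely dominated = pulling homotopy} is stated.

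For the forward implication I would fix a sharp neighborhood of infinity $N$ and argue that it is finitely dominated. Since $N$ is, in particular, a closed neighborhood of infinity, the already-established forward half of \lemref{Lemma: inward tame implies pulling inward} furnishes a homotopy $S\colon N\times[0,1]\to N$ with $S_0=\operatorname{id}_N$ and $\overline{S_1(N)}$ compact. As $N$ is an ANR, Proposition~\ref{Prop: finitely dominated = pulling homotopy} converts this pulling homotopy into a finite domination of $N$, which is all that is required.

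For the converse I would check the definition of inward tameness directly, without invoking the (as-yet-unproved) converse of \lemref{Lemma: inward tame implies pulling inward}. Given an arbitrary neighborhood of infinity $N$, sharpness of $X$ supplies a sharp $N'\subseteq N$, and the hypothesis makes $N'$ finitely dominated, say via $u\colon N'\to K$ and $d\colon K\to N'$ with $d\circ u\simeq\operatorname{id}_{N'}$ and $K$ a finite complex. Writing $j\colon N'\hookrightarrow N$ for the inclusion, the composite $N'\xrightarrow{\,u\,}K\xrightarrow{\,j\circ d\,}N$ satisfies $(j\circ d)\circ u=j\circ(d\circ u)\simeq j$, which is exactly the factorization of $j$ through a finite complex demanded by the definition.

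I expect no real obstacle: the substance has been absorbed into \lemref{Lemma: inward tame implies pulling inward} and Proposition~\ref{Prop: finitely dominated = pulling homotopy}, and what remains is bookkeeping. The two points worth stating explicitly are that only the forward direction of \lemref{Lemma: inward tame implies pulling inward} is used (so there is no circularity with its pending converse), and that sharpness is the hypothesis that lets one replace an arbitrary neighborhood of infinity by an ANR before applying the pulling/finite-domination equivalence.
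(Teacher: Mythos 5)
Your proposal is correct and follows the paper's own route: the forward direction applies the already-proved half of \lemref{Lemma: inward tame implies pulling inward} to a sharp (hence ANR) neighborhood of infinity and then invokes Proposition~\ref{Prop: finitely dominated = pulling homotopy}, while the converse is the definitional unwinding that the paper compresses into ``immediate by the definitions.'' Your explicit remark that only the forward half of \lemref{Lemma: inward tame implies pulling inward} is used, so no circularity arises with its still-pending converse, is exactly the point the paper's ordering of the two lemmas is designed to respect.
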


\begin{proof}
[Proof (completion of Lemma \ref{Lemma: inward tame implies pulling inward}%
)]Suppose that, for each closed neighborhood of infinity $N$ in $X$, there is
a homotopy pulling $N$ into a compact subset. Then the same is true for
$X\times\mathcal{Q}$. But $X\times\mathcal{Q}$ is sharp since it is a Hilbert
cube manifold, so by Proposition
\ref{Prop: finitely dominated = pulling homotopy}, each ANR neighborhood of
infinity in $X\times\mathcal{Q}$ is finitely dominated. By Lemma
\ref{Lemma: tame = nbds of infinity are f.d.} and Exercise
\ref{Exercise: inward tame is a proper homotopy invariant}, $X$ is inward tame.
\end{proof}

We tidy up by combining the above Lemmas into a single Proposition, and adding
some mild extensions. For convenience we restrict attention to spaces that are
sharp at infinity.

\begin{proposition}
For a space $X$ that is sharp at infinity, the following are equivalent.

\begin{enumerate}
\item $X$ is inward tame,

\item for every closed neighborhood of infinity $N$, there is a homotopy
$H:N\times\left[  0,1\right]  \rightarrow N$ with $H_{0}=\operatorname*{id}%
_{N}$ and $\overline{H_{1}\left(  N\right)  }$ compact,

\item there exist arbitrarily small closed neighborhood of infinity $N$, for
which there is a homotopy $H:N\times\left[  0,1\right]  \rightarrow N$ with
$H_{0}=\operatorname*{id}_{N}$ and $\overline{H_{1}\left(  N\right)  }$ compact,

\item every sharp neighborhood of infinity is finitely dominated,

\item there exist arbitrarily small sharp neighborhoods of infinity that are
finitely dominated.
\end{enumerate}
\end{proposition}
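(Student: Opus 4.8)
The plan is to recognize that the two preceding Lemmas already do most of the work, so the only genuine task is to weave in the two ``arbitrarily small'' conditions (3) and (5). Indeed, Lemma~\ref{Lemma: inward tame implies pulling inward} is exactly the equivalence (1)$\Leftrightarrow$(2), and Lemma~\ref{Lemma: tame = nbds of infinity are f.d.} is exactly the equivalence (1)$\Leftrightarrow$(4). I would therefore prove the single cycle of implications
\[
(1)\Rightarrow(4)\Rightarrow(5)\Rightarrow(3)\Rightarrow(2)\Rightarrow(1),
\]
which visits all five items and so establishes the full equivalence in one stroke. The opening arrow (1)$\Rightarrow$(4) is the forward direction of Lemma~\ref{Lemma: tame = nbds of infinity are f.d.} (valid because $X$ is sharp at infinity), and the closing arrow (2)$\Rightarrow$(1) is the converse direction of Lemma~\ref{Lemma: inward tame implies pulling inward}.

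The two middle arrows into and out of the ``arbitrarily small'' conditions are short. For (4)$\Rightarrow$(5), since $X$ is sharp at infinity it contains arbitrarily small sharp neighborhoods of infinity, and by (4) every one of them is finitely dominated, which is (5) verbatim. For (5)$\Rightarrow$(3), the key observation is that a sharp neighborhood of infinity is by definition a closed ANR neighborhood of infinity, so Proposition~\ref{Prop: finitely dominated = pulling homotopy} converts its finite domination into a self-homotopy pulling it into a set of compact closure; applying this to the arbitrarily small sharp neighborhoods provided by (5) yields arbitrarily small closed neighborhoods of infinity carrying such a pulling homotopy, which is precisely (3).

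The substantive step is (3)$\Rightarrow$(2), where one must upgrade ``\emph{some} arbitrarily small neighborhood can be pulled in'' to ``\emph{every} closed neighborhood can be pulled in.'' Given an arbitrary closed neighborhood of infinity $N$, I would use (3) to pick a closed neighborhood of infinity $N'\subseteq\operatorname{int}N$ together with a homotopy $H\colon N'\times[0,1]\to N'\subseteq\operatorname{int}N$ satisfying $H_0=\operatorname{id}_{N'}$ and $\overline{H_1(N')}$ compact. This $H$ occupies precisely the role played by the homotopy built from the factorization through a finite complex in the forward direction of Lemma~\ref{Lemma: inward tame implies pulling inward}, so I would simply rerun that argument: choose an open $U$ with $N'\subseteq U$ and $\overline{U}\cap\operatorname{Bd}_X N=\varnothing$, set $A=\operatorname{int}N-U$, glue $H$ on $N'$ to the identity homotopy on $A$ (these are disjoint, since $N'\subseteq U$), extend over $\operatorname{int}N$ by Borsuk's Homotopy Extension Property (Proposition~\ref{Proposition: ANR facts}, using that $\operatorname{int}N$ is an ANR), and finally extend by the identity across the compact set $\operatorname{Bd}_X N$.

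The only place demanding care, and hence the main obstacle, is checking that this gluing preserves compactness of the time-$1$ image. Here I would note that $\overline{A}\subseteq\overline{X-U}$ and the interpolation region $\operatorname{int}N\cap(U-N')\subseteq\overline{X-N'}$ both have compact closure, while $\overline{H_1(N')}$ is compact by construction; since a continuous image of a compact set is compact, $\overline{S_1(\operatorname{int}N)}$ is contained in a finite union of compacta and is therefore compact, and extending by the identity over the compact $\operatorname{Bd}_X N$ then pulls all of $N$ into a compact subset, giving (2). Because this implication proceeds directly inside $X$, neither the proper-homotopy invariance of Exercise~\ref{Exercise: inward tame is a proper homotopy invariant} nor the $X\times\mathcal{Q}$ device is required; the argument is merely a reprise of the bookkeeping already performed in Lemma~\ref{Lemma: inward tame implies pulling inward}.
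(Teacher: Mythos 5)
Your proof is correct and follows essentially the same route as the paper: the two preceding Lemmas already supply $(1)\Leftrightarrow(2)$ and $(1)\Leftrightarrow(4)$, and the only substantive new content is the Borsuk homotopy-extension argument upgrading a pulling homotopy on a small closed neighborhood of infinity to one on an arbitrary closed neighborhood $N$ --- exactly the paper's ``(2) $\Leftrightarrow$ (3)'' step, with the correct compactness bookkeeping (take closures inside the closed set $N$, where $\overline{N-N'}$ is compact and lies in the domain of $S_{1}$, then apply $S_{1}$). Your single-cycle organization $(1)\Rightarrow(4)\Rightarrow(5)\Rightarrow(3)\Rightarrow(2)\Rightarrow(1)$ is a mild streamlining that lets you route $(5)\Rightarrow(4)$ through the lemmas and thereby skip the paper's separate (admittedly easier) $(4)\Leftrightarrow(5)$ argument.
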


\begin{proof}
The equivalence of (b) and (c) is by a homotopy extension argument like that
found in Lemma \ref{Lemma: inward tame implies pulling inward}. The
equivalence of (d) and (e) is similar, but easier.
\end{proof}

\begin{remark}
\emph{The \textquotedblleft inward\textquotedblright\ in inward tame is
motivated by conditions (b) and (c) where the homotopies are viewed as pulling
the end of }$X$\emph{ inward} \emph{toward the center of }$X$\emph{. Based on
the definition and conditions (d) and (e), one may also think of inward
tameness as \textquotedblleft finitely dominated at infinity\textquotedblright%
. We call }$X$ absolutely inward tame \emph{if it contains arbitrarily small
closed ANR neighborhoods of infinity with finite homotopy type.}
\end{remark}

\begin{example}
The infinite ladder $L_{[0,\infty)}$ is not inward tame, since its ANR
neighborhoods of infinity have infinitely generated fundamental groups.
Similarly, the infinite genus 1-ended orientable surface in Figure
\ref{Figure: Infinite genus surface}
%TCIMACRO{\FRAME{ftbpFU}{2.951in}{0.6227in}{0pt}{\Qcb{1-ended infinite genus
%surface}}{\Qlb{Figure: Infinite genus surface}}{osu-figinfinitegenus.eps}%
%{\special{ language "Scientific Word";  type "GRAPHIC";
%maintain-aspect-ratio TRUE;  display "USEDEF";  valid_file "F";
%width 2.951in;  height 0.6227in;  depth 0pt;  original-width 9.1367in;
%original-height 1.8586in;  cropleft "0";  croptop "1";  cropright "1";
%cropbottom "0";  filename '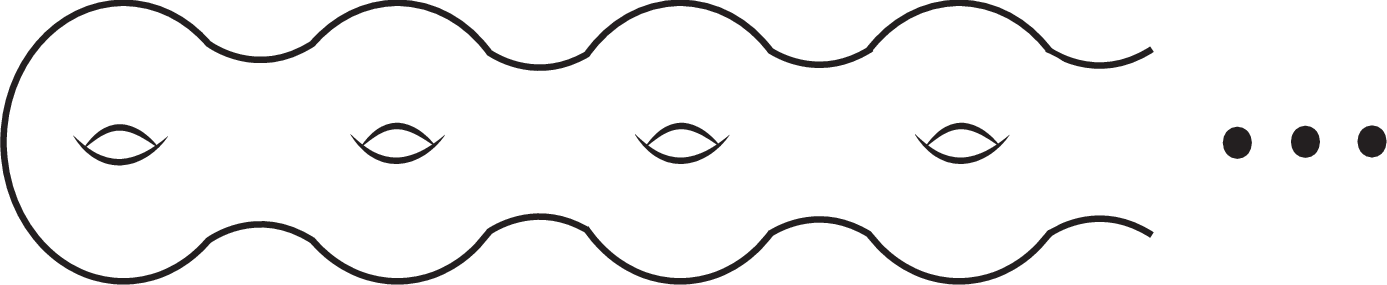';file-properties "XNPEU";}%
%} }%
%BeginExpansion
\begin{figure}[ptb]%
\centering
\includegraphics[
height=0.6227in,
width=2.951in
]%
{}%
\caption{1-ended infinite genus surface}%
\label{Figure: Infinite genus surface}%
\end{figure}
%EndExpansion
is not inward tame.
\end{example}

\begin{example}
Although the Whitehead manifold $\mathcal{W}^{3}$ itself has finite homotopy
type, it is not inward tame, since the neighborhoods of infinity $U_{i}$
discussed in Example
\ref{Example: Fundamental group at infinity for the Whitehead manifold} do not
have finitely generated fundamental groups (proof would require some work).
The Davis manifolds, on the other hand, are absolutely inward tame. More on
these observations in \S \ref{Subsection: Generalizing Siebenman}.
\end{example}

\begin{exercise}
Justify the above assertion about the Davis manifolds.
\end{exercise}

\begin{example}
Every proper CAT(0) space $X$ is absolutely inward tame. For inward tameness,
let $N_{p,r}$ be the complement of an open ball $B\left(  p;r\right)  $ and
use geodesics to strong deformation retract $N_{p,r}$ onto the metric sphere
$S\!\left(  p;r\right)  $. If $S\!\left(  p;r\right)  $ is an ANR, then it
(and thus $N_{p,r}$) have finite homotopy type by Proposition
\ref{Proposition: ANR facts}. Since this is not known to be the case, more
work is required. For each sharp neighborhood of infinity $N$ (recall Example
\ref{Example: Proper CAT(0) spaces are sharp at infinity}), choose $r$ so that
$\overline{X-N}\subseteq B\left(  p;r\right)  $ and let $A=\overline
{N-N_{p,r}}$. Then $N$ strong deformation retracts onto $A$, which is a
compact ANR.\medskip
\end{example}

Before closing this section, we caution the reader that differing notions of
\textquotedblleft tameness\textquotedblright\ are scattered throughout the
literature. \cite{Si} called a 1-ended open manifold \emph{tame} if it
satisfies our definition for inward tame and also has \textquotedblleft
stable\textquotedblright\ fundamental group at infinity (a concept to be
discussed shortly). In \cite{CS}, the definition of tame was reformulated to
match our current-day definition of inward tame. Later still, \cite{Quin2} and
\cite{HR} put forth another version of \textquotedblleft
tame\textquotedblright\ in which homotopies push neighborhoods of infinity
toward the end of the space---sometimes referring to that version as
\emph{forward tame} and the \cite{CS} version as \emph{reverse tame}. In an
effort to avoid confusion, this author introduced the term \emph{inward tame,}
while referring to the Quinn-Hughes-Ranicki version as \emph{outward tame.}

Within the realm of 3-manifold topology, a \emph{tame end }is often defined to
be one for which there exists a product neighborhood of infinity
$N\approx\partial N\times\lbrack0,\infty)$. Remarkably, by \cite{Tu} combined
with the 3-dimensional Poincar\'{e} conjecture---in the special case of
3-manifolds---this property, inward tameness, and outward tameness are all equivalent.

Despite its mildly confusing history, the concept of inward tameness (and its
variants) is fundamental to the study of noncompact spaces. Throughout the
reminder of these notes, its importance will become more and more clear. In
\S \ref{Subsection: Shape of the end of a space}, we will give meaning to the
slogan: \textquotedblleft an inward tame space is one that acts like a
compactum at infinity\textquotedblright.

\section{Algebraic invariants of the ends of a space: the precise
definitions\label{Section: Algebraic invariants-precise definitions}}

In \S \ref{Subsection: Examples of fundamental groups at infinity} we
introduced the fundamental group at infinity rather informally. In this
section we provide the details necessary to place that invariant on firm
mathematical ground. In the process we begin to uncover subtleties that make
this invariant even more interesting than one might initially expect.

As we progress, it will become apparent that the fundamental group at infinity
(more precisely \textquotedblleft$\operatorname*{pro}$-$\pi_{1}$%
\textquotedblright) is just one of many \textquotedblleft end
invariants\textquotedblright. By the end of the section, we will have
introduced others, including $\operatorname*{pro}$-$\pi_{k}$ and
$\operatorname*{pro}$-$H_{k}$ for all $k\geq0.$

\subsection{An equivalence relation on the set of inverse
sequences\label{Subsection: defining pro-isomorphism}}

The \emph{inverse limit} of an inverse sequence%
\[
G_{0}\overset{\mu_{1}}{\longleftarrow}G_{1}\overset{\mu_{2}}{\longleftarrow
}G_{2}\overset{\mu_{3}}{\longleftarrow}G_{3}\overset{\mu_{4}}{\longleftarrow
}\cdots
\]
of groups is defined by%
\[
\underleftarrow{\lim}\left\{  G_{i},\mu_{i}\right\}  =\left\{  \left(
g_{0},g_{1},g_{2},\cdots\right)  \mid\mu_{i}\left(  g_{i}\right)
=g_{i-1}\text{ for all }i\geq1\right\}  .
\]
Although useful at times, passing to an inverse limit often results in a loss
of information. Instead, one usually opts to keep the entire sequence---or,
more accurately, the essential elements of that sequence. To get a feeling for
what is meant by \textquotedblleft essential elements\textquotedblright, let
us look at some things that can go wrong.

In Example \ref{Example: Fundamental group at infinity for R^n}, we obtained
the following representation of the fundamental group of infinity for $%
%TCIMACRO{\U{211d} }%
%BeginExpansion
\mathbb{R}
%EndExpansion
^{3}$.%
\begin{equation}
1\leftarrow1\longleftarrow1\longleftarrow\cdots.\tag{3.5}%
\label{Inverse sequence: Constant 1}%
\end{equation}
That was done by exhausting $%
%TCIMACRO{\U{211d} }%
%BeginExpansion
\mathbb{R}
%EndExpansion
^{3}$ with a sequence $\left\{  i\mathbb{B}^{3}\right\}  $ of closed $i$-balls
and letting $U_{i}=\overline{%
%TCIMACRO{\U{211d} }%
%BeginExpansion
\mathbb{R}
%EndExpansion
^{3}-i\mathbb{B}^{3}}\approx\mathbb{S}^{2}\times\lbrack i,\infty)$. If
instead, $%
%TCIMACRO{\U{211d} }%
%BeginExpansion
\mathbb{R}
%EndExpansion
^{3}$ is exhausted with a sequence $\left\{  T_{i}\right\}  $ of solid tori
where each $T_{j}$ lies in $T_{j+1}$ as shown in Figure
\ref{Figure: Null torus}%
%TCIMACRO{\FRAME{ftbpFU}{2.5001in}{1.4947in}{0pt}{\Qcb{Exhausting $\U{211d}
%^{3}$ with solid tori.}}{\Qlb{Figure: Null torus}}{osu-fignulltorusgray.eps}%
%{\special{ language "Scientific Word";  type "GRAPHIC";
%maintain-aspect-ratio TRUE;  display "USEDEF";  valid_file "F";
%width 2.5001in;  height 1.4947in;  depth 0pt;  original-width 10.5447in;
%original-height 6.2535in;  cropleft "0";  croptop "1";  cropright "1";
%cropbottom "0";  filename '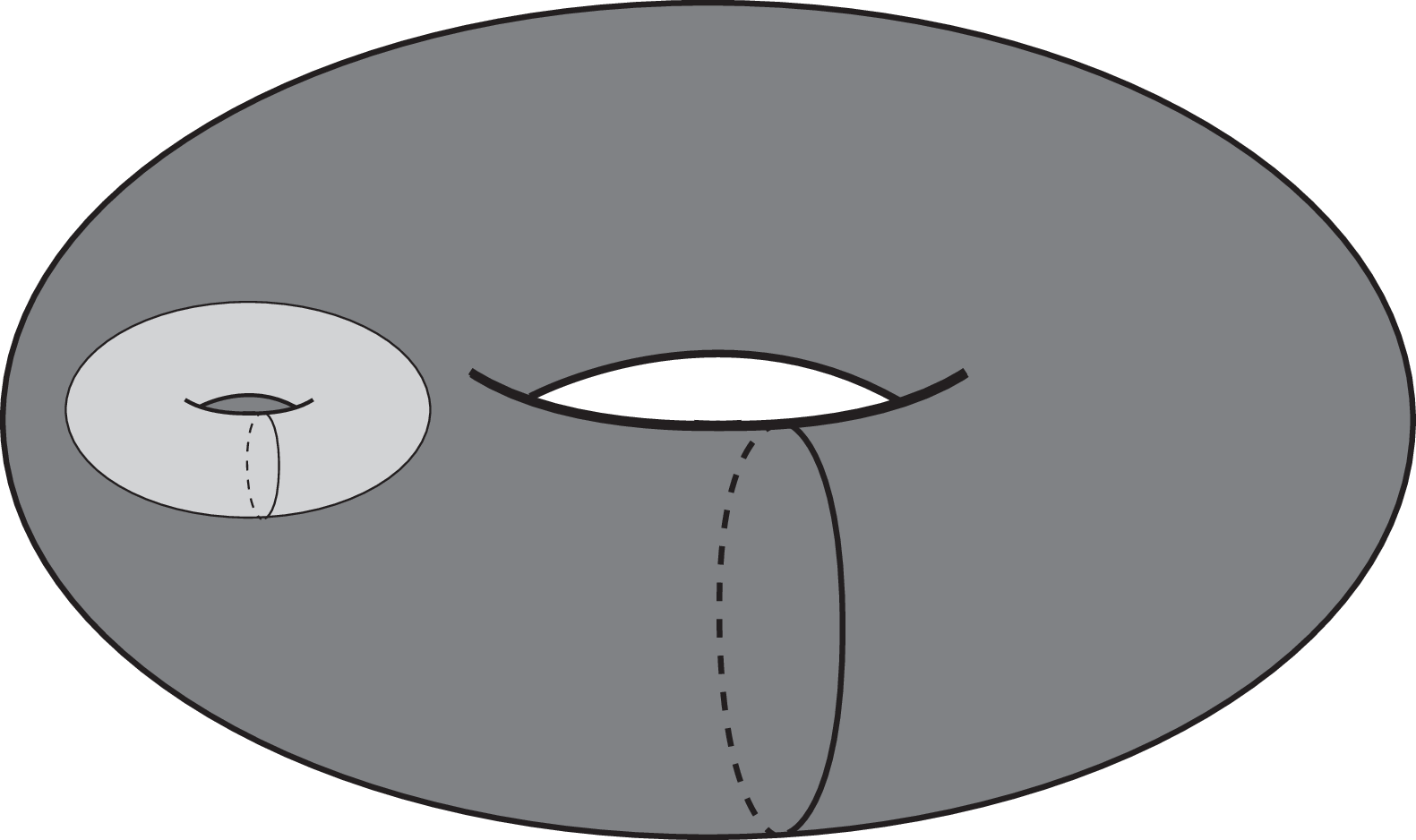';file-properties "XNPEU";}%
%} }%
%BeginExpansion
\begin{figure}[ptb]%
\centering
\includegraphics[
height=1.4947in,
width=2.5001in
]%
{}%
\caption{Exhausting $\mathbb{R} ^{3}$ with solid tori.}%
\label{Figure: Null torus}%
\end{figure}
%EndExpansion
and $V_{i}=\overline{%
%TCIMACRO{\U{211d} }%
%BeginExpansion
\mathbb{R}
%EndExpansion
^{3}-T_{i}}$, the resulting representation of the fundamental group of
infinity is%
\[%
%TCIMACRO{\U{2124} }%
%BeginExpansion
\mathbb{Z}
%EndExpansion
\overset{0}{\leftarrow}%
%TCIMACRO{\U{2124} }%
%BeginExpansion
\mathbb{Z}
%EndExpansion
\overset{0}{\leftarrow}%
%TCIMACRO{\U{2124} }%
%BeginExpansion
\mathbb{Z}
%EndExpansion
\overset{0}{\leftarrow}\cdots.
\]
By choosing more complicated exhausting sequences (e.g., exhaustions by higher
genus knotted handlebodies), representations with even more complicated groups
can be obtained. It can also be arranged that the bonding homomorphisms are
not always trivial. Yet each of these sequences purports to describe the same
thing. Although it seems clear that (\ref{Inverse sequence: Constant 1}) is
the preferred representative for the end of $%
%TCIMACRO{\U{211d} }%
%BeginExpansion
\mathbb{R}
%EndExpansion
^{3}$, in the case of an arbitrary 1-ended space, there may be no obvious
\textquotedblleft best choice\textquotedblright. The problem is resolved by
placing an equivalence relation on the set of all inverse sequences of groups.
Within an equivalence class, certain representatives may be preferable to
others, but each contains the essential information.

For an inverse sequence $\left\{  G_{i},\phi_{i}\right\}  $, there is an
obvious meaning for \emph{subsequence}%
\[
G_{k_{0}}\overset{\phi_{k_{0},k_{1}}}{\longleftarrow}G_{k_{1}}\overset{\phi
_{k_{1},k_{2}}}{\longleftarrow}G_{k_{2}}\overset{\phi_{k_{2},k_{3}%
}}{\longleftarrow}\cdots
\]
where the bonding homomorphisms $\phi_{k_{i},k_{i+1}}$ are compositions of the
$\phi_{i}$. Declare inverse sequences $\left\{  G_{i},\phi_{i}\right\}  $ and
$\left\{  H_{i},\psi_{i}\right\}  $ to be \emph{pro-isomorphic} if they
contain subsequences that fit into a commuting \textquotedblleft ladder
diagram\textquotedblright%
\[
\begin{diagram} G_{i_{0}} & & \lTo^{\lambda_{i_{0},i_{1}}} & & G_{i_{1}} & & \lTo^{\lambda_{i_{1},i_{2}}} & & G_{i_{2}} & & \lTo^{\lambda_{i_{2},i_{3}}}& & G_{i_{3}}& \cdots\\ & \luTo & & \ldTo & & \luTo & & \ldTo & & \luTo & & \ldTo &\\ & & H_{j_{0}} & & \lTo^{\mu_{j_{0},j_{1}}} & & H_{j_{1}} & & \lTo^{\mu_{j_{1},j_{2}}}& & H_{j_{2}} & & \lTo^{\mu_{j_{2},j_{3}}} & & \cdots \end{diagram}
\]
More broadly, define \emph{pro-isomorphism} to be the equivalence relation on
the collection of all inverse sequences of groups generated by that
rule\footnote{The prefix \textquotedblleft pro\textquotedblright\ is derived
from \textquotedblleft projective\textquotedblright. Some authors refer to
inverse sequences and inverse limits as projective sequences and projective
limits, respectively.}.

It is immediate that an inverse sequence is pro-isomorphic to each of its
subsequences; but sequences can appear very different and still be pro-isomorphic.

\stepcounter{theorem}

\begin{exercise}
Convince yourself that the various inverse sequences mentioned above for
describing the fundamental group at infinity of $%
%TCIMACRO{\U{211d} }%
%BeginExpansion
\mathbb{R}
%EndExpansion
^{3}$ are pro-isomorphic.
\end{exercise}

\begin{exercise}
\label{Exercise: Inverse limit well-defined on pro-groups}Show that a pair of
pro-isomorphic inverse sequences of groups have isomorphic inverse limits.
\emph{Hint: }Begin by observing a canonical isomorphism between the inverse
limit of a sequence and that of any of its subsequences.
\end{exercise}

The next exercise provides a counterexample to the converse of Exercise
\ref{Exercise: Inverse limit well-defined on pro-groups}. It justifies our
earlier assertion that passing to an inverse limit often results in loss of information.

\begin{exercise}
\label{Exercise: nontrivial inverse sequence with trivial inverse limit}Show
that the inverse sequence $%
%TCIMACRO{\U{2124} }%
%BeginExpansion
\mathbb{Z}
%EndExpansion
\overset{\times2}{\longleftarrow}%
%TCIMACRO{\U{2124} }%
%BeginExpansion
\mathbb{Z}
%EndExpansion
\overset{\times2}{\longleftarrow}%
%TCIMACRO{\U{2124} }%
%BeginExpansion
\mathbb{Z}
%EndExpansion
\overset{\times2}{\longleftarrow}\cdots$ is not pro-isomorphic to the trivial
inverse sequence $1\leftarrow1\longleftarrow1\longleftarrow\cdots$, but both
inverse limits are trivial.
\end{exercise}

\begin{exercise}
A more slick (if less intuitive) way to define pro-isomorphism is to declare
it to be the equivalence relation generated by making sequences equivalent to
their subsequences. Show that the two approaches are equivalent.
\end{exercise}

\begin{remark}
\emph{With a little more work, we could define }morphisms\emph{ between
inverse sequences of groups and arrive at a category }pro-$\mathcal{G}$roups,
\emph{where the objects are inverse sequences of groups, in which two objects
are pro-isomorphic if and only if they are isomorphic in that category.}%
\footnote{We are not being entirely forthright here. In the literature,
\emph{pro}-\emph{Groups} usually refers to a larger category consisting of
\textquotedblleft inverse systems\textquotedblright\ of groups indexed by
arbitrary partially ordered sets. We have described a subcategory,
\emph{Tow}-\emph{Groups}, made up of those objects indexed by the natural
numbers---also known as \textquotedblleft towers\textquotedblright.}

\emph{Similarly, for any category }$\mathcal{C}$\emph{ one can build a
category }$\operatorname*{pro}$-$\,\mathcal{C}$\emph{ in which the objects are
inverse sequences of objects and morphisms from }$\mathcal{C}$\emph{ and for
which the resulting relationship of pro-isomorphism is similar to the one
defined above. All of this is interesting and useful, but more than we need
here. For a comprehensive treatment of this topic, see} \cite{Ge2}.
\end{remark}

\subsection{Topological definitions and justification of the pro-isomorphism
relation}

A quick look at the topological setting that leads to multiple inverse
sequences representing the same fundamental group at infinity provides
convincing justification for the definition of pro-isomorphic.

Let $U_{0}\hookleftarrow U_{1}\hookleftarrow U_{2}\hookleftarrow\cdots$ and
$V_{0}\hookleftarrow V_{1}\hookleftarrow V_{2}\hookleftarrow\cdots$ be two
cofinal sequences of connected neighborhoods of infinity for a 1-ended space
$X$. By going out sufficiently far in the second sequence, one arrives at a
$V_{k_{0}}$ contained in $U_{0}$. Similarly, going out sufficiently far in the
initial sequence produces a $U_{j_{1}}\subseteq V_{k_{0}}$. (For convenience,
let $j_{0}=0$.) Alternating back and forth produces a ladder diagram of
inclusions%
\begin{equation}
\begin{diagram} V_{j_{0}} & & \lInto & & V_{j_{1}} & & \lInto & & V_{j_{2}} & & \lInto & & \cdots \\ & \luInto & & \ldInto & & \luInto & & \ldInto & & \luInto \\ & & U_{k_{0}} & & \lInto & & U_{k_{1}} & & \lInto & & U_{k_{2}} & & \cdots \end{diagram}.\tag{3.6}%
\label{diagram: pro-equivalence of neighborhood systems}%
\end{equation}
Applying the fundamental group functor to that diagram (ignoring base points
for the moment) results in a diagram
\begin{equation}
\begin{diagram} \pi_{1}(V_{j_{0}}) & & \lTo & & \pi_{1}(V_{j_{1}}) & & \lTo & & \pi_{1}(V_{j_{2}}) & & \lTo & & \cdots \\ & \luTo & & \ldTo & & \luTo & & \ldTo & & \luTo \\ & & \pi_{1}(U_{k_{0}}) & & \lTo & & \pi_{1}(U_{k_{1}}) & & \lTo & & \pi_{1}(U_{k_{2}}) & & \cdots \end{diagram}\tag{3.7}%
\label{diagram: pro-pi1 pro-equivalence}%
\end{equation}
showing that
\[
\pi_{1}\left(  U_{0}\right)  \overset{\lambda_{1}}{\longleftarrow}\pi
_{1}\left(  U_{1}\right)  \overset{\lambda_{2}}{\longleftarrow}\pi_{1}\left(
U_{2}\right)  \overset{\lambda_{3}}{\longleftarrow}\cdots
\]
and
\[
\pi_{1}\left(  V_{0}\right)  \overset{\mu_{1}}{\longleftarrow}\pi_{1}\left(
V_{1}\right)  \overset{\mu_{2}}{\longleftarrow}\pi_{1}\left(  V_{2}\right)
\overset{\mu_{3}}{\longleftarrow}\cdots
\]
are pro-isomorphic.

A close look at base points and base rays is still to come, but recognizing
their necessity, we make the following precise definition. For a pair $\left(
X,r\right)  $ where $r$ is a proper ray in $X$, let $\operatorname*{pro}$%
-$\pi_{1}\left(  \varepsilon(X),r\right)  $ denote the pro-isomorphism class
of inverse sequences of groups which contains representatives of the form
(\ref{line: initial fundamental group at infinity}), where $\left\{
U_{i}\right\}  _{i=0}^{\infty}$ is a cofinal sequence of neighborhoods of
infinity, and $r$ has been modified (in the manner described in Exercise
\ref{Exercise: Fixing a proper ray}) so that $r\left(  [i,\infty)\right)
\subseteq U_{i}$ for each $i\geq0$. From now on, when we refer to the
\emph{fundamental group at infinity (based at }$r$\emph{)} of a space $X$, we
mean $\operatorname*{pro}$-$\pi_{1}\left(  \varepsilon(X),r\right)  $.

With the help of Exercise
\ref{Exercise: Inverse limit well-defined on pro-groups}, we also define the
\emph{\v{C}ech fundamental group of the end of }$X$ \emph{(based at }%
$r$\emph{)}, to be the inverse limit of $\operatorname*{pro}$-$\pi_{1}\left(
\varepsilon(X),r\right)  $. It is denoted by $\check{\pi}_{1}\left(
\varepsilon(X),r\right)  $.

\stepcounter{theorem}

\stepcounter{theorem}

\begin{exercise}
Fill in the details related to base points and base rays needed for the
existence of diagram (\ref{diagram: pro-pi1 pro-equivalence}).
\end{exercise}

\begin{remark}
\emph{Now that }$\operatorname*{pro}$\emph{-}$\pi_{1}\left(  \varepsilon
(X),r\right)  $ \emph{is well-defined and (hopefully) well-understood for
1-ended }$X$\emph{, it is time to point out that everything done thus far
works for multi-ended }$X$\emph{. In those situations, the role of }$r$\emph{
is more pronounced. In the process of selecting base points for a sequence of
neighborhoods of infinity }$\left\{  U_{i}\right\}  $\emph{, }$r$\emph{
determines the component of each }$U_{i}$\emph{ that contributes to
}$\operatorname*{pro}$\emph{-}$\pi_{1}\left(  \varepsilon(X),r\right)
$\emph{. So, if }$r$\emph{ and }$s$\emph{ point to different ends of }%
$X$\emph{, }$\operatorname*{pro}$\emph{-}$\pi_{1}\left(  \varepsilon
(X),r\right)  $\emph{ and }$\operatorname*{pro}$\emph{-}$\pi_{1}\left(
\varepsilon(X),s\right)  $\emph{ reflect information about entirely different
portions of }$X$\emph{. This observation is just the beginning; a thorough
examination of the role of base rays is begun in
\S \ref{Section: On the role of the base ray}.}
\end{remark}

\subsection{Other algebraic invariants of the end of a space}

By now it has likely occurred to the reader that $\pi_{1}$ is not the only
functor that can be applied to an inverse sequence of neighborhoods of
infinity. For any $k\geq1$ and proper ray $r$, define $\operatorname*{pro}%
$-$\pi_{k}\left(  \varepsilon(X),r\right)  $ in the analogous manner. By
taking inverse limits we get the \emph{\v{C}ech homotopy groups} $\check{\pi
}_{k}\left(  \varepsilon\left(  X\right)  ,r\right)  $ of the end of $X$
determined by $r$. Similarly, we may define $\operatorname*{pro}$-$\pi
_{0}\left(  \varepsilon(X),r\right)  $ and $\check{\pi}_{0}\left(
\varepsilon\left(  X\right)  ,r\right)  $; the latter is just a set (more
precisely a \emph{pointed} set, i.e., a set with a distinguished base point),
and the former an equivalence class of inverse sequences of (pointed) sets.

By applying the homology functor we obtain $\operatorname*{pro}$-$H_{k}\left(
\varepsilon(X);R\right)  $ and $\check{H}_{k}\left(  \varepsilon\left(
X\right)  ;R\right)  $ for each non-negative integer $k$ and arbitrary
coefficient ring $R$, the latter being called the \emph{\v{C}ech homology of
the end of }$X$. In this context, no base ray is needed!

If instead we apply the cohomology functor, a significant change occurs. The
contravariant nature of $H^{k}$ produces \emph{direct sequences}%
\[
H^{k}\left(  U_{0};R\right)  \overset{\lambda_{1}}{\longrightarrow}%
H^{k}\left(  U_{1};R\right)  \overset{\lambda_{2}}{\longrightarrow}%
H^{k}\left(  U_{2};R\right)  \overset{\lambda_{3}}{\longrightarrow}\cdots
\]
of cohomology groups. An algebraic treatment of such sequences, paralleling
\S \ref{Subsection: defining pro-isomorphism}, and a standard definition of
\emph{direct limit}, allow us to define $\operatorname*{ind}$-$H^{\ast}\left(
\varepsilon(X);R\right)  $ and $\check{H}^{\ast}\left(  \varepsilon\left(
X\right)  ;R\right)  $.

\begin{exercise}
Show that for ANRs there is a one-to-one correspondence between $\mathcal{E}%
\emph{nds}\left(  X\right)  $ and $\check{\pi}_{0}\left(  \varepsilon\left(
X\right)  ,r\right)  $.
\end{exercise}

\subsection{End invariants and the proper homotopy category}

In Remark \ref{Remark: importance of properness}, we commented on the
importance of proper maps and proper homotopy equivalences in the study of
noncompact spaces. We are now ready to back up that assertion. The following
Proposition could be made even stronger with a discussion of morphisms in the
category of pro-$\mathcal{G}$\emph{roups}, but for our purposes, it will suffice.

\begin{proposition}
\label{Prop: proper homotopy invariance of end invariants}Let $f:X\rightarrow
Y$ be a proper homotopy equivalence and $r$ a proper ray in $X$. Then

\begin{enumerate}
\item $\operatorname*{pro}$-$H_{k}\left(  \varepsilon(X);R\right)  $ is
pro-isomorphic to $\operatorname*{pro}$-$H_{k}\left(  \varepsilon(Y);R\right)
$ for all $k$ and every coefficient ring $R$,

\item $\operatorname*{pro}$-$\pi_{0}\left(  \varepsilon(X,r)\right)  $ is
pro-isomorphic to $\operatorname*{pro}$-$\pi_{0}\left(  \varepsilon(Y,f\circ
r)\right)  $ as inverse sequences of pointed sets, and

\item $\operatorname*{pro}$-$\pi_{k}\left(  \varepsilon(X),r\right)  $ is
pro-isomorphic to $\operatorname*{pro}$-$\pi_{k}\left(  \varepsilon(Y),f\circ
r\right)  $ for all $k\geq1.$
\end{enumerate}

\begin{corollary}
A proper homotopy equivalence $f:X\rightarrow Y$ induces isomorphisms between
$\check{H}_{k}\left(  \varepsilon(X);R\right)  $ and $\check{H}_{k}\left(
\varepsilon(Y);R\right)  $ for all $k$ and every coefficient ring $R$. It
induces a bijection between $\check{\pi}_{0}\left(  \varepsilon(X),r\right)  $
and $\check{\pi}_{0}\left(  \varepsilon(Y),f\circ r\right)  $ and isomorphisms
between $\check{\pi}_{k}\left(  \varepsilon(X),r\right)  $ and $\check{\pi
}_{k}\left(  \varepsilon(Y),f\circ r\right)  $ for all $k\geq1$.
\end{corollary}
\end{proposition}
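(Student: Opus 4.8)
The plan is to build an interleaved ``ladder'' between a cofinal sequence of neighborhoods of infinity in $X$ and one in $Y$, using $f$ together with a proper homotopy inverse $g:Y\rightarrow X$, and then to apply the relevant functor. Fix cofinal decreasing sequences $U_0\supseteq U_1\supseteq\cdots$ in $X$ and $V_0\supseteq V_1\supseteq\cdots$ in $Y$, and proper homotopies $H:X\times[0,1]\rightarrow X$ from $\operatorname{id}_X$ to $gf$ and $H':Y\times[0,1]\rightarrow Y$ from $\operatorname{id}_Y$ to $fg$. Two consequences of properness drive the construction. First, by Proposition \ref{Prop: Induced maps on ends}, the proper maps $f$ and $g$ each send the end of one space into the end of the other: given a neighborhood of infinity $U$ in $Y$ there is a neighborhood of infinity $V$ in $X$ with $f(V)\subseteq U$, and symmetrically for $g$. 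Second, properness of $H$ confines it near infinity: for each $U_i$ there is a smaller neighborhood of infinity $U'$ with $H(U'\times[0,1])\subseteq U_i$ (take $U'$ to be the complement in $X$ of the projection to $X$ of the compact set $H^{-1}(\overline{X-U_i})$); the analogous statement holds for $H'$.

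Using these facts I would inductively construct an interleaving
\[
U_{i_0}\xleftarrow{g} V_{j_0}\xleftarrow{f} U_{i_1}\xleftarrow{g} V_{j_1}\xleftarrow{f} U_{i_2}\xleftarrow{g}\cdots
\]
of subsequences, each arrow being the appropriate restriction of $f$ or $g$, where the choices satisfy $g(V_{j_\ell})\subseteq U_{i_\ell}$ and $f(U_{i_{\ell+1}})\subseteq V_{j_\ell}$, and where each new neighborhood is taken small enough that $H(U_{i_{\ell+1}}\times[0,1])\subseteq U_{i_\ell}$ and $H'(V_{j_{\ell+1}}\times[0,1])\subseteq V_{j_\ell}$. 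Applying $H_k(-;R)$ produces a ladder whose diagonal maps are $f_\ast$ and $g_\ast$. The confinement of the homotopies forces the triangles to commute: the composite $U_{i_\ell}\xleftarrow{g}V_{j_\ell}\xleftarrow{f}U_{i_{\ell+1}}$ agrees, up to a homotopy lying entirely in $U_{i_\ell}$, with the inclusion $U_{i_{\ell+1}}\hookrightarrow U_{i_\ell}$---for $H$ restricts on $U_{i_{\ell+1}}$ to exactly such a homotopy from that inclusion to $gf|_{U_{i_{\ell+1}}}$, and the latter lands in $U_{i_\ell}$; symmetrically the $V$-triangles commute via $H'$. Hence $g_\ast f_\ast$ and $f_\ast g_\ast$ are the bonding maps of the two sequences, and the diagram is a pro-isomorphism in the sense of \S\ref{Subsection: defining pro-isomorphism}. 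This establishes (1), where no base point intervenes.

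For (2) and (3) the same ladder serves, but base points must be carried along. After reparametrizing $r$ as in Exercise \ref{Exercise: Fixing a proper ray}, $r$ selects base points in the $U_{i_\ell}$, and $f$ carries $r$ to the base ray $f\circ r$, so $f_\ast$ is automatically base point preserving into the $V_{j_\ell}$. The map $g_\ast$ sends $f\circ r$ to $gf\circ r$; restricting $H$ to $r\times[0,1]$ gives a proper homotopy of rays from $r$ to $gf\circ r$, whose tracks supply the change-of-base-point isomorphisms realigning $g_\ast$ with the base points prescribed by $r$ (for $k=1$ these tracks serve as the connecting paths). With this bookkeeping the triangles commute as maps of pointed sets or groups, giving the pro-isomorphisms in (2) and (3). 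The Corollary is then immediate: pro-isomorphic inverse sequences have canonically isomorphic inverse limits (Exercise \ref{Exercise: Inverse limit well-defined on pro-groups}), so the \v{C}ech invariants correspond, with the $\pi_0$ statement yielding the asserted bijection of end sets.

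I expect the main difficulty to be bookkeeping rather than conceptual: threading the base points and connecting paths through the interleaved ladder so that every triangle commutes on the nose at the level of pointed $\pi_k$, not merely up to some unnamed isomorphism. The technical heart is the confinement step, which upgrades the free proper homotopies $gf\overset{p}{\simeq}\operatorname{id}_X$ and $fg\overset{p}{\simeq}\operatorname{id}_Y$ to homotopies respecting the chosen neighborhoods of infinity; once that is secured, commutativity of the triangles, and hence the pro-isomorphism, follows formally.
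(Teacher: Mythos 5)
Your proposal is correct and follows essentially the same route as the paper's sketch: choose a proper homotopy inverse $g$, use properness of the homotopies to build an interleaved ladder of neighborhoods of infinity whose diagonal arrows are restrictions of $f$ and $g$ and whose triangles commute up to homotopy confined within the larger neighborhood, then apply the functor. Your explicit confinement step (taking $U'$ to be the complement of the projection of $H^{-1}(\overline{X-U_i})$) and the base-ray bookkeeping via the tracks of $H$ along $r$ are exactly the details the paper leaves implicit.
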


\begin{proof}
[Sketch of the proof of Proposition
\ref{Prop: proper homotopy invariance of end invariants}]Let $g:Y\rightarrow
X$ be a proper inverse for $f$ and let $H$ and $K$ be proper homotopies
between $g\circ f$ and $\operatorname*{id}_{X}$ and $f\circ g$ and
$\operatorname*{id}_{Y}$, respectively. By using the properness of $H$ and $K$
and a back-and-forth strategy similar to the one employed in obtaining diagram
(\ref{diagram: pro-equivalence of neighborhood systems}), we obtain systems of
neighborhoods of infinity $\left\{  U_{i}\right\}  $ in $X$ and $\left\{
V_{i}\right\}  $ in $Y$ that fit into a ladder diagram%
\begin{equation}
\begin{diagram} V_{j_{0}} & & \lInto & & V_{j_{1}} & & \lInto & & V_{j_{2}} & & \lInto & & \cdots \\ & \luTo & & \ldTo & & \luTo & & \ldTo & & \luTo \\ & & U_{k_{0}} & & \lInto & & U_{k_{1}} & & \lInto & & U_{k_{2}} & & \cdots \end{diagram}.\tag{3.8}%
\label{Diagram: ladder diagran for a p.h.e.}%
\end{equation}
Unlike the earlier case, the up and down arrows are not inclusions, but rather
restrictions of $f$ and $g$. Furthermore, the diagram does not commute on the
nose; instead, it commutes up to homotopy. But that is enough to obtain a
commuting ladder diagram of homology groups, thus verifying (a). The same is
true for (b), but on the level of sets. Assertion (c) is similar, but a little
additional care must be taken to account for the base rays.
\end{proof}

\subsection{Inverse mapping telescopes and a topological realization
theorem\label{Subsection: Inverse mapping telescopes}}

It is natural to ask which inverse sequences (more precisely, pro-isomorphism
classes) can occur as $\operatorname*{pro}$-$\pi_{1}\left(  \varepsilon
(X),r\right)  $ for a space $X$. Here we show that, even if restricted to very
nice spaces, the answer is \textquotedblleft nearly all of
them\textquotedblright. Later we will see that, in certain important contexts
the answer becomes much different. But for now we create a simple machine for
producing wide range of examples.

Let
\begin{equation}
(K_{0},p_{0})\overset{f_{1}}{\longleftarrow}(K_{1},p_{1})\overset{f_{2}%
}{\longleftarrow}(K_{2},p_{2})\overset{f_{3}}{\longleftarrow}\cdots
\tag{3.9}\label{Inverse sequence: pointed complexes}%
\end{equation}
be an inverse sequence of pointed finite CW complexes and cellular maps. For
each $i\geq1$, let $M_{i}$ be a copy of the \emph{mapping cylinder} of $f_{i}%
$; more specifically%
\[
M_{i}=(K_{i}\times\lbrack i-1,i])\sqcup(K_{i-1}\times\left\{  i-1\right\}
)/\sim_{i}%
\]
where $\sim_{i}$ is the equivalence relation generated by the rule: $\left(
k,i-1\right)  \sim_{i}(f_{i}\left(  k\right)  ,i-1)$ for each $k\in K_{i}$.
Then $M_{i}$ contains a canonical copy $K_{i-1}\times\left\{  i-1\right\}  $
of $K_{i-1}$ and a canonical copy $K_{i}\times\left\{  i\right\}  $ of $K_{i}%
$; and $M_{i-1}\cap M_{i}=K_{i-1}\times\left\{  i-1\right\}  $. The infinite
union $\operatorname*{Tel}\left(  \left\{  K_{i},f_{i}\right\}  \right)
=\bigcup{}_{i=1}^{\infty}M_{i}$, with the obvious topology is called the
\emph{mapping telescope }of (\ref{Inverse sequence: pointed complexes}). See
Figure \ref{Figure: Mapping telescope}.%
%TCIMACRO{\FRAME{ftbpFU}{2.2549in}{1.1377in}{0pt}{\Qcb{The mapping telescope
%$\operatorname*{Tel}\left(  \left\{  K_{i},f_{i}\right\}  \right)  $}%
%}{\Qlb{Figure: Mapping telescope}}{osu-figtelescope.eps}%
%{\special{ language "Scientific Word";  type "GRAPHIC";
%maintain-aspect-ratio TRUE;  display "USEDEF";  valid_file "F";
%width 2.2549in;  height 1.1377in;  depth 0pt;  original-width 8.4181in;
%original-height 4.1978in;  cropleft "0";  croptop "1";  cropright "1";
%cropbottom "0";  filename '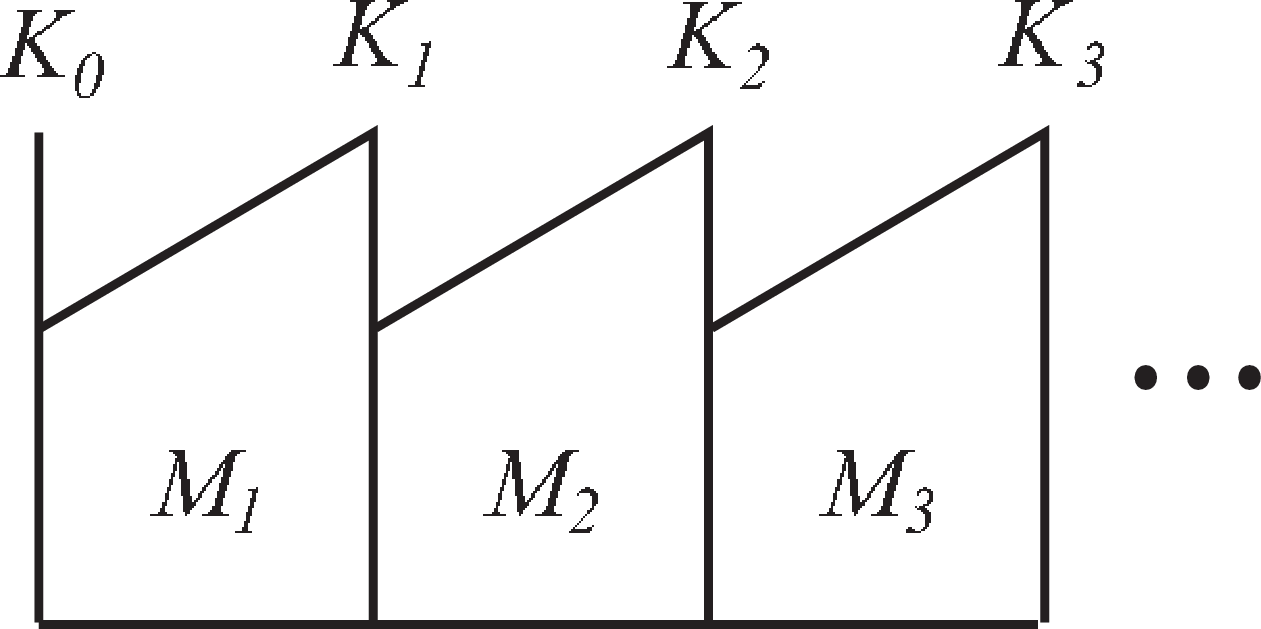';file-properties "XNPEU";}} }%
%BeginExpansion
\begin{figure}[ptb]%
\centering
\includegraphics[
height=1.1377in,
width=2.2549in
]%
{OSU-FigTelescope.eps}%
\caption{The mapping telescope $\operatorname*{Tel}\left(  \left\{
K_{i},f_{i}\right\}  \right)  $}%
\label{Figure: Mapping telescope}%
\end{figure}
%EndExpansion

For each $x\in K_{i}$, the (embedded) copy of the interval $\left\{
x\right\}  \times\lbrack i-1,i]$ in $M_{i}$ is called a \emph{mapping cylinder
line}. The following observations are straightforward.

\begin{itemize}
\item $\operatorname*{Tel}\left(  \left\{  K_{i},f_{i}\right\}  \right)  $ may
be viewed as the union of infinite and dead end \textquotedblleft telescope
rays\textquotedblright, each of which begins in $K_{0}\times\left\{
0\right\}  $ and intersects a given $M_{i}$ in a mapping cylinder line or not
at all. The dead end rays and empty intersections occur only when a point
$k\in K_{j}$ is not in the image of $f_{j+1}$; whereas, the infinite telescope
rays are proper and in one-to-one correspondence with $\underleftarrow{\lim
}\left\{  K_{i},f_{i}\right\}  $,

\item by choosing a canonical set of strong deformation retractions of the
above rays to their initial points, one obtains a strong deformation
retraction of $\operatorname*{Tel}\left(  \left\{  K_{i},f_{i}\right\}
\right)  $ to $K_{0}\times\left\{  0\right\}  $.

\item letting $U_{k}=\bigcup{}_{i=k+1}^{\infty}M_{i}$ provides a cofinal
sequence of neighborhoods of infinity. By a small variation on the previous
observation each $K_{i}\times\left\{  i\right\}  \hookrightarrow U_{i}$ is a
homotopy equivalence. (So $\operatorname*{Tel}\left(  \left\{  K_{i}%
,f_{i}\right\}  \right)  $ is absolutely inward tame.)

\item letting $r$ be the proper ray consisting of the cylinder lines
connecting each $p_{i}$ to $p_{i-1}$, we obtain a representation of
$\operatorname*{pro}$-$\pi_{1}\left(  \varepsilon(X),r\right)  $ which is
pro-isomorphic to the sequence
\[
\pi_{1}\left(  K_{0},p_{0}\right)  \overset{f_{1\#}}{\longleftarrow}\pi
_{1}\left(  K_{1},p_{1}\right)  \overset{f_{2\#}}{\longleftarrow}\pi
_{1}\left(  K_{2},p_{2}\right)  \overset{f_{3\#}}{\longleftarrow}\cdots
\]

\item in the same manner, representations of $\operatorname*{pro}$-$\pi
_{k}\left(  \varepsilon(X),r\right)  $ and $\operatorname*{pro}$-$H_{k}\left(
\varepsilon(X),%
%TCIMACRO{\U{2124} }%
%BeginExpansion
\mathbb{Z}
%EndExpansion
\right)  $ can be obtained by applying the appropriate functor to sequence
(\ref{Inverse sequence: pointed complexes}).
\end{itemize}

\stepcounter{theorem}

\stepcounter{theorem}

\begin{proposition}
\label{Prop: mapping telescope realization fro pro-p1}For every inverse
sequence $G_{0}\overset{\mu_{1}}{\longleftarrow}G_{1}\overset{\mu
_{2}}{\longleftarrow}G_{2}\overset{\mu_{3}}{\longleftarrow}\cdots$ of finitely
presented groups, there exists a 1-ended, absolutely inward tame, locally
finite CW complex $X$ and a proper ray $r$ such that $\operatorname*{pro}%
$-$\pi_{1}\left(  \varepsilon(X),r\right)  $ is represented by that sequence.
If desired, $X$ can be chosen to be contractible.

\begin{proof}
For each $i$, let $K_{i}$ be a presentation $2$-complex for $G_{i}$ and let
$f_{i}:K_{i}\rightarrow K_{i-1}$ be a cellular map that induces $\mu_{i}$.
Then let $X=\operatorname*{Tel}\left(  \left\{  K_{i},f_{i}\right\}  \right)
$.

In order to make $X$ contractible, one simply adds a trivial space
$K_{-1}=\left\{  p_{-1}\right\}  $ to the left end of the sequence of complexes.
\end{proof}
\end{proposition}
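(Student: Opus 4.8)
The plan is to feed the given inverse sequence directly into the mapping telescope machine just developed, so that almost every conclusion is read off from the bulleted observations about $\operatorname*{Tel}\left(  \left\{  K_{i},f_{i}\right\}  \right)$. The only genuine construction needed is to realize the algebra by complexes and cellular maps; everything else is already in hand.

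First I would produce the complexes and bonding maps at the CW level. Because each $G_{i}$ is finitely presented, fix a finite presentation $\left\langle s_{1},\dots,s_{m}\mid r_{1},\dots,r_{n}\right\rangle$ and let $K_{i}$ be its presentation $2$-complex: a connected finite CW complex with one $0$-cell $p_{i}$, a $1$-cell per generator, and a $2$-cell per relator, so $\pi_{1}\left(  K_{i},p_{i}\right)  \cong G_{i}$. Next I would promote each $\mu_{i}\colon G_{i}\rightarrow G_{i-1}$ to a cellular map $f_{i}\colon K_{i}\rightarrow K_{i-1}$ inducing it. Send $p_{i}$ to $p_{i-1}$; send each generator $1$-cell $s$ to an edge loop in $K_{i-1}$ spelling a word $w$ representing $\mu_{i}(s)$; then extend over each $2$-cell, whose attaching loop is a relator $r$ and whose image spells $\mu_{i}(r)=1$ in $G_{i-1}$, hence is null-homotopic in $K_{i-1}$ and bounds a disk. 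A cellular-approximation step makes $f_{i}$ cellular without altering its effect on $\pi_{1}$, so $f_{i\#}=\mu_{i}$. I expect this realization of homomorphisms by cellular maps to be the one step demanding care; the rest is bookkeeping.

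Now set $X=\operatorname*{Tel}\left(  \left\{  K_{i},f_{i}\right\}  \right)$ and invoke the preceding observations. Since each $K_{i}$ is a compact connected complex, $X$ is a locally finite CW complex whose neighborhoods of infinity $U_{k}=\bigcup_{i=k+1}^{\infty}M_{i}$ are connected, so $X$ is $1$-ended; the retraction and homotopy-equivalence observations show $X$ is absolutely inward tame; and taking $r$ to be the telescope base ray through the $p_{i}$, the computation listed above gives that $\operatorname*{pro}$-$\pi_{1}\left(  \varepsilon(X),r\right)$ is represented by $\pi_{1}\left(  K_{0}\right)  \overset{f_{1\#}}{\longleftarrow}\pi_{1}\left(  K_{1}\right)  \overset{f_{2\#}}{\longleftarrow}\cdots$, which is precisely $G_{0}\overset{\mu_{1}}{\longleftarrow}G_{1}\overset{\mu_{2}}{\longleftarrow}\cdots$.

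Finally, for the contractible refinement I would prepend the one-point complex $K_{-1}=\left\{  p_{-1}\right\}$, together with the unique map $K_{0}\rightarrow K_{-1}$, before forming the telescope. Then the telescope strong-deformation-retracts onto $K_{-1}\times\left\{  -1\right\}$, a point, so $X$ is contractible. Prepending a single term does not disturb the end invariant, since $\operatorname*{pro}$-$\pi_{1}$ is computed from the cofinal tail $\left\{  U_{k}\right\}$; equivalently, the augmented sequence $1\leftarrow G_{0}\leftarrow G_{1}\leftarrow\cdots$ is pro-isomorphic to its subsequence $G_{0}\leftarrow G_{1}\leftarrow\cdots$, using that every inverse sequence is pro-isomorphic to each of its subsequences.
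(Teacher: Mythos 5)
Your proposal is correct and follows the same route as the paper: realize each $G_{i}$ by a presentation $2$-complex, realize each $\mu_{i}$ by a cellular map, form the mapping telescope, and prepend a point for contractibility. The extra detail you supply on building $f_{i}$ (generators to edge loops, extending over $2$-cells because relators map to null-homotopic loops) is exactly the care the paper leaves implicit.
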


\begin{example}
\label{Example: realizing Z<-x2-Z<-x2-Z}An easy application of Proposition
\ref{Prop: mapping telescope realization fro pro-p1} produces a
$\operatorname*{pro}$-$\pi_{1}\left(  \varepsilon(X),r\right)  $ equal to the
inverse sequence $%
%TCIMACRO{\U{2124} }%
%BeginExpansion
\mathbb{Z}
%EndExpansion
\overset{\times2}{\longleftarrow}%
%TCIMACRO{\U{2124} }%
%BeginExpansion
\mathbb{Z}
%EndExpansion
\overset{\times2}{\longleftarrow}%
%TCIMACRO{\U{2124} }%
%BeginExpansion
\mathbb{Z}
%EndExpansion
\overset{\times2}{\longleftarrow}\cdots$ discussed in Exercise
(\ref{Exercise: nontrivial inverse sequence with trivial inverse limit}). For
each $i$, let $\mathbb{S}_{i}^{1}$ be a copy of the unit circle and
$f_{i}:\mathbb{S}_{i}^{1}\rightarrow\mathbb{S}_{i-1}^{1}$ the standard degree
2 map. Then $X=\operatorname*{Tel}\left(  \left\{  \mathbb{S}_{i}^{1}%
,f_{i}\right\}  \right)  $ is 1-ended and has the desired fundamental group at infinity.
\end{example}

\begin{proposition}
For every inverse sequence $G_{0}\overset{\mu_{1}}{\longleftarrow}%
G_{1}\overset{\mu_{2}}{\longleftarrow}G_{2}\overset{\mu_{3}}{\longleftarrow
}\cdots$ of finitely presented groups and $n\geq6$, there exists a 1-ended
open $n$-manifold $M^{n}$ such that $\operatorname*{pro}$-$\pi_{1}\left(
M^{n},r\right)  $ is represented by that sequence. If a (noncompact) boundary
is permitted, and $n\geq7$, then $M^{n}$ can be chosen to be contractible.

\begin{proof}
Let $X=\operatorname*{Tel}\left(  \left\{  K_{i},f_{i}\right\}  \right)  $ as
constructed in the previous Proposition. With some extra care, arrange for $X$
to be a simplicial $3$-complex, and choose a proper PL embedding into $%
%TCIMACRO{\U{211d} }%
%BeginExpansion
\mathbb{R}
%EndExpansion
^{n+1}$. Let $N^{n+1}$ be a regular neighborhood of that embedding. It is easy
to see that $\operatorname*{pro}$-$\pi_{1}(N^{n+1},r)$ is identical to
$\operatorname*{pro}$-$\pi_{1}\left(  \varepsilon(X),r\right)  $, so if
boundary is permitted, we are finished. If not, let $M^{n}=\partial N^{n+1}$.
By general position, the base ray $r$ may be slipped off $X$ and then isotoped
to a ray $r^{\prime}$ in $M^{n}$. Also by general position, loops and disks in
$N^{n+1}$ may be slipped off $X$ and then pushed into $M^{n}$. In doing so,
one sees that $\operatorname*{pro}$-$\pi_{1}\left(  M^{n},r^{\prime}\right)  $
is pro-isomorphic to $\operatorname*{pro}$-$\pi_{1}\left(  N^{n+1},r\right)  $.
\end{proof}
\end{proposition}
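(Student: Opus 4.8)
The plan is to thicken the locally finite complex produced in Proposition~\ref{Prop: mapping telescope realization fro pro-p1} into a manifold of the right dimension. First I would apply that Proposition to the given sequence $\{G_i,\mu_i\}$ to obtain the mapping telescope $X=\operatorname{Tel}(\{K_i,f_i\})$, where each $K_i$ is a presentation $2$-complex for $G_i$ and $f_i$ induces $\mu_i$. A mapping cylinder of a cellular map between $2$-complexes is $3$-dimensional, so $X$ is at most $3$-dimensional; after subdividing I may assume it is a locally finite simplicial $3$-complex carrying the base ray $r$ and realizing the prescribed $\operatorname{pro}$-$\pi_1$ at infinity. For the contractible conclusion I would prepend the one-point complex $K_{-1}=\{p_{-1}\}$, exactly as in Proposition~\ref{Prop: mapping telescope realization fro pro-p1}, so that $X$ becomes contractible.

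Next I would embed $X$ properly and piecewise-linearly into Euclidean space. By general position a $3$-complex embeds in $\mathbb{R}^{7}$, and since $X$ is locally finite with a natural compact exhaustion, the embedding can be made proper by pushing successive stages of the telescope farther out. For the boundary-permitted case I embed into $\mathbb{R}^{n}$ with $n\geq7$; for the open (boundaryless) case I embed into $\mathbb{R}^{n+1}$ with $n\geq6$. I then take a regular neighborhood $N$ of the image. Regular-neighborhood theory supplies a proper strong deformation retraction of $N$ onto $X$, so $N\overset{p}{\simeq}X$, and Proposition~\ref{Prop: proper homotopy invariance of end invariants} gives that $\operatorname{pro}$-$\pi_1(\varepsilon(N),r)$ is pro-isomorphic to $\operatorname{pro}$-$\pi_1(\varepsilon(X),r)$. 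In the boundary-permitted case $N=M^{n}\subseteq\mathbb{R}^{n}$ is an $n$-manifold with noncompact boundary; it is $1$-ended (being proper homotopy equivalent to the $1$-ended $X$) and contractible whenever $X$ is, which settles the second assertion for $n\geq7$.

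For the open-manifold assertion I would set $M^{n}=\partial N^{n+1}$, where $N^{n+1}\subseteq\mathbb{R}^{n+1}$ is the regular neighborhood above. The decisive point is codimension: $X$ has dimension $3$ inside the $(n+1)$-manifold $N^{n+1}$, so its codimension equals $n-2\geq3$ once $n\geq5$, and its normal spheres have dimension $n-3\geq3$. For each $i$ let $U_{i}\subseteq N^{n+1}$ be the neighborhood of infinity that is the regular neighborhood of the corresponding tail of the telescope, and let $V_{i}=U_{i}\cap\partial N^{n+1}$ be the matching neighborhood of infinity in $M^{n}$. Because $X\cap U_{i}$ has codimension at least $3$, general position lets me push every loop of $U_{i}$, and every disk spanning a loop of $V_{i}$, off $X$ and out to $\partial N^{n+1}$; hence each inclusion $V_{i}\hookrightarrow U_{i}$ induces a $\pi_1$-isomorphism, and these isomorphisms commute with the bonding maps. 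Slipping the $1$-dimensional base ray off $X$ produces a base ray $r'$ in $M^{n}$, and the resulting ladder shows that $\operatorname{pro}$-$\pi_1(\varepsilon(M^{n}),r')$ is pro-isomorphic to $\operatorname{pro}$-$\pi_1(\varepsilon(N^{n+1}),r)$. The same pushing argument applied to points and paths matches the ends, and since the normal spheres are connected, $M^{n}$ is $1$-ended.

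I expect the main obstacle to be this last step: carrying out the general-position homotopies not once but uniformly across the entire cofinal system of neighborhoods of infinity, so that the individual $\pi_1$-isomorphisms assemble into a genuine pro-isomorphism (a commuting ladder after passing to subsequences, in the sense of \S\ref{Subsection: defining pro-isomorphism}), and so that the base-ray adjustment of Exercise~\ref{Exercise: Fixing a proper ray} is respected. One must also keep track of the compact frontier pieces $\operatorname{Bd}_{N}U_{i}$ when pushing loops toward $\partial N^{n+1}$, and verify that the codimension estimates persist after restricting to neighborhoods of infinity. It is precisely this bookkeeping that fixes the dimension thresholds: the embedding of a $3$-complex forces ambient dimension at least $7$, yielding $n\geq7$ in the boundary-permitted case and $n\geq6$ (via the boundary) in the open case.
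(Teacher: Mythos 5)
Your proposal is correct and follows essentially the same route as the paper: realize the sequence by the $3$-dimensional mapping telescope of the previous Proposition, embed it properly and PL into Euclidean space, take a regular neighborhood $N$ (which already settles the boundary-permitted case, since $N\overset{p}{\simeq}X$), and in the boundaryless case pass to $\partial N$, using general position in codimension $\geq 3$ to slip the base ray, loops, and disks off the $3$-complex and into the boundary so that the neighborhoods of infinity of $\partial N$ and $N$ form a commuting $\pi_{1}$-ladder. Your explicit accounting of the two ambient dimensions ($\mathbb{R}^{n}$ with $n\geq7$ for the bounded case, $\mathbb{R}^{n+1}$ with $n\geq6$ for the open case) is a welcome clarification of a point the paper's proof leaves implicit.
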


In the study of compact manifolds, results like Poincar\'{e} duality place
significant restrictions on the topology of closed manifolds. A similar
phenomenon occurs in the study of noncompact manifolds. In that setting, it is
the open manifolds (and to a similar extent, manifolds with compact boundary)
that are the more rigidly restricted. If an open manifold is required to
satisfy additional niceness conditions, such as contractibility, finite
homotopy type, or inward tameness, even more rigidity comes into play. This is
at the heart of the study of noncompact manifolds, where a goal is to obtain
strong conclusions about the structure of a manifold from modest hypotheses.

\begin{exercise}
\label{Exercise: inward tame implies finite-ended}Show that an inward tame
manifold $M^{n}$ with compact boundary cannot have infinitely many ends.
(\emph{Hint: }Homology with $%
%TCIMACRO{\U{2124} }%
%BeginExpansion
\mathbb{Z}
%EndExpansion
_{2}$-coefficients simplifies the algebra and eliminates issues related to
orientability.) Show that this result fails if we omit the tameness hypothesis
or if $M^{n}$ is permitted to have noncompact boundary.
\end{exercise}

\begin{exercise}
Show that the inverse sequence realized in Example
\ref{Example: realizing Z<-x2-Z<-x2-Z} cannot occur as $\operatorname*{pro}%
$-$\pi_{1}\left(  \varepsilon(M^{n}),r\right)  $ for a contractible open
manifold. \emph{Hint:}\textbf{ }A look ahead to
\S \ref{Subsection: Another look at contractible open manifolds} may be helpful.
\end{exercise}

The trick used in the proof of Proposition
\ref{Prop: mapping telescope realization fro pro-p1} for obtaining a
\emph{contractible} mapping telescope with the same end behavior as one that
is homotopically nontrivial is often useful. Given an inverse sequence
$\left\{  K_{i}\right\}  $ of finite CW complexes, the \emph{augmented inverse
sequence} $\left\{  K_{i},f_{i}\right\}  ^{\bullet}$ is obtained by inserting
a singleton space at the beginning of $\left\{  K_{i},f_{i}\right\}  $; the
corresponding \emph{contractible mapping telescope }$\operatorname*{CTel}%
\left(  \left\{  K_{i},f_{i}\right\}  \right)  $ is contractible, but
identical to $\operatorname*{Tel}\left(  \left\{  K_{i},f_{i}\right\}
\right)  $ at infinity.

\subsection{On the role of the base
ray\label{Section: On the role of the base ray}}

We now begin the detailed discussion of the role of base rays in the
fundamental group at infinity---a topic more subtle and more interesting than
one might expect.

As hinted earlier, small changes in base ray, such as reparametrization or
deletion of an initial segment, do not alter $\operatorname*{pro}$-$\pi
_{1}\left(  \varepsilon\left(  X\right)  ,r\right)  $; this follows from a
more general result to be presented shortly. On the other hand, large changes
can obviously have an impact. For example, if $X$ is multi-ended and $r$ and
$s$ point to different ends, then $\operatorname*{pro}$-$\pi_{1}\left(
\varepsilon\left(  X\right)  ,r\right)  $ and $\operatorname*{pro}$-$\pi
_{1}\left(  \varepsilon\left(  X\right)  ,s\right)  $ provide information
about different portions of $X$ ---much as the traditional fundamental group
of a non-path-connected space provides different information when the base
point is moved from one component to another. When $r$ and $s$ point to the
same end of $X$, it is reasonable to expect $\operatorname*{pro}$-$\pi
_{1}\left(  \varepsilon\left(  X\right)  ,r\right)  $ and $\operatorname*{pro}%
$-$\pi_{1}\left(  \varepsilon\left(  X\right)  ,s\right)  $ to be
$\operatorname*{pro}$-isomorphic---but this is not the case either! At the
heart of the matter is the difference between the set of ends $\mathcal{E}%
\left(  X\right)  $ and the set of strong ends $\mathcal{SE}\left(  X\right)
$. The following requires some effort, but the proof is completely elementary.

\begin{proposition}
\label{Prop: proper rays}If proper rays $r$ and $s$ in $X$ are strongly
equivalent, i.e., properly homotopic, then $\operatorname*{pro}$-$\pi
_{1}\left(  \varepsilon\left(  X\right)  ,r\right)  $ and $\operatorname*{pro}%
$-$\pi_{1}\left(  \varepsilon\left(  X\right)  ,s\right)  $ are pro-isomorphic.

\begin{corollary}
If $X$ is strongly connected at infinity, i.e., $\left\vert \mathcal{SE}%
\left(  X\right)  \right\vert =1$, then $\operatorname*{pro}$-$\pi_{1}\left(
\varepsilon\left(  X\right)  \right)  $ is a well-defined invariant of $X$.
\end{corollary}
\end{proposition}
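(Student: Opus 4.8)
The plan is to construct not merely a pro-isomorphism but an honest level isomorphism between well-chosen representatives of $\operatorname*{pro}$-$\pi_{1}\left(\varepsilon(X),r\right)$ and $\operatorname*{pro}$-$\pi_{1}\left(\varepsilon(X),s\right)$, letting the proper homotopy furnish simultaneously the vertical isomorphisms and the commutativity of the ladder squares. First I would fix a proper homotopy $H\colon[0,\infty)\times[0,1]\to X$ with $H_{0}=r$ and $H_{1}=s$, and choose a cofinal sequence $\{U_{i}\}$ of neighborhoods of infinity. The crucial preliminary step exploits properness of $H$: since each $H^{-1}(X-U_{i})$ is compact it lies in $[0,n_{i}]\times[0,1]$ for some $n_{i}$, whence $H([n_{i},\infty)\times[0,1])\subseteq U_{i}$. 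Passing to a subsequence of $\{U_{i}\}$ (harmless, as a sequence is pro-isomorphic to each of its subsequences) and reparametrizing the $[0,\infty)$-coordinate of $H$ by a homeomorphism $\phi$, I may assume outright that $H([i,\infty)\times[0,1])\subseteq U_{i}$ for every $i$. This costs nothing: the reparametrized rays $r\circ\phi$ and $s\circ\phi$ track through $\{U_{i}\}$, so by the very definition of the fundamental group at infinity (cf. Exercise~\ref{Exercise: Fixing a proper ray}) they directly represent $\operatorname*{pro}$-$\pi_{1}\left(\varepsilon(X),r\right)$ and $\operatorname*{pro}$-$\pi_{1}\left(\varepsilon(X),s\right)$. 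Because $r\overset{p}{\simeq}s$ they point to the same end, so the components of each $U_{i}$ they select coincide; the \emph{same} system $\{U_{i}\}$ now computes both invariants, and each homotopy track $\beta_{i}(t):=H(i,t)$, running from $r(i)$ to $s(i)$, lies in $U_{i}$.

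Next I would assemble the ladder. For each $i$ the track $\beta_{i}\subseteq U_{i}$ induces a change-of-basepoint isomorphism $\widehat{\beta}_{i}\colon\pi_{1}(U_{i},r(i))\xrightarrow{\ \cong\ }\pi_{1}(U_{i},s(i))$. Taking these as the vertical maps, I claim the diagram whose rows are the $r$- and $s$-based representing sequences—with bonding maps $\lambda_{i}^{r},\lambda_{i}^{s}$ given by inclusion followed by change of basepoint along the segments $r|_{[i-1,i]}$, respectively $s|_{[i-1,i]}$—commutes on the nose. Unwinding the two composites $\widehat{\beta}_{i-1}\circ\lambda_{i}^{r}$ and $\lambda_{i}^{s}\circ\widehat{\beta}_{i}$ applied to a class $[\gamma]\in\pi_{1}(U_{i},r(i))$, one checks they agree precisely when the based loop
\[ \overline{\beta}_{i-1}\cdot(r|_{[i-1,i]})\cdot\beta_{i}\cdot\overline{(s|_{[i-1,i]})} \]
is null-homotopic in $U_{i-1}$.

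The heart of the argument is that this loop is exactly the $H$-image of the boundary of the rectangle $[i-1,i]\times[0,1]$: its four edges are the left track $\beta_{i-1}$, the bottom $r|_{[i-1,i]}$, the right track $\beta_{i}$, and the top $s|_{[i-1,i]}$. Since $[i-1,i]\times[0,1]\subseteq[i-1,\infty)\times[0,1]$, the normalization of the first paragraph gives $H([i-1,i]\times[0,1])\subseteq U_{i-1}$; thus the loop bounds the singular disk $H|_{[i-1,i]\times[0,1]}$ \emph{inside} $U_{i-1}$ and is therefore null-homotopic there. Hence every square commutes, and as each $\widehat{\beta}_{i}$ is an isomorphism we obtain a level isomorphism of the two inverse sequences, a fortiori a pro-isomorphism.

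The hard part will be the bookkeeping of the first paragraph: using properness of $H$ to confine the \emph{entire} homotopy strip over $[i,\infty)$ to $U_{i}$, and verifying that this normalization (subsequence plus reparametrization) genuinely yields representatives of the invariants being compared rather than something different. Once that confinement is secured the geometry is immediate—filling the ladder square is nothing more than restricting $H$ to a rectangle. Some care is also needed with the orientation and basepoint conventions when identifying the obstruction loop, but this is routine. Finally, the Corollary is then immediate: if $\left\vert\mathcal{SE}\left(X\right)\right\vert=1$ then any two proper rays in $X$ are properly homotopic, so the Proposition renders $\operatorname*{pro}$-$\pi_{1}\left(\varepsilon\left(X\right)\right)$ independent of the choice of base ray.
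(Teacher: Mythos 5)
Your proof is correct, and since the paper leaves this Proposition as an exercise (``the proof is completely elementary''), there is no authorial proof to diverge from; your argument is exactly the intended one, and it matches the rectangle-filling technique the paper itself uses in the sketch of Proposition \ref{Prop: strongly connected at infinity}. The two essential points are both handled properly: the properness of $H$ confines $H([n_i,\infty)\times[0,1])$ to $U_i$ so that a single subsequence-plus-reparametrization makes one cofinal system $\{U_i\}$ compute both invariants with tracks $\beta_i\subseteq U_i$, and the obstruction loop $\overline{\beta}_{i-1}\cdot r|_{[i-1,i]}\cdot\beta_i\cdot\overline{s|_{[i-1,i]}}$ for commutativity of each ladder square is precisely $H\vert_{\partial([i-1,i]\times[0,1])}$, hence bounds the disk $H\vert_{[i-1,i]\times[0,1]}$ inside $U_{i-1}$.
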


\begin{exercise}
Prove Proposition \ref{Prop: proper rays}.
\end{exercise}

\begin{remark}
\emph{There are useful analogies between the role played by base points in the
fundamental group and that played by base rays in the fundamental group at
infinity:}

\begin{itemize}
\item \emph{The fundamental group is a functor from the category of} pointed
spaces, i.e.\emph{, pairs }$\left(  Y,p\right)  $\emph{, where }$p\in
Y$\emph{, to the category of groups. In a similar manner, the fundamental
group at infinity is a functor from the proper category of pairs }$\left(
X,r\right)  $\emph{, where }$r$\emph{ is a proper ray in }$X$\emph{, to the
category }$\operatorname*{pro}$\emph{-$\mathcal{G}$roups.}

\item \emph{If there is a path }$\alpha$\emph{ in }$Y$\emph{ from }$p$\emph{
to }$q$\emph{ in }$Y$\emph{, there is a corresponding isomorphism
}$\widehat{\alpha}:\pi_{1}\left(  Y,p\right)  \rightarrow\pi_{1}\left(
Y,q\right)  $\emph{. If there is a proper homotopy in }$X$\emph{ between
proper rays }$r$\emph{ and }$s$\emph{, then there is a corresponding
pro-isomorphism between }$\operatorname*{pro}$\emph{-}$\pi_{1}\left(
\varepsilon\left(  X\right)  ,r\right)  $\emph{ and }$\operatorname*{pro}%
$\emph{-}$\pi_{1}\left(  \varepsilon\left(  X\right)  ,s\right)  $\emph{.}

\item \emph{Even for connected }$Y$\emph{ there may be no relationship between
}$\pi_{1}\left(  Y,p\right)  $\emph{ and }$\pi_{1}\left(  Y,q\right)  $\emph{
when there is no path connecting }$p$\emph{ to }$q$\emph{. Similarly, for a
1-ended space }$X$\emph{, }$\operatorname*{pro}$\emph{-}$\pi_{1}\left(
\varepsilon\left(  X\right)  ,r\right)  $\emph{ and }$\operatorname*{pro}%
$\emph{-}$\pi_{1}\left(  \varepsilon\left(  X\right)  ,s\right)  $\emph{ may
be very different if there is no proper homotopy from }$r$\emph{ to }%
$s$\emph{.\medskip}
\end{itemize}
\end{remark}

We wish to describe a 1-ended $Y$ with proper rays $r$ and $s$ for which
$\operatorname*{pro}$-$\pi_{1}\left(  \varepsilon\left(  X\right)  ,r\right)
$ and $\operatorname*{pro}$-$\pi_{1}\left(  \varepsilon\left(  X\right)
,s\right)  $ are not pro-isomorphic. We begin with an intermediate space.

\begin{example}
[Another space with $\mathcal{SE}\left(  X\right)  \neq\mathcal{E}\left(
X\right)  $]\label{Example: SE(X) not E(X)}Let $X=\operatorname*{CTel}\left(
\left\{  \mathbb{S}_{i}^{1},f_{i}\right\}  \right)  $ where each
$\mathbb{S}_{i}^{1}$ is a copy of the unit circle and $f_{i}:\mathbb{S}%
_{i}^{1}\rightarrow\mathbb{S}_{i-1}^{1}$ is the standard degree 2 map (see
Example \ref{Example: realizing Z<-x2-Z<-x2-Z}). If $p_{i}$ is the canonical
base point for $\mathbb{S}_{i}^{1}$ and $f_{i}\left(  p_{i}\right)  =p_{i-1}$
for all $i$, we may construct a \textquotedblleft straight\textquotedblright%
\ proper ray $r$ by concatenating the mapping cylinder lines $\alpha_{i}$
connecting $p_{i}$ and $p_{i-1}$. Construct a second proper ray $s$ by
splicing between each $\alpha_{i}$ and $\alpha_{i+1}$ a loop $\beta_{i}$ that
goes once in the positive direction around $\mathbb{S}_{i}^{1}$; in other
words, $s=\alpha_{0}\cdot\beta_{0}\cdot\alpha_{1}\cdot\beta_{1}\cdot\alpha
_{2}\cdot\cdots$. With some effort, it can be shown that $r$ and $s$ are not
properly homotopic. That observation is also a corollary of the next example.
\end{example}

\begin{example}
For each $i$, let $K_{i}$ be a wedge of two circles and let $g_{i}%
:K_{i}\rightarrow K_{i-1}$ send one of those circles onto itself by the
identity and the other onto itself via the standard degree 2 map. Let
$Y=\operatorname*{CTel}\left(  \left\{  K_{i},g_{i}\right\}  \right)  $. This
space may be viewed as the union of $X$ from Example
\ref{Example: SE(X) not E(X)} and an infinite cylinder $\mathbb{S}^{1}%
\times\lbrack0,\infty)$, coned off at the left end, with the union identifying
the ray $r$ with a standard ray in the product. By viewing $X$ as a subset of
$Y$, view $r$ and $s$ as proper rays in $Y.$

Choose neighborhoods of infinity $U_{i}$ as described in
\S \ \ref{Subsection: Inverse mapping telescopes}. Each has fundamental group
that is free of rank 2. If we let $F_{i}$ be the free group of rank 2 with
formal generators $a^{2i}$ and $b$ then, $\operatorname*{pro}$-$\pi_{1}\left(
\varepsilon\left(  Y\right)  ,r\right)  $ may be represented by
\[
\left\langle a,b\right\rangle \hookleftarrow\left\langle a^{2},b\right\rangle
\hookleftarrow\left\langle a^{4},b\right\rangle \hookleftarrow\cdots.
\]
Similarly $\operatorname*{pro}$-$\pi_{1}\left(  \varepsilon\left(  Y\right)
,s\right)  $ may be represented by
\[
\left\langle a,b\right\rangle \overset{\lambda_{1}}{\longleftarrow
}\left\langle a^{2},b\right\rangle \overset{\lambda_{2}}{\longleftarrow
}\left\langle a^{4},b\right\rangle \overset{\lambda_{3}}{\longleftarrow}%
\cdots.
\]
where $\lambda_{i}\left(  a^{2i}\right)  =a^{2i}$ and $\lambda_{i}\left(
b\right)  =a^{2i}ba^{-2i}$. Taking inverse limits, produces $\check{\pi}%
_{1}\left(  \varepsilon\left(  Y\right)  ,r\right)  \allowbreak=\allowbreak
\left\langle b\right\rangle \allowbreak\cong\allowbreak%
%TCIMACRO{\U{2124} }%
%BeginExpansion
\mathbb{Z}
%EndExpansion
$ and $\check{\pi}_{1}\left(  \varepsilon\left(  Y\right)  ,s\right)
\allowbreak=\allowbreak1$. Hence $\operatorname*{pro}$-$\pi_{1}\left(
\varepsilon\left(  Y\right)  ,r\right)  $ and $\operatorname*{pro}$-$\pi
_{1}\left(  \varepsilon\left(  Y\right)  ,s\right)  $ are not pro-isomorphic.
\end{example}

\begin{exercise}
Verify the assertions made in each of the two previous Examples.
\end{exercise}

The fact that a 1-ended space can have multiple fundamental groups at infinity
might lead one to doubt the value of that invariant. Over the next several
sections we provide evidence to counter that impression. For example, we will
investigate some properties of pro-$\pi_{1}$ that persist under change of base
ray. Furthermore, we will see that in some of the most important situations,
there is (verifiably in many cases and conjecturally in others) just one
proper homotopy class of base ray---causing the ambiguity to vanish. As an
example, the following important question is open.\medskip

\begin{conjecture}
[The Manifold Semistability Conjecture--version 1]%
\label{Question: semistability for universal covers}The universal cover of a
closed aspherical manifold of dimension greater than $1$ is always strongly
connected at infinity?\medskip
\end{conjecture}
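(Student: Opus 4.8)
The plan is to translate the topological assertion into a statement about the group $G=\pi_1(M)$ and then attack it with the standard machinery of semistability. First I would note that, since $M$ is closed aspherical of dimension $n>1$, the universal cover $\widetilde{M}$ is a contractible open $n$-manifold, hence $1$-ended by Exercise~\ref{Exercise: contractible open mflds are 1-ended}; thus $|\mathcal{E}(\widetilde{M})|=1$ and the content of the conjecture is exactly that $|\mathcal{SE}(\widetilde{M})|=1$, i.e. that $\widetilde{M}$ is \emph{semistable at infinity}. By Corollary~\ref{Corollary: proper homotopy equivalence of covering spaces} the proper homotopy type of $\widetilde{M}$ depends only on the homotopy type of $M$, which for aspherical $M$ depends only on $G$; so strong connectedness at infinity is an invariant of $G$ alone, and the conjecture becomes the claim that fundamental groups of closed aspherical manifolds are semistable. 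This is precisely the restriction of Geoghegan's general semistability conjecture to the class of $PD_n$ groups.

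Next I would reformulate semistability in terms of the end invariants developed above. Fixing a base ray $r$ and a cofinal system of connected neighborhoods of infinity $\{U_i\}$, the standard reformulation is that $\widetilde{M}$ is semistable if and only if $\operatorname{pro}$-$\pi_1(\varepsilon(\widetilde{M}),r)$ is pro-isomorphic to an inverse sequence whose bonding homomorphisms are surjective (a \emph{pro-epimorphic} system). This is the nonabelian analogue of the vanishing of a $\varprojlim^{1}$ term, and it is exactly what lets the ladder realizing a weak equivalence of two proper rays be filled in by a proper family of $2$-disks, promoting the automatic weak equivalence (valid since $\widetilde{M}$ is $1$-ended) to a proper homotopy. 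So the target reduces to producing, for each $i$, enough loops in $U_{i+1}$ to surject onto $\pi_1(U_i)$ up to the bonding maps.

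Here I would bring the manifold hypothesis to bear through Poincaré duality. Because $M$ is a closed aspherical $n$-manifold, $G$ is a $PD_n$ group, so $H^{k}(G;\mathbb{Z}G)$ vanishes for $k\neq n$ and is infinite cyclic for $k=n$; in particular $H^{2}(G;\mathbb{Z}G)$ is free abelian (indeed zero once $n>2$). By the theorem of Geoghegan and Mihalik, free abelianness of $H^{2}(G;\mathbb{Z}G)$ is the homological \emph{necessary} condition for semistability, so the first obstruction vanishes: duality controls $\operatorname{pro}$-$H_1(\varepsilon(\widetilde{M}))$ and hence the abelianization of $\operatorname{pro}$-$\pi_1$, and there is no homological reason for that system to fail to be pro-epimorphic. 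For the large subclasses where $G$ carries extra geometry---word-hyperbolic groups, CAT(0) groups, or groups admitting a suitable graph-of-groups splitting---I would then invoke, or reprove via the cocompact $G$-action on $\widetilde{M}$, the established semistability results, building the required surjecting loops equivariantly from the splitting or from the visual boundary.

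The hard part, and the reason this remains a conjecture, is precisely the gap between this abelian information and the nonabelian statement actually needed. Poincaré duality says nothing about commutators: at present nothing rules out that $\operatorname{pro}$-$\pi_1(\varepsilon(\widetilde{M}),r)$ fails to be pro-epimorphic even while its abelianization is as perfect as the duality forces it to be. Closing this gap uniformly over all aspherical manifold groups is equivalent to the open semistability conjecture for $PD_n$ groups, so a complete proof must either exhibit a genuinely new mechanism by which duality forces pro-epimorphicity at the nonabelian level, or carry out a structural case analysis of $PD_n$ groups for which no uniform completion is currently known. I expect this last step to be the true obstacle.
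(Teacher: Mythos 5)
The statement you were asked about is labeled a \emph{conjecture} in the paper, and the paper offers no proof of it: it is presented explicitly as an open problem (a special case of Geoghegan's Semistability Conjecture, restricted to fundamental groups of closed aspherical manifolds). So there is no argument of the author's to compare yours against, and your proposal --- which correctly stops short of claiming a proof --- is the right response. Your reductions all match the paper's own framework: $1$-endedness of $\widetilde{M}$ is Exercise~\ref{Exercise: contractible open mflds are 1-ended}; the passage from the topology of $\widetilde{M}$ to an invariant of $G$ alone uses Corollary~\ref{Corollary: proper homotopy equivalence of covering spaces}; the equivalence between strong connectedness at infinity and semistability of $\operatorname{pro}$-$\pi_1$ for some (hence any) ray is Proposition~\ref{Prop: strongly connected at infinity}, whose proof is exactly the ladder-filling argument you describe; and your observation that Poincar\'e duality settles the $H_1$-version is precisely the paper's remark that the $H_1$-semistability of aspherical manifold groups follows from Exercise~\ref{Exercise: contractible open n-manifolds have pro-homology of R^n}.

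The one place to be careful is the role you assign to the Geoghegan--Mihalik criterion: freeness of $H^2(G;\mathbb{Z}G)$ is a consequence of semistability, so verifying it for $PD_n$ groups removes a potential disproof but contributes nothing toward a proof; as you say yourself, the entire difficulty is that duality controls only the abelianized system $\operatorname{pro}$-$H_1$ and gives no purchase on whether $\operatorname{pro}$-$\pi_1$ itself is pro-epimorphic. Your correctly identified ``gap'' is not a flaw in your argument --- it is the open problem itself, and the honest thing would be to present your text as a reduction and survey of partial results (hyperbolic, CAT(0), one-relator, Coxeter, Artin, and extension cases, as in the paper's Theorem~\ref{Theorem: Some groups that are semistable}) rather than as a proof attempt.
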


We stated the above problem as a conjecture because it is a special case of
the following better-known conjecture. For now the reader can guess at the
necessary definitions. The meaning will be fully explained in
\S \ref{Section: Exploring the ends of groups}. The naming of these
conjectures will be explained over the next couple of pages.\medskip

\begin{conjecture}
[The Semistability Conjecture-version 1]Every finitely presented 1-ended group
is strongly connected at infinity.
\end{conjecture}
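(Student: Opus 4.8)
The plan is to convert the group-theoretic hypothesis into a topological statement about a single canonical space and then attack that statement using whatever structure $G$ provides. First I would fix a finite presentation of $G$ and pass to the universal cover $\widetilde{K}$ of its presentation $2$-complex $K$, a simply connected, locally finite complex on which $G$ acts freely and cocompactly. Strong connectedness at infinity is a proper homotopy invariant (Proposition \ref{Prop: proper homotopy invariance of end invariants}), and---by the standard observation that attaching cells of dimension $\geq 3$ leaves $\operatorname*{pro}$-$\pi_1$ at infinity unchanged---it depends only on $G$ and not on the chosen presentation. This legitimizes calling $G$ \emph{semistable} when $\widetilde{K}$ is strongly connected at infinity, and reduces the conjecture to showing that any two proper rays in the (necessarily $1$-ended) complex $\widetilde{K}$ are properly homotopic.

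The next step is to recast this as the Mittag--Leffler (pro-epimorphic) condition: for a $1$-ended space, strong connectedness at infinity is equivalent to $\operatorname*{pro}$-$\pi_1(\varepsilon(\widetilde{K}),r)$ being pro-isomorphic to an inverse sequence with surjective bonding homomorphisms. Concretely, given proper rays $r$ and $s$, one attempts to build a proper homotopy between them rung-by-rung along the infinite ladder $L_{[0,\infty)}$, filling each successive square with a $2$-disk in $\widetilde{K}$; the obstruction to continuing the filling arbitrarily far out is exactly the failure of the bonding maps to be surjective. The cocompact $G$-action is the essential tool here, since it supplies uniform bounds on the filling disks and thereby guarantees that the homotopy one constructs is proper.

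To actually produce the fillings, I would exploit any available decomposition of $G$. When $G$ splits as an amalgamated product or HNN extension over a finitely presented subgroup, the Bass--Serre tree endows $\widetilde{K}$ with a graph-of-spaces structure, and one can propagate proper homotopies across the edge spaces and induct over the tree, provided the vertex and edge groups are themselves well-behaved at infinity. This is the engine behind the known cases---one-relator groups, Coxeter and Artin groups, word-hyperbolic groups (via their $\mathcal{Z}$-boundaries), and large classes of CAT(0) groups. A purely algebraic alternative is to invoke the cohomological criterion identifying semistability of a $1$-ended finitely presented $G$ with freeness of $H^2(G;\mathbb{Z}G)$ as an abelian group, and then to try to establish that freeness directly.

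The hard part---and the reason this remains a conjecture---is that neither route is available for a completely general finitely presented group. There is no structure theorem splitting an arbitrary such $G$ into pieces with controlled topology at infinity, so the inductive Bass--Serre strategy has no foundation on which to build; and freeness of $H^2(G;\mathbb{Z}G)$ is itself unknown in general. A genuine proof would therefore require either a new uniform technique for manufacturing proper $2$-disk fillings that presupposes no splitting of $G$, or a reduction of the general case to the structured cases already understood. Absent such an advance, the most I can honestly propose is the reduction above, together with the remark that the conjecture holds for every class of groups to which the relevant structure theory applies.
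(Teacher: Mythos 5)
You were asked to prove a statement that the paper itself presents as an open conjecture: the paper offers no proof, only a reformulation in terms of universal covers of finite complexes (Conjecture \ref{Conjecture: Semistability-detailed version}), the equivalence of strong connectedness at infinity with semistability of $\operatorname*{pro}$-$\pi_{1}$ (Proposition \ref{Prop: strongly connected at infinity}), and a list of verified special cases (Theorem \ref{Theorem: Some groups that are semistable}). Your proposal correctly recognizes this. The reduction you describe---independence of the chosen presentation via attaching cells of dimension $\geq 3$, the passage to the Mittag--Leffler condition, and the rung-by-rung ladder-filling picture with the obstruction located in the failure of surjectivity of the bonding maps---is precisely the paper's own framing, and your conclusion that no general argument is currently available is the correct one. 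So there is nothing to certify as a proof here; the genuine gap you identify (no uniform method for producing the proper $2$-disk fillings for an arbitrary finitely presented group) is exactly the gap that makes this a conjecture rather than a theorem.

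Two cautions on the details. First, the claim that the cocompact $G$-action ``supplies uniform bounds on the filling disks and thereby guarantees that the homotopy one constructs is proper'' is too optimistic as stated: cocompactness gives no fillings at all in general---if it did, the conjecture would be a theorem---and in the paper's sketch the properness of the homotopy comes from arranging that each square $\square_{i}$ contracts in $X-C_{i}$, which is available only after the semistability hypothesis has been imposed. Second, the ``cohomological criterion identifying semistability with freeness of $H^{2}(G;\mathbb{Z}G)$'' overstates what is known: the Geoghegan--Mihalik result gives one implication (semistability implies $H^{2}(G;\mathbb{Z}G)$ is free abelian), and the converse is itself open, so establishing freeness directly would not prove the conjecture. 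With those corrections, your survey of the known cases and of the structural obstruction to a general argument is accurate and consistent with the paper.
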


\subsection{Flavors of inverse sequences of groups}

When dealing with pro-isomorphism classes of inverse sequences of groups,
general properties are often more significant than the sequences themselves.
In this section we discuss several such properties.\smallskip

Let $G_{0}\overset{\mu_{1}}{\longleftarrow}G_{1}\overset{\mu_{2}%
}{\longleftarrow}G_{2}\overset{\mu_{3}}{\longleftarrow}G_{2}\overset{\mu
_{4}}{\longleftarrow}\cdots$ be an inverse sequence of groups. We say that
$\left\{  G_{i},\mu_{i}\right\}  $ is

\begin{itemize}
\item \emph{pro-trivial} if it is pro-isomorphic to the trivial inverse
sequence $1\leftarrow1\leftarrow1\leftarrow1\leftarrow\cdots,$

\item \emph{stable} if it is pro-isomorphic to an inverse sequence $\left\{
H_{i},\lambda_{i}\right\}  $ where each $\lambda_{i}$ is an isomorphism, or
equivalently, a constant inverse sequence $\left\{  H,\operatorname*{id}%
_{H}\right\}  $,

\item \emph{semistable }(or \emph{Mittag-Leffler, }or\emph{\ pro-epimorphic})
if it is pro-isomorphic to an $\left\{  H_{i},\lambda_{i}\right\}  $, where
each $\lambda_{i}$ is an epimorphism, and

\item \emph{pro-monomorphic} if it is pro-isomorphic to an $\left\{
H_{i},\lambda_{i}\right\}  $, where each $\lambda_{i}$ is a
monomorphism.\medskip
\end{itemize}

\noindent The following easy exercise will help the reader develop intuition
for the above definitions, and for the notion of pro-isomorphism itself.

\begin{exercise}
\label{Exercise: not pro-mono and not pro-epi}Show that an inverse sequence of
non-injective epimorphisms cannot be pro-monomorphic, and that an inverse
sequence of non-surjective monomorphisms cannot be semistable.
\end{exercise}

\begin{exercise}
\label{Exercise: stable inverse sequences}Show that if $\left\{  G_{i},\mu
_{i}\right\}  $ is stable and thus pro-isomorphic to some $\left\{
H,\operatorname*{id}_{H}\right\}  $, then $H$ is well-defined up to
isomorphism. In that case $H\cong\underleftarrow{\lim}\left\{  G_{i},\mu
_{i}\right\}  $.
\end{exercise}

A troubling aspect of the above definitions is that the concepts appear to be
extrinsic, requiring a second unseen sequence, rather than being intrinsic to
the given sequence. A standard result corrects that misperception.

\begin{proposition}
\label{Prop: passing to images}An inverse sequence of groups $\left\{
G_{i},\lambda_{i}\right\}  $ is stable if and only if it contains a
subsequence for which \textquotedblleft passing to images\textquotedblright%
\ results in an inverse sequence of isomorphisms, in other words: we may
obtain a diagram of the following form, where all unlabeled homomorphisms are
obtained by restriction or inclusion.%
\begin{equation}
\begin{diagram} G_{i_{0}} & & \lTo^{\lambda_{i_{0},i_{1}}} & & G_{i_{1}} & & \lTo^{\lambda_{i_{1},i_{2}}} & & G_{i_{2}} & & \lTo^{\lambda_{i_{2},i_{3}}}& & G_{i_{3}}& \cdots\\ & \luInto & & \ldTo & & \luInto & & \ldTo & & \luInto & & \ldTo & \\ & & \operatorname{Im}\left( \lambda_{i_{0},i_{1}}\right) & & \lTo^{\cong} & & \operatorname{Im}\left( \lambda_{i_{1},i_{2}}\right) & &\lTo^{\cong} & & \operatorname{Im}\left( \lambda_{i_{2},i_{3}}\right) & & \lTo^{\cong} & &\cdots & \\ \end{diagram}\tag{3.10}%
\label{Diagram: passing to images}%
\end{equation}

\noindent Analogous statements are true for the pro-epimorphic and
pro-monomorphic sequences; in those cases we require maps in the bottom row of
(\ref{Diagram: passing to images}) to be epimorphisms, and monomorphisms, respectively.
\end{proposition}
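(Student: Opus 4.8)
The plan is to prove all three equivalences simultaneously by exploiting the ladder diagram that \emph{defines} pro-isomorphism, reading off injectivity and surjectivity of the slanted maps directly from the hypothesis on the comparison sequence. I will treat the stable case in full and then observe that the epimorphic and monomorphic versions drop out of the same computation with one ingredient weakened.

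First I would dispatch the ``if'' direction, which is essentially built into diagram (\ref{Diagram: passing to images}). Given a subsequence $\{G_{i_k}\}$ whose consecutive images $\operatorname{Im}(\lambda_{i_k,i_{k+1}})\subseteq G_{i_k}$, with the restricted bonding maps, form a sequence of isomorphisms, I would verify that diagram (\ref{Diagram: passing to images}) is a genuine commuting ladder when the vertical-up arrows are the inclusions $\operatorname{Im}(\lambda_{i_k,i_{k+1}})\hookrightarrow G_{i_k}$ and the vertical-down arrows are the corestrictions $G_{i_{k+1}}\twoheadrightarrow\operatorname{Im}(\lambda_{i_k,i_{k+1}})$ of the bonding maps. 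The upper triangles commute because each bonding map factors as corestriction followed by inclusion, and the lower triangles commute because $\lambda_{i_k,i_{k+1}}$ carries $\operatorname{Im}(\lambda_{i_{k+1},i_{k+2}})$ into $\operatorname{Im}(\lambda_{i_k,i_{k+1}})$, using $\lambda_{i_k,i_{k+2}}=\lambda_{i_k,i_{k+1}}\circ\lambda_{i_{k+1},i_{k+2}}$. This exhibits the subsequence as pro-isomorphic to the image sequence; since a sequence is always pro-isomorphic to its subsequences, $\{G_i,\lambda_i\}$ is pro-isomorphic to a sequence of isomorphisms and is therefore stable. The identical diagram settles the epimorphic and monomorphic cases verbatim, only the type of the bottom-row maps changing.

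For the ``only if'' direction I would unwind stability into its defining data: stability means $\{G_i,\lambda_i\}$ is pro-isomorphic to a sequence $\{H_i,\psi_i\}$ of isomorphisms, so after passing to subsequences there is a commuting ladder furnishing homomorphisms $a_k\colon H_{j_k}\to G_{i_k}$ and $b_k\colon G_{i_{k+1}}\to H_{j_k}$ with $a_kb_k=\lambda_{i_k,i_{k+1}}$ and $b_ka_{k+1}=\psi_{j_k,j_{k+1}}$, where $\psi_{j_k,j_{k+1}}$ is the (iso) composite bonding map of the $H$-sequence. The key observations are then purely formal: since $b_ka_{k+1}$ is an isomorphism, $b_k$ is surjective and $a_{k+1}$ is injective. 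Surjectivity of $b_k$ forces $\operatorname{Im}(\lambda_{i_k,i_{k+1}})=\operatorname{Im}(a_kb_k)=a_k(H_{j_k})$, so $a_k$ restricts to an isomorphism of $H_{j_k}$ onto this image; a short chase then shows the restricted bonding map $\operatorname{Im}(\lambda_{i_{k+1},i_{k+2}})\to\operatorname{Im}(\lambda_{i_k,i_{k+1}})$ corresponds, under the $a$'s, to $\psi_{j_k,j_{k+1}}$ and is hence an isomorphism. A harmless shift dropping the $k=0$ term avoids the one place where $a_0$ need not be injective. Thus passing to images on this subsequence yields isomorphisms. For the epimorphic statement, $b_ka_{k+1}$ epi still gives $b_k$ surjective, so again $\operatorname{Im}(\lambda_{i_k,i_{k+1}})=a_k(H_{j_k})$ and surjectivity of $\psi_{j_k,j_{k+1}}$ makes the restricted bonding maps surjective; for the monomorphic statement, $b_ka_{k+1}$ mono gives $a_{k+1}$ injective, and injectivity of the restricted bonding map follows by writing elements of $\operatorname{Im}(\lambda_{i_{k+1},i_{k+2}})$ as $a_{k+1}(y)$ and cancelling first $a_k$ and then $\psi_{j_k,j_{k+1}}$.

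I expect the only real friction to be bookkeeping rather than ideas: aligning the index ranges of the two interleaved subsequences $\{i_k\}$ and $\{j_k\}$ so that every triangle of the ladder is defined and commutes, and correctly identifying which slanted map is an inclusion, a corestriction, or a restriction in each triangle. The ``hard part,'' such as it is, is resisting the temptation to route the argument through the Mittag--Leffler (stabilizing-images) characterization: the ladder-diagram argument above is self-contained and, pleasingly, handles all three flavors at once, whereas the Mittag--Leffler route would demand separate stabilization arguments together with an extra diagonal-subsequence construction.
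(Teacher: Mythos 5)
The paper leaves this proposition as an exercise ("Proof of the above is an elementary exercise"), so there is no in-text proof to compare against; your ladder argument is the intended one, and it is correct in all three cases, including the observation that $b_ka_{k+1}=\psi_{j_k,j_{k+1}}$ forces $b_k$ epi and $a_{k+1}$ mono, which is the whole engine of the converse direction. Two small points to tighten in a final write-up: extracting a single interleaved ladder between $\left\{  G_{i},\lambda_{i}\right\}  $ and a sequence of isomorphisms uses the (standard, but unstated) fact that the ladder relation is already transitive up to subsequences, since the paper defines pro-isomorphism as the equivalence relation merely \emph{generated} by that rule; and the index shift discarding $k=0$ (where $a_{0}$ need not be injective) is needed in the pro-monomorphic case as well as the stable case, because cancelling $a_{k}$ there likewise requires $k\geq1$.
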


Proof of the above is an elementary exercise, as is the following:

\stepcounter{theorem}

\begin{proposition}
An inverse sequence is stable if and only if it is both pro-\allowbreak
epi\-mor\-ph\-ic and pro-\allowbreak mon\-o\-mor\-ph\-ic.
\end{proposition}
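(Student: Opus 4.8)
The plan is to handle the two implications separately, with essentially all the work in the converse. The forward implication is immediate from the definitions: if $\{G_i,\mu_i\}$ is stable it is pro-isomorphic to a sequence $\{H_i,\lambda_i\}$ in which every $\lambda_i$ is an isomorphism; since an isomorphism is in particular an epimorphism and a monomorphism, the sequence is by definition both pro-epimorphic and pro-monomorphic. I would dispose of this in one sentence and concentrate on the reverse direction.

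For the converse, assume $\{G_i,\mu_i\}$ is both pro-epimorphic and pro-monomorphic, and the goal is to produce a subsequence whose consecutive bonding maps are isomorphisms, which is exactly stability. My first move is to apply the pro-epimorphic half of Proposition~\ref{Prop: passing to images}: after passing to a subsequence and then to images, I obtain an inverse sequence $\{P_k,\nu_k\}$, pro-isomorphic to the original, in which every bonding map $\nu_k$ is an honest epimorphism. Because the flavors of inverse sequences are properties of the pro-isomorphism class (the definitions read ``pro-isomorphic to a sequence of $\dots$''), this new sequence of epimorphisms is still pro-monomorphic. The value of this reduction is that I have upgraded ``pro-epimorphic'' to a sequence all of whose maps are literally surjective.

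The key observation comes next. For a sequence of epimorphisms every composite $\nu_{k_j,k_{j+1}}$ is again surjective, so $\operatorname{Im}(\nu_{k_j,k_{j+1}})=P_{k_j}$ is the full group and the operation of ``passing to images'' does nothing. Now I invoke the pro-monomorphic half of Proposition~\ref{Prop: passing to images} applied to $\{P_k,\nu_k\}$: it supplies a subsequence for which the image-row maps are monomorphisms. Since here the image row coincides with the ordinary composite bonding maps, those composites are monomorphisms as well as epimorphisms, hence isomorphisms. A subsequence of isomorphisms exhibits $\{P_k,\nu_k\}$, and therefore the original sequence, as stable; the final combining with ``pro-epi $+$ pro-mono $\Rightarrow$ stable'' then also yields the companion statement that stable equals both conditions.

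The main obstacle to anticipate is that the subsequences produced by the two characterizations in Proposition~\ref{Prop: passing to images} are a priori different, so one cannot simply overlay the two image diagrams. The device that removes the difficulty is precisely the order of operations above: first convert ``pro-epimorphic'' into a genuine sequence of surjections, after which the image construction demanded by the pro-monomorphic characterization becomes trivial and can be layered on top without conflict. (Heuristically this is what Exercise~\ref{Exercise: not pro-mono and not pro-epi} already predicts---a pro-monomorphic sequence cannot be assembled from non-injective epimorphisms, so the surjections $\nu_k$ are pushed toward injectivity---but it is Proposition~\ref{Prop: passing to images} that turns the heuristic into the clean subsequence argument.) I expect the final writeup to be only a few lines once this sequencing is in place.
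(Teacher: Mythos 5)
Your argument is correct. The paper leaves this proposition as an exercise rather than proving it, but your route is plainly the intended one: it rests entirely on the immediately preceding Proposition~\ref{Prop: passing to images}, first using its pro-epimorphic half to replace the sequence by a pro-isomorphic sequence of genuine surjections, and then noting that for such a sequence ``passing to images'' is the identity, so the pro-monomorphic criterion forces the composite bonds of a subsequence to be injective surjections, hence isomorphisms.
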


\begin{exercise}
Prove the previous two Propositions.
\end{exercise}

\subsection{Some topological interpretations of the previous
definitions\label{Subsection: topological interpretations}}

It is common practice to characterize simply connected spaces topologically
(without mentioning the word `group'), as path-connected spaces in which every
loop contracts to a point. In that spirit, we provide topological
characterizations of spaces whose fundamental groups at infinity possess some
of the algebraic properties discussed in the previous section.

\begin{proposition}
\label{Prop: top characerization 1-connected at infinity}For a 1-ended space
$X$ and a proper ray $r$, $\operatorname*{pro}$-$\pi_{1}\left(  \varepsilon
\left(  X\right)  ,r\right)  $ is

\begin{enumerate}
\item pro-trivial if and only if: for any compact $C\subseteq X$, there exists
a larger compact set $D$ such that every loop in $X-D$ contracts in $X-C$,

\item semistable if and only if: for any compact $C\subseteq X$, there exists
a larger compact set $D$ such that, for every still larger compact $E$, each
pointed loop $\alpha$ in $X-D$ based on $r$ can be homotoped into $X-E$ via a
homotopy into $X-C$ that slides the base point along $r$, and

\item pro-monomorphic if and only if there exists a compact $C\subseteq X$
such that, for every compact set $D$ containing $C$, there exists a compact
$E$ such that every loop in $X-E$ that contracts in $X-C$ contracts in $X-D$.
\end{enumerate}

\begin{proof}
This is a straightforward exercise made easier by applying Proposition
\ref{Prop: passing to images}.
\end{proof}
\end{proposition}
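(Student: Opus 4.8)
The plan is to fix one concrete representative and then translate each of the three algebraic conditions into the loop-and-compactum language via a fixed dictionary. First I would choose an efficient exhaustion $\{K_i\}$ with connected complementary neighborhoods of infinity $U_i = X - K_i$ arranged so that $r([i,\infty)) \subseteq U_i$, so that $\operatorname{pro}$-$\pi_1(\varepsilon(X),r)$ is represented by $\{\pi_1(U_i,r(i)),\lambda_i\}$, where the composite bonding map $\lambda_{i,k}\colon \pi_1(U_k,r(k)) \to \pi_1(U_i,r(i))$ is inclusion followed by the change-of-basepoint isomorphism along $r|_{[i,k]}$. The dictionary has two halves: (i) since $\{K_i\}$ is cofinal, every compact $C$ sits inside some $K_i$ and every $K_i$ is such a $C$, which lets me pass freely between the ``arbitrary compacta'' of the statement and the indices of the sequence (producing compacta from the sequence in one direction, and feeding sequence-compacta into the hypothesis in the other); and (ii) for a loop $\gamma$ in $U_k$ joined to the base ray, $\lambda_{i,k}([\gamma])=1$ says exactly that $\gamma$ contracts in $U_i$, while $[\alpha]\in\operatorname{Im}(\lambda_{i,k})$ says exactly that the class of $\alpha$ in $\pi_1(U_i)$ is carried, rel the base ray, by a loop lying in $U_k$.

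With this dictionary in hand, Proposition~\ref{Prop: passing to images} (and its easy kernel-analogue) supplies the intrinsic forms of the three properties, which I would then match term-for-term. For (1), pro-triviality is equivalent to: for each $i$ there is $k$ with $\lambda_{i,k}$ trivial; under the dictionary this is precisely ``every loop in $X-D$ contracts in $X-C$'' once $C\subseteq K_i$ and $D\supseteq K_k$. For (2), semistability is equivalent to stabilization of images, i.e.\ for each $i$ there is $j$ with $\operatorname{Im}(\lambda_{i,k})=\operatorname{Im}(\lambda_{i,j})$ for all $k\ge j$; unwinding the base-ray transport, the equation $\lambda_{i,j}([\alpha])=\lambda_{i,k}([\beta])$ in $\pi_1(U_i,r(i))$ is realized by a homotopy in $U_i$ carrying the based loop $\alpha$ (in $U_j$) to the loop $\beta$ (in $U_k$) while dragging the basepoint along $r$ from $r(j)$ to $r(k)$ — exactly the basepoint-sliding homotopy into $X-E$ inside $X-C$ described in the statement. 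For (3), I would first record the elementary fact (again via Proposition~\ref{Prop: passing to images}) that pro-monomorphicity is equivalent to the existence of an index $i_0$ such that for every $j\ge i_0$ some later $k$ satisfies $\ker\lambda_{i_0,k}=\ker\lambda_{j,k}$; since $\ker\lambda_{j,k}\subseteq\ker\lambda_{i_0,k}$ always holds, the nontrivial inclusion $\ker\lambda_{i_0,k}\subseteq\ker\lambda_{j,k}$ translates directly into ``every loop in $X-E$ that contracts in $X-C$ already contracts in $X-D$.'' In each part I would check both directions by the quantifier-matching described above.

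The main obstacle is the base-ray bookkeeping in part (2). Parts (1) and (3) concern only which loops die, so they are insensitive to how loops are based and the statements are (correctly) base-ray free; but semistability concerns which loops survive, and comparing images at different levels forces one to transport basepoints along $r$. The delicate step is converting the algebraic equality of images into an honest free homotopy whose basepoint track runs along the ray, and conversely reading off such an equality from a basepoint-sliding homotopy — this is where one must be careful that the conjugations by $r|_{[i,j]}$ and $r|_{[j,k]}$ assemble correctly. A secondary, purely algebraic nuisance is verifying the kernel-stabilization reformulation used in (3); this is routine but fiddly, following by iterating the injectivity of the restricted maps furnished by Proposition~\ref{Prop: passing to images}, together with the observation that injectivity of $\lambda_{i_0,j}$ on a subgroup forces injectivity of every intermediate restriction on that subgroup.
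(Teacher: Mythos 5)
Your proposal is correct and is exactly the route the paper intends: the paper's proof consists of the single remark that this is a straightforward exercise via Proposition~\ref{Prop: passing to images}, and your plan carries out precisely that exercise --- fixing a representative $\{\pi_1(U_i,r(i)),\lambda_i\}$, invoking the intrinsic (image/kernel) reformulations of pro-trivial, Mittag-Leffler, and pro-monomorphic, and matching quantifiers against the cofinal exhaustion. You also correctly isolate the one genuinely delicate point, the base-ray bookkeeping in part~(2), and correctly observe that parts~(1) and~(3) are kernel conditions and hence base-ray independent.
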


Note that the topological condition in part (a) of Proposition
\ref{Prop: top characerization 1-connected at infinity} makes no mention of a
base ray. So (for 1-ended spaces) the property of having pro-trivial
fundamental group at infinity is independent of base ray; such spaces are
called \emph{simply connected at infinity}. Similarly, the topological
condition in (c) is independent of base ray; 1-ended spaces with that property
are called \emph{pro-monomorphic at infinity }(or simply
\emph{pro-monomorphic}). And despite the (unavoidable) presence of a base ray
in the topological portion of (b), there does exist an elegant and useful
characterization of spaces with semistable pro-$\pi_{1}$.

\begin{proposition}
\label{Prop: strongly connected at infinity}A 1-ended space $X$ is strongly
connected at infinity if and only if there exists a proper ray $r$ for which
$\operatorname*{pro}$-$\pi_{1}\left(  \varepsilon(X),r\right)  $ is semistable.

\begin{proof}
[Sketch of proof]First we outline a proof of the reverse implication. Let $r$
be as in the hypothesis and let $s$ be another proper ray. By 1-endedness,
there is a proper map $h$ of the infinite ladder $L_{[0,\infty)}%
=([0,\infty)\times\left\{  0,1\right\}  )\cup(%
%TCIMACRO{\U{2115} }%
%BeginExpansion
\mathbb{N}
%EndExpansion
\times\lbrack0,1])$ into $X$, with $\left.  h\right\vert _{[0,\infty)\times
0}=r$ and $\left.  h\right\vert _{[0,\infty)\times1}=s$. For convenience,
choose an exhaustion of $X$ by compacta $\varnothing=C_{0}\subseteq
C_{1}\subseteq C_{2}\subseteq\cdots$ with the property that the subladder
$L_{[i,\infty)}$ is sent into $U_{i}=\overline{X-C_{i}}$ for each $i\geq1$. As
a simplifying hypothesis, assume that all bonding homomorphisms in the
corresponding inverse sequence%
\begin{equation}
\pi_{1}\left(  X,p_{0}\right)  \overset{\lambda_{1}}{\longleftarrow}\pi
_{1}\left(  X-C_{1},p_{1}\right)  \overset{\lambda_{2}}{\longleftarrow}\pi
_{1}\left(  X-C_{2},p_{2}\right)  \overset{\lambda_{3}}{\longleftarrow}%
\cdots\tag{3.11}\label{above}%
\end{equation}
are surjective. (For a complete proof, one should instead apply Proposition
\ref{Prop: top characerization 1-connected at infinity} inductively.)

We would like to extend $h$ to a proper map of $[0,\infty)\times\left[
0,1\right]  $ into $X$. To that end, let $\square_{i}$ be the loop in $X$
corresponding to $r_{i+1}\cup e_{i+1}\cup s_{i+1}^{-1}\cup e_{i}^{-1}$ in
$L_{[0,\infty)}$. (Here $r_{i+1}=\left.  r\right\vert _{\left[  i,i+1\right]
}$ and $s_{i+1}=\left.  s\right\vert _{\left[  i,i+1\right]  }$;
$e_{j}=\left.  h\right\vert _{j\times\lbrack0,1]}$, the $j^{\text{th}}$
\textquotedblleft rung\textquotedblright\ of the ladder.)

If each $\square_{i}$ contracts in $X$ we can use those contractions to extend
$h$ to $[0,\infty)\times\left[  0,1\right]  $; if each $\square_{i}$ contracts
in $X-C_{i}$ the resulting extension is proper (as required). The idea of the
proof is to arrange those conditions. Begin inductively with $\square_{0}$. If
this loop does not contract in $X$, we make it so by rechoosing $e_{1}$ as
follows: choose a loop $\alpha_{1}$ based at $p_{1}$ so that $r_{1}\cdot
\alpha_{1}\cdot r_{1}^{-1}$ is equal to $\square_{0}$ in $\pi_{1}\left(
X,p_{0}\right)  $. Replace $e_{1}$ with the rung $\hat{e}_{1}=\alpha_{1}%
^{-1}\cdot e_{1}$. The newly modified $\square_{0}$ contracts in $X$, as
desired. Now move to the correspondingly modified $\square_{1}$ viewed as an
element of $\pi_{1}\left(  X-C_{1},p_{1}\right)  $. If it is nontrivial,
choose a loop $\alpha_{2}$ in $X-C_{2}$ based at $p_{2}$ such that
$\lambda_{2}\left(  \alpha_{2}\right)  =r_{2}\cdot\alpha_{2}\cdot r_{2}%
^{-1}=\square_{1}$. Replacing $e_{2}$ with $\hat{e}_{2}=\alpha_{2}^{-1}\cdot
e_{2}$ results in a further modified $\square_{1}$ that contracts in $X-C_{1}%
$. Continue this process inductively to obtain a proper homotopy
$H:[0,\infty)\times\left[  0,1\right]  \rightarrow X$ between $r$ and $s$.

For the reverse implication, assume that (\ref{above}) is not semistable. One
creates a proper ray $s$ not properly homotopic to $r$ by affixing to each
vertex $p_{i}$ of $r$ a loop in $\beta_{i}$ in $X-C_{i}$ that that does not
lie in the image of $\pi_{1}\left(  X-C_{i+1},p_{i+1}\right)  $. More
specifically
\[
s=r_{1}\cdot\alpha_{1}\cdot r_{2}\cdot\alpha_{2}\cdot r_{3}\cdot\alpha
_{3}\cdot\cdots.
\]

\end{proof}
\end{proposition}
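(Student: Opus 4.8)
The plan is to prove the two implications separately, writing (A) for ``$X$ is strongly connected at infinity'' and (B) for ``there exists a proper ray $r$ with $\operatorname*{pro}$-$\pi_1(\varepsilon(X),r)$ semistable'', and then to upgrade from ``some ray'' to ``every ray'' using Proposition~\ref{Prop: proper rays}. The substantive direction is (B)$\Rightarrow$(A); the converse is cleanest via a contrapositive construction.

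For (B)$\Rightarrow$(A), suppose $r$ is a proper ray with $\operatorname*{pro}$-$\pi_1(\varepsilon(X),r)$ semistable, and let $s$ be an arbitrary proper ray. Since $X$ is 1-ended, $r$ and $s$ are weakly equivalent, so there is a proper map $h\colon L_{[0,\infty)}\to X$ of the infinite ladder with $h|_{[0,\infty)\times 0}=r$ and $h|_{[0,\infty)\times 1}=s$. Choosing an exhaustion $\varnothing=C_0\subseteq C_1\subseteq\cdots$ so that $L_{[i,\infty)}$ maps into $U_i=\overline{X-C_i}$, the task becomes extending $h$ to a proper map $H\colon[0,\infty)\times[0,1]\to X$; such an $H$ is exactly a proper homotopy from $r$ to $s$. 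The obstruction to filling the $i$-th square is the loop $\square_i$ formed from $r_{i+1}$, the rungs $e_i,e_{i+1}$, and $s_{i+1}$; the extension exists and is proper provided each $\square_i$ contracts inside $X-C_i$. I would arrange this inductively: after passing to a subsequence realizing the pro-epimorphic structure via Proposition~\ref{Prop: passing to images}, surjectivity of the bonding maps lets me rechoose $e_{i+1}$ by pre-pending a loop $\alpha_{i+1}$ drawn from a deeper neighborhood of infinity so that the modified $\square_i$ becomes nullhomotopic in $X-C_i$, while the modification only pushes the remaining obstruction outward. Carrying the induction to infinity yields the desired proper homotopy, so $s\overset{p}{\simeq}r$ and hence $|\mathcal{SE}(X)|=1$.

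For (A)$\Rightarrow$(B), I argue the contrapositive for a fixed base ray $r$. Suppose $\operatorname*{pro}$-$\pi_1(\varepsilon(X),r)$ is not semistable, i.e.\ not pro-epimorphic. Then no cofinal restriction makes the bonding maps surjective, so after reindexing there are loops $\beta_i$ based at $p_i$ in $X-C_i$ whose classes do not lie in the image of $\pi_1(X-C_{i+1},p_{i+1})$. Splicing these between successive segments of $r$, as in $s=r_1\cdot\alpha_1\cdot r_2\cdot\alpha_2\cdots$ with $\alpha_i$ representing $\beta_i$, produces a proper ray $s$ not properly homotopic to $r$: any proper homotopy would, at each level, force $\beta_i$ into the image of the next bonding map, contradicting the choice of the $\beta_i$. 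Thus $X$ is not strongly connected. Contrapositively, if $X$ is strongly connected then the given $r$ has semistable $\operatorname*{pro}$-$\pi_1$; and by Proposition~\ref{Prop: proper rays} all proper rays are then properly homotopic and yield pro-isomorphic sequences, so semistability in fact holds for every base ray.

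The main obstacle I anticipate is the properness bookkeeping in the inductive square-filling of the first implication: semistability supplies surjectivity only up to pro-isomorphism, so I must first replace the sequence by a subsequence on which the bonding maps are genuine epimorphisms (Proposition~\ref{Prop: passing to images}) and then track, level by level, that each rechosen rung drives the obstruction loop strictly further toward infinity while leaving the already-filled squares undisturbed. Equivalently, one can run the inductive step through the topological reformulation of semistability in Proposition~\ref{Prop: top characerization 1-connected at infinity}(2), which builds the level-wise control into the hypothesis; verifying that the resulting extension is \emph{proper}, rather than merely continuous, is the delicate point. In the converse, the only subtlety is confirming rigorously that the spliced ray $s$ cannot be properly homotoped back to $r$, which again reduces to the failure of the image condition characterizing non-semistability.
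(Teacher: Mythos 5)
Your proposal follows essentially the same route as the paper's own sketch: the semistable-implies-strongly-connected direction by properly filling the squares of the ladder between $r$ and $s$, inductively rechoosing the rungs $e_i$ using surjectivity of the (suitably reindexed) bonding maps so each obstruction loop $\square_i$ contracts in $X-C_i$; and the converse by the contrapositive, splicing loops $\beta_i$ outside the images of the bonding maps into $r$ to produce a ray not properly homotopic to it. Your explicit appeal to Proposition~\ref{Prop: passing to images} (or alternatively Proposition~\ref{Prop: top characerization 1-connected at infinity}) to justify the simplifying surjectivity assumption matches the paper's own parenthetical remark, so this is the same argument at the same level of detail.
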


As a result of Proposition \ref{Prop: strongly connected at infinity}, a
1-ended space $X$ may be called \emph{semistable at infinity} [respectively,
\emph{stable at infinity}] if $\operatorname*{pro}$-$\pi_{1}\left(
\varepsilon(X),r\right)  $ is semistable [respectively, stable] for some (and
hence any) proper ray $r$. Alternatively, a 1-ended space is sometimes
\emph{defined} to be semistable at infinity (or just \emph{semistable}) if all
proper rays in $X$ are properly homotopic. In those cases we often drop the
base ray and refer to the homotopy end invariants simply as
$\operatorname*{pro}$-$\pi_{1}\left(  \varepsilon\left(  X\right)  \right)  $
and $\check{\pi}_{1}\left(  \varepsilon\left(  X\right)  \right)  $.

Multi-ended spaces are sometimes called semistable if, whenever two proper
rays determine the same end, they are properly homotopic; or equivalently,
when $\Phi:\mathcal{SE}\left(  X\right)  \rightarrow\mathcal{E}\left(
X\right)  $ is bijective.

\begin{remark}
\emph{By using the sketched proof of Proposition
\ref{Prop: strongly connected at infinity} as a guide, it is not hard to see
why a 1-ended space }$X$\emph{ that is not semistable will necessarily have
}uncountable\emph{ }$\mathcal{SE}\left(  X\right)  $.\emph{ A method for
placing} $\mathcal{SE}\left(  X\right)  $ \emph{into an algebraic context
involves the} derived limit\emph{ or }`$\lim^{1}$ functor'. \emph{More
generally,} $\lim^{1}\left\{  G_{i},\mu_{i}\right\}  $ \emph{is an algebraic
construct that helps to recover the information lost when one passes from an
inverse sequence to its inverse limit. See \cite[\S 11.3]{Ge2}.}
\end{remark}

\section{Applications of end invariants to manifold topology}

Although a formal study of pro-homotopy and pro-homology of the ends of
noncompact space is not a standard part of the education of most manifold
topologists, there are numerous important results and open questions best
understood in that context. In this section we discuss several of those,
beginning with classical results and moving toward recent work and still-open questions.

\subsection{Another look at contractible open
manifolds\label{Subsection: Another look at contractible open manifolds}}

We now return to the study of contractible open manifolds begun in
\S \ref{Section: Motivating examples}. We will tie up some loose ends from
those earlier discussions---most of which focused on specific examples. We
also present some general results whose hypotheses involve nothing more than
the fundamental group at infinity.

\begin{theorem}
[Whitehead's Exotic Open $3$-manifold]There exists a contractible open
$3$-manifold not homeomorphic to $%
%TCIMACRO{\U{211d} }%
%BeginExpansion
\mathbb{R}
%EndExpansion
^{3}$.

\begin{proof}
We wish to nail down a proof that the Whitehead contractible $3$-manifold
$\mathcal{W}^{3}$ described in
\S \ref{Subsection: classic examples contractible manifolds} is not
homeomorphic to $%
%TCIMACRO{\U{211d} }%
%BeginExpansion
\mathbb{R}
%EndExpansion
^{3}$. We do that by showing $\mathcal{W}^{3}$ is not simply connected at
infinity. Using the representation of $\operatorname*{pro}$-$\pi_{1}\left(
\varepsilon(\mathcal{W}^{3}),r\right)  $ obtained in
\S \ref{Subsection: Examples of fundamental groups at infinity} and applying
the rigorous development from
\S \ref{Section: Algebraic invariants-precise definitions}, we can accomplish
that task with an application of Exercise
\ref{Exercise: not pro-mono and not pro-epi}.
\end{proof}
\end{theorem}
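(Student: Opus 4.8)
The plan is to use the fundamental group at infinity as a homeomorphism invariant and, more precisely, to exploit the one feature of $\operatorname*{pro}$-$\pi_{1}$ that is insensitive to the choice of base ray: whether or not the space is \emph{simply connected at infinity}. Since a homeomorphism is in particular a proper homotopy equivalence, Proposition~\ref{Prop: proper homotopy invariance of end invariants} guarantees that $\mathcal{W}^{3}\approx\mathbb{R}^{3}$ would force $\operatorname*{pro}$-$\pi_{1}\left(\varepsilon(\mathcal{W}^{3}),r\right)$ and $\operatorname*{pro}$-$\pi_{1}\left(\varepsilon(\mathbb{R}^{3}),r'\right)$ to be pro-isomorphic for suitably matched rays. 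I would therefore reduce the theorem to showing that $\mathbb{R}^{3}$ is simply connected at infinity while $\mathcal{W}^{3}$ is not. Because being simply connected at infinity is the pro-trivial condition, which by part~(1) of Proposition~\ref{Prop: top characerization 1-connected at infinity} makes no reference to a base ray, this dichotomy is a genuine invariant and the two spaces cannot be homeomorphic.

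First I would record that the notion applies: $\mathcal{W}^{3}$ is a contractible open $3$-manifold, hence $1$-ended by Exercise~\ref{Exercise: contractible open mflds are 1-ended}. Next, that $\mathbb{R}^{3}$ is simply connected at infinity is immediate from Example~\ref{Example: Fundamental group at infinity for R^n}, which exhibits $\operatorname*{pro}$-$\pi_{1}\left(\varepsilon(\mathbb{R}^{3})\right)$ as the trivial inverse sequence $1\leftarrow1\leftarrow1\leftarrow\cdots$, and this is pro-trivial by definition.

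The crux is to show that $\mathcal{W}^{3}$ is \emph{not} simply connected at infinity. Here I would invoke the representation of $\operatorname*{pro}$-$\pi_{1}\left(\varepsilon(\mathcal{W}^{3}),r\right)$ produced in Example~\ref{Example: Fundamental group at infinity for the Whitehead manifold}, in which each neighborhood of infinity $U_{i}$ has fundamental group $G_{i}\ast_{\Lambda}G_{i+1}\ast_{\Lambda}\cdots$ with $\Lambda\cong\mathbb{Z}\oplus\mathbb{Z}$, and each bonding homomorphism $\pi_{1}(U_{i+1})\to\pi_{1}(U_{i})$ is injective but not surjective. Thus the sequence consists of non-surjective monomorphisms, so by Exercise~\ref{Exercise: not pro-mono and not pro-epi} it cannot be semistable. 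Since a pro-trivial sequence is in particular semistable (its bonding maps are the trivial epimorphisms), the Whitehead sequence is not pro-trivial, and therefore $\mathcal{W}^{3}$ is not simply connected at infinity. Combined with the preceding paragraph, this contradicts pro-isomorphism and completes the argument.

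The main obstacle is not in this final assembly, which is purely formal once the input sequence is known, but in justifying that input: that the inclusion-induced bonding map on the fundamental groups of the nested neighborhoods of infinity really is an injective, non-surjective homomorphism. This rests on the $3$-manifold and combinatorial group theory facts flagged in Example~\ref{Example: Fundamental group at infinity for the Whitehead manifold}, namely that $G=\pi_{1}(A_{i})$ is nonabelian and that each torus boundary component is incompressible in $A_{i}$, so that the amalgamating $\mathbb{Z}\oplus\mathbb{Z}$ subgroups inject and the factors assemble into a nondegenerate infinite amalgamated free product. Granting those facts, injectivity follows from the normal-form theory for free products with amalgamation, while non-surjectivity is visible because passing from $U_{i}$ to $U_{i+1}$ discards the nontrivial free factor $G_{i}$.
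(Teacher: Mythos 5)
Your proposal is correct and follows essentially the same route as the paper: both use the representation of $\operatorname*{pro}$-$\pi_{1}\left(  \varepsilon(\mathcal{W}^{3}),r\right)  $ by non-surjective monomorphisms from Example \ref{Example: Fundamental group at infinity for the Whitehead manifold} and apply Exercise \ref{Exercise: not pro-mono and not pro-epi} to conclude that $\mathcal{W}^{3}$ is not simply connected at infinity, a property $\mathbb{R}^{3}$ does have. Your explicit remarks on the ray-independence of simple connectivity at infinity and on where the real work lies (incompressibility and normal forms for amalgamated products) are exactly the details the paper's sketch leaves to the reader.
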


\begin{theorem}
The open Newman contractible $n$-manifolds are not homeomorphic to $%
%TCIMACRO{\U{211d} }%
%BeginExpansion
\mathbb{R}
%EndExpansion
^{n}$. More generally, any compact contractible $n$-manifold with non-simply
connected boundary has interior that is not homeomorphic to $%
%TCIMACRO{\U{211d} }%
%BeginExpansion
\mathbb{R}
%EndExpansion
^{n}$.

\begin{proof}
Combine our observations from Example
\ref{Example: Fundamental group at infinty for open Newman manifolds} with
Exercise \ref{Exercise: stable inverse sequences}---or simply observe that the
topological characterization of simply connected at infinity fails.
\end{proof}
\end{theorem}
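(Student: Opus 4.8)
The plan is to distinguish these interiors from $\mathbb{R}^n$ entirely by their fundamental group at infinity, leaning on the computations and invariance results already established. First I would note that the second assertion subsumes the first: a Newman compact contractible manifold has $\pi_1(\partial C^n)\cong G\neq 1$ by construction, so it is a special case of a compact contractible $n$-manifold with non-simply connected boundary. Accordingly, let $C^n$ be any such manifold, set $G=\pi_1(\partial C^n)\neq 1$, and let $W=\operatorname{int}C^n$. Pushing in a collar shows $W\simeq C^n\simeq\text{pt}$, so $W$ is a contractible open manifold. One caveat belongs here: we must take $n\geq 3$, since for $n=2$ the only compact contractible manifold is the disk, whose boundary $\mathbb{S}^1$ is non-simply connected yet whose interior is $\mathbb{R}^2$; in fact non-simply-connectedness of the boundary forces $n\geq 4$ (for $n=3$ a compact contractible manifold has $\mathbb{S}^2$ boundary), so the hypothesis has content precisely where the argument works. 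Since $n\geq 4\geq 2$, Exercise~\ref{Exercise: contractible open mflds are 1-ended} gives that $W$ is 1-ended.

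Next I would compute $\operatorname{pro}$-$\pi_1$ on each side. For $W$ this is exactly the calculation of Example~\ref{Example: Fundamental group at infinty for open Newman manifolds}: that argument used nothing particular to Newman's construction, only a product collar $\partial C^n\times[0,\infty)$, which every compact manifold possesses. Deleting $\partial C^n$ and taking the subcollars $U_i\approx\partial C^n\times[i,\infty)$ as neighborhoods of infinity, each inclusion $U_{i+1}\hookrightarrow U_i$ deformation retracts onto a base slice while sliding the basepoint along the collar ray $\{p\}\times[0,\infty)$, so with that ray as base ray one obtains the constant, hence stable, representation $G\overset{\operatorname{id}}{\longleftarrow}G\overset{\operatorname{id}}{\longleftarrow}G\overset{\operatorname{id}}{\longleftarrow}\cdots$. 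On the other side, Example~\ref{Example: Fundamental group at infinity for R^n} gives that $\operatorname{pro}$-$\pi_1(\varepsilon(\mathbb{R}^n))$ is pro-trivial for $n\geq 3$, which covers our range.

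Finally I would invoke invariance and conclude. By Proposition~\ref{Prop: proper homotopy invariance of end invariants}, $\operatorname{pro}$-$\pi_1$ of the end is a proper homotopy invariant, and a homeomorphism is in particular a proper homotopy equivalence (Remark~\ref{Remark: importance of properness}); hence a homeomorphism $W\approx\mathbb{R}^n$ would force the constant sequence $\{G,\operatorname{id}\}$ to be pro-isomorphic to the trivial sequence. But by Exercise~\ref{Exercise: stable inverse sequences} the limit of a stable sequence is a well-defined invariant of its pro-isomorphism class, equal to $G$ on one side and to $1$ on the other; since $G\neq 1$, no such pro-isomorphism exists, so $W\not\approx\mathbb{R}^n$. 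Equivalently, and more transparently, $W$ fails the base-ray-free topological characterization of simple connectivity at infinity in Proposition~\ref{Prop: top characerization 1-connected at infinity}(1): a loop representing a nontrivial class of $\pi_1(\partial C^n)$ pushed into a deep subcollar cannot be contracted in any prescribed smaller neighborhood of infinity, because every bonding map is the identity.

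I do not anticipate a serious obstacle; the proof is essentially immediate once the machinery is in place. The point deserving genuine care is the claim that the collar computation is generic---that it depends only on the product-collar structure of $\partial C^n$ in $C^n$, not on how $C^n$ was assembled---so that it applies verbatim to an arbitrary compact contractible $n$-manifold with non-simply connected boundary rather than only to the explicit Newman examples. A secondary point worth a sentence is base-ray independence: since the comparison is against a pro-trivial class and the criterion in Proposition~\ref{Prop: top characerization 1-connected at infinity}(1) makes no reference to a base ray, no ambiguity from the choice of ray can arise.
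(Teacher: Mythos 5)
Your proposal is correct and follows the same route as the paper: compute $\operatorname*{pro}$-$\pi_{1}$ at infinity from the collar structure to get the constant sequence on $G=\pi_{1}(\partial C^{n})$, then distinguish it from the pro-trivial sequence of $\mathbb{R}^{n}$ via Exercise \ref{Exercise: stable inverse sequences} (or, equivalently, observe that the topological characterization of simple connectivity at infinity fails). Your added remarks on the dimension restriction and on the base-ray independence are sound but not needed beyond what the paper's one-line argument already relies on.
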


The next result establishes simple connectivity at infinity as the definitive
property in determining whether a contractible open manifold is exotic. The
initial formulation is due to Stallings \cite{St}, who proved it for PL
manifolds of dimension $\geq5$; his argument is clean, elegant, and highly
recommended---but outside the scope of these notes. That result was extended
to all topological manifolds of dimension $\geq5$ by Luft \cite{Lu}. Extending
the result to dimensions $3$ and $4$ requires the Fields Medal winning work of
Perelman and Freedman \cite{Free}, respectively. The foundation for the
3-dimensional result was laid by C.H. Edwards in \cite{EdC}.

\begin{theorem}
[Stallings' Characterization of $%
%TCIMACRO{\U{211d} }%
%BeginExpansion
\mathbb{R}
%EndExpansion
^{n}$]\label{Theorem: Stallings Theorem} A contractible open $n$-manifold
($n\geq3$) is homeomorphic to $%
%TCIMACRO{\U{211d} }%
%BeginExpansion
\mathbb{R}
%EndExpansion
^{n}$ if and only if it is simply connected at infinity.
\end{theorem}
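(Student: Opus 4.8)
The plan is to prove both directions, restricting attention to the substantive dimensions $n \geq 5$ where the PL techniques apply cleanly, while noting the low-dimensional cases rely on the deep results cited (Perelman for $n=3$, Freedman for $n=4$). The forward direction is essentially immediate: if $W^n \approx \mathbb{R}^n$, then Example~\ref{Example: Fundamental group at infinity for R^n} shows that $\operatorname*{pro}$-$\pi_1(\varepsilon(W^n),r)$ is pro-trivial, so $W^n$ is simply connected at infinity. The entire content lies in the converse, so that is where I would concentrate.

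For the converse, suppose $W^n$ is a contractible open $n$-manifold that is simply connected at infinity, with $n \geq 5$. First I would exhaust $W^n$ by a sequence of clean, connected, codimension-$0$ compact submanifolds $K_0 \subseteq \operatorname*{int} K_1 \subseteq \cdots$ with connected complements (using Exercise~\ref{Exercise: contractible open mflds are 1-ended} to guarantee $1$-endedness and hence that each neighborhood of infinity $U_i = \overline{W^n - K_i}$ is connected). The goal is to produce a single clean compact submanifold $C$ that is an $n$-ball, with $W^n - \operatorname*{int} C$ homeomorphic to $\mathbb{S}^{n-1} \times [0,\infty)$; an exhaustion of $W^n$ by nested $n$-balls then yields $W^n \approx \mathbb{R}^n$ by a standard limiting/telescoping argument. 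The crucial engineering step is to improve the neighborhoods of infinity so that each $\partial K_i$ is simply connected and the inclusions $U_{i+1} \hookrightarrow U_i$ become, up to homotopy, as trivial as possible.

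The heart of the matter — and the step I expect to be the main obstacle — is converting the homotopy-theoretic hypothesis (simple connectivity at infinity, a pro-$\pi_1$ statement) into the geometric conclusion that a neighborhood of infinity is a genuine product $\partial K_i \times [0,\infty)$. This is exactly where high-dimensional manifold topology enters in a nontrivial way: one must show each frontier hypersurface $\partial K_i$ is a homotopy $(n-1)$-sphere and then invoke the $h$-cobordism theorem (or its engulfing-based precursors) to upgrade a homotopy equivalence between consecutive frontiers into a product cobordism. Concretely, after arranging $\pi_1(\partial K_i) = 1$, I would use Poincar\'e--Lefschetz duality together with the contractibility of $W^n$ to compute the homology of $\partial K_i$ and conclude it is a homotopy sphere; by the high-dimensional Poincar\'e conjecture (Smale), $\partial K_i \approx \mathbb{S}^{n-1}$. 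The region between two such spheres is then a compact $h$-cobordism with simply connected boundary, so the $h$-cobordism theorem forces it to be $\mathbb{S}^{n-1} \times [0,1]$.

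Two technical points deserve care, and I would flag them rather than grind through them. First, simple connectivity at infinity gives, for each $C$, a larger $D$ with loops in $W^n - D$ contracting in $W^n - C$; translating this into the statement that one can \emph{surgically} make the $\partial K_i$ simply connected requires an engulfing or handle-trading argument (this is precisely Stallings' innovation and the reason $n \geq 5$ is needed — one needs room to push $2$-disks off the $1$-skeleton). Second, the passage from a product \emph{structure on each collar} to a product structure on the \emph{whole end} requires assembling the pieces compatibly, which is the telescoping argument alluded to above and is routine once the individual $h$-cobordisms are identified. I would present the $n=3,4$ cases as corollaries of the cited resolved conjectures, since the direct PL argument breaks down precisely at the engulfing step in those dimensions, and emphasize that the genuinely new geometric input is Smale's $h$-cobordism theorem applied to the frontier spheres.
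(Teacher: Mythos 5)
The paper does not actually prove this theorem: it explicitly defers to Stallings \cite{St} (PL, $n\geq 5$), Luft \cite{Lu} (topological, $n\geq 5$), and Perelman/Freedman for $n=3,4$, calling the argument ``outside the scope of these notes.'' So your proposal must be compared with the cited argument, and it takes a genuinely different route. Stallings' proof is an engulfing argument: one shows directly that every compactum in $W^{n}$ lies in an open set PL-homeomorphic to $\mathbb{R}^{n}$, then concludes with M.~Brown's theorem that a monotone union of open $n$-cells is an open $n$-cell; it never passes through homotopy spheres, the Poincar\'{e} conjecture, or the $h$-cobordism theorem. Your route---improve an exhaustion until the frontiers are simply connected, identify them as spheres, and stack $h$-cobordisms---is instead the collaring strategy of \cite{BLL} and Siebenmann's thesis; indeed, for $n\geq 6$ the converse direction follows formally from the theorem of \cite{BLL} quoted in \S\ref{Subsection: Siebenmann's thesis} together with the exercise that a compact contractible manifold with simply connected boundary is a ball. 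The trade-off: engulfing is self-contained PL topology valid for all $n\geq 5$, whereas your route imports the generalized Poincar\'{e} conjecture and the $h$-cobordism theorem and, as written, only covers $n\geq 6$ cleanly---for $n=5$ the frontiers are homotopy $4$-spheres and the intervening cobordisms are $5$-dimensional, so you would need Freedman there too, not just ``PL techniques.''

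Beyond the steps you flag, one claimed implication is actually false as stated: Poincar\'{e}--Lefschetz duality plus contractibility of $W^{n}$ does \emph{not} show that a simply connected frontier $\partial K_{i}$ is a homology sphere. An arbitrary clean compact $K_{i}$ in a contractible manifold can have frontier of essentially any homology type (a tubular neighborhood of a circle in $\mathbb{R}^{n}$ already shows this), so you must first trade handles to make $K_{i}$ itself acyclic; simple connectivity at infinity kills $\pi_{1}(\partial K_{i})$, but controlling the higher homology of the exhaustion is a separate step (Exercise \ref{Exercise: contractible open n-manifolds have pro-homology of R^n} is the relevant input). That step, together with the $\pi_{1}$-killing you defer to ``engulfing or handle-trading,'' is where essentially all of the content lives, so the proposal establishes a viable skeleton rather than a proof.
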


\begin{exercise}
\label{Exercise: contractible cross R}Prove the following corollary to Theorem
\ref{Theorem: Stallings Theorem}: If $W^{n}$ is a contractible open manifold,
then $W^{n}\times%
%TCIMACRO{\U{211d} }%
%BeginExpansion
\mathbb{R}
%EndExpansion
\approx%
%TCIMACRO{\U{211d} }%
%BeginExpansion
\mathbb{R}
%EndExpansion
^{n+1}$.
\end{exercise}

The next application of the fundamental group at infinity returns us to
another prior discussion.

\begin{theorem}
[Davis' Exotic Universal Covering Spaces]For $n\geq4$, there exist closed
aspherical $n$-manifolds whose universal covers are not homeomorphic to $%
%TCIMACRO{\U{211d} }%
%BeginExpansion
\mathbb{R}
%EndExpansion
^{n}$.

\begin{proof}
Here we provide only the punch-line to this major theorem. As noted in
\S \ref{Subsection: classic examples contractible manifolds} Davis'
construction produces closed aspherical $n$-manifolds $M^{n}$ with universal
covers homeomorphic to the infinite open sums described in Example
\ref{Example: Construction of Davis manifolds} and Theorem
\ref{Theorem: Ancel-Siebenmann}. As observed in Example
\ref{Example: Fundamental group at infinity for Davis manifolds},
$\operatorname*{pro}$-$\pi_{1}\left(  \varepsilon(\widetilde{M}^{n}),r\right)
$ may be represented by
\begin{equation}
G\twoheadleftarrow G\ast G\twoheadleftarrow G\ast G\ast G\twoheadleftarrow
G\ast G\ast G\ast G\twoheadleftarrow\cdots,\tag{3.12}%
\label{Inverse sequence: Davis type}%
\end{equation}
a sequence that is semistable but not pro-monomorphic. An application of
Exercise \ref{Exercise: not pro-mono and not pro-epi} verifies that
$\widetilde{M}^{n}$ is not simply connected at infinity.
\end{proof}
\end{theorem}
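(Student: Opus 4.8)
The plan is to treat the theorem exactly as the paper frames it—a punch-line that assembles machinery already in place. First I would fix, for each $n\geq 4$, an exotic compact contractible oriented $n$-manifold $C^{n}$; such manifolds exist precisely because $n\geq 4$, and by the exercise characterizing balls (a compact contractible manifold is an $n$-ball iff its boundary is simply connected), exoticness forces $G=\pi_{1}(\partial C^{n})$ to be a \emph{nontrivial} group. Feeding $C^{n}$ into the Davis construction of Example \ref{Example: Construction of Davis manifolds} yields a contractible open $n$-manifold $\mathcal{D}^{n}$ carrying a proper cocompact action of a Coxeter group $\Gamma$; applying Selberg's Lemma to obtain a torsion-free finite-index $\Gamma'\leq\Gamma$, the quotient $M^{n}=\Gamma'\backslash\mathcal{D}^{n}$ is a closed aspherical $n$-manifold whose universal cover $\widetilde{M}^{n}$ is homeomorphic to $\mathcal{D}^{n}$, hence is a contractible open $n$-manifold.

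Next I would identify the fundamental group at infinity of $\widetilde{M}^{n}$. Using Theorem \ref{Theorem: Ancel-Siebenmann} to view $\mathcal{D}^{n}$ as the interior of an infinite boundary connected sum and running the exhaustion computation of Example \ref{Example: Fundamental group at infinity for Davis manifolds}, $\operatorname*{pro}$-$\pi_{1}\left(\varepsilon(\widetilde{M}^{n}),r\right)$ is represented by the inverse sequence (\ref{Inverse sequence: Davis type}),
\[
G\twoheadleftarrow G\ast G\twoheadleftarrow G\ast G\ast G\twoheadleftarrow
G\ast G\ast G\ast G\twoheadleftarrow\cdots,
\]
in which the bonding homomorphism $G_{0}\ast\cdots\ast G_{i+1}\to G_{0}\ast\cdots\ast G_{i}$ acts as the identity on the first $i+1$ free factors and sends the last factor $G_{i+1}\cong G$ to $1$. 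The key algebraic observation is that each such map is a \emph{non-injective epimorphism}: it is surjective because it fixes all but the last factor, and it is non-injective precisely because $G\neq 1$.

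With that in hand, Exercise \ref{Exercise: not pro-mono and not pro-epi} immediately applies: an inverse sequence of non-injective epimorphisms cannot be pro-monomorphic. In particular the sequence is not pro-trivial (any pro-trivial sequence is pro-monomorphic, being pro-isomorphic to $1\leftarrow 1\leftarrow\cdots$), so by the base-ray-independent characterization in Proposition \ref{Prop: top characerization 1-connected at infinity}(1), $\widetilde{M}^{n}$ is \emph{not} simply connected at infinity. Since $\widetilde{M}^{n}$ is a contractible open $n$-manifold, Theorem \ref{Theorem: Stallings Theorem}—Stallings in dimension $\geq 5$, together with the work of Freedman and Perelman for $n=4$ and $n=3$—then forces $\widetilde{M}^{n}\not\approx\mathbb{R}^{n}$, proving the theorem. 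The genuine content all resides in the cited results; the only real care required is the bookkeeping that $G$ is nontrivial (exactly the constraint $n\geq 4$) and the chaining of ``not pro-monomorphic'' through ``not pro-trivial'' to ``not simply connected at infinity.'' The one step that would be hard from scratch—producing the Ancel--Siebenmann interior description and extracting the displayed $\operatorname*{pro}$-$\pi_{1}$ sequence—has already been handled earlier, so here it functions as a black box.
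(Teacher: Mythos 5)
Your proposal is correct and follows essentially the same route as the paper: represent $\operatorname*{pro}$-$\pi_{1}$ at infinity by the sequence of free products via Theorem \ref{Theorem: Ancel-Siebenmann} and Example \ref{Example: Fundamental group at infinity for Davis manifolds}, apply Exercise \ref{Exercise: not pro-mono and not pro-epi} to rule out pro-monomorphy (hence pro-triviality), and conclude that $\widetilde{M}^{n}$ is not simply connected at infinity, so not $\mathbb{R}^{n}$. You usefully make explicit two points the paper leaves implicit---that exoticness of $C^{n}$ forces $G\neq 1$, and the chain from ``not pro-monomorphic'' through ``not pro-trivial'' to ``not simply connected at infinity''---though note that only the trivial direction of Theorem \ref{Theorem: Stallings Theorem} (that $\mathbb{R}^{n}$ itself is simply connected at infinity) is actually needed at the last step.
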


After Davis showed that aspherical manifolds need not be covered by $%
%TCIMACRO{\U{211d} }%
%BeginExpansion
\mathbb{R}
%EndExpansion
^{n}$, many questions remained. With the 3-dimensional version unresolved (at
the time), it was asked whether the Whitehead manifold could cover a closed
$3$-manifold. In higher dimensions, people wondered whether a Newman
contractible open manifold (or the interior of another compact contractible
manifold) could cover a closed manifold. Myers \cite{My} resolved the first
question in the negative, before Wright \cite{Wr} proved a remarkably general
result in which the fundamental group at infinity plays the central role.

\stepcounter{theorem}

\begin{theorem}
[Wright's Covering Space Theorem]Let $M^{n}$ be a contractible open
$n$-manifold with pro-\allowbreak mono\-mor\-phic fundamental group at
infinity. If $M^{n}$ admits a nontrivial action by covering transformations,
then $M^{n}\approx\mathbb{R}^{n}$.
\end{theorem}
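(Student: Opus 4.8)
The plan is to deduce the result from Stallings' Characterization (\thmref{Theorem: Stallings Theorem}): since $M^{n}$ is a contractible open manifold, it suffices to prove that $M^{n}$ is simply connected at infinity, i.e.\ that $\operatorname{pro}$-$\pi_{1}\left(\varepsilon(M^{n})\right)$ is pro-trivial. (For $n\le 2$ there is nothing to prove, and for $n=3,4$ Stallings' theorem is available through the work of Perelman and Freedman, so we may assume $n\ge 3$.) Because $M^{n}$ is contractible of dimension $\ge 2$, it is $1$-ended (Exercise \ref{Exercise: contractible open mflds are 1-ended}), so there is a single end whose invariants we must kill.

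First I would reduce the covering action to a cyclic one. A nontrivial covering transformation acts freely and properly discontinuously, and it cannot have finite order: an order-$k$ element ($k\ge 2$) would generate a free $\mathbb{Z}/k$-action on the contractible manifold $M^{n}$, whose quotient would be an aspherical $n$-manifold, i.e.\ a finite-dimensional $K(\mathbb{Z}/k,1)$---impossible, since $\mathbb{Z}/k$ has homology in arbitrarily high degrees. Thus some deck transformation $t$ generates a free, properly discontinuous action of $\mathbb{Z}=\langle t\rangle$. The one feature of this action I will lean on is properness: for any compacta $K,K'$ only finitely many translates $t^{k}K$ meet $K'$, so $t^{k}K$ eventually leaves every compact set. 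Since $M^{n}$ is $1$-ended, this means $t$ pushes compacta toward the end, and, being a homeomorphism of a $1$-ended space, $t$ preserves that end and hence induces a self-pro-isomorphism of $\operatorname{pro}$-$\pi_{1}\left(\varepsilon(M^{n})\right)$.

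The key geometric observation is a disk-pushing argument. Fix a cofinal sequence of neighborhoods of infinity $U_{0}\supseteq U_{1}\supseteq\cdots$; by pro-monomorphicity we may pass to a subsequence on which every bonding homomorphism $\pi_{1}(U_{i+1})\to\pi_{1}(U_{i})$ is injective (Proposition \ref{Prop: passing to images}). Suppose, for contradiction, that the end is not pro-trivial; then by injectivity the groups $\pi_{1}(U_{i})$ are nontrivial cofinally, and we may select a loop $\alpha$, lying in some $U_{i}$, that is essential in every $\pi_{1}(U_{j})$ with $j\le i$---in particular essential in $\pi_{1}(U_{0})$. Because $M^{n}$ is contractible, $\alpha$ bounds a singular disk $\Delta$ in $M^{n}$; the disk is compact, so for all large $k$ properness forces $t^{k}\Delta$ (and with it $t^{k}\alpha$) into an arbitrarily deep neighborhood of infinity $U_{N}$. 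Thus $t^{k}\alpha$ is null-homotopic arbitrarily far out in the end.

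Now the two halves collide. On one hand, $t^{k}\alpha$ becomes trivial deep in the end; on the other, $t^{k}$ is a homeomorphism preserving the end, so it carries the essential class $[\alpha]$ to a class that the induced automorphism of the end group cannot send to zero. The main obstacle---and the place where pro-monomorphicity is indispensable---is reconciling these two statements across the \emph{level shift} introduced by the deck action: $t$ preserves no individual $U_{i}$, so $t_{*}$ is visible only on the inverse system, not on a single $\pi_{1}(U_{i})$. Pro-monomorphicity is precisely what rules out the pathology of the sequence $\mathbb{Z}\xleftarrow{\times 2}\mathbb{Z}\xleftarrow{\times 2}\cdots$ (Exercise \ref{Exercise: nontrivial inverse sequence with trivial inverse limit}), in which essential classes dissolve as one passes outward; with injective bonding maps an essential class persists and is detected at every level, so its triviality deep in the end can be transported back to triviality of $[\alpha]$ itself. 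Carrying out this transport rigorously---organizing the injective inverse system so that the automorphism $t_{*}$ and the translated null-homotopies are compared at a common depth (and absorbing the attendant base-ray bookkeeping)---is the technical heart of the argument. Once it yields $[\alpha]=1$, we contradict the choice of $\alpha$; hence the end is pro-trivial, $M^{n}$ is simply connected at infinity, and Stallings' theorem gives $M^{n}\approx\mathbb{R}^{n}$.
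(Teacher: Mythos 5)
The paper offers no proof of this theorem---it simply cites Wright \cite{Wr}---so your proposal has to be judged on its own terms. Your reductions are sound: Stallings' theorem reduces the problem to simple connectivity at infinity, and a nontrivial deck transformation must have infinite order (else the quotient would be a finite-dimensional $K(\mathbb{Z}/k,1)$). The trouble is that the step you yourself label ``the technical heart of the argument'' is exactly the step that is missing, and the heuristic you offer for why pro-monomorphicity closes the gap does not survive scrutiny. You write that pro-monomorphicity ``is precisely what rules out the pathology of the sequence $\mathbb{Z}\xleftarrow{\times 2}\mathbb{Z}\xleftarrow{\times 2}\cdots$''; but that sequence is a tower of monomorphisms and is therefore pro-monomorphic (it is the paper's standard example of a pro-monomorphic, non-semistable tower). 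Pro-monomorphicity does not prevent essential classes at one level from failing to be hit from deeper levels, so your intended transport of ``triviality deep in the end'' back to triviality of $[\alpha]$ has no stated mechanism. Relatedly, ``passing to images'' does not let you arrange that the actual bonding maps $\pi_{1}(U_{i+1})\to\pi_{1}(U_{i})$ of neighborhoods of infinity are injective---it gives injectivity only between the image subgroups---so the usable form of the hypothesis is item (3) of Proposition \ref{Prop: top characerization 1-connected at infinity}: there is a compact $C$ such that for every compact $D\supseteq C$ there is a compact $E$ with the property that loops in $X-E$ that die in $X-C$ die in $X-D$.

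The quantifier clash at the heart of the matter is concrete. To push $t^{k}\Delta$ (hence a null-homotopy of $t^{k}\alpha$) into a prescribed neighborhood of infinity $U_{N}$ you must take $k$ large relative to $N$; but to compare $t^{k}_{*}[\alpha]$ with the filtration at depth $N$ you need $U_{N}\subseteq t^{k}U_{i}$, i.e.\ $N$ large relative to $k$, because $t^{k}U_{i}$ itself recedes to infinity as $k$ grows. If this clash could be ignored, the argument would show that \emph{every} contractible open manifold admitting a nontrivial covering action is simply connected at infinity---false, by the Davis manifolds, whose $\operatorname{pro}$-$\pi_{1}$ at infinity is semistable but not pro-monomorphic. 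So the pro-monomorphic hypothesis must enter in an essential, quantitative way to break the deadlock, and that is where Wright's actual work lies (roughly: the translates $t^{k}C$ of the special compact set $C$ are again special, a bounding disk for a far-out loop misses all but finitely many of them, and one must then convert ``bounds in the complement of some translate of $C$'' into ``bounds far out in the original filtration''). As written, your proposal assembles the correct ingredients and correctly locates the difficulty, but it does not contain the idea that resolves it.
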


\begin{corollary}
Neither the Whitehead manifold nor the interior of any compact contractible
manifold with non-simply connected boundary can cover a manifold nontrivially.
\end{corollary}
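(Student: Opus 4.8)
The plan is to argue by contradiction, reducing everything to Wright's Covering Space Theorem. The crucial preliminary observation is that each manifold $M^n$ under consideration is contractible, hence simply connected; so if $M^n$ were to cover some manifold $N^n$ via a nontrivial covering projection $p\colon M^n\to N^n$, then---since $N^n$, being a manifold, is semilocally simply connected---$p$ would necessarily be the \emph{universal} covering of $N^n$. A universal covering is regular, with group of covering transformations isomorphic to $\pi_1(N^n)$ acting freely and properly. Nontriviality of $p$ means $p$ is not a homeomorphism, i.e.\ $\pi_1(N^n)\neq 1$, so this covering-transformation action is nontrivial. Thus ``covers a manifold nontrivially'' supplies exactly the hypothesis ``admits a nontrivial action by covering transformations'' demanded by Wright's theorem.

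Next I would verify that both families have pro-monomorphic fundamental group at infinity. For the interior of a compact contractible $C^n$ with non-simply-connected boundary, the open-collar neighborhoods of infinity $\partial C^n\times[i,\infty)$ (as in the open Newman example) yield the constant inverse sequence $G\xleftarrow{\operatorname{id}}G\xleftarrow{\operatorname{id}}\cdots$ with $G=\pi_1(\partial C^n)$; this is stable, and in particular pro-monomorphic, since the identity is a monomorphism. For the Whitehead manifold $\mathcal{W}^3$, the computation recorded earlier exhibits $\operatorname{pro}$-$\pi_1$ as an inverse sequence of infinite amalgamated free products in which every bonding homomorphism is injective; such a sequence is by definition pro-monomorphic.

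Finally I would assemble the contradiction. Suppose one of these $M^n$ covered a manifold nontrivially. By the first paragraph, $M^n$ admits a nontrivial covering-transformation action; by the second paragraph, $M^n$ is a contractible open manifold with pro-monomorphic fundamental group at infinity; so Wright's Covering Space Theorem forces $M^n\approx\mathbb{R}^n$. But this contradicts the known exoticness of these manifolds: $\mathcal{W}^3\not\approx\mathbb{R}^3$ (it fails to be simply connected at infinity, hence is not $\mathbb{R}^3$ by Stallings' Characterization), while the interior of a compact contractible manifold with non-simply-connected boundary is not homeomorphic to $\mathbb{R}^n$ (as established immediately after the Newman examples; equivalently, its constant nontrivial $\operatorname{pro}$-$\pi_1$ is not pro-trivial, so it is not simply connected at infinity). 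Hence no such nontrivial covering can exist.

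The main obstacle---really the only nonroutine point---is the reduction in the first paragraph: one must be certain that a nontrivial covering by a simply connected space forces a genuine nontrivial action by covering transformations, so that Wright's hypothesis (phrased in terms of a group action rather than an arbitrary covering) is actually met. Once the universal-cover/regularity observation is in hand, the remaining steps are direct appeals to the $\operatorname{pro}$-$\pi_1$ computations and the exoticness results already developed.
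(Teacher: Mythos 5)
Your proposal is correct and is exactly the argument the paper intends (the corollary is stated without proof, but it is meant as a direct application of Wright's Covering Space Theorem combined with the earlier $\operatorname{pro}$-$\pi_{1}$ computations and exoticness results, which is precisely what you do). Your careful justification that a nontrivial covering by a simply connected manifold yields a nontrivial deck-transformation action is the right point to spell out, and the pro-monomorphicity checks for both families match the paper's Examples.
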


Wright's theorem refocuses attention on a question mentioned earlier.

\begin{conjecture}
[The Manifold Semistability Conjecture]%
\label{Question: semistability for manifolds-version 2}Must the universal
cover of every closed aspherical manifold have semistable fundamental group at infinity?
\end{conjecture}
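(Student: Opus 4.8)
The plan is to route everything through the cocompact action and reduce the manifold statement to a question about the group $G = \pi_1(M^n)$. Since $M^n$ is closed and aspherical, its universal cover $\widetilde{M}^n$ is a contractible open $n$-manifold, hence 1-ended by Exercise~\ref{Exercise: contractible open mflds are 1-ended}, and $G$ acts on it properly, cocompactly, and freely by covering transformations. By Proposition~\ref{Prop: strongly connected at infinity}, semistability of $\operatorname{pro}$-$\pi_1(\varepsilon(\widetilde{M}^n),r)$ is equivalent to strong connectedness at infinity, i.e., to all proper rays being properly homotopic, and a standard argument shows this property depends only on $G$ and not on the chosen cocompact model. Thus the conjecture is the special case of the Semistability Conjecture for groups in which $G$ is a Poincar\'{e} duality group carried by a closed aspherical manifold. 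I must stress at once that this reduction is not progress by itself: the group version is equally open, so any real attack has to exploit the extra manifold (or at least duality) structure rather than merely the existence of a cocompact action.

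The engine of any attempt is the ladder-filling argument sketched in the proof of Proposition~\ref{Prop: strongly connected at infinity}. Given two proper rays $r$ and $s$ out to the unique end, 1-endedness yields a proper map of the infinite ladder $L_{[0,\infty)}$ joining them, and the goal is to fill each square $\square_i$ by a null-homotopy that stays in the prescribed neighborhood of infinity $U_i$. The obstruction lives in $\pi_1(U_i)$, and one wants to push it outward along the bonding maps, which is exactly the pro-epimorphism condition. The cocompact action is the obvious source of leverage here: translates $g\cdot K$ of a compact fundamental domain tile $\widetilde{M}^n$, and each $g$ carries neighborhoods of infinity to neighborhoods of infinity, so in principle a filling of one square can be transported by a group element to a filling of a far-away square, which is the kind of uniformity one would need to force pro-epimorphism everywhere at once.

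The decisive obstacle---and the reason this is still a conjecture---is controlling those fillings so that they remain near infinity. Asphericity makes $\widetilde{M}^n$ contractible, so every $\square_i$ bounds a disk \emph{somewhere}, but nothing forces that disk to avoid a prescribed compactum. In every case where semistability is actually known---word-hyperbolic groups and CAT(0) groups among them---the control comes from a genuine geometric boundary: geodesic rays, geodesic contraction of the complements of metric balls, and the connectivity of the boundary together manufacture the required proper homotopies of rays. A general closed aspherical manifold offers no such geometry; we have a cocompact action but no nonpositive curvature, no visual boundary, and no quasi-geodesic combing to tame the filling disks. Converting ``loops die somewhere in a contractible space'' into ``loops die near infinity'' is precisely the step no present technique accomplishes for arbitrary aspherical manifold groups.

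Accordingly my proposal is frankly conditional. First install the reduction to $G$-semistability via the action; then attempt to supply the missing control by one of two routes: either construct a coarse combing of $G$ (a family of quasi-geodesic paths varying continuously enough to fill ladders properly), or find a splitting of $G$ over semistable subgroups to which an inductive, Mayer--Vietoris-style semistability argument can be applied. I expect the combing (or splitting) to be the true sticking point: it is exactly the geometric input that nonpositive curvature would hand us for free, and producing it from asphericity and Poincar\'{e} duality alone is, as far as I can see, the heart of the open problem.
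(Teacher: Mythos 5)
This statement is an open conjecture; the paper offers no proof of it, and you correctly refrain from claiming one. Your reduction---that semistability of $\operatorname{pro}$-$\pi_1$ at infinity of $\widetilde{M}^n$ is a proper-homotopy invariant of any cocompact $EG$ and hence a property of $G=\pi_1(M^n)$ alone, making the manifold conjecture a special case of the Semistability Conjecture for finitely presented groups---is exactly the observation the paper itself makes immediately after stating the conjecture, and your diagnosis of the sticking point (converting ``every square of the ladder bounds a disk somewhere in the contractible cover'' into ``bounds a disk near infinity'') faithfully tracks the ladder-filling argument in the sketch of Proposition~\ref{Prop: strongly connected at infinity}. The one concrete piece of leverage the paper records that you do not mention is that the $H_1$-semistability version \emph{is} known affirmatively for closed aspherical manifolds, via Exercise~\ref{Exercise: contractible open n-manifolds have pro-homology of R^n} (Poincar\'{e} duality forces the pro-homology at infinity of a contractible open $n$-manifold to be that of $\mathbb{R}^n$); any serious attack along your proposed lines should try to promote that homological stability to the $\pi_1$ level, which is precisely where duality alone stops helping.
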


\noindent More generally we can ask:\medskip

\noindent\textbf{Vague Question:} \emph{Must all universal covers of
aspherical manifolds be similar to the Davis examples?\medskip}

In discussions still to come, we will make this vague question more precise.
But, before moving on, we note that in 1991 Davis and Januszkiewicz \cite{DJ}
invented a new strategy for creating closed aspherical manifolds with exotic
universal covers. Although that strategy is very different from Davis'
original approach, the resulting exotic covers are remarkably similar. For
example, their fundamental groups at infinity are precisely of the form
(\ref{Inverse sequence: Davis type}).

\begin{exercise}
\label{Exercise: contractible open n-manifolds have pro-homology of R^n}%
Theorem \ref{Theorem: Stallings Theorem} suggests that the essence of a
contractible open manifold is contained in its fundamental group at infinity.
Show that every contractible open n-manifold $W^{n}$ has the same
\emph{homology} at infinity as $%
%TCIMACRO{\U{211d} }%
%BeginExpansion
\mathbb{R}
%EndExpansion
^{n}$. In particular, show that for all $n\geq2$, $\operatorname*{pro}$%
-$H_{i}\left(  W^{n};%
%TCIMACRO{\U{2124} }%
%BeginExpansion
\mathbb{Z}
%EndExpansion
\right)  $ is stably $%
%TCIMACRO{\U{2124} }%
%BeginExpansion
\mathbb{Z}
%EndExpansion
$ if $i=0$ or $n-1$ and pro-trivial otherwise. \emph{Note:}\textbf{ }This
exercise may be viewed as a continuation of Exercise
\ref{Exercise: contractible open mflds are 1-ended}.
\end{exercise}

\subsection{Siebenmann's thesis\label{Subsection: Siebenmann's thesis}}

Theorem \ref{Theorem: Stallings Theorem} may be viewed as a classification of
those open manifolds that can be compactified to a closed $n$-ball by addition
of an $\left(  n-1\right)  $-sphere boundary. More generally, one may look to
characterize open manifolds that can be compactified to a manifold with
boundary by addition of a boundary $\left(  n-1\right)  $-manifold. Since the
boundary of a manifold $P^{n}$ always has a \emph{collar neighborhood}
$N\approx\partial P^{n}\times\lbrack0,1]$, an open manifold $M^{n}$ allows
such a com\-pact\-ific\-at\-ion if and only if it contains a neighborhood of
infinity homeomorphic to an \emph{open collar} $Q^{n-1}\times\lbrack0,1)$, for
some closed $\left(  n-1\right)  $-manifold $Q^{n-1}$. We refer to open
manifolds of this sort as being \emph{collarable}.

The following shows that, to characterize collarable open manifolds, it is not
enough to consider the fundamental group at infinity.

\begin{example}
Let $M^{n}$ be the result of a countably infinite collection of copies of
$\mathbb{S}^{2}\times\mathbb{S}^{n-2}$ connect-summed to $%
%TCIMACRO{\U{211d} }%
%BeginExpansion
\mathbb{R}
%EndExpansion
^{n}$ along a sequence of $n$-balls tending to infinity (see Figure
\ref{Figure: Infinite handles}).%
%TCIMACRO{\FRAME{ftbpFU}{3.4968in}{1.6869in}{0pt}{\Qcb{$\U{211d} ^{n}$
%connect-summed with infinitely many $\QTR{Bbb}{S}^{2}\times\QTR{Bbb}{S}^{n-2}%
%$}}{\Qlb{Figure: Infinite handles}}{osu-figinfinitehandles.eps}%
%{\special{ language "Scientific Word";  type "GRAPHIC";
%maintain-aspect-ratio TRUE;  display "USEDEF";  valid_file "F";
%width 3.4968in;  height 1.6869in;  depth 0pt;  original-width 8.3385in;
%original-height 3.9894in;  cropleft "0";  croptop "1";  cropright "1";
%cropbottom "0";
%filename '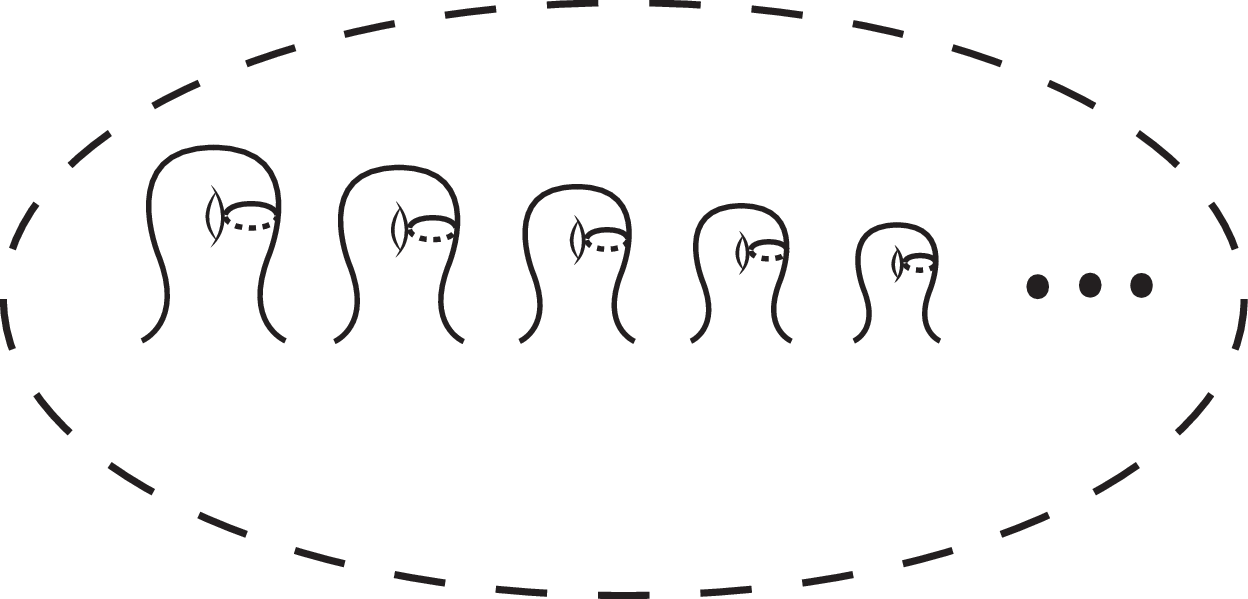';file-properties "XNPEU";}} }%
%BeginExpansion
\begin{figure}[ptb]%
\centering
\includegraphics[
height=1.6869in,
width=3.4968in
]%
{}%
\caption{$\mathbb{R} ^{n}$ connect-summed with infinitely many $\mathbb{S}%
^{2}\times\mathbb{S}^{n-2}$}%
\label{Figure: Infinite handles}%
\end{figure}
%EndExpansion
Provided $n\geq4$, $M^{n}$ is simply connected at infinity. Moreover, since a
compact manifold with boundary has finite homotopy type, and since the
addition of a manifold boundary does not affect homotopy type, this $M^{n}$
admits no such com\-pact\-ific\-at\-ion.
\end{example}

For manifolds that are simply connected at infinity, the necessary additional
hypothesis is as simple as one could hope for.

\begin{theorem}
[See \cite{BLL}]Let $W^{n}$ be a 1-ended open $n$-manifold ($n\geq6$) that is
simply connected at infinity. Then $W^{n}$ is collarable if and only if
$H_{\ast}\left(  W;%
%TCIMACRO{\U{2124} }%
%BeginExpansion
\mathbb{Z}
%EndExpansion
\right)  $ is finitely generated.
\end{theorem}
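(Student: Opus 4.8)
The plan is to prove the two implications separately, with the reverse one carrying essentially all of the content.

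For the forward implication, suppose $W^{n}$ is collarable, so it contains a neighborhood of infinity $N\approx Q^{n-1}\times\lbrack0,1)$ with $Q^{n-1}$ a closed manifold. Set $K=\overline{W^{n}-N}$, a compact manifold with $\partial K=Q^{n-1}\times\{0\}$. The open collar deformation retracts onto $Q^{n-1}\times\{0\}\subseteq K$, so $W^{n}$ deformation retracts onto $K$ and $H_{\ast}(W^{n};\mathbb{Z})\cong H_{\ast}(K;\mathbb{Z})$. As a compact ANR, $K$ has finite homotopy type (Proposition \ref{Proposition: ANR facts}), hence finitely generated homology, and therefore so does $W^{n}$.

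The reverse implication is the substantive one. The first observation—valid for \emph{every} clean neighborhood of infinity $N$, with no connectivity assumption—is that $H_{\ast}(N;\mathbb{Z})$ is finitely generated. Writing $K=\overline{W^{n}-N}$, excision gives $H_{k}(W^{n},N)\cong H_{k}(K,\partial K)$. Since $K$ and $\partial K$ are compact manifolds, hence compact ANRs of finite homotopy type (Proposition \ref{Proposition: ANR facts}), the pair $(K,\partial K)$ has finitely generated homology, so $H_{k}(W^{n},N)$ is finitely generated. The portion
\[
H_{k}(W^{n},N)\longrightarrow H_{k-1}(N)\longrightarrow H_{k-1}(W^{n})
\]
of the long exact sequence of $(W^{n},N)$ then presents $H_{k-1}(N)$ as an extension of a subgroup of $H_{k-1}(W^{n})$ (finitely generated by hypothesis) by a quotient of the finitely generated group $H_{k}(W^{n},N)$. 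Hence $H_{\ast}(N;\mathbb{Z})$ is finitely generated.

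Next I would upgrade this to inward tameness, and here simple connectivity at infinity is essential. Since $\operatorname{pro}\text{-}\pi_{1}$ of the end is pro-trivial, I can choose a cofinal sequence of clean neighborhoods of infinity $N_{0}\supseteq N_{1}\supseteq\cdots$ with connected boundaries and each $\pi_{1}(N_{i+1})\to\pi_{1}(N_{i})$ trivial. The hard part—and the main obstacle—is to convert this \emph{eventual} triviality into genuinely simply connected neighborhoods of infinity. I would do so by surgery: generators of the finitely generated group $\pi_{1}(\partial N_{i+1})$ bound disks in $N_{i}$, and since $n\geq 6>4$, general position makes these $2$-disks disjoint embeddings, so the associated handle-trading produces a clean neighborhood of infinity $V\subseteq N_{i}$ with $\pi_{1}(V)=1$. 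This high-dimensional engulfing step is where the dimension hypothesis is genuinely used. Granting it, each such $V$ is simply connected, finite-dimensional, and (by the previous paragraph) has finitely generated homology, hence is of finite homotopy type and in particular finitely dominated. By the characterizations of inward tameness established earlier—existence of arbitrarily small finitely dominated sharp neighborhoods of infinity suffices—$W^{n}$ is inward tame.

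Finally I would invoke Siebenmann's thesis \cite{Si}. The end of $W^{n}$ is $1$-ended, inward tame, and has stable (indeed pro-trivial) $\operatorname{pro}\text{-}\pi_{1}$, so for $n\geq 6$ the collaring theorem applies: the end is collarable exactly when its finiteness obstruction in $\widetilde{K}_{0}(\mathbb{Z}[\pi_{1}(\varepsilon)])$ vanishes. Because $\pi_{1}$ of the end is trivial, this obstruction lies in $\widetilde{K}_{0}(\mathbb{Z}[1])=\widetilde{K}_{0}(\mathbb{Z})=0$ and so vanishes automatically. Therefore $W^{n}$ contains a neighborhood of infinity homeomorphic to $Q^{n-1}\times\lbrack0,1)$; that is, $W^{n}$ is collarable, completing the proof.
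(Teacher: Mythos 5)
Your argument is correct, but note first that the paper offers no proof of this theorem at all---it is stated with only the citation \cite{BLL}. Your route is also genuinely different from the original one: Browder--Levine--Livesay construct the collar directly by handle-theoretic methods, whereas you derive the statement as a corollary of Siebenmann's Collaring Theorem (Theorem \ref{Theorem: Siebenmann's thesis}, stated immediately afterward in these notes), observing that simple connectivity at infinity gives pro-trivial, hence stable, $\operatorname{pro}$-$\pi_{1}$ and forces the finiteness obstruction into $\widetilde{K}_{0}\left(\mathbb{Z}\left[1\right]\right)=\widetilde{K}_{0}\left(\mathbb{Z}\right)=0$; this is exactly the mechanism the author alludes to in Remark (b) following that theorem. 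Your supporting steps are sound: the excision/long-exact-sequence argument showing that every clean neighborhood of infinity inherits finitely generated homology from $W^{n}$ is standard, and a simply connected finite-dimensional ANR with finitely generated homology (vanishing above dimension $n$) does have finite homotopy type, so inward tameness follows once simply connected clean neighborhoods of infinity are produced. The one place needing more care is that surgery step: attaching $2$-handles along generators of $\pi_{1}\left(\partial N_{i+1}\right)$ kills only the normal closure of the image of $\pi_{1}\left(\partial N_{i+1}\right)$ in the new neighborhood, so you must first arrange, by the standard carving/piping move, that $\pi_{1}\left(\partial N_{i+1}\right)\twoheadrightarrow\pi_{1}\left(N_{i+1}\right)$, and you must also handle embedded disks that cross $\partial N_{i+1}$ rather than staying in the cobordism $\overline{N_{i}-N_{i+1}}$; both are routine for $n\geq6$ but should be said. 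What your approach buys is economy within the framework already built in these notes; what it costs is logical independence, since the original argument of \cite{BLL} predates and does not rely on Siebenmann's machinery.
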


For manifolds not simply connected at infinity, the situation is more
complicated, but the characterization is still remarkably elegant. It is one
of the best-known and most frequently applied theorems in manifold topology.

\begin{theorem}
[Siebenmann's Collaring Theorem]\label{Theorem: Siebenmann's thesis}A 1-ended
n-manifold $W^{n}$ ($n\geq6$) with compact (possibly empty) boundary is
collarable if and only if

\begin{enumerate}
\item $W^{n}$ is inward tame,

\item $\operatorname*{pro}$-$\pi_{1}\left(  \varepsilon(W^{n})\right)  $ is
stable, and

\item $\sigma_{\infty}\left(  W^{n}\right)  \in\widetilde{K}_{0}\left(
%TCIMACRO{\U{2124} }%
%BeginExpansion
\mathbb{Z}
%EndExpansion
\left[  \check{\pi}_{1}\left(  \varepsilon\left(  X\right)  \right)  \right]
\right)  $ is trivial.
\end{enumerate}
\end{theorem}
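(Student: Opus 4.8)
The plan is to prove the two implications separately, with essentially all of the substance residing in the ``if'' direction. For \emph{necessity}, suppose $W^{n}$ is collarable, so that some neighborhood of infinity is an open collar $N\approx Q^{n-1}\times\lbrack 0,1)$ with $Q^{n-1}$ a closed $(n-1)$-manifold. The subcollars $Q^{n-1}\times\lbrack t,1)$ form a cofinal family of neighborhoods of infinity, each deformation retracting onto a copy of the finite complex $Q^{n-1}$, with every bonding map a homotopy equivalence. This is precisely the end behaving like the compactum $Q^{n-1}$: it is inward tame, $\operatorname*{pro}$-$\pi_{1}$ is pro-isomorphic to a constant sequence and hence stable (with $\check{\pi}_{1}(\varepsilon(W^{n}))\cong\pi_{1}(Q^{n-1})$), and since each neighborhood of infinity has finite homotopy type its Wall obstruction vanishes, giving $\sigma_{\infty}(W^{n})=0$. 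Thus (1)--(3) hold, and the work lies entirely in the converse.

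Assume now (1)--(3) and write $\pi=\check{\pi}_{1}(\varepsilon(W^{n}))$. The first stage is to \textbf{normalize the neighborhoods of infinity}. Because $W^{n}$ is a manifold it is sharp at infinity and admits clean neighborhoods of infinity; using stability of $\operatorname*{pro}$-$\pi_{1}$ together with the ``passing to images'' description (Proposition~\ref{Prop: passing to images}), I pass to a cofinal nested sequence $N_{0}\supseteq N_{1}\supseteq\cdots$ of connected clean neighborhoods for which every inclusion $N_{i+1}\hookrightarrow N_{i}$ induces an isomorphism onto $\pi$. A standard but essential piece of engineering here, using $n\geq 6$ and handle trading in low indices, is to further arrange that each boundary inclusion $\partial N_{i}\hookrightarrow N_{i}$ is itself a $\pi_{1}$-isomorphism; this is what lets the subsequent homology calculations be run with $\mathbb{Z}[\pi]$-coefficients on universal covers. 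The second stage is to \textbf{extract $h$-cobordisms using the finiteness obstruction}. By inward tameness and the identification of inward tameness with finite domination of sharp neighborhoods of infinity (Lemma~\ref{Lemma: tame = nbds of infinity are f.d.}), each $N_{i}$ is finitely dominated and carries a Wall obstruction in $\widetilde{K}_{0}(\mathbb{Z}[\pi])$; these all agree, up to the standard sign/involution, with $\sigma_{\infty}(W^{n})$, so hypothesis (3) forces each $N_{i}$ to have finite homotopy type. Combined with the boundary $\pi_{1}$-control of the first stage and Poincar\'{e}--Lefschetz duality, this pins the relative homology $H_{\ast}(N_{i},\partial N_{i};\mathbb{Z}[\pi])$ into a single dimension as a stably free module, whereupon handle trading (Whitney trick, $n\geq 6$) lets me find inside each $N_{i}$ a smaller clean \emph{homotopy collar} $N_{i+1}$ (one with $\partial N_{i+1}\hookrightarrow N_{i+1}$ a homotopy equivalence). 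The region $A_{i}=\overline{N_{i}-N_{i+1}}$ is then an $h$-cobordism between the closed manifolds $\partial N_{i}$ and $\partial N_{i+1}$, and iterating yields a cofinal sequence with every $A_{i}$ an $h$-cobordism.

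The third stage is to \textbf{assemble a product via the $s$-cobordism theorem and a torsion swindle}. Each $A_{i}$ carries a Whitehead torsion $\tau_{i}\in\operatorname*{Wh}(\pi)$ that the hypotheses do not control; since $\dim\partial N_{i}=n-1\geq 5$, the $s$-cobordism theorem recognizes $A_{i}$ as a product exactly when $\tau_{i}=0$. The resolution---and the reason the theorem needs only a $\widetilde{K}_{0}$-obstruction and no Whitehead-group hypothesis---is that there are infinitely many stacked cobordisms: by re-choosing where each boundary is cut one may prescribe the $\tau_{i}$ and arrange them to telescope, each cancelling against its successor, so that even though an individual $A_{i}$ may be nontrivial the infinite union $N_{0}=\bigcup_{i}A_{i}$ is homeomorphic to $\partial N_{0}\times\lbrack 0,\infty)$. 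This open collar neighborhood of infinity exhibits $W^{n}$ as collarable. The main obstacle is precisely this pair of high-dimensional maneuvers: converting the homotopy-level data (stability and vanishing finiteness obstruction) into honest geometric $h$-cobordisms, via the boundary $\pi_{1}$-stabilization and the duality argument that confines the obstruction to one dimension---which is exactly where $n\geq 6$ is indispensable---and then the infinite torsion cancellation, in which one must exploit the semi-infinite stacking to absorb the uncontrolled $\operatorname*{Wh}(\pi)$ torsions, since nothing in the hypotheses kills them one at a time. These two steps are the technical heart of Siebenmann's thesis.
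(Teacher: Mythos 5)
Your proposal is correct and follows essentially the same route as the paper's outline: necessity via the subcollar structure, then normalizing $\pi_{1}$ of the clean neighborhoods of infinity, using hypothesis (3) (the paper cites Siebenmann's Sum Theorem to propagate vanishing of the Wall obstruction where you invoke agreement of the obstructions up to the standard identification), improving to homotopy collars to produce $h$-cobordisms, and finally the infinite borrowing swindle to absorb the uncontrolled Whitehead torsions before applying the $s$-cobordism theorem. The only cosmetic difference is that in the paper's version of the swindle each re-cut $A_{i}$ individually becomes a product, rather than only the infinite union being recognized as a collar.
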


\begin{remark}
\emph{(1) Under the assumption of hypotheses (a) and (b), }$\sigma_{\infty
}\left(  W^{n}\right)  $ \emph{is }defined\emph{ to be the Wall finiteness
obstruction} $\sigma\left(  N\right)  $ \emph{of a single clean neighborhood
of infinity, chosen so that its fundamental group (under inclusion) matches}
$\check{\pi}_{1}\left(  \varepsilon\left(  X\right)  \right)  $. \emph{A more
general definition for }$\sigma_{\infty}\left(  W^{n}\right)  $ ---\emph{one
that can be used when} $\operatorname*{pro}$-$\pi_{1}\left(  \varepsilon
(W^{n})\right)  $ \emph{is not stable---will be introduced in
\S \ref{Subsection: Generalizing Siebenman}.\smallskip}

\noindent\emph{(2) Together, assumptions (a) and (c) are equivalent to
assuming that }$W^{n}$\emph{ is absolutely inward tame. That would allow for a
simpler statement of the Collaring Theorem; however, the power of the given
version is that it allows the finiteness obstruction to be measured on a
single (appropriately chosen) neighborhood of infinity. Furthermore, in a
number of important cases, }$\sigma_{\infty}\left(  W^{n}\right)  $\emph{ is
trivial for algebraic reasons. That is the case, for example, when }%
$\check{\pi}_{1}\left(  \varepsilon\left(  X\right)  \right)  $ \emph{is
trivial, free, or free abelian, by a fundamental result of algebraic K-theory
found in \cite{BHS}.\smallskip}

\noindent\emph{(3) Due to stability, no base ray needs to be mentioned in
Condition (b). Use of the \v{C}ech fundamental group in Condition (c) is just
a convenient way of specifying the single relevant group implied by Condition
(b) (see Exercise \ref{Exercise: stable inverse sequences}).\smallskip}

\noindent\emph{(4) Since an inward tame manifold with compact boundary is
necessarily finite-ended (see Exercise
\ref{Exercise: inward tame implies finite-ended}), the 1-ended hypothesis is
easily eliminated from the above by requiring each end to satisfy (b) and (c),
individually.}

\noindent\emph{(5) By applying \cite{Free}, Theorem
\ref{Theorem: Siebenmann's thesis} can be extended to dimension 5, provided
}$\check{\pi}_{1}\left(  \varepsilon\left(  X\right)  \right)  $ \emph{is a
\textquotedblleft good\textquotedblright\ group, in the sense of}
\emph{\cite{FQ}; whether the theorem holds for all 5-manifolds is an open
question.} \emph{Meanwhile, Kwasik and Schultz \cite{KS} have shown that
Theorem \ref{Theorem: Siebenmann's thesis} fails in dimension 4; partial
results in that dimension can be found in \cite[\S 11.9]{FQ}. By combining the
solution to the Poincar\'{e} Conjecture with work by Tucker \cite{Tu}, one
obtains a strong 3-Dimensional Collaring Theorem---only condition (a) is
necessary. For classical reasons, the same is true for }$n=2$\emph{. And for
}$n=1$\emph{, there are no issues.}
\end{remark}

The proof of Theorem \ref{Theorem: Siebenmann's thesis} is intricate in
detail, but simple in concept. Readers unfamiliar with h-cobordisms and
s-cobordisms, and their role in the topology of manifolds, should consult
\cite{RS}.

\begin{proof}
[Proof of Siebenmann's Theorem (outline)]Since a 1-ended collarable manifold
is easily seen to be absolutely inward tame with stable fundamental group at
infinity, conditions (a)-(c) are necessary. To prove sufficiency, begin with a
cofinal sequence $\left\{  N_{i}\right\}  _{i=0}^{\infty}$ of clean
neighborhoods of infinity with $N_{i+1}\subseteq\operatorname*{int}N_{i}$ for
all $i$. After some initial combinatorial group theory, a 2-dimensional disk
trading argument allows us to improve the neighborhoods of infinity so that,
for each $i$, $N_{i}$ and $\partial N_{i}$ have fundamental groups
corresponding to the stable fundamental group $\check{\pi}_{1}\left(
\varepsilon(W^{n})\right)  $. More precisely, each inclusion induces
isomorphisms $\pi_{1}\left(  \partial N_{i}\right)  \overset{\cong%
}{\rightarrow}\pi_{1}\left(  N_{i}\right)  $ and $\pi_{1}\left(
N_{i+1}\right)  \overset{\cong}{\rightarrow}\pi_{1}\left(  N_{i}\right)  $,
with each group being isomorphic to $\check{\pi}_{1}\left(  \varepsilon
(W^{n})\right)  $.

Under the assumption that one of these $N_{i}$ has trivial finiteness
obstruction, the \textquotedblleft Sum Theorem\textquotedblright\ for the Wall
obstruction (first proved in \cite{Si} for this purpose) together with the
above $\pi_{1}$-isomorphisms, implies that all $N_{i}$ have trivial finiteness
obstruction. From there, a carefully crafted sequence of modifications to
these neighborhoods of infinity---primarily handle manipulations---results in
a further improved sequence of neighborhoods of infinity with the property
that $\partial N_{i}\hookrightarrow N_{i}$ is a homotopy equivalence for each
$i$. The resulting cobordisms $\left(  A_{i},\partial N_{i},\partial
N_{i+1}\right)  $, where $A_{i}=\overline{N_{i}-N}_{i+1}$ are then
h-cobordisms (See Exercise
\ref{Exercise: cobordisms are [one-sided] h-cobordisms}).

A clever \textquotedblleft infinite swindle\textquotedblright\ allows one to
trivialize the Whitehead torsion of $\partial N_{i}\hookrightarrow A_{i}$ in
each h-cobordism by inductively borrowing the inverse h-cobordism $B_{i}$ from
a collar neighborhood of $\partial N_{i+1}$ in $A_{i+1}$ (after which the
\textquotedblleft new\textquotedblright\ $N_{i+1}$ is $\overline{N_{i+1}%
-B}_{i}$), until the s-cobordism theorem yields $A_{i}\approx\partial
N_{i}\times\lbrack i,i+1]$, for each \thinspace$i$. Gluing these products
together completes the proof.
\end{proof}

\begin{exercise}
\label{Exercise: cobordisms are [one-sided] h-cobordisms}Verify the
h-cobordism assertion in the above paragraph. In particular, let $N_{i}$ and
$N_{i+1}$ be clean neighborhoods of infinity with $\operatorname*{int}%
N_{i}\supseteq N_{i+1}$ satisfying the properties: \emph{(1)} $\partial
N_{i}\hookrightarrow N_{i}$ and $\partial N_{i+1}\hookrightarrow N_{i+1}$ are
homotopy equivalences and \emph{(2)}\textbf{ }$N_{i+1}\hookrightarrow N_{i}$
induces a $\pi_{1}$-isomorphism. For $A_{i}=\overline{N_{i}-N}_{i+1}$, show
that both $\partial N_{i}\hookrightarrow A_{i}$ and $\partial N_{i+1}%
\hookrightarrow A_{i}$ are homotopy equivalences.

Observe that in the absence of Condition (2), it is still possible to conclude
that $\left(  A_{i},\partial N_{i},\partial N_{i+1}\right)  $ is a
\textquotedblleft1-sided h-cobordism\textquotedblright, in particular,
$\partial N_{i}\hookrightarrow A_{i}$ is a homotopy equivalence.
\end{exercise}

In the spirit of the result in Exercise \ref{Exercise: contractible cross R},
the following may be obtained as an application of Theorem
\ref{Theorem: Siebenmann's thesis}.

\begin{theorem}
[\cite{Gu2}]\label{Theorem: Finite type x R}For an open manifold $M^{n}$
($n\geq5$), the \textquotedblleft stabilization\textquotedblright%
\ $M^{n}\times%
%TCIMACRO{\U{211d} }%
%BeginExpansion
\mathbb{R}
%EndExpansion
$ is collarable if and only if $M^{n}$ has finite homotopy type.

\begin{proof}
[Sketch of proof]Since a collarable manifold has finite homotopy type, and
since $M^{n}\times%
%TCIMACRO{\U{211d} }%
%BeginExpansion
\mathbb{R}
%EndExpansion
$ is homotopy equivalent to $M^{n}$, it is clear that $M^{n}$ must have finite
homotopy in order for $M^{n}\times%
%TCIMACRO{\U{211d} }%
%BeginExpansion
\mathbb{R}
%EndExpansion
$ to be collarable. To prove sufficiency of that condition, we wish to verify
that the conditions Theorem \ref{Theorem: Siebenmann's thesis} are met by
$M^{n}\times%
%TCIMACRO{\U{211d} }%
%BeginExpansion
\mathbb{R}
%EndExpansion
$.

Conditions (a) and (c) are relatively easy, and are left as an exercise (see
below). The key step is proving stability of $\operatorname*{pro}$-$\pi
_{1}\left(  \varepsilon(M^{n}\times%
%TCIMACRO{\U{211d} }%
%BeginExpansion
\mathbb{R}
%EndExpansion
),r\right)  $. We will say just enough to convey the main idea---describing a
technique that has been useful in several other contexts. Making these
argument rigorous is primarily a matter of base points and base rays---a
nontrivial issue, but one that we ignore for now. (See \cite{Gu2} for the details.)

For simplicity, assume $M^{n}$ is 1-ended and $N_{0}\supseteq N_{1}\supseteq
N_{2}\supseteq\cdots$ is a cofinal sequence of clean connected neighborhoods
of infinity in $M^{n}$. If $R_{i}=\left(  M^{n}\times(-\infty,-i]\cup\lbrack
i,\infty)\right)  \allowbreak\cup\allowbreak\left(  N_{i}\times%
%TCIMACRO{\U{211d} }%
%BeginExpansion
\mathbb{R}
%EndExpansion
\right)  $, then $\{R_{i}\}$ forms a cofinal sequence of clean connected
neighborhoods of infinity in $M^{n}\times%
%TCIMACRO{\U{211d} }%
%BeginExpansion
\mathbb{R}
%EndExpansion
$. If $G=\pi_{1}\left(  M^{n}\right)  $ and $H_{i}=\operatorname*{Im}\left(
\pi_{1}\left(  N_{i}\right)  \allowbreak\rightarrow\allowbreak\pi_{1}\left(
M^{n}\right)  \right)  $ for each $i$, then $\pi_{1}\left(  R_{i}\right)
=G\ast_{H_{i}}G$ and $\operatorname*{pro}$-$\pi_{1}\left(  \varepsilon
(M^{n}\times%
%TCIMACRO{\U{211d} }%
%BeginExpansion
\mathbb{R}
%EndExpansion
),r\right)  $ may be represented by
\[
G\ast_{H_{0}}G\twoheadleftarrow G\ast_{H_{1}}G\twoheadleftarrow G\ast_{H_{2}%
}G\twoheadleftarrow\cdots
\]
where the bonds are induced by the identities on $G$ factors. Notice that each
$H_{i+1}$ injects into $H_{i}$. To prove stability, it suffices to show that,
eventually, $H_{i+1}$ goes onto $H_{i}$. To that end, we argue that every loop
in $N_{i}$ can be homotoped into $N_{i+1}$ by a homotopy whose tracks may go
anywhere in $M^{n}$.\footnote{A complete proof would do this while keeping a
base point of the loop on a base ray $r$.} The loops of concern are those
lying in $N_{i}-N_{i+1}$; let $\alpha$ be such a loop, and assume it is an
embedded circle.

By the finite homotopy type of $M^{n}$ (in fact, finite domination is enough),
we may assume the existence of a homotopy $S$ that pulls $M^{n}$ into
$M^{n}-N_{i}$. Consider the map $J=\left.  S\right\vert _{\partial N_{i}%
\times\lbrack0,1]}$. Adjust $J$ so that it is transverse to the $1$-manifold
$\alpha$. Then $J^{-1}\left(  \alpha\right)  $ is a finite collection of
circles. With some extra effort we can see that at least one of those circles
goes homeomorphically onto $\alpha$. The strong deformation retraction of
$\partial N_{i}\times\lbrack0,1]$ onto $\partial N_{i}\times\left\{
0\right\}  $ composed with $J$ pushes $\alpha$ into $N_{i+1}$.
\end{proof}
\end{theorem}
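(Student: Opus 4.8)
The plan is to deduce the nontrivial (sufficiency) direction from Siebenmann's Collaring Theorem (\ref{Theorem: Siebenmann's thesis}) applied to $W=M^{n}\times\mathbb{R}$, which is an open manifold of dimension $n+1\geq 6$. Necessity is the soft half: a collarable open manifold contains an open collar $Q^{n-1}\times[0,1)$, hence deformation retracts onto the compact manifold-with-boundary obtained by deleting that collar, and so has finite homotopy type; since $W\simeq M^{n}$, the same holds for $M^{n}$. For sufficiency I may assume $M^{n}$ is connected and, to keep the $\pi_{1}$-bookkeeping clean, $1$-ended (the multi-ended case only replaces the amalgamated product below by a graph of groups). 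Fixing a cofinal sequence $N_{0}\supseteq N_{1}\supseteq\cdots$ of clean connected neighborhoods of infinity in $M^{n}$, the sets $R_{i}=(M^{n}\times((-\infty,-i]\cup[i,\infty)))\cup(N_{i}\times\mathbb{R})$ are clean neighborhoods of infinity in $W$ with compact complements $\overline{W-R_{i}}=\overline{(M^{n}-N_{i})}\times[-i,i]$; since each $R_{i}$ is connected, $W$ is $1$-ended. It remains to verify the three hypotheses of \ref{Theorem: Siebenmann's thesis} for $W$.

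Conditions (1) and (3) I expect to be routine, as in the author's sketch. A preliminary observation feeds both: since $M^{n}$ has finite homotopy type, the relative groups $H_{\ast}(M^{n},\overline{M^{n}-N_{i}})\cong H_{\ast}(N_{i},\partial N_{i})$ are finitely generated, so by the long exact sequence of $(N_{i},\partial N_{i})$ together with finiteness of $H_{\ast}(\partial N_{i})$ each $H_{\ast}(N_{i})$ is finitely generated, and a Mayer--Vietoris computation on $R_{i}\simeq M^{n}\cup_{N_{i}}M^{n}$ then shows $H_{\ast}(R_{i})$ is finitely generated. For inward tameness I would pull $R_{i}$ into a compact subset of itself by combining the homotopy pulling $M^{n}$ inward (\ref{Prop: finitely dominated = pulling homotopy}) with an outward push along the $\mathbb{R}$-coordinate, timed so that a tube point leaves $N_{i}$ only after its height has been pushed into one of the two caps; this keeps the homotopy inside $R_{i}$, and \ref{Lemma: inward tame implies pulling inward} gives (1). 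For the vanishing of $\sigma_{\infty}(W)=\sigma(R_{i})$, the Sum Theorem for Wall's obstruction applied to $R_{i}\simeq M^{n}\cup_{N_{i}}M^{n}$ expresses $\sigma(R_{i})$ through the contributions of the two copies of $M^{n}$ (which vanish, $M^{n}$ having finite homotopy type) and a contribution supported on the end of $M^{n}$; finite homotopy type of $M^{n}$, via the Sum Theorem applied to $M^{n}=\text{core}\cup N_{i}$, forces that last contribution to die already in $\widetilde{K}_{0}(\mathbb{Z}[\pi_{1}M^{n}])$, whence $\sigma(R_{i})=0$.

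The heart of the matter is condition (2), stability of $\operatorname{pro}$-$\pi_{1}(\varepsilon(W),r)$. Van Kampen gives $\pi_{1}(R_{i})=G\ast_{H_{i}}G$ with $G=\pi_{1}(M^{n})$ and $H_{i}=\operatorname{Im}(\pi_{1}(N_{i})\to G)$, and the bond $G\ast_{H_{i+1}}G\to G\ast_{H_{i}}G$ is the identity on each free factor, hence surjective; so the sequence is automatically semistable, and by \ref{Prop: passing to images} it is stable exactly when the descending chain $H_{0}\supseteq H_{1}\supseteq\cdots$ is eventually constant. As $H_{i+1}\subseteq H_{i}$ always, everything reduces to showing that, for large $i$, every loop $\alpha$ in $N_{i}-N_{i+1}$ is homotopic in $M^{n}$ into $N_{i+1}$. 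Here I would fix the homotopy $S\colon M^{n}\times[0,1]\to M^{n}$ pulling $M^{n}$ into a compact set; by cofinality its image misses $N_{i}$ for large $i$, so $S$ sweeps the inner boundary $\partial N_{i+1}$ from the $N_{i+1}$-side of $\alpha$ out past $N_{i}$. Taking $\alpha$ to be an embedded circle and making $J=S|_{\partial N_{i+1}\times[0,1]}$ transverse to it, $J^{-1}(\alpha)$ is a finite disjoint union of circles (it misses $\partial N_{i+1}\times\{0,1\}$, since $\alpha$ is disjoint from both $\partial N_{i+1}$ and $S_{1}(M^{n})$). A component of $J^{-1}(\alpha)$ mapping homeomorphically onto $\alpha$, straightened to $\partial N_{i+1}\times\{0\}$ by the product retraction and composed with $J$, is then the desired homotopy carrying $\alpha$ to a loop in $\partial N_{i+1}\subseteq N_{i+1}$.

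The main obstacle is precisely the existence of a component of $J^{-1}(\alpha)$ mapping degree-one onto $\alpha$: a priori all components could map with degree zero. I would secure this from the fact that $S$ carries $\partial N_{i+1}$ from one side of $\alpha$ to the other, so the mod-$2$ intersection number of the swept surface with $\alpha$ is odd, forcing an essential circle; making this rigorous (the author's ``some extra effort'') is the delicate geometric point. The second, and ultimately more demanding, subtlety is base-ray bookkeeping: the construction above yields a free homotopy, i.e.\ equality of conjugacy classes, whereas stability of $\operatorname{pro}$-$\pi_{1}$ is a based statement depending on $r$. Keeping the base point of $\alpha$ on $r$ throughout, and checking that the homotopies slide that base point along $r$, upgrades the conjugacy-class equality to honest stability; this is the content of the author's footnoted refinement and is where the bulk of the technical work resides.
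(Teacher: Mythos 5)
Your proposal follows the paper's route exactly: necessity via $W=M^{n}\times\mathbb{R}\simeq M^{n}$, sufficiency by checking the three hypotheses of Theorem \ref{Theorem: Siebenmann's thesis} for $W$ using the neighborhoods $R_{i}$, the identification $\pi_{1}\left(  R_{i}\right)  =G\ast_{H_{i}}G$, the reduction of $\pi_{1}$-stability to eventual surjectivity of $H_{i+1}\hookrightarrow H_{i}$, and the transversality argument that sweeps $\partial N_{i+1}$ across an embedded loop $\alpha$. The central argument matches the paper's sketch, and the two points you explicitly defer---producing a circle of $J^{-1}\left(  \alpha\right)  $ mapping degree one onto $\alpha$, and the base-ray bookkeeping---are exactly the points the paper defers to \cite{Gu2}. (Two small remarks: your use of $\partial N_{i+1}$ rather than $\partial N_{i}$ for the swept surface is the version that actually lands $\alpha$ in $N_{i+1}$; and your mod-$2$ count, as you acknowledge, only yields a circle of \emph{odd} degree onto $\alpha$, which is weaker than the degree-one circle needed to conclude that $\alpha$ itself, rather than a power, is carried into $N_{i+1}$.)

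The one step that would fail as written is your treatment of Condition (3). The Sum Theorem for Wall's obstruction, applied to $M^{n}=\overline{M^{n}-N_{i}}\cup N_{i}$ or to $R_{i}\simeq M^{n}\cup_{N_{i}}M^{n}$, requires every piece---in particular $N_{i}$---to be finitely dominated. Finite homotopy type of $M^{n}$ does not give this: an open manifold of finite homotopy type need not be inward tame, so its clean neighborhoods of infinity need not be finitely dominated (the Whitehead manifold, contractible but with non-finitely-generated $\pi_{1}\left(  N_{i}\right)  $, illustrates the phenomenon; the theorem's conclusion $W^{3}\times\mathbb{R}\approx\mathbb{R}^{4}$ is nonetheless true there). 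The repair is to keep $N_{i}$ out of the obstruction computation: by excision the pair $\left(  M^{n},N_{i}\right)  $ has the equivariant chain homotopy type of the compact pair $\left(  \overline{M^{n}-N_{i}},\partial N_{i}\right)  $, so the double mapping cylinder $R_{i}$ is obtained from one copy of $M^{n}$ (finite homotopy type) by attaching a \emph{finite} relative complex. Since $R_{i}$ is finitely dominated---your pulling homotopy does establish that, and hence Condition (1)---additivity of the finiteness obstruction over this cofibration gives $\sigma\left(  R_{i}\right)  =0$ directly, with no appeal to finite domination of $N_{i}$.
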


\begin{exercise}
Show that for an open manifold $M^{n}$ with finite homotopy type, the special
neighborhoods of infinity $R_{i}\subseteq M^{n}\times%
%TCIMACRO{\U{211d} }%
%BeginExpansion
\mathbb{R}
%EndExpansion
$, used in the above proof, have finite homotopy type. Therefore, $M^{n}\times%
%TCIMACRO{\U{211d} }%
%BeginExpansion
\mathbb{R}
%EndExpansion
$ is absolutely inward tame.
\end{exercise}

\begin{exercise}
Show that if $M^{n}$ (as above) is finitely dominated, but does not have
finite homotopy type, then $M^{n}\times%
%TCIMACRO{\U{211d} }%
%BeginExpansion
\mathbb{R}
%EndExpansion
$ satisfies Conditions (a) and (b) of Theorem
\ref{Theorem: Siebenmann's thesis}, but not Condition (c).
\end{exercise}

\subsection{Generalizing Siebenmann\label{Subsection: Generalizing Siebenman}}

Siebenmann's Collaring Theorem and a \textquotedblleft
controlled\textquotedblright\ version of it found in \cite{Quin1} have proven
remarkably useful in manifold topology; particularly in obtaining the sorts of
structure and embedding theorems that symbolize the tremendous activity in
high-dimensional manifold topology in the 1960's and 70's. But the discovery
of exotic universal covering spaces, along with a shift in research interests
(the Borel and Novikov Conjectures in particular and geometric group theory in
general) to topics where an understanding of universal covers is crucial,
suggests a need for results applicable to spaces with \emph{non-stable}
fundamental group at infinity. As an initial step, one may ask what can be
said about open manifolds satisfying some of Siebenmann's conditions---but not
$\pi_{1}$-stability. In \S \ref{Subsection: Inverse mapping telescopes} we
described a method for constructing locally finite polyhedra satisfying
Conditions (a) and (c) of Theorem \ref{Theorem: Stallings Theorem}, but having
almost arbitrary $\operatorname*{pro}$-$\pi_{1}$. By the same method, we could
build unusual behavior into $\operatorname*{pro}$-$H_{k}$. So it is a pleasant
surprise that, for manifolds with compact boundary, inward tameness by itself,
has significant implications.

\begin{theorem}
[{\cite[Th.1.2]{GT1}}]\label{Theorem: tame implies semistability}If a manifold
with compact (possibly empty) boundary is inward tame, then it has finitely
many ends, each of which has semistable fundamental group and stable homology
in all dimensions.
\end{theorem}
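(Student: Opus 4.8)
The plan is to dispatch the three conclusions in order of increasing difficulty, reducing first to a single end and then exploiting the two engines available to us: the pulling homotopies furnished by inward tameness (Lemma \ref{Lemma: inward tame implies pulling inward}), whose tracks remain \emph{inside} the neighborhood of infinity being pulled, and the general position/transversality techniques available because $M^{n}$ is a manifold. First I would record that the ``finitely many ends'' assertion is already in hand, since it is precisely Exercise \ref{Exercise: inward tame implies finite-ended}. Having finitely many ends, I would treat one end at a time by fixing a cofinal sequence $\left\{ N_{i}\right\} _{i=0}^{\infty}$ of clean, connected neighborhoods of infinity with $N_{i+1}\subseteq\operatorname{int}N_{i}$ converging to that end, so that the problem becomes one about the inverse sequences $\operatorname{pro}$-$\pi_{1}$ and $\operatorname{pro}$-$H_{k}$ determined by $\left\{ N_{i}\right\} $. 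Because $M^{n}$ is a manifold (hence sharp at infinity) and inward tame, Lemma \ref{Lemma: tame = nbds of infinity are f.d.} shows each $N_{i}$ is finitely dominated, so by Proposition \ref{Prop: properties of finitely dominated spaces} every $\pi_{1}\left( N_{i}\right) $ is finitely presented and every $H_{k}\left( N_{i};\mathbb{Z}\right) $ is finitely generated. This finiteness is the input that the manifold structure will convert into stability.

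For the stable homology in all dimensions, finite generation of the terms is not by itself enough---the inverse sequence $\mathbb{Z}\overset{\times2}{\longleftarrow}\mathbb{Z}\overset{\times2}{\longleftarrow}\cdots$ has finitely generated terms yet is not even semistable---so genuine manifold input is required. I would supply it through Poincar\'{e}--Lefschetz duality, working with $\mathbb{Z}_{2}$ coefficients to avoid orientability issues (as in the hint to Exercise \ref{Exercise: inward tame implies finite-ended}). The compact cobordisms $A_{ij}=\overline{N_{i}-N_{j}}$, together with the compact cores $\overline{M^{n}-N_{i}}$, are manifolds with boundary, and duality interchanges their $H_{k}$ with $H^{n-k}$ relative to part of the boundary. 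Passing to the limit along $\left\{ N_{i}\right\} $, this converts the inverse system $\operatorname{pro}$-$H_{k}\left( \varepsilon(M^{n})\right) $ into a direct system of cohomology groups of the cores; finite generation on both sides, together with the duality pairing, forces the relevant image subgroups to stabilize in both the pro-epimorphic and the pro-monomorphic senses of Proposition \ref{Prop: passing to images}, and a sequence that is both is stable.

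The main obstacle is semistability of $\operatorname{pro}$-$\pi_{1}$, and here I would adapt the transversality argument used in the proof of Theorem \ref{Theorem: Finite type x R}, with the crucial advantage that the pulling homotopy of Lemma \ref{Lemma: inward tame implies pulling inward} maps $N_{i-1}\times\left[ 0,1\right] $ into $N_{i-1}$ itself, so every homotopy track stays out of the fixed compactum $\overline{M^{n}-N_{i-1}}$---exactly the control demanded by the topological characterization of semistability in Proposition \ref{Prop: top characerization 1-connected at infinity}(2). Given a loop $\alpha$ carried deep in the end, I would restrict the pulling homotopy to the separating hypersurface $\partial N_{i}\times\left[ 0,1\right] $, make the resulting map $J$ transverse to $\alpha$, and analyze the $1$-manifold $J^{-1}\left( \alpha\right) $. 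The delicate step---and the one I expect to be hardest---is to show, by a degree/linking argument, that some component of this preimage maps homeomorphically onto $\alpha$, which then lets the product collar slide $\alpha$ to the required depth while keeping its base point on the base ray $r$. Establishing this shows that the image subgroups $\operatorname{Im}\left( \pi_{1}\left( N_{j}\right) \rightarrow\pi_{1}\left( N_{i}\right) \right) $ stabilize, i.e.\ that $\operatorname{pro}$-$\pi_{1}\left( \varepsilon(M^{n}),r\right) $ is semistable; by Proposition \ref{Prop: strongly connected at infinity} this conclusion is independent of the base ray, and running the argument at each of the finitely many ends completes all three assertions.
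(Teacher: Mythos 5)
Your proposal follows essentially the same route as the paper's (very brief) sketch: finite-endedness via Exercise \ref{Exercise: inward tame implies finite-ended}, $\pi_{1}$-semistability via the transversality argument of Theorem \ref{Theorem: Finite type x R} adapted so that the pulling homotopy's tracks stay inside the neighborhood of infinity, and homology stability via duality. You have also correctly isolated the genuinely delicate point---that some circle of $J^{-1}\left(  \alpha\right)  $ must map homeomorphically onto $\alpha$---which is exactly the step the paper flags as requiring ``some extra effort.''
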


\begin{proof}
[Sketch of proof]Finite-endedness of inward tame manifolds with compact
boundary was obtained in Exercise
\ref{Exercise: inward tame implies finite-ended}. The $\pi_{1}$-semistablity
of each end is based on the transversality strategy described in Theorem
\ref{Theorem: Finite type x R}. Stability of the homology groups is similar,
but algebraic tools like duality are also needed.
\end{proof}

Siebenmann's proof of Theorem \ref{Theorem: Siebenmann's thesis} (as outlined
earlier),\ along with the strategy used by Chapman and Siebenmann in \cite{CS}
(to be discussed
\S \ref{Section: Existence and uniqueness of Z-compactifications}) make the
following approach seem all but inevitable: Define a manifold $N^{n}$ with
compact boundary to be a \emph{homotopy collar} if $\partial N^{n}%
\hookrightarrow N^{n}$ is a homotopy equivalence. A homotopy collar is called
a \emph{pseudo-collar} if it contains arbitrarily small homotopy collar
neighborhoods of infinity. A manifold that contains a pseudo-collar
neighborhood of infinity is called \emph{pseudo-collarable}.

Clearly, every collarable manifold is pseudo-collarable, but the Davis
manifolds are counterexamples to the converse (see Example
\ref{Example: Davis manifolds are pseudo-collarable}). Before turning our
attention to a pseudo-collarability characterization, modeled after Theorem
\ref{Theorem: Siebenmann's thesis}, we spend some time getting familiar with
pseudo-collars and their properties.

A cobordism $\left(  A,\partial_{-}A,\partial_{+}A\right)  $ is called a
\emph{one-sided h-cobordism} if $\partial_{-}A\hookrightarrow A$ is a homotopy
equivalence, but not necessarily so for $\partial_{+}A\hookrightarrow A$. The
key connection between these concepts is contained in Proposition
\ref{Proposition: pseudo-collar properties}. First we state a standard lemma.

\begin{lemma}
\label{Lemma: properties of one-sided h-cobordisms}Let $\left(  A,\partial
_{-}A,\partial_{+}A\right)  $ be a compact one-sided h-cobordism as described
above. Then the inclusion $\partial_{+}A\hookrightarrow A$ induces $%
%TCIMACRO{\U{2124} }%
%BeginExpansion
\mathbb{Z}
%EndExpansion
$-homology isomorphisms (in fact, $%
%TCIMACRO{\U{2124} }%
%BeginExpansion
\mathbb{Z}
%EndExpansion
\lbrack\pi_{1}\left(  A\right)  ]$-homology isomorphisms) in all dimensions;
in addition, $\pi_{1}\left(  \partial_{+}A\right)  \rightarrow\pi_{1}\left(
A\right)  $ is surjective with perfect kernel.
\end{lemma}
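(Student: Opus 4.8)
The plan is to deduce the whole statement from Poincar\'{e}--Lefschetz duality applied to the universal cover, followed by elementary diagram-chasing; no handle theory or dimension restriction is needed. Write $\pi=\pi_{1}(A)$ and $\Lambda=\mathbb{Z}[\pi]$, let $p\colon\widetilde{A}\to A$ be the universal cover, and set $\widetilde{\partial_{\pm}A}=p^{-1}(\partial_{\pm}A)$. Since $A$ is a compact manifold it carries a finite CW (or handle) structure, so $C_{*}(\widetilde{A},\widetilde{\partial_{-}A})$ and $C_{*}(\widetilde{A},\widetilde{\partial_{+}A})$ are finite complexes of finitely generated free $\Lambda$-modules, and $\Lambda$-homology is exactly their homology. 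The one input to record is that, because $\partial_{-}A\hookrightarrow A$ is a homotopy equivalence (hence a $\pi_{1}$-isomorphism), $\widetilde{\partial_{-}A}$ is the universal cover of $\partial_{-}A$ and the lifted inclusion $\widetilde{\partial_{-}A}\hookrightarrow\widetilde{A}$ is again a homotopy equivalence; thus $C_{*}(\widetilde{A},\widetilde{\partial_{-}A})$ is chain contractible, i.e. $H_{*}(A,\partial_{-}A;\Lambda)=0$.

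For statement (1) I would invoke Poincar\'{e}--Lefschetz duality for the compact $n$-manifold $A$ with $\partial A=\partial_{-}A\sqcup\partial_{+}A$, in the chain-level form of a homotopy equivalence $C_{*}(\widetilde{A},\widetilde{\partial_{-}A})\simeq\mathrm{Hom}_{\Lambda}(C_{n-*}(\widetilde{A},\widetilde{\partial_{+}A}),\Lambda)$ given by cap product with a (twisted) fundamental class. Since the left side is contractible, the dual complex $\mathrm{Hom}_{\Lambda}(C_{*}(\widetilde{A},\widetilde{\partial_{+}A}),\Lambda)$ is acyclic; being a finite complex of finitely generated free $\Lambda$-modules it is therefore chain contractible, and dualizing once more (the double dual of a finitely generated free module is canonically itself) shows $C_{*}(\widetilde{A},\widetilde{\partial_{+}A})$ is chain contractible. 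Hence $H_{*}(A,\partial_{+}A;\Lambda)=0$, i.e. $\partial_{+}A\hookrightarrow A$ is a $\Lambda$-homology isomorphism; applying $-\otimes_{\Lambda}\mathbb{Z}$ to the contraction yields the ordinary $\mathbb{Z}$-homology isomorphism as well.

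For statement (2) I would feed $H_{*}(\widetilde{A},\widetilde{\partial_{+}A})=0$ into the long exact sequence of the pair. The segment $0=H_{1}(\widetilde{A},\widetilde{\partial_{+}A})\to H_{0}(\widetilde{\partial_{+}A})\to H_{0}(\widetilde{A})\to H_{0}(\widetilde{A},\widetilde{\partial_{+}A})=0$ forces $H_{0}(\widetilde{\partial_{+}A})\cong H_{0}(\widetilde{A})\cong\mathbb{Z}$, so $\widetilde{\partial_{+}A}$ is connected. As the components of $p^{-1}(\partial_{+}A)$ biject with the cosets $\pi/\mathrm{im}(\pi_{1}(\partial_{+}A)\to\pi)$, connectedness simultaneously shows $\partial_{+}A$ is connected and gives surjectivity of $\pi_{1}(\partial_{+}A)\to\pi$. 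Writing $K$ for the kernel, the connected cover $\widetilde{\partial_{+}A}$ is exactly the cover of $\partial_{+}A$ with fundamental group $K$, so $H_{1}(\widetilde{\partial_{+}A})\cong K^{\mathrm{ab}}$; and the segment $0=H_{2}(\widetilde{A},\widetilde{\partial_{+}A})\to H_{1}(\widetilde{\partial_{+}A})\to H_{1}(\widetilde{A})=0$ (using $\pi_{1}\widetilde{A}=1$) gives $H_{1}(\widetilde{\partial_{+}A})=0$, so $K$ is perfect.

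The hard part is the duality step of (1), and two points within it deserve care. First, one must use coefficients in $\Lambda$ (equivalently, pass to the universal cover) rather than $\mathbb{Z}$, so that the conclusion retains the full $\mathbb{Z}[\pi_{1}(A)]$-homology content advertised in the lemma. Second, the passage from vanishing cohomology to vanishing homology cannot run through a naive universal-coefficient theorem, since $\Lambda$ need not be a PID; this is precisely why I route the argument through chain contractibility and the double dual of a finite free complex. The orientation character appears only as a twist in the fundamental class and is harmless here, since the left-hand complex is contractible for every coefficient system. Everything downstream of the duality step is formal bookkeeping, which is why this is a ``standard'' lemma.
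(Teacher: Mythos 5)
Your argument is correct and follows the same route the paper indicates: the paper disposes of this lemma in one line by appeal to ``various forms of duality'' (citing \cite[Th.\ 2.5]{GT1}), and your proof is precisely the worked-out version of that---Poincar\'e--Lefschetz duality over $\mathbb{Z}[\pi_{1}(A)]$ turning the contractibility of $C_{*}(\widetilde{A},\widetilde{\partial_{-}A})$ into that of $C_{*}(\widetilde{A},\widetilde{\partial_{+}A})$, with the $\pi_{1}$-surjectivity and perfect kernel read off from the long exact sequence of the pair in the universal cover. Your cautionary remarks about routing through chain contractibility of finite free complexes rather than a universal-coefficient theorem, and about the orientation twist, are exactly the right points to flag.
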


Lemma \ref{Lemma: properties of one-sided h-cobordisms} is obtained from
various forms of duality. For details, see \cite[Th. 2.5]{GT1}.

\begin{proposition}
[Structure of manifold pseudo-collars]%
\label{Proposition: pseudo-collar properties}Let $N^{n}$ be a pseudo-collar. Then

\begin{enumerate}
\item $N^{n}$ can be expressed as a union $A_{0}\cup A_{1}\cup A_{2}\cup
\cdots$ of one-sided h-cobordisms with $\partial_{-}A_{0}=\partial N$ and
$\partial_{+}A_{i}=\partial_{-}A_{i+1}=A_{i}\cap A_{i+1}$ for all $i\geq0$,

\item $N^{n}$ contains arbitrarily small pseudo-collar neighborhoods of infinity,

\item $N^{n}$ is absolutely inward tame,

\item $\operatorname*{pro}$-$H_{i}\left(  \varepsilon\left(  N^{n}\right)  ;%
%TCIMACRO{\U{2124} }%
%BeginExpansion
\mathbb{Z}
%EndExpansion
\right)  $ is stable for all $i$,

\item $\operatorname*{pro}$-$\pi_{1}\left(  \varepsilon\left(  N^{n}\right)
\right)  $ may be represented by a sequence $G_{0}\overset{\mu_{1}%
}{\twoheadleftarrow}G_{1}\overset{\mu_{2}}{\twoheadleftarrow}G_{2}%
\overset{\mu_{3}}{\twoheadleftarrow}\cdots$ of surjections, where each $G_{i}$
is finitely presentable and each $\ker(\mu_{i})$ is perfect, and

\item there exists a proper map $\phi:N^{n}\rightarrow\lbrack0,\infty)$ with
$\phi^{-1}\left(  0\right)  =\partial N^{n}$ and $\phi^{-1}\left(  r\right)  $
a closed $\left(  n-1\right)  $-manifold with the same $%
%TCIMACRO{\U{2124} }%
%BeginExpansion
\mathbb{Z}
%EndExpansion
$-homology as $\partial N^{n}$ for all $r$.
\end{enumerate}

\begin{proof}
Observations (a)-(c) are almost immediate, after which (d) and (e) can be
obtained by straightforward applications of Lemma
\ref{Lemma: properties of one-sided h-cobordisms}. Item (f) can be obtained by
applying the (highly nontrivial) main result from \cite{DT} to each cobordism
$\left(  A_{i},\partial_{-}A_{i},\partial_{+}A_{i}\right)  $.
\end{proof}

\begin{exercise}
Fill in the necessary details for observations (a)-(e).
\end{exercise}
\end{proposition}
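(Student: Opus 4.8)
The plan is to extract from the pseudo-collar hypothesis a cofinal nested sequence of \emph{clean} homotopy collar neighborhoods of infinity $N = N_0 \supseteq N_1 \supseteq N_2 \supseteq \cdots$, with $N_{i+1}\subseteq \operatorname*{int}N_i$ and $\bigcap_i N_i = \varnothing$; this is available since, by definition, a pseudo-collar contains arbitrarily small homotopy collar neighborhoods of infinity, and standard manifold topology lets us take each $N_i$ clean. Setting $A_i = \overline{N_i - N_{i+1}}$, I would first check that each $(A_i,\partial N_i,\partial N_{i+1})$ is a one-sided h-cobordism. Since $\partial N_{i+1}\hookrightarrow N_{i+1}$ is a homotopy equivalence, the pushout description $N_i = A_i \cup_{\partial N_{i+1}} N_{i+1}$ shows $A_i \hookrightarrow N_i$ is a homotopy equivalence; combined with the homotopy equivalence $\partial N_i \hookrightarrow N_i$ and two-out-of-three, $\partial N_i = \partial_- A_i \hookrightarrow A_i$ is a homotopy equivalence. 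This is precisely Exercise~\ref{Exercise: cobordisms are [one-sided] h-cobordisms} and yields (1). Observation (2) is then immediate, since each tail $N_i$ again contains the cofinal family $\{N_j\}_{j>i}$ of homotopy collars and so is itself a pseudo-collar. For (3), each $N_i \simeq \partial N_i$ is homotopy equivalent to a closed manifold, hence to a compact ANR of finite homotopy type (West's theorem, Proposition~\ref{Proposition: ANR facts}), so the $N_i$ are arbitrarily small neighborhoods of infinity of finite homotopy type, giving absolute inward tameness.

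The homology and $\pi_1$ statements then follow by feeding the one-sided h-cobordisms into Lemma~\ref{Lemma: properties of one-sided h-cobordisms}. For (4), excision identifies $H_*(N_j, N_{j+1})$ with $H_*(A_j, \partial_+ A_j)$, and since $\partial_+ A_j = \partial N_{j+1} \hookrightarrow A_j$ is a $\mathbb{Z}$-homology isomorphism, the long exact sequence of the pair forces $H_*(A_j,\partial_+A_j)=0$; hence every bonding map $H_i(N_{j+1}) \to H_i(N_j)$ is an isomorphism, so $\operatorname*{pro}$-$H_i\left(\varepsilon(N^n);\mathbb{Z}\right)$ is not merely stable but constant along this cofinal sequence. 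For (5), set $G_j = \pi_1(N_j)$; since $N_j \simeq \partial N_j$ is a closed manifold, $G_j$ is finitely presentable. The inclusion-induced bonding map factors, via the homotopy equivalences $\pi_1(N_{j+1}) \cong \pi_1(\partial N_{j+1})$ and $\pi_1(A_j) \cong \pi_1(N_j)$, as the map $\pi_1(\partial_+ A_j) \to \pi_1(A_j)$, which by the Lemma is surjective with perfect kernel. Modulo the usual base-ray bookkeeping, this exhibits $\operatorname*{pro}$-$\pi_1\left(\varepsilon(N^n)\right)$ as $G_0 \twoheadleftarrow G_1 \twoheadleftarrow \cdots$ with the asserted properties.

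The genuinely hard part is (6), and here I would lean on external input. The plan is to apply the main theorem of \cite{DT} to each one-sided h-cobordism $(A_i, \partial N_i, \partial N_{i+1})$ to produce a map $\phi_i : A_i \to [i, i+1]$ restricting to the two boundary components over the endpoints and with every interior level $\phi_i^{-1}(r)$ a closed $(n-1)$-manifold whose $\mathbb{Z}$-homology agrees with that of $\partial N_i$. Because the bonding maps on homology are isomorphisms by (4), all the boundaries $\partial N_i$, and hence all these level sets, carry the $\mathbb{Z}$-homology of $\partial N^n$. Since consecutive maps agree on the shared boundary $\partial N_{i+1}$, I can glue the $\phi_i$ into a single proper $\phi : N^n \to [0, \infty)$ with $\phi^{-1}(0) = \partial N^n$; properness is automatic since $\phi(N_i) \subseteq [i, \infty)$ and the $N_i$ are cofinal. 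I expect the entire difficulty to be concentrated in invoking and correctly applying the \cite{DT} result: producing a manifold-level function whose intermediate slices realize the boundary homology is exactly the delicate ``controlled plus-construction'' phenomenon, whereas the gluing and properness checks that surround it are routine.
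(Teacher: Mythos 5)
Your proposal is correct and follows essentially the same route as the paper: decompose the pseudo-collar into one-sided h-cobordisms via Exercise \ref{Exercise: cobordisms are [one-sided] h-cobordisms}, deduce (2)--(3) from the homotopy collar property and West's theorem, feed the cobordisms into Lemma \ref{Lemma: properties of one-sided h-cobordisms} for (4)--(5), and invoke the Daverman--Tinsley result cobordism-by-cobordism for (6). The details you supply for (1)--(5) are exactly those the accompanying exercise asks for, and your excision/two-out-of-three arguments are sound.
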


Some examples are now in order.

\begin{example}
[The Whitehead manifold is not pseudo-collarable]First notice that
$\mathcal{W}^{3}$ does contain a homotopy collar neighborhood of infinity. Let
$D^{3}$ be a tame ball in $\mathcal{W}^{3}$ and let $N=\mathcal{W}%
^{3}-\operatorname*{int}D^{3}$. By excision and the Hurewicz and Whitehead
theorems, $N$ is a homotopy collar. (This argument works for all contractible
open manifolds.) But since $\mathcal{W}^{3}$ is neither inward tame nor
semistable, Proposition \ref{Proposition: pseudo-collar properties} assures
that $\mathcal{W}^{3}$ is not pseudo-collarable.
\end{example}

\begin{example}
[Davis manifolds are pseudo-collarable]%
\label{Example: Davis manifolds are pseudo-collarable}Non-collarable but
pseudo-collarable ends are found in some of our most important examples---the
Davis manifolds. It is easy to see that the neighborhood of infinity $N_{0}$
shown in Figure \ref{Figure: Exhaustion by compact contractibles} is a
homotopy collar, as is $N_{i}$ for each $i>0$.
\end{example}

Motivated by Proposition \ref{Proposition: pseudo-collar properties} and
previous definitions, call an inverse sequence of groups \emph{perfectly
semistable }if it is pro-isomorphic to an inverse sequence of finitely
presentable groups for which the bonding homomorphisms are all surjective with
perfect kernels. A complete characterization of pseudo-collarable
$n$-manifolds is provided by:

\begin{theorem}
[\cite{GT2}]\label{Theorem: characterization of pseudocollarable}A 1-ended
n-manifold $W^{n}$ ($n\geq6$) with compact (possibly empty) boundary is
pseudo-collarable if and only if

\begin{enumerate}
\item $W^{n}$ is inward tame,

\item $\operatorname*{pro}$-$\pi_{1}\left(  \varepsilon\left(  W^{n}\right)
\right)  $ is perfectly semistable, and

\item $\sigma_{\infty}\left(  W^{n}\right)  \in\underleftarrow{\lim}\left\{
\widetilde{K}_{0}(%
%TCIMACRO{\U{2124} }%
%BeginExpansion
\mathbb{Z}
%EndExpansion
\left[  \pi_{1}\left(  N,p_{i}\right)  \right]  )\mid N\text{ a clean nbd. of
infinity}\right\}  $ is trivial.
\end{enumerate}
\end{theorem}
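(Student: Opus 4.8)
The plan is to mirror the architecture of Siebenmann's Collaring Theorem (Theorem~\ref{Theorem: Siebenmann's thesis}), systematically replacing \emph{two-sided} h-cobordisms with \emph{one-sided} h-cobordisms and invoking perfect semistability wherever stability was used. Necessity is essentially packaged in Proposition~\ref{Proposition: pseudo-collar properties}: given a pseudo-collar neighborhood of infinity $N$, item (3) yields Condition (1), item (5) yields Condition (2), and absolute inward tameness forces arbitrarily small neighborhoods of infinity to have \emph{finite homotopy type}, so that every Wall obstruction $\sigma(N_i)$, and hence the inverse-limit obstruction $\sigma_\infty(W^n)$, vanishes, giving Condition (3).

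For sufficiency I would begin, as in Siebenmann, with a cofinal sequence $\{N_i\}$ of clean neighborhoods of infinity satisfying $N_{i+1}\subseteq\operatorname{int}N_i$. The first phase is a $\pi_1$-improvement step: using inward tameness together with perfect semistability, and the same low-dimensional (1- and 2-handle) disk-trading used in the Collaring Theorem, arrange that each inclusion $\partial N_i\hookrightarrow N_i$ induces a $\pi_1$-isomorphism and that the bonds $\pi_1(N_{i+1})\to\pi_1(N_i)$ are surjections with finitely generated perfect kernel. Perfectness is exactly the hypothesis that makes the target geometry attainable: by Lemma~\ref{Lemma: properties of one-sided h-cobordisms}, a one-sided h-cobordism forces a surjective $\pi_1$ with perfect kernel, so Condition (2) is precisely the algebra realizable by the cobordisms we intend to build.

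The heart of the proof is the second phase, in which each $A_i=\overline{N_i-N_{i+1}}$ is upgraded to a \emph{one-sided} h-cobordism with $\partial_-A_i=\partial N_i$ on the homotopy-equivalence side. The aim is to kill the relative homology $H_*(A_i,\partial N_i)$ by handle trading above the middle dimension while realizing the prescribed perfect-kernel surjection on $\pi_1$ geometrically. This realization---building, from a boundary manifold and a surjection with finitely generated perfect kernel, an honest one-sided h-cobordism---is the highly nontrivial construction of \cite{DT} (the same input behind Proposition~\ref{Proposition: pseudo-collar properties}(6)); it is where $n\geq6$ and the Whitney trick enter. Condition (3) plays exactly the role the finiteness obstruction plays in Siebenmann's argument: a Sum-Theorem computation, propagated along the tower via the first-phase $\pi_1$-isomorphisms, shows the compatible family of Wall obstructions can be trivialized so the needed handle cancellations can be performed.

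I would then assemble the pieces by an infinite swindle, inductively borrowing inverse one-sided h-cobordisms from collar regions of the next stage to trivialize the relevant Whitehead torsion, and gluing the resulting sequence $A_0\cup A_1\cup A_2\cup\cdots$ end-to-end to produce the pseudo-collar structure of Proposition~\ref{Proposition: pseudo-collar properties}(1). The main obstacle is the second phase: realizing a perfect-kernel surjection by a one-sided h-cobordism and, simultaneously, verifying that Condition (3) permits a \emph{coherent} trivialization of finiteness obstructions along the entire tower rather than one cobordism at a time. Everything else is bookkeeping and a faithful adaptation of the handle theory underlying Theorem~\ref{Theorem: Siebenmann's thesis}.
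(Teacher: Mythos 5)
First, a point of reference: these notes do not actually prove Theorem~\ref{Theorem: characterization of pseudocollarable}---it is stated with a citation to \cite{GT2}---so your proposal can only be measured against the scaffolding the survey provides (the outline of Theorem~\ref{Theorem: Siebenmann's thesis}, Lemma~\ref{Lemma: properties of one-sided h-cobordisms}, and Proposition~\ref{Proposition: pseudo-collar properties}) and against the argument of \cite{GT1}, \cite{GT2} itself. Your necessity direction is correct and is exactly how the paper packages it: items (3) and (5) of Proposition~\ref{Proposition: pseudo-collar properties} give Conditions (1) and (2), and absolute inward tameness kills every coordinate of $\sigma_{\infty}$. Your overall architecture for sufficiency---improve the $\pi_{1}$'s, then upgrade each $A_{i}=\overline{N_{i}-N_{i+1}}$ to a one-sided h-cobordism---is also the right one.

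There are, however, two genuine problems in the sufficiency half. The smaller one is your final step: the ``infinite swindle \ldots to trivialize the relevant Whitehead torsion'' is imported from Siebenmann's proof but has no role here and cannot be executed as stated. One-sided h-cobordisms are not invertible (there is no inverse cobordism to borrow when $\partial_{+}A_{i}\hookrightarrow A_{i}$ fails to be a homotopy equivalence), and no torsion needs to be killed because the target is a homotopy collar, not a product collar: once each $\partial N_{i}\hookrightarrow A_{i}$ is a homotopy equivalence, each $N_{i}$ strong deformation retracts onto $\partial N_{i}$ by retracting the $A_{j}$'s one at a time, and the pseudo-collar structure of Proposition~\ref{Proposition: pseudo-collar properties}(1) is already in hand. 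The larger problem is that the actual content of the theorem---converting each $A_{i}$ into a one-sided h-cobordism---is delegated to \cite{DT} as a black box, and that delegation misplaces the difficulty. The result of \cite{DT} manufactures an abstract cobordism realizing a prescribed perfect-kernel surjection on a given boundary manifold (and is invoked in these notes only for item (6) of Proposition~\ref{Proposition: pseudo-collar properties}); what the sufficiency argument requires is an \emph{in situ} modification of the given neighborhoods of infinity: first use perfect semistability to choose a subsequence in which $\pi_{1}(N_{i+1})\rightarrow\pi_{1}(N_{i})$ is surjective with perfect (and suitably finitely generated) kernel and $\partial N_{i}\hookrightarrow N_{i}$ is a $\pi_{1}$-isomorphism, then run a geometric plus-construction inside $W^{n}$---attach $2$- and $3$-handles along the perfect kernel to fix $\pi_{1}(A_{i},\partial N_{i})$, and use duality together with the finite homotopy type supplied by Conditions (1) and (3) to identify and kill the remaining finitely generated relative homology of $(A_{i},\partial N_{i})$ by handle trading. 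That is where $n\geq6$, the perfectness hypothesis, and the vanishing of $\sigma_{\infty}$ are all actually consumed, and it is the part of \cite{GT2} your sketch does not reconstruct.
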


\begin{remark}
\emph{In (c), }$\sigma_{\infty}\left(  W^{n}\right)  $ \emph{may be defined
as} $\left(  \sigma\left(  N_{0}\right)  ,\sigma\left(  N_{1}\right)
,\sigma\left(  N_{2}\right)  ,\cdots\right)  $\emph{, the sequence of Wall
finiteness obstructions of an arbitrary nested cofinal sequence of clean
neighborhoods of infinity. By the functoriality of }$\widetilde{K}_{0}$\emph{,
this obstruction may be viewed as an element of the indicated inverse limit
group.} \emph{It is trivial if and only if each coordinate is trivial, i.e.,
each }$N_{i}$\emph{ has finite homotopy type. So just as in Theorem
\ref{Theorem: Siebenmann's thesis}, Conditions (a) and (c) together are
equivalent to }$W^{n}$\emph{ being absolutely inward tame.}
\end{remark}

By Theorem \ref{Theorem: tame implies semistability}, every inward tame open
manifold $W^{n}$ has semistable $\operatorname*{pro}$-$\pi_{1}$ and stable
$\operatorname*{pro}$-$H_{1}$. Together those observations guarantee a
representation of $\operatorname*{pro}$-$\pi_{1}\left(  \varepsilon
(W^{n})\right)  $ by an inverse sequence of surjective homomorphisms of
finitely presented groups with \textquotedblleft nearly
perfect\textquotedblright\ kernels (in a way made precise in \cite{GT3}). One
might hope that Condition (b) of Theorem
\ref{Theorem: characterization of pseudocollarable} is extraneous, but an
example constructed in \cite{GT1} dashes that hope.

\begin{theorem}
\label{Theorem: existence of nonpseudocollarable manifolds}In all dimensions
$\geq6$ there exist absolutely inward tame open manifolds that are not pseudo-collarable.
\end{theorem}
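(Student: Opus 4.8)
The plan is to turn the characterization of pseudo-collarability (Theorem~\ref{Theorem: characterization of pseudocollarable}) against its own hypotheses. By the Remark immediately following that theorem, Conditions (1) and (3)---inward tameness and vanishing of $\sigma_{\infty}$---are together equivalent to $W^{n}$ being \emph{absolutely} inward tame. Hence, \emph{within the class of absolutely inward tame open $n$-manifolds}, pseudo-collarability reduces to Condition (2) alone: that $\operatorname*{pro}$-$\pi_{1}\left(  \varepsilon(W^{n})\right)  $ be \emph{perfectly} semistable. So the entire theorem reduces to producing, for each $n\geq6$, a single absolutely inward tame open $n$-manifold whose fundamental group at infinity is semistable---which it must be, by Theorem~\ref{Theorem: tame implies semistability}---but \emph{not perfectly} semistable. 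This is precisely the ``gap'' left open by Theorem~\ref{Theorem: tame implies semistability}, which forces semistability and stable homology but only ``nearly perfect'' (not perfect) kernels.

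The real work is algebraic. I would hunt for an inverse sequence $G_{0}\overset{\mu_{1}}{\twoheadleftarrow}G_{1}\overset{\mu_{2}}{\twoheadleftarrow}G_{2}\overset{\mu_{3}}{\twoheadleftarrow}\cdots$ of finitely presented groups with surjective bonding maps in which (a) each kernel $K_{i}=\ker(\mu_{i})$ lies in the commutator subgroup $[G_{i},G_{i}]$, so that $H_{1}(G_{i})\rightarrow H_{1}(G_{i-1})$ is an isomorphism and the induced $\operatorname*{pro}$-$H_{1}$ is stable (as it must be, by Theorem~\ref{Theorem: tame implies semistability}); yet (b) each $K_{i}$ has nontrivial abelianization, hence is \emph{not} perfect. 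The subtle point is (c): since perfect semistability is only required up to pro-isomorphism, passing to a subsequence replaces consecutive kernels by an extension of one by another, and a ladder equivalence permits still more drastic re-choices of the $G_{i}$. One must verify that \emph{no} pro-isomorphic representation has all kernels perfect. This demands isolating a pro-isomorphism invariant of the tower of kernels---morally a ``stable abelianized kernel''---that is visibly nonzero for the chosen sequence. Finding groups with exactly this balance (nearly perfect but genuinely, stably, non-perfect) is where the delicate group theory of \cite{GT1} is spent.

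With such a sequence in hand, I would realize it as the fundamental group at infinity of an absolutely inward tame open $n$-manifold. One cannot simply invoke the regular-neighborhood realization used for the manifold version of Proposition~\ref{Prop: mapping telescope realization fro pro-p1}: that recipe reproduces $\operatorname*{pro}$-$\pi_{1}$ faithfully but gives no control over inward tameness of the boundary manifold $\partial N^{n+1}$---indeed it \emph{cannot}, since the same recipe also realizes non-semistable sequences, which Theorem~\ref{Theorem: tame implies semistability} forbids for inward tame manifolds. Instead I would build $W^{n}$ directly as an ascending union $N_{0}\supseteq N_{1}\supseteq\cdots$ of compact cobordisms $A_{i}=\overline{N_{i}-N}_{i+1}$, thickening presentation complexes for the $G_{i}$ and realizing each $\mu_{i}$ by the inclusion $N_{i+1}\hookrightarrow N_{i}$. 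Using handle theory in dimension $n\geq6$ (in the spirit of the outline of Siebenmann's Theorem~\ref{Theorem: Siebenmann's thesis}), I would arrange that every $N_{i}$ has \emph{finite} homotopy type; this gives absolute inward tameness directly, while the $\pi_{1}$ of the cobordisms reproduces the prescribed tower.

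The principal obstacle is step (c). Non-perfect-semistability is a statement quantified over \emph{all} pro-isomorphic representations, so it cannot be checked on a single presentation and genuinely requires an invariant stable under passage to subsequences and ladder equivalences. Coupled to this is the geometric tension of the last paragraph: the cobordisms $A_{i}$ must be flexible enough to carry a nontrivial, non-perfect kernel, yet rigid enough that each $N_{i}$ retains finite homotopy type. Balancing these two demands---purely algebraic non-repairability against absolute inward tameness---is exactly what makes the construction in \cite{GT1} delicate rather than routine, and I expect it to be the crux of the argument.
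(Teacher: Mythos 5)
Your reduction is correct and matches the paper's framing: by the Remark following Theorem \ref{Theorem: characterization of pseudocollarable}, absolute inward tameness is exactly Conditions (1) and (3), so within that class pseudo-collarability comes down to perfect semistability of $\operatorname*{pro}$-$\pi_{1}$ at infinity; and since Theorem \ref{Theorem: tame implies semistability} already forces semistability, everything rests on exhibiting a semistable-but-not-perfectly-semistable end on an absolutely inward tame manifold. (For the negative direction you do not even need the full characterization theorem---Proposition \ref{Proposition: pseudo-collar properties}(5) already shows a pseudo-collarable end is perfectly semistable.) Your observation that the regular-neighborhood realization of Proposition \ref{Prop: mapping telescope realization fro pro-p1} cannot be used here, because the same recipe realizes non-semistable sequences that Theorem \ref{Theorem: tame implies semistability} forbids for inward tame manifolds, is accurate and worth making. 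Note that the survey itself offers no proof of this theorem; it defers entirely to \cite{GT1}, and your outline is consistent with the approach taken there.

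The genuine gap is that the crux is acknowledged but not resolved, and that crux is the entire content of the theorem. You never produce the inverse sequence $\left\{ G_{i},\mu_{i}\right\}$, and you never produce the pro-isomorphism invariant certifying that \emph{no} pro-isomorphic tower has perfect kernels. The difficulty is real and is exactly where you locate it: because $\operatorname*{pro}$-$H_{1}$ must be stable, the obstruction cannot be read off from the abelianizations of the $G_{i}$; it must be extracted from the kernels, and kernels are not preserved by pro-isomorphism---passing to a subsequence replaces $\ker(\mu_{i})$ by an iterated extension of kernels, and a general ladder diagram replaces the $G_{i}$ wholesale. One therefore needs an invariant of the tower as a whole, stable under these moves, that detects persistent non-perfectness, together with groups that carry it; and one must then realize that tower at infinity while keeping every neighborhood of infinity of finite homotopy type. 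That combination is the ``significant dose of group theory'' the paper attributes to \cite{GT1}. As written, your argument shows only that \emph{if} such a tower exists and can be so realized, then the theorem follows; it does not establish the theorem.
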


In light of Theorem \ref{Theorem: characterization of pseudocollarable}, it is
not surprising that Theorem
\ref{Theorem: existence of nonpseudocollarable manifolds} uses a significant
dose of group theory. In fact, unravelling the group theory at infinity seems
to be the key to understanding ends of inward tame manifolds. That topic is
the focus of ongoing work \cite{GT3}. As for our favorite open manifolds, the
following is wide-open.

\begin{Question}
Is the universal cover $\widetilde{M}^{n}$of a closed aspherical $n$-manifold
always pseudocollarable? Must it satisfy some of the hypotheses of Theorem
\ref{Theorem: characterization of pseudocollarable}? In particular, is
$\widetilde{M}^{n}$ always inward tame? (If so, an affirmative answer to
Conjecture \ref{Question: semistability for manifolds-version 2} would follow
from Theorem \ref{Theorem: tame implies semistability}.)
\end{Question}

We close this section with a reminder that the above results rely heavily on
manifold-specific tools. For general locally finite complexes, Proposition
\ref{Prop: mapping telescope realization fro pro-p1} serves as warning. Even
so, many ideas and questions discussed here have interesting analogs outside
manifold topology---in the field of geometric group theory. We now take a
break from manifold topology to explore that area.

\section{End invariants applied to group
theory\label{Section: Exploring the ends of groups}}

A standard method for applying topology to group theory is via
Eilenberg-MacLane spaces. For a group $G$, a $K\!\left(  G,1\right)  $
\emph{complex} (or \emph{Eilenberg-MacLane complex }for $G$ or a
\emph{classifying space} for $G$) is an aspherical CW complex with fundamental
group isomorphic to $G$. When the language of classifying spaces is used, a
$K\!\left(  G,1\right)  $ complex is often referred to as a $BG$ complex and
its universal cover as an $EG$ complex. Alternatively, an $EG$ complex is a
contractible CW complex on which $G$ acts properly and freely.

\begin{exercise}
\label{Exercise: aspherical = contractible universal cover}Show that a CW
complex $X$ is aspherical if and only if $\widetilde{X}$ is contractible.
\end{exercise}

It is a standard fact that, for every group $G$: \textbf{(a)} there exists a
$K\!\left(  G,1\right)  $ complex, and \textbf{(b)} any two $K\!\left(
G,1\right)  $ complexes are homotopy equivalent. Therefore, any homotopy
invariant of a $K\!\left(  G,1\right)  $ complex is an invariant of $G$. In
that way we define the \emph{(co)homology of }$G$\emph{ with constant
coefficients} in a ring $R$, denoted $H_{\ast}\left(  G;R\right)  $ and
$H^{\ast}\left(  G;R\right)  $, to be $H_{\ast}\left(  K\!\left(  G,1\right)
;R\right)  $ and $H^{\ast}\left(  K\!\left(  G,1\right)  ;R\right)  $, respectively.

At times it is useful to relax the requirement that a $BG$ or an $EG$ be a CW
complex. For example, an aspherical manifold or a locally CAT(0) space with
fundamental group $G$, but with no obvious cell structure might be a used as a
$BG$. Provided the space in question is an ANR, there is no harm in allowing
it, since all of the key facts from algebraic topology (for example, Exercise
\ref{Exercise: aspherical = contractible universal cover}) still apply.
Moreover, by Proposition \ref{Proposition: ANR facts}, ANRs are homotopy
equivalent to CW complexes, so, if necessary, an appropriate complex can be obtained.

\subsection{Groups of type $\mathbf{F}\label{Subsection: groups of type F}$}

We say that $G$ \emph{has} \emph{type }$F$ if $K\!\left(  G,1\right)  $
complexes have finite homotopy type or, equivalently, there exits a finite
$K\!\left(  G,1\right)  $ complex or a compact ANR $K\!\left(  G,1\right)  $
space. Note that if $K$ is a finite $K\!\left(  G,1\right)  $ complex, then
$\widetilde{K}$ is locally finite and the $G$-action is cocompact; then we
call $\widetilde{K}$ a cocompact $EG$ complex.

\begin{example}
All finitely generated free and free abelian groups have type $F$, as do the
fundamental groups of all closed surfaces, except for $%
%TCIMACRO{\U{211d} }%
%BeginExpansion
\mathbb{R}
%EndExpansion
P^{2}$. In fact, the fundamental group of every closed aspherical manifold has
type $F$. No group that contains torsion can have type $F$ (see \cite[Prop.
7.2.12]{Ge2}), but every torsion-free $CAT\left(  0\right)  $ or $\delta
$-hyperbolic group has type $F$.
\end{example}

For groups of type $F$, there is an immediate connection between group theory
and topology at the ends of noncompact spaces. If $G$ is nontrivial and
$K_{G}$ is a finite $K\!\left(  G,1\right)  $ complex, $\widetilde{K}_{G}$ is
contractible, locally finite, and noncompact, and by Corollary
\ref{Corollary: proper homotopy equivalence of covering spaces}, all other
finite $K\!\left(  G,1\right)  $ complexes (or compact ANR classifying spaces)
have universal covers proper homotopy equivalent to $\widetilde{K}_{G}$. So
the end invariants of $\widetilde{K}_{G}$, which are well-defined up to proper
homotopy equivalence, may be attributed directly to $G$. For example, one may
discuss: the \emph{number of ends}\ of $G$; the \emph{homology and cohomology
at infinity} of $G$ (denoted by $\operatorname*{pro}$-$H_{\ast}(\varepsilon
\left(  G);R\right)  $, $\check{H}_{\ast}\left(  \varepsilon\left(  G\right)
;R\right)  $ and $\check{H}^{\ast}\left(  \varepsilon\left(  G\right)
;R\right)  $); and the homotopy behavior of the end(s) of $G$---properties
such as simple connectedness, stability, semistability, or pro-monomorpic at
infinity. In cases where $\widetilde{K}_{G}$ is 1-ended and semistable,
$\operatorname*{pro}$-$\pi_{\ast}\left(  \varepsilon\left(  G\right)  \right)
$ and $\check{\pi}_{\ast}\left(  \varepsilon\left(  G\right)  \right)  $ are
defined similarly. The need for semistability is, of course, due to base ray
issues. Although $\widetilde{K}_{G}$ is well-defined up to proper homotopy
type, there is no canonical choice base ray; in the presence of semistability
that issue goes away. We will return to that topic shortly.

\subsection{Groups of type $\mathbf{F}_{k}%
\label{Subsection: Groups of type F_k}$}

In fact, the existence of a finite $K\!\left(  G,1\right)  $ is excessive for
defining end invariants like $\operatorname*{pro}$-$H_{\ast}\left(
\varepsilon\left(  G);R\right)  \right)  $ and $\check{H}_{\ast}\left(
\varepsilon\left(  G\right)  ;R\right)  $. If $G$ admits a $K\!\left(
G,1\right)  $ complex $K$ with a finite $k$-skeleton (in which case we say $G$
\emph{has type }$F_{k}$), then all $j$-dimensional homology and homotopy end
properties of the (locally finite) $k$-skeleton $\widetilde{K}_{G}^{(k)}$ of
$\widetilde{K}_{G}$ can be directly attributed to $G$, provided $j<k$. The
proof of invariance is rather intuitive. If $L$ is any other $K\!\left(
G,1\right)  $ with finite $k$-skeleton, choose a cellular homotopy equivalence
$f:K\rightarrow L$ and a homotopy inverse $g:L\rightarrow K$. These lift to
homotopy equivalences $\tilde{f}:\widetilde{K}\rightarrow\widetilde{L}$ and
$\tilde{g}:\widetilde{L}\rightarrow\widetilde{K}$, which cannot be expected to
be proper. Nevertheless, the restrictions of $\tilde{g}\circ\tilde{f}$ and
$\tilde{f}\circ\tilde{g}$ to the $\left(  k-1\right)  $-skeletons of
$\widetilde{K}$ and $\widetilde{L}$ can be proven properly homotopic to
inclusions $\widetilde{K}^{\left(  k-1\right)  }\hookrightarrow\widetilde{K}%
^{\left(  k\right)  }$ and $\widetilde{L}^{\left(  k-1\right)  }%
\hookrightarrow\widetilde{L}^{\left(  k\right)  }$. This is enough for the
desired result.

As another example of the above, the number of ends, viewed as (the
cardinality of) $\check{\pi}_{0}\left(  \widetilde{K}_{G}^{(1)}\right)  $,\ is
a well-defined invariant of a finitely generated group, i.e., group of type
$F_{1}$.

\begin{exercise}
Alternatively, one may define the number of ends of a finitely generated $G$
to be the number of ends of a corresponding Cayley graph. Explain why this
definition is equivalent to the above.
\end{exercise}

\begin{remark}
\emph{There are key connections between} $\operatorname*{pro}$-$H_{\ast
}\left(  \varepsilon\left(  G);R\right)  \right)  $ \emph{and} $\check
{H}_{\ast}\left(  \varepsilon\left(  G\right)  ;R\right)  $ \emph{and the
cohomology of} $G$ \emph{with} $RG$ \emph{coefficients (as presented, for
example, in \cite{Br}). We have chosen not to delve into that topic in these
notes. The interested reader is encouraged to read Chapters 8 and 13 of
\cite{Ge2}.}
\end{remark}

\subsection{Ends of Groups}

In view of earlier comments, the following iconic result may be viewed as an
application of $\check{\pi}_{0}\left(  \varepsilon\left(  G\right)  \right)  $.

\begin{theorem}
[Freudenthal-Hopf-Stallings]\label{Theorem: Ends theorem for groups}Every
finitely generated group $G$ has 0,1,2, or infinitely many ends. Moreover

\begin{enumerate}
\item $G$ is $0$-ended if and only if it is finite,

\item $G$ is $2$-ended if and only if it contains an infinite cyclic group of
finite index, and

\item $G$ is infinite-ended if and only if

\begin{itemize}
\item $G=A\ast_{C}B$ (a free product with amalgamation), where $C$ is finite
and has index $\geq2$ in both $A$ and $B$ with at least one index being
$\geq3$, or

\item $G=A\ast_{\phi}$ (an HNN extension\footnote{Definitions of \emph{free
product with amalgamation }\ and \emph{HNN\ extension} can be found in
\cite{SW}, \cite{Ge2}, or any text on combinatorial group theory.}), where
$\phi$ is an isomorphism between finite subgroups of $A$ each having index
$\geq2$.\medskip
\end{itemize}
\end{enumerate}
\end{theorem}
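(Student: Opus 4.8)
The plan is to stratify the statement by difficulty: first the coarse trichotomy together with the characterizations of the $0$- and $2$-ended cases (the Freudenthal--Hopf content), and only then the splitting theorem for infinitely-ended groups (Stallings). Throughout I work with a cocompact $EG$ complex $\widetilde{K}_{G}$, or equivalently with a Cayley graph, using two facts recorded earlier: the number of ends is a proper-homotopy invariant of $\widetilde{K}_{G}$, and any finite-index subgroup $H\leq G$ acts properly, freely, and cocompactly on the \emph{same} $\widetilde{K}_{G}$, so $H$ and $G$ have equally many ends. The trichotomy is then immediate: a finite group has compact $\widetilde{K}_{G}$, hence $0$ ends, while an infinite $G$ acts properly on the ANR $\widetilde{K}_{G}$ and so by Exercise~\ref{Exercise: Ends of spaces admitting actions} has $1$, $2$, or infinitely many ends. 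Statement (1) follows from the fact, noted earlier, that a space is compact if and only if it is $0$-ended.

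For (2), the forward implication is easy: $\mathbb{Z}$ has two ends (its Cayley graph is a line), so any $G$ containing $\mathbb{Z}$ with finite index has two ends by the finite-index invariance above. For the converse, suppose $G$ is two-ended. The action of $G$ on its two-element set of ends gives a homomorphism to the symmetric group on two letters; passing to the index $\leq 2$ kernel $G_{0}$, I may assume both ends are fixed. The key construction is a \emph{cut}: a finite subcomplex whose complement has exactly two unbounded components, one for each end. Measuring, for each $g\in G_{0}$, how far $g$ pushes the cut toward a chosen fixed end yields a homomorphism $\phi:G_{0}\to\mathbb{Z}$; properness forces $\phi$ to be nontrivial, while $\ker\phi$, consisting of the elements that preserve the cut up to a bounded set, is finite. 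Thus $G_{0}$ is finite-by-$\mathbb{Z}$, hence virtually $\mathbb{Z}$, and therefore so is $G$.

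For (3), the forward direction is the easier half of Stallings' theorem. Given a splitting $A\ast_{C}B$ or $A\ast_{\phi}$ with $C$ finite, I form the Bass--Serre tree $T$, on which $G$ acts cocompactly with finite edge stabilizers. A standard cut argument (see \cite{SW}) shows that such an action produces a partition of $\widetilde{K}_{G}$ witnessing more than one end, so $G$ is not one-ended. The stated index conditions --- at least one factor index $\geq 3$ in the amalgamated case, both index $\geq 2$ in the HNN case --- are exactly what prevents $G$ from being virtually cyclic (the excluded cases being $D_{\infty}=\mathbb{Z}/2\ast\mathbb{Z}/2$ and $\mathbb{Z}$, which are the only virtually cyclic splittings over a finite subgroup). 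Hence by part (2) the group $G$ is not two-ended, and by the trichotomy it must have infinitely many ends.

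The converse of (3) is Stallings' theorem, and this is the main obstacle. Here one begins only with the hypothesis that $G$ has more than one end and must manufacture an algebraic splitting. The route I would outline is via almost invariant sets: more-than-one-endedness is equivalent to the existence of a subset $A\subseteq G$ with $A$ and its complement both infinite and $A\,\triangle\,gA$ finite for every $g\in G$. One then applies Dunwoody's resolution of such a set into a $G$-invariant family of disjoint tracks, producing an action of $G$ on a tree with finite edge stabilizers and no global fixed point; Bass--Serre theory converts this action into the desired amalgam or HNN decomposition over a finite subgroup. The two-ended alternative is ruled out exactly by the index hypotheses as above, and combining this with part (2) pins down (3). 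In the write-up I would give (1), (2), and the forward direction of (3) in full detail, but present Stallings' theorem itself only in outline, citing \cite{St} and \cite{SW} for the construction of the splitting; the track/almost-invariant-set machinery is where all the real work, and the genuine difficulty, resides.
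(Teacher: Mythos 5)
Your proposal is correct in all its essentials, and it actually supplies far more content than the paper does: the paper's ``proof (small portions)'' consists of one paragraph observing that the trichotomy is essentially Exercise~\ref{Exercise: Ends of spaces admitting actions}, declaring item (1) trivial and item (2) ``a challenging exercise,'' and deferring item (3) entirely to \cite{St2}, \cite{SW}, and \cite{Ge2}. Your trichotomy and item (1) coincide exactly with the paper's intent. For item (2) you fill in the standard argument the paper omits (pass to the index-$\leq 2$ subgroup fixing both ends, build a homomorphism to $\mathbb{Z}$ from the translation of a finite cut, use properness to see the kernel is finite), and for item (3) you add the easy forward direction via Bass--Serre trees and correctly identify the hard converse as Stallings' theorem, outlining the almost-invariant-set/Dunwoody-track route that the cited references follow. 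This is entirely consistent with, and a genuine improvement on, the paper's treatment; what the paper's brevity ``buys'' is only that it avoids committing to a particular proof of Stallings' theorem, whereas your outline commits to the tracks approach (Dunwoody's later simplification) rather than Stallings' original bipolar-structure argument.

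Two small points to tidy in a write-up. First, in the forward direction of (3), the excluded splittings are not literally only $D_{\infty}$ and $\mathbb{Z}$ but the configurations $A\ast_{C}B$ with $[A:C]=[B:C]=2$ and $A\ast_{\phi}$ with $C=A$ finite, which yield groups that are finite-by-$D_{\infty}$ or finite-by-$\mathbb{Z}$, i.e., virtually cyclic; the cleanest justification that the stated index conditions force infinitely many ends is that they guarantee a free subgroup of rank $2$ (ping-pong on the Bass--Serre tree), which rules out two-endedness via part (2). Second, in this paper's bibliography the ends theorem is \cite{St2}, not \cite{St} (the latter is Stallings' characterization of Euclidean space), so adjust the citation accordingly.
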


\begin{proof}
[Proof (small portions)]The opening line of Theorem
\ref{Theorem: Ends theorem for groups} is essentially Exercise
\ref{Exercise: Ends of spaces admitting actions}; item (a) is trivial and item
(b) is a challenging exercise. Item (c) is substantial \cite{St2}, but
pleasantly topological. Complete treatments can be found in \cite{SW} or
\cite{Ge2}.
\end{proof}

\subsection{The Semistability Conjectures}

If $G$ is finitely presentable, i.e., $G$ has type $F_{2}$, and $K$ is a
corresponding presentation $2$-complex (or any finite 2-complex with
fundamental group $G$), then $K$ may be realized as the 2-skeleton of a
$K\!\left(  G,1\right)  $. That is accomplished by attaching 3-cells to $K$ to
kill $\pi_{2}\left(  K\right)  $ and proceeding inductively, attaching
$\left(  k+1\right)  $-cells to kill the $k^{\text{th}}$ homotopy group, for
all $k\geq3$. It follows that $\operatorname*{pro}$-$H_{1}\left(
\varepsilon\left(  \widetilde{K}\right)  ;R\right)  $ and $\check{H}%
_{1}\left(  \varepsilon\left(  \widetilde{K}\right)  ;R\right)  $ represent
the group invariants $\operatorname*{pro}$-$H_{1}\left(  \varepsilon\left(
G\right)  ;R\right)  $ and $\check{H}_{1}\left(  \varepsilon\left(  G\right)
;R\right)  $, as discussed in \S \ref{Subsection: Groups of type F_k}. And by
the same approach used there, when $G$ (in other words $\widetilde{K}$) is
1-ended, properties such as simple connectivity at infinity, stability,
semistability and pro-monomorphic at infinity can be measured in
$\widetilde{K}$ and attributed directly to $G$. In an effort to go further
with homotopy properties of the end of $G$, we are inexorably led back to the
open problem:

\begin{conjecture}
[Semistability Conjecture--with explanation]%
\label{Conjecture: Semistability-detailed version}Every 1-ended finitely
presented group $G$ is semistable. In other words, the universal cover
$\widetilde{K}$ of every finite complex with fundamental group $G$ is strongly
connected at infinity; equivalently, $\operatorname*{pro}$-$\pi_{1}\left(
\widetilde{K},r\right)  $ is semistable for some (hence all) proper rays $r$.
\end{conjecture}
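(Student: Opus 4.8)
The plan is to be honest at the outset: this is a well-known open problem, so what follows is a program together with an identification of the essential obstruction rather than a complete argument. The most direct line of attack uses the topological reformulation established in Proposition~\ref{Prop: strongly connected at infinity}, which reduces semistability to showing that any two proper rays $r,s$ in $\widetilde{K}$ are properly homotopic. Following the ladder-filling strategy from that proof (reused in Theorem~\ref{Theorem: Finite type x R}), I would join $r$ and $s$ by a proper map of the infinite ladder $L_{[0,\infty)}$---available since $\widetilde{K}$ is $1$-ended---obtaining loops $\square_{i}$ at successive stages, and then attempt to contract each $\square_{i}$ far out in the space while controlling where the contraction lives. The content is exactly the pro-epimorphism condition of Proposition~\ref{Prop: top characerization 1-connected at infinity}(2): for each compact $C$ one must produce a larger compact $D$ so that loops near infinity can be pushed still further out while remaining outside $C$.

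The second ingredient is a reduction via splittings. By the Freudenthal-Hopf-Stallings Theorem (Theorem~\ref{Theorem: Ends theorem for groups}), a finitely presented group decomposes over finite subgroups into one-ended and finite pieces, and Dunwoody's accessibility theorem guarantees this process terminates. Mihalik-type combination theorems show that semistability is inherited by amalgamated products and HNN extensions built from finitely presented semistable vertex and edge groups; Mihalik and Tschantz handled the one-relator case by exactly such methods. Consequently the whole conjecture reduces to one-ended groups that admit no further decomposition over small subgroups---precisely the recalcitrant core.

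For that core I would import geometry. Whenever $G$ acts properly and cocompactly on a proper CAT(0) space (Example~\ref{Example: Proper CAT(0) spaces are sharp at infinity}) or is $\delta$-hyperbolic, geodesic interpolation, respectively the existence of a connected $\mathcal{Z}$-boundary, forces any two proper rays to be properly homotopic, and semistability follows at once; the same holds for any $G$ carrying a $\mathcal{Z}$-structure with connected boundary, in the sense taken up later in these notes. The hard part---indeed the reason the conjecture is still open---is that an \emph{arbitrary} one-ended finitely presented group comes with no such scaffolding: it is not known that every such group admits a $\mathcal{Z}$-structure (Bestvina's problem), and in the absence of a compactification or a nonpositively curved model there is no known mechanism to guarantee the loop-pushing described above. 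So the strategy is clear, but the decisive step is supplying large-scale control for groups that possess none a priori, and it is exactly here that every current approach stalls.
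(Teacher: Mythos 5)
This statement is labeled a conjecture because it is one: the Semistability Conjecture (Geoghegan, 1979) is an open problem, the paper supplies no proof, and none should be expected. You were right to say so up front and to offer a program rather than an argument, and your identification of the two standard lines of attack---the ladder-filling reformulation via Proposition~\ref{Prop: strongly connected at infinity} and the Mihalik--Tschantz combination theorems---matches the surrounding discussion in the paper, which records exactly these as the sources of the known special cases in Theorem~\ref{Theorem: Some groups that are semistable}.

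One assertion in your survey is wrong, however, and it is worth isolating because it is the main checkable mathematical claim you make. You assert that for a group acting geometrically on a proper CAT(0) space, \textquotedblleft geodesic interpolation \ldots\ forces any two proper rays to be properly homotopic, and semistability follows at once,\textquotedblright\ and that the same holds for any group with a $\mathcal{Z}$-structure whose boundary is connected. The paper explicitly states the CAT(0) Semistability Conjecture as \emph{open}. The geodesic homotopy between two geodesic rays emanating from $p_{0}$ need not be proper (in the Euclidean plane the straight-line homotopy between two opposite rays passes through the origin at every stage), and connectedness of a $\mathcal{Z}$-boundary is not the right hypothesis: by the shape-theoretic criterion quoted in the paper, semistability of $G$ is equivalent to $\partial G$ having the shape of a \emph{locally connected} compactum. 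It is precisely this local connectivity (via Swarup's no-cut-point theorem) that is known for hyperbolic boundaries and open for CAT(0) boundaries. A smaller imprecision: since the conjecture is already restricted to $1$-ended groups, decompositions over finite subgroups give nothing (a $1$-ended group admits none, by Stallings), so the useful combination theorems are those over \emph{infinite} edge groups, and Mihalik's reduction runs in the other direction---from the $1$-ended case to the infinite-ended case---rather than reducing the present statement to an indecomposable core.
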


The fundamental nature of the Semistability Conjecture is now be clear. We
would like to view $\operatorname*{pro}$-$\pi_{1}\left(  \varepsilon\left(
\widetilde{K}\right)  ;r\right)  $ and $\check{\pi}_{1}\left(  \varepsilon
\left(  \widetilde{K}\right)  ;r\right)  $ as group invariants
$\operatorname*{pro}$-$\pi_{1}\left(  \varepsilon\left(  G\right)  \right)  $
and $\check{\pi}_{1}\left(  \varepsilon\left(  G\right)  \right)  $.
Unfortunately, there is the potential for these to depend on base rays. A
positive resolution of the Semistability Conjecture would eliminate that
complication once and for all. The same applies to $\operatorname*{pro}$%
-$\pi_{j}\left(  \varepsilon\left(  \widetilde{K}\right)  ;r\right)  $and
$\check{\pi}_{j}\left(  \varepsilon\left(  \widetilde{K}\right)  ;r\right)  $
when $G$ is of type F$_{k}$ and $j<k$.

The extension of Conjecture \ref{Conjecture: Semistability-detailed version}
to groups with arbitrarily many ends makes sense---the conjecture is that
$\widetilde{K}$ is semistable (defined for multi-ended spaces near the end of
\S \ref{Subsection: topological interpretations}). But this situation is
simpler than one might expect: for 0-ended groups there is nothing to discuss,
and 2-ended groups are known to be simply connected at each end (see Exercise
\ref{Exercise: 2-ended groups simply connected at ends} below); moreover,
Mihalik \cite{Mi87} has shown that an affirmative answer for $1$-ended groups
would imply an affirmative answer for all infinite-ended groups.

\begin{exercise}
\label{Exercise: 2-ended groups simply connected at ends}Let $G$ be a group of
type $F_{k}$. Show that every finite index subgroup $H$ is of type F$_{k}$ and
the two groups share the same end invariants through dimension $k-1$. Use
Theorem \ref{Theorem: Ends theorem for groups} to conclude that every
$2$-ended group is simply connected at each end.
\end{exercise}

Evidence for the Semistability Conjecture is provided by a wide variety of
special cases; here is a sampling.

\begin{theorem}
\label{Theorem: Some groups that are semistable}A finitely presented group
satisfying any one of the following is semistable.

\begin{enumerate}
\item $G$ is the extension of an infinite group by an infinite group,

\item $G$ is a one-relator group,

\item $G=A\ast_{C}B$ where $A$ and $B$ are finitely presented and semistable
and $C$ is infinite,

\item $G=A\ast_{C}$ where $A$ is finitely generated and semistable and $C$ is infinite,

\item $G$ is $\delta$-hyperbolic,

\item $G$ is a Coxeter group,

\item $G$ is an Artin group.
\end{enumerate}
\end{theorem}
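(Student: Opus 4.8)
The plan is to prove each of the seven cases, but I should be honest that these are genuinely different theorems drawn from the literature, not corollaries of a single lemma. My strategy would be to isolate a small set of reusable tools and then show how each case either reduces to a previously established case or succumbs to a structural splitting argument.

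The foundational tool is Theorem~\ref{Theorem: Finite type x R}'s transversality technique combined with the topological characterization of semistability in Proposition~\ref{Prop: top characerization 1-connected at infinity}(2): to prove $G$ semistable it suffices to show that for a finite $K(G,1)^{(2)}$ with universal cover $\widetilde{K}$, any two proper rays tending to the same end are properly homotopic, and this can be reformulated as a ``loop-pushing'' condition on neighborhoods of infinity. For items (3) and (4), which concern amalgams and HNN extensions over an \emph{infinite} edge group $C$, I would build $\widetilde{K}$ from the Bass--Serre tree: the universal cover decomposes into copies of the vertex-space covers glued along copies of $\widetilde{K_C}$. The key point is that when $C$ is infinite, each edge space is itself noncompact and 1-ended (or connected at infinity), so a proper ray can be pushed from one vertex piece across an edge piece into a neighboring vertex piece; semistability of the vertex groups then lets one homotope rays within each piece. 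Item (4) is the HNN analogue of the same argument. These two cases are essentially Mihalik-style theorems and I would cite the combinatorial-group-theory machinery rather than reprove it.

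Items (5), (6), (7) I would handle by reduction: (5), the $\delta$-hyperbolic case, follows because a 1-ended hyperbolic group has a $\mathcal{Z}$-boundary (the Gromov boundary) which is a connected, locally connected compactum, and connectedness of the boundary forces semistability at infinity---this is the cleanest route, using the shape-theoretic connection promised in the introduction. Item (6), Coxeter groups, and item (7), Artin groups, I would treat by exhibiting enough splittings over infinite subgroups to invoke (3) and (4) inductively, together with base cases that are either free, free abelian, or hyperbolic; for Coxeter groups the Davis complex provides a CAT(0) model, and every proper CAT(0) space is 1-ended-or-splittable in a controlled way. Item (1), an extension $1 \to N \to G \to Q \to 1$ with $N,Q$ both infinite, I would prove directly: the normal subgroup $N$ acts cocompactly on a ``horizontal'' slice, and its infinitude lets one slide the basepoint of any loop along $r$ within a single coset-neighborhood, giving the pro-epimorphism condition. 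Item (2), one-relator groups, reduces via Magnus--Moldavanskii hierarchy to HNN and amalgam splittings over infinite (indeed free) subgroups, landing back in cases (3)--(4).

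The hard part will be the uniformity: there is no single argument, and the honest obstacle is that the Semistability Conjecture itself is open precisely because these techniques do not combine into a general method. Each listed case exploits special structure---a splitting, a boundary, or a normal subgroup---that a generic 1-ended finitely presented group need not possess. In writing this up I would therefore present the common topological reformulation first (semistability $\Leftrightarrow$ every loop near infinity can be pushed further out along a homotopy whose basepoint tracks the base ray), then dispatch each case by naming the structural feature it uses and citing the relevant source, with (3) and (4) carrying the most detail since they are the engine driving (2), (6), and (7).
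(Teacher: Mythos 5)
Your proposal is, at bottom, the same as the paper's: the paper offers no argument at all for this theorem, only the line ``References include: \cite{Mi83}, \cite{MiTs}, \cite{MiTs2}, \cite{Swa}, and \cite{Mi96}'', so any honest treatment must defer to the literature exactly as you do. What you add --- a common topological reformulation up front, followed by a case-by-case identification of the structural feature each item exploits --- is a genuinely useful organizational layer, and your attributions are essentially the ones the paper intends: Mihalik \cite{Mi83} for extensions, Mihalik--Tschantz \cite{MiTs} for one-relator groups, \cite{MiTs2} for amalgams and HNN extensions, Swarup \cite{Swa} (with Bestvina--Mess \cite{BM}) for hyperbolic groups, and Mihalik \cite{Mi96} for Coxeter and Artin groups. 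Two cautions. First, in case (5) the operative property is \emph{local} connectedness of $\partial G$, not connectedness: the chain is ``no cut points'' (Swarup, Bowditch) $\Rightarrow$ locally connected (Bestvina--Mess) $\Rightarrow$ $G$ semistable, as the paper itself records in \S\ref{Section: Z-boundaries in geometric group theory} and in the shape-theoretic criterion that $G$ is semistable iff $\partial G$ has the shape of a locally connected compactum; your phrasing attributes the conclusion to connectedness alone, which is not sufficient. Second, your suggestion that (2), (6), and (7) reduce inductively to (3)--(4) overstates the uniformity: the Magnus--Moldavanskii hierarchy produces splittings whose edge groups need not be infinite, the amalgam/HNN theorems require the edge group to be finitely generated, and the Mihalik--Tschantz one-relator proof and Mihalik's Coxeter/Artin arguments are independent, substantial papers rather than formal corollaries of the splitting theorems. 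Since you explicitly flag that you would cite rather than reprove the machinery, these are defects of emphasis rather than gaps, but they should be corrected before the sketch is presented as a roadmap to the proofs.
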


\noindent References include: \cite{Mi83}, \cite{MiTs}, \cite{MiTs2},
\cite{Swa}, and \cite{Mi96}.\medskip

There is a variation on the Semistability Conjecture that is also open.

\begin{conjecture}
[$H_{1}$-semistability Conjecture]For every 1-ended finitely presented group
$G$, $\operatorname*{pro}$-$H_{1}\left(  \varepsilon(G);%
%TCIMACRO{\U{2124} }%
%BeginExpansion
\mathbb{Z}
%EndExpansion
\right)  $ is semistable.
\end{conjecture}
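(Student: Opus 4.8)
The plan is to pass from the group to its universal cover and then to homological algebra over $\mathbb{Z}G$. First I would fix a finite presentation $2$-complex $K$ with $\pi_{1}(K)\cong G$ and pass to the universal cover $\widetilde{K}$; since $G$ is $1$-ended, so is $\widetilde{K}$, and by the discussion in \S\ref{Subsection: Groups of type F_k} the pro-object $\operatorname*{pro}$-$H_{1}\left(\varepsilon(\widetilde{K});\mathbb{Z}\right)$ is a genuine invariant of $G$ for which no base ray is required. Fixing a cofinal sequence $U_{0}\supseteq U_{1}\supseteq\cdots$ of neighborhoods of infinity, semistability of $\operatorname*{pro}$-$H_{1}$ is, by Proposition~\ref{Prop: passing to images}, exactly the Mittag--Leffler condition that the images $\operatorname*{Im}\left(H_{1}(U_{j})\to H_{1}(U_{i})\right)$ stabilize as $j\to\infty$ for each fixed $i$. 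Because $\widetilde{K}$ is a countable complex, each $H_{1}(U_{i};\mathbb{Z})$ is countable, and in that setting the Mittag--Leffler condition is equivalent to the vanishing of the derived limit $\lim^{1}\{H_{1}(U_{i};\mathbb{Z})\}$. Thus the whole conjecture becomes the single assertion that $\lim^{1}\{H_{1}(U_{i};\mathbb{Z})\}=0$.

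The second step is to translate this derived-limit condition into the cohomology of $G$ with group-ring coefficients. The cellular chain complex of $\widetilde{K}$ is a complex of free $\mathbb{Z}G$-modules that is finitely generated through dimension $2$, and, as noted in the remark in \S\ref{Subsection: Groups of type F_k}, the homology of the end of $\widetilde{K}$ is tied to $H^{\ast}(G;\mathbb{Z}G)$. A Milnor-type $\lim$--$\lim^{1}$ sequence then expresses $\lim^{1}\{H_{1}(U_{i})\}$ in terms of the torsion and divisibility present in $H^{2}(G;\mathbb{Z}G)$; concretely, this reduces the problem to showing that $H^{2}(G;\mathbb{Z}G)$ carries no such obstruction---most cleanly, that $H^{2}(G;\mathbb{Z}G)$ is torsion-free---for every finitely presented $1$-ended $G$. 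This is the homological shadow of the Geoghegan--Mihalik phenomenon that full $\operatorname*{pro}$-$\pi_{1}$ semistability forces $H^{2}(G;\mathbb{Z}G)$ to be free abelian, and it immediately explains why every group listed in Theorem~\ref{Theorem: Some groups that are semistable} satisfies the $H_{1}$-conjecture: $\pi_{1}$-semistability abelianizes to $H_{1}$-semistability.

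With this reduction in hand I would proceed on two fronts. For the groups already covered by Theorem~\ref{Theorem: Some groups that are semistable}---Coxeter and Artin groups, one-relator groups, suitable amalgams and HNN extensions, and $\delta$-hyperbolic groups---nothing new is needed. For the general case I would attempt a direct assault on the Mittag--Leffler condition itself: represent a class of $H_{1}(U_{i})$ by a $1$-cycle and try to push it arbitrarily far toward infinity within $U_{i}$, using lifts of the finitely many $2$-cells of $K$ to supply the required homologies. This imitates the transversality-and-pushing method behind Theorem~\ref{Theorem: Finite type x R} and Theorem~\ref{Theorem: tame implies semistability}, but carried out combinatorially in $\widetilde{K}$ rather than via manifold transversality.

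The hard part---and the reason the conjecture is still open---is exactly this last step. There is no general mechanism forcing $H^{2}(G;\mathbb{Z}G)$ to be torsion-free, and the geometric pushing argument collapses because $\widetilde{K}$ need not be inward tame: there is neither a self-homotopy pulling neighborhoods of infinity into compacta nor a Poincar\'{e}--Lefschetz duality with which to convert such a ``pulling in'' into the ``pushing out'' that semistability demands. In the manifold setting those are precisely the two ingredients that drive Theorem~\ref{Theorem: tame implies semistability}; for an arbitrary finitely presented group both are missing. I therefore expect real progress to come either from a new method for extracting structural information about $H^{2}(G;\mathbb{Z}G)$ from the bare existence of a finite presentation, or from a combinatorial-group-theoretic substitute playing the role that duality plays for manifolds.
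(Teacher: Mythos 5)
This statement is a conjecture, not a theorem: the paper offers no proof of it, and none is known, so there is no argument of the author's to measure yours against. You correctly recognize this, and your write-up is an honest research plan rather than a proof; as such it cannot be accepted as a proof of the statement, but its individual reductions are sound. Passing to $\widetilde{K}$ for a finite presentation $2$-complex, noting that no base ray is needed for $\operatorname{pro}$-$H_{1}$, rephrasing semistability as the Mittag--Leffler condition via Proposition \ref{Prop: passing to images}, and invoking the equivalence of Mittag--Leffler with $\varprojlim^{1}=0$ for towers of countable abelian groups are all correct. The translation into $H^{2}(G;\mathbb{Z}G)$ is also in the spirit of the Geoghegan--Mihalik results the paper cites, though you should verify the exact form of the dictionary (free abelian versus torsion-free, and which flavor of semistability corresponds to which) against \cite{Ge2} rather than asserting it; the paper itself deliberately avoids this translation, saying only that it has ``chosen not to delve into that topic.''

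Two points of comparison with what the paper actually says about the conjecture. First, the paper's sole general observation is the one you reproduce: $\operatorname{pro}$-$H_{1}(\varepsilon(G);\mathbb{Z})$ is the abelianization of any representative of $\operatorname{pro}$-$\pi_{1}(\varepsilon(\widetilde{K}),r)$, so $\pi_{1}$-semistability implies $H_{1}$-semistability and every group in Theorem \ref{Theorem: Some groups that are semistable} satisfies the conjecture. Second, the paper singles out the case where $G$ is the fundamental group of a closed aspherical manifold, which it resolves affirmatively by Exercise \ref{Exercise: contractible open n-manifolds have pro-homology of R^n} (the universal cover has the pro-homology of $\mathbb{R}^{n}$, so $\operatorname{pro}$-$H_{1}$ is in fact pro-trivial, hence semistable). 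Your proposal omits this manifold case entirely, even though your closing paragraph correctly identifies duality as the missing ingredient in general; it would strengthen your discussion to note that this is precisely why the manifold case \emph{is} accessible. Your final assessment---that the obstruction is the absence of any mechanism forcing inward tameness or a duality substitute for arbitrary finitely presented groups---is accurate and is consistent with the conjecture's continued open status.
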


Since $\operatorname*{pro}$-$H_{1}\left(  \varepsilon\left(  G\right)  ;%
%TCIMACRO{\U{2124} }%
%BeginExpansion
\mathbb{Z}
%EndExpansion
\right)  $ can be obtained by abelianization of any representative of
$\operatorname*{pro}$-$\pi_{1}\left(  \varepsilon\left(  \widetilde{K}\right)
,r\right)  $, for any presentation 2-complex $K$ and base ray $r$, it is clear
that the $H_{1}$-semistability Conjecture is weaker than the Semistability
Conjecture. Moreover, the $H_{1}$-version of our favorite special case of the
Semistability Conjecture---the case where $G$ is the fundamental group of an
aspherical manifold---is easily solved in the affirmative, by an application
of Exercise
\ref{Exercise: contractible open n-manifolds have pro-homology of R^n}. This
provides a ray of hope that the Manifold Semistability Conjecture is more
accessible that the general case.

\begin{remark}
\emph{The Semistability Conjectures presented in this section were initially
formulated by Ross Geoghegan in 1979. At the time, he simply called them
\textquotedblleft questions\textquotedblright, expecting the answers to be
negative. Their long-lasting resistance to solutions, combined with an
accumulation of affirmative answers to special cases, has gradually led them
to become known as conjectures.}
\end{remark}

\section{Shape Theory}

Shape theory may be viewed as a method for studying bad spaces using tools
created for the study of good spaces. Although more general approaches exist,
we follow the classical (and the most intuitive) route by developing shape
theory only for compacta. But now we are interested in arbitrary
compacta---not just ANRs. A few examples to be considered are shown in Figure
\ref{Figure: Shapes}.%
%TCIMACRO{\FRAME{ftbpFU}{4.755in}{3.5404in}{0pt}{\Qcb{\ }}{\Qlb{Figure: Shapes}%
%}{osufigshapes.eps}{\special{ language "Scientific Word";  type "GRAPHIC";
%maintain-aspect-ratio TRUE;  display "USEDEF";  valid_file "F";
%width 4.755in;  height 3.5404in;  depth 0pt;  original-width 5.5443in;
%original-height 4.1208in;  cropleft "0";  croptop "1";  cropright "1";
%cropbottom "0";  filename '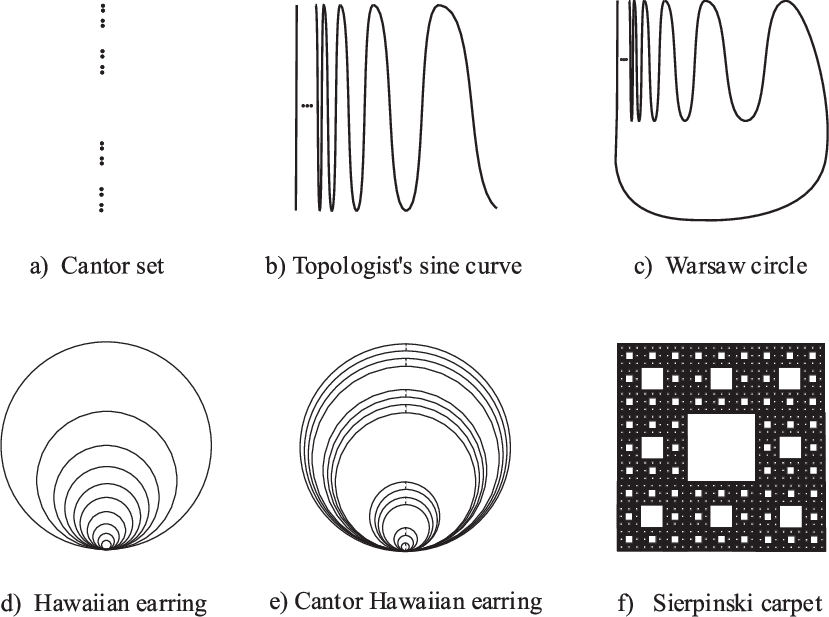';file-properties "XNPEU";}} }%
%BeginExpansion
\begin{figure}[ptb]%
\centering
\includegraphics[
height=3.5404in,
width=4.755in
]%
{OSUFigShapes.eps}%
\caption{\ }%
\label{Figure: Shapes}%
\end{figure}
%EndExpansion

The abrupt shift from noncompact spaces with nice local properties to compacta
with bad local properties may seem odd, but there are good reasons for this
temporary shift in focus. First, the tools we have already developed for
analyzing the ends of manifolds and complexes are nearly identical to those
used in shape theory; understanding and appreciating the basics of shape
theory will now be quite easy. More importantly, certain aspects of the study
of ends are nearly impossible without shapes---if the theory did not already
exist, we would be forced to invent it.

For more comprehensive treatments of shape theory, the reader can consult
\cite{Bor2} or \cite{DySe}.

\subsection{Associated sequences, basic definitions, and
examples\label{Subsection: basic definitions of shape theory}}

In shape theory, the first step in studying a compactum $A$ is to choose an
\emph{associated inverse sequence} $K_{0}\overset{f_{1}}{\longleftarrow}%
K_{1}\overset{f_{2}}{\longleftarrow}K_{2}\overset{f_{3}}{\longleftarrow}%
\cdots$ of finite polyhedra and simplicial maps. There are several ways this
can be done. We describe a few of them.\medskip

\noindent\textbf{Method 1:} If $A$ is finite-dimensional, choose an embedding
$A\hookrightarrow%
%TCIMACRO{\U{211d} }%
%BeginExpansion
\mathbb{R}
%EndExpansion
^{n}$, and let $K_{0}\supseteq K_{1}\supseteq K_{2}\supseteq\cdots$ be a
sequence of compact polyhedral neighborhoods intersecting in $A$. Since it is
impossible to choose triangulations under which all inclusion maps are
simplicial, choose progressively finer triangulations for the $K_{i}$ and let
the $f_{i}$ be simplicial approximations to the inclusion maps.\medskip

\noindent\textbf{Method 2:} Choose a sequence $\mathcal{U}_{0},\mathcal{U}%
_{1},\mathcal{U}_{2},\cdots$ of finite covers of $A$ by $\varepsilon_{i}%
$-balls, where $\varepsilon_{i}\rightarrow0$ and each $\mathcal{U}_{i+1}$
refines $\mathcal{U}_{i}$. Let $K_{i}$ be the nerve of $\mathcal{U}_{i}$ and
$f_{i}:K_{i}\rightarrow K_{i-1}$ a simplicial map that takes each vertex
$U\in\mathcal{U}_{i}$ to a vertex $V\in\mathcal{U}_{i-1}$ with $U\subseteq
V$.\medskip

\noindent\textbf{Method 3:} If $A$ can be expressed as the inverse limit of an
inverse sequence $K_{0}\overset{g_{1}}{\longleftarrow}K_{1}\overset{g_{2}%
}{\longleftarrow}K_{2}\overset{g_{3}}{\longleftarrow}\cdots$ of finite
polyhedra\footnote{By definition, $\underleftarrow{\lim}\left\{  K_{i}%
,f_{i}\right\}  $ is viewed as a subspace of the infinite product space
$\prod_{i=0}^{\infty}K_{i}$ and is topologized accordingly.}, then that
sequence itself may be associated to $A$, after each map is approximated by
one that is simplicial.\medskip

\begin{remark}
\emph{(a) \ At times, it will be convenient if each }$K_{i}$\emph{ in an
associated inverse sequence has a preferred vertex }$p_{i}$\emph{ with each
}$f_{i+1}$\emph{ taking }$p_{i+1}$\emph{ to }$p_{i}$\emph{. That can easily be
arranged; we refer to the result as a} pointed inverse sequence.\smallskip

\noindent\emph{(b) }\textbf{\ }\emph{Our requirement that the bonding maps in
associated inverse sequences be simplicial, will soon be seen as unnecessary.
But, for now, there is no harm in including that additional niceness
condition.}\smallskip

\noindent\emph{(c) \ When }$A$\emph{ is infinite-dimensional, a variation on
Method 1 is available. In that case, }$A$\emph{ is embedded in the Hilbert
cube and a sequence }$\left\{  N_{i}\right\}  $\emph{ of closed Hilbert cube
manifold neighborhoods of }$A$\emph{ is chosen. By Theorem
\ref{Theorem: Triangulability of HCMs}, each }$N_{i}$\emph{ has the homotopy
type of a finite polyhedron }$K_{i}$\emph{. From there, an associated inverse
sequence for }$A$\emph{ is readily obtained.}
\end{remark}

The choice of an associated inverse sequence for a compactum $A$ should be
compared to the process of choosing a cofinal sequence of neighborhoods of
infinity for a noncompact space $X$. In both situations, the terms in the
sequences can be viewed as progressively better approximations to the object
of interest, and in both situations, there is tremendous leeway in assembling
those approximating sequences. In both contexts, that flexibility raises
well-definedness issues. In the study of ends, we introduced an equivalence
relation based on ladder diagrams to obtain the appropriate level of
well-definedness. The same is true in shape theory.

\begin{proposition}
\label{Proposition: Equivalence of associated sequences of polyhedra}For a
fixed compactum $A$, let $\left\{  K_{i},f_{i}\right\}  $ and $\left\{
L_{i},g_{i}\right\}  $ be a pair of associated inverse sequences of finite
polyhedra. Then there exist subsequences, simplicial maps, and a corresponding
ladder diagram%
\[
\begin{diagram} K_{i_{0}} & & \lTo^{f_{i_{0},i_{1}}} & & K_{i_{1}} & & \lTo^{f_{i_{1},i_{2}}} & & K_{i_{2}} & & \lTo^{f_{i_{2},i_{3}}}& & K_{i_{3}}& \cdots\\ & \luTo & & \ldTo & & \luTo & & \ldTo & & \luTo & & \ldTo &\\ & & L_{j_{0}} & & \lTo^{g_{j_{0},j_{1}}} & & L_{j_{1}} & & \lTo^{g_{j_{1},j_{2}}}& & L_{j_{2}} & & \lTo^{g_{j_{2},j_{3}}} & & \cdots \end{diagram}
\]
in which each triangle of maps homotopy commutes. If desired, we may require
that those homotopies preserve base points.
\end{proposition}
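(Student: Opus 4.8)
The plan is to reproduce, in the shape-theoretic setting, the interleaving argument used earlier to justify the pro-isomorphism relation for neighborhoods of infinity (diagram (\ref{diagram: pro-equivalence of neighborhood systems})), with one new wrinkle: because the bonding maps are only \emph{simplicial approximations} to inclusions, the resulting ladder will commute up to homotopy rather than on the nose, which is exactly the weakened conclusion asserted.

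First I would treat the cleanest case, in which both $\{K_i,f_i\}$ and $\{L_i,g_i\}$ arise from Method~1 for a \emph{single} embedding $A\hookrightarrow\mathbb{R}^n$, so that the $K_i$ and $L_i$ are nested compact polyhedral neighborhoods with $\bigcap_i K_i=\bigcap_i L_i=A$ and each $f_i$ (resp.\ $g_i$) is a simplicial approximation to the inclusion $K_i\hookrightarrow K_{i-1}$ (resp.\ $L_i\hookrightarrow L_{i-1}$). The simplicial approximation theorem guarantees that each $f_i$ is homotopic \emph{within} $K_{i-1}$ to the genuine inclusion, via the straight-line homotopy supported in the carrying simplices. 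Since both families shrink to $A$ and each term is a neighborhood of the compactum $A$, I can choose subsequences realizing honest inclusions $K_{i_0}\supseteq L_{j_0}\supseteq K_{i_1}\supseteq L_{j_1}\supseteq\cdots$, and take the diagonal maps to be simplicial approximations to these inclusions. Every triangle then homotopy commutes for a single reason: all of its maps are homotopic to honest inclusions, and inclusions of nested neighborhoods compose strictly correctly, so the two composites around the triangle are homotopic to the same inclusion.

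Next I would remove the restriction to a common Method~1 presentation; this is the main obstacle. The point is to realize \emph{any} associated sequence, whatever method produced it, as a sequence pro-isomorphic to a system of honest ANR neighborhoods of $A$ inside one fixed ambient space, after which the interleaving above applies verbatim. For two different Euclidean embeddings $e_1,e_2$ one can use the product embedding $A\hookrightarrow\mathbb{R}^m\times\mathbb{R}^n$ together with the coordinate projections, which send neighborhoods of the diagonal copy of $A$ onto neighborhoods of each $e_i(A)$, to slot both neighborhood systems compatibly into the common space. For an arbitrary compactum I would instead embed $A$ in the Hilbert cube $\mathcal{Q}$ and use Theorem~\ref{Theorem: Triangulability of HCMs} to replace Hilbert-cube-manifold neighborhoods of $A$ by homotopy equivalent finite polyhedra; Methods~2 and~3 are brought into this framework by identifying a nerve (resp.\ a finite polyhedron mapping to the inverse limit) with a neighborhood of $A$ up to homotopy, using the ANR property of finite polyhedra (Proposition~\ref{Proposition: ANR facts}). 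This reduction is where the real work lies, since it is what makes the two families of simplicial maps comparable at all; once each sequence is expressed through neighborhoods of $A$ in the common space, the connecting maps are again simplicial approximations to inclusions and the homotopy-commuting ladder is built as before.

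Finally, for the pointed refinement I would fix a point $a\in A$, arrange the preferred vertices $p_i\in K_i$ and $q_i\in L_i$ to converge to $a$, and choose all connecting simplicial maps to carry preferred vertex to preferred vertex. Because the relevant vertices eventually lie together in arbitrarily small simplices clustering at $a$, the straight-line homotopies witnessing commutativity can be taken to move base points only within a contractible carrier, hence corrected to be base-point preserving, yielding the pointed triangles. I expect the homotopy-commutativity and the pointed bookkeeping to be routine; the genuine difficulty is the method- and embedding-independent reduction of the previous paragraph.
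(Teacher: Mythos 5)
The paper itself offers no proof here---the Proposition is followed by an exercise whose hint is to compare sequences arising from the same method and then to observe that Method 1 is a special case of Method 3. Your overall strategy is the classical one and is sound: interleave the two systems inside a common ambient space so that every bonding and diagonal map is (homotopic to) an honest inclusion, whence each triangle homotopy commutes for free. Your Step 1 (two Method-1 sequences from a single embedding) is exactly right. You organize the general reduction in the opposite direction from the paper's hint---pushing Methods 2 and 3 into the ``neighborhoods of $A$'' picture rather than absorbing Method 1 into Method 3---which is a legitimate alternative route.

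Two steps of the reduction do not work as stated. First, the product-embedding device: the coordinate projections only give you maps \emph{out of} neighborhoods of the diagonal copy of $A$ in $\mathbb{R}^{m}\times\mathbb{R}^{n}$, but the ladder also requires maps from small neighborhoods of $e_{1}(A)$ \emph{into} small neighborhoods of the diagonal copy (equivalently, toward neighborhoods of $e_{2}(A)$), and products $U\times V$ are not cofinal among neighborhoods of the diagonal copy, so you cannot manufacture these from projections alone. The missing ingredient is an extension of the comparison homeomorphism $e_{2}\circ e_{1}^{-1}$ over a neighborhood of $e_{1}(A)$ (Tietze, using that $\mathbb{R}^{n}$ is an absolute extensor), followed by a compactness argument shrinking the domain until the image lands in a prescribed target neighborhood; once you have that extension, the product embedding is unnecessary. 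Second, your treatment of Method 2 overstates what is true: the nerve of a single $\varepsilon_{i}$-cover of $A$ need \emph{not} be homotopy equivalent to any polyhedral neighborhood of $A$ (already for $A$ a Cantor set the component counts disagree). What holds, and what you must construct, is only an interleaving: a partition-of-unity (barycentric) map from a sufficiently small neighborhood into the nerve of a given cover, and a map from the nerve of a sufficiently fine cover into a given neighborhood obtained by sending each vertex $U$ to a point of $U$ and extending over simplices via local contractibility---together with the check that these homotopy commute with the bonds. Since the interleaving is all the ladder requires, both gaps are reparable by standard arguments, but neither is supplied by the mechanisms you name.
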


\begin{exercise}
Prove some or all of Proposition
\ref{Proposition: Equivalence of associated sequences of polyhedra}. Start by
comparing any pair of sequences obtained using the same method, then note that
Method 1 is a special case of Method 3.
\end{exercise}

Define a pair of inverse sequences of finite polyhedra and simplicial maps to
be $\operatorname*{pro}$\emph{-homotopy equivalent} if they contain
subsequences that fit into a homotopy commuting ladder diagram, as described
in Proposition
\ref{Proposition: Equivalence of associated sequences of polyhedra}. Compacta
$A$ and $A^{\prime}$ are \emph{shape equivalent }if some (and thus every) pair
of associated inverse sequences of finite polyhedra are pro-homotopy
equivalent. In that case we write $\mathcal{S}\emph{h\!}\left(  A\right)
=\mathcal{S}\emph{h\!}\left(  A^{\prime}\right)  $ or sometimes $A^{\prime}%
\in\mathcal{S}\emph{h\!}\left(  A\right)  $.

\begin{remark}
\emph{If }$\left\{  K_{i},f_{i}\right\}  $\emph{ is an associated inverse
sequence for a compactum }$A$\emph{, it is not necessarily the case that
}$\underleftarrow{\lim}\left\{  K_{i},f_{i}\right\}  \approx A$\emph{. But it
is immediate from the definitions that the two spaces have the same shape.}
\end{remark}

\begin{exercise}
Show that the Topologist's Sine Curve has the shape of a point and the Warsaw
Circle has the shape of a circle (see Figure \ref{Figure: Shapes}). Note that
neither space is homotopy equivalent to its nicer shape version.
\end{exercise}

\begin{exercise}
Show that the Whitehead Continuum (see Example
\ref{Example: Definition of Whitehead manifold}) has the shape of a point.
Spaces with the shape of a point are often called \textbf{cell-like}.
\end{exercise}

\begin{exercise}
Show that the Sierpinski Carpet is shape equivalent to a Hawaiian Earring.
\end{exercise}

\begin{exercise}
Show that the Cantor Hawaiian Earring is shape equivalent to a standard
Hawaiian Earring. (An observation that once prompted the reaction:
\textquotedblleft I demand a recount!\textquotedblright)
\end{exercise}

When considering the shape of a compactum $A$, the space $A$ itself becomes
largely irrelevant after an associated inverse sequence has been chosen. In a
sense, shape theory is just the study of pro-homotopy classes of inverse
sequences of finite polyhedra. Nevertheless, there is a strong correspondence
between inverse sequences of finite polyhedra and compact metric spaces
themselves. If $A$ is the inverse limit of an inverse sequence $\left\{
K_{i},f_{i}\right\}  $ of finite polyhedra, then applying any of the three
methods mentioned earlier to the space $A$ yields an inverse sequence of
finite polyhedra pro-homotopy equivalent to the original $\left\{  K_{i}%
,f_{i}\right\}  $. In other words, passage to an inverse limit preserves all
relevant information. As we saw in Exercise
\ref{Exercise: nontrivial inverse sequence with trivial inverse limit}, that
is not the case with inverse sequences of groups. This phenomenon is even more
striking when studying ends of spaces. If $N_{0}\hookleftarrow N_{1}%
\hookleftarrow N_{2}\hookleftarrow\cdots$ is a cofinal sequence of
neighborhoods of infinity of a space $X$, the inverse limit of that sequence
is clearly the empty set. In some sense, the study of ends is a study of an
imaginary \textquotedblleft space at infinity\textquotedblright. By using
shape theory, we can sometimes make that space a reality.

\begin{exercise}
Prove that an inverse sequence of nonempty finite polyhedra (or more
generally, an inverse sequence of nonempty compacta) is never the empty set.
\end{exercise}

\begin{exercise}
\label{Exercise: Shapes of finite polyhedra}So far, our discussion of shape
has focused on exotic compacta; but nice spaces, such as finite polyhedra, are
also part of the theory. Show that finite polyhedra $K$ and $L$ are shape
equivalent if and only if they are homotopy equivalent. \emph{Hint: }Choosing
trivial associated inverse sequences $K\overset{\operatorname*{id}%
}{\longleftarrow}K\overset{\operatorname*{id}}{\longleftarrow}\cdots$ and
$L\overset{\operatorname*{id}}{\longleftarrow}L\overset{\operatorname*{id}%
}{\longleftarrow}\cdots$ makes the task easier. A more general observation of
this sort will be made shortly.
\end{exercise}

\subsection{The algebraic shape invariants}

In the spirit of the work already done on ends of spaces, we define a variety
of algebraic invariants for compacta. Given a compactum $A$ and any associated
inverse sequence $\left\{  K_{i},f_{i}\right\}  $, define $\operatorname*{pro}%
$-$H_{\ast}\left(  A;R\right)  $ to be the pro-isomorphism class of the
inverse sequence
\[
H_{\ast}\left(  K_{0};R\right)  \overset{f_{1\ast}}{\longleftarrow}H_{\ast
}\left(  K_{1};R\right)  \overset{f_{2\ast}}{\longleftarrow}H_{\ast}\left(
K_{2};R\right)  \overset{f_{3\ast}}{\longleftarrow}\cdots
\]
and $\check{H}_{\ast}\left(  A;R\right)  $ to be its inverse limit. By
reversing arrows and taking a direct limit, we also define
$\operatorname*{ind}$-$H^{\ast}\left(  A;R\right)  $ and $\check{H}^{\ast
}\left(  A;R\right)  $. The groups $\check{H}_{\ast}\left(  A;R\right)  $ and
$\check{H}^{\ast}\left(  A;R\right)  $ are know as the \emph{\v{C}ech homology
and cohomology groups} of $A$, respectively. If we begin with a pointed
inverse sequence $\left\{  \left(  K_{i},p_{i}\right)  ,f_{i}\right\}  $ we
obtain $\operatorname*{pro}$-$\pi_{\ast}\left(  A,p\right)  $ and $\check{\pi
}_{\ast}\left(  A,p\right)  $, where $p$ corresponds to $\left(  p_{0}%
,p_{1},p_{2},\cdots\right)  $. Call $\check{\pi}_{\ast}\left(  A,p\right)  $
the \emph{\v{C}ech homotopy groups} of $A$, or sometimes, the \emph{shape
groups }of $A.$

\v{C}ech cohomology is known to be better-behaved than \v{C}ech homology, in
that there is a full-blown \v{C}ech cohomology theory satisfying the
Eilenberg-Steenrod axioms. Although the \v{C}ech homology groups of $A$ do not
fit into such a nice theory, they are are still perfectly good topological
invariants of $A$. For reasons we have seen before, $\operatorname*{pro}%
$-$H_{\ast}\left(  A;R\right)  $ and $\operatorname*{pro}$-$\pi_{\ast}\left(
A,p\right)  $ tend to carry more information than the corresponding inverse limits.

\begin{exercise}
\label{Exercise: Cech homology of the Warsaw circle}Observe that, for the
Warsaw circle $W$, the first \v{C}ech homology and the first \v{C}ech homotopy
group are not the same as the first singular homology and traditional
fundamental group of $W$.
\end{exercise}

\begin{remark}
\emph{Another way to think about the phenomena that occur in Exercise
\ref{Exercise: Cech homology of the Warsaw circle} is that, for an inverse
sequence of spaces} $K_{0}\overset{g_{1}}{\longleftarrow}K_{1}\overset{g_{2}%
}{\longleftarrow}K_{2}\overset{g_{3}}{\longleftarrow}\cdots$\emph{, the
homology [homotopy] of the inverse limit is not necessarily the same as the
inverse limit of the homologies [homotopies]. It is the point of view of shape
theory that the latter inverse limits often do a better job of capturing the
true nature of the space.}
\end{remark}

\subsection{Relaxing the definitions}

Now that the framework for shape theory is in place, we make a few adjustments
to the definitions. These changes will not nullify anything done so far, but
at times they will make the application of shape theory significantly easier.

Previously we required bonding maps in associated inverse sequences to be
simplicial. That has some advantages; for example, $\operatorname*{pro}%
$-$H_{\ast}\left(  A;R\right)  $ and $\check{H}^{\ast}\left(  A;R\right)  $
can be defined using only simplicial homology. But in light of the definition
of pro-homotopy equivalence, it is clear that only the homotopy classes of the
bonding maps really matters. So, adjusting a naturally occurring bonding map
to make it simplicial is unnecessary. From now on, we only require bonding
maps to be continuous. In a similar vein, a finite polyhedron $K_{i\text{ }}%
$in an inverse sequence corresponding to $A$ can easily be replaced by a
finite CW complex. More generally, any compact ANR is acceptable as an entry
in that inverse sequence (Proposition \ref{Proposition: ANR facts} is relevant
here). Of course, once these changes are made, we must use cellular or
singular (as opposed to simplicial) homology for defining the algebraic shape
invariants of the previous section.

With the above relaxation of definitions in place, the following fundamental
facts becomes elementary.

\begin{proposition}
\label{Prop: For compact ANRs shape = homotopy}Let $A$ and $B$ be compact
ANRs. Then $\mathcal{S}\emph{h\!}\left(  A\right)  =\mathcal{S}\emph{h\!}%
\left(  B\right)  $ if and only if $A\simeq B$.
\end{proposition}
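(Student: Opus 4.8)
The plan is to exploit the relaxation just introduced---that any compact ANR is permissible as an entry in an associated inverse sequence---which makes the \emph{constant} sequence $A\xleftarrow{\mathrm{id}}A\xleftarrow{\mathrm{id}}A\xleftarrow{\mathrm{id}}\cdots$ a legitimate associated inverse sequence for $A$ (its inverse limit is $A$, so Method 3 applies), and likewise for $B$. Since shape equivalence asserts, via Proposition~\ref{Proposition: Equivalence of associated sequences of polyhedra}, that \emph{every} pair of associated inverse sequences is pro-homotopy equivalent, the whole statement collapses to a comparison of these two constant sequences, and the proof becomes an elementary diagram chase.

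For the reverse implication, I would take a homotopy equivalence $f\colon A\to B$ with homotopy inverse $g\colon B\to A$ and build the homotopy-commuting ladder between the two constant sequences by letting every ``down'' diagonal equal $f$ and every ``up'' diagonal equal $g$. The two kinds of triangles then demand exactly $g\circ f\simeq\mathrm{id}_A$ and $f\circ g\simeq\mathrm{id}_B$, which hold by hypothesis; so the constant sequences are pro-homotopy equivalent and $\mathcal{S}h(A)=\mathcal{S}h(B)$.

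For the forward implication, assume $\mathcal{S}h(A)=\mathcal{S}h(B)$. Then the two constant sequences are pro-homotopy equivalent, and passing to subsequences changes nothing (a subsequence of a constant sequence is again constant). I would read off the diagonal maps of the resulting ladder: the down-diagonals $d_i\colon A\to B$ and the up-diagonals $u_i\colon B\to A$. For a single down-diagonal $d=d_0$, the triangle with two copies of $A$ along its top forces $u_0\circ d\simeq\mathrm{id}_A$, while the adjacent triangle with two copies of $B$ along its bottom forces $d\circ u_1\simeq\mathrm{id}_B$. Thus $d$ has both a left and a right homotopy inverse, and the usual argument $u_0\simeq u_0\circ(d\circ u_1)=(u_0\circ d)\circ u_1\simeq u_1$ shows that $u_0$ is a genuine two-sided homotopy inverse. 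Hence $d\colon A\to B$ is a homotopy equivalence and $A\simeq B$.

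The conceptual crux is entirely in the first paragraph: recognizing that the relaxed definitions legitimize the constant sequence, which turns a statement about inverse systems into a one-line diagram chase. I expect no real obstacle beyond this; the only point requiring care is correctly matching the two types of triangles in the ladder of Proposition~\ref{Proposition: Equivalence of associated sequences of polyhedra} to the left- and right-inverse conditions, as above.
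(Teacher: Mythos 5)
Your proposal is correct and follows exactly the route the paper intends: the paper's proof is a one-line pointer to Exercise \ref{Exercise: Shapes of finite polyhedra}, whose hint is precisely to use the constant associated sequences $A\overset{\operatorname*{id}}{\longleftarrow}A\overset{\operatorname*{id}}{\longleftarrow}\cdots$ and $B\overset{\operatorname*{id}}{\longleftarrow}B\overset{\operatorname*{id}}{\longleftarrow}\cdots$ made legitimate by the relaxed definitions. Your filling-in of the ladder-diagram details (matching the two triangle types to the left- and right-inverse conditions and concluding that a down-diagonal is a two-sided homotopy equivalence) is accurate.
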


\begin{proof}
An argument like that used in Exercise
\ref{Exercise: Shapes of finite polyhedra} can now be applied here.
\end{proof}

\begin{proposition}
\label{Prop: For a compact ANR shape invariants are homotopy invariants}If $A$
is a compact ANR, then $\operatorname*{pro}$-$H_{\ast}\left(  A;R\right)  $
and $\operatorname*{pro}$-$\pi_{\ast}\left(  A,p\right)  $ are stable for all
$\ast$ with $\check{H}_{\ast}\left(  A;R\right)  $ and $\check{\pi}_{\ast
}\left(  A,p\right)  $ being isomorphic to the singular homology groups
$H_{\ast}\left(  A;R\right)  $\ and the traditional homotopy groups $\pi
_{\ast}\left(  A,p\right)  $, respectively.
\end{proposition}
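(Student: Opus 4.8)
The plan is to compute both families of shape invariants directly from the simplest possible associated inverse sequence---the constant one. Now that the definitions have been relaxed to permit compact ANRs as entries and merely continuous bonding maps, the constant sequence
\[
A \overset{\operatorname{id}}{\longleftarrow} A \overset{\operatorname{id}}{\longleftarrow} A \overset{\operatorname{id}}{\longleftarrow} \cdots
\]
(and, after fixing the basepoint $p$, its pointed analog $\{(A,p),\operatorname{id}\}$) qualifies as an associated inverse sequence for $A$, since $A$ is trivially its own inverse limit (cf. the hint to Exercise~\ref{Exercise: Shapes of finite polyhedra}). Once this is justified, the proposition is immediate: applying the singular homology functor $H_{\ast}(-;R)$ to the constant sequence yields the constant inverse sequence $\{H_{\ast}(A;R),\operatorname{id}\}$, which is stable and whose inverse limit is $H_{\ast}(A;R)$ itself; hence $\operatorname{pro}$-$H_{\ast}(A;R)$ is stable and $\check{H}_{\ast}(A;R)\cong H_{\ast}(A;R)$. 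The identical argument with $\pi_{\ast}(-,p)$ gives stability of $\operatorname{pro}$-$\pi_{\ast}(A,p)$ together with $\check{\pi}_{\ast}(A,p)\cong\pi_{\ast}(A,p)$.

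The real work is thus to verify that the constant sequence genuinely represents the shape invariants of $A$---equivalently, that it is pro-homotopy equivalent to any standard associated inverse sequence, as furnished by the ANR form of Proposition~\ref{Proposition: Equivalence of associated sequences of polyhedra}. I would do this by comparing the constant sequence with a Method~1 neighborhood sequence $K_{0}\supseteq K_{1}\supseteq\cdots$ intersecting in $A$ after embedding $A$ in some $\mathbb{R}^{n}$. The downward maps $A\hookrightarrow K_{i}$ are inclusions. For the upward maps I invoke the retraction property of ANRs (Proposition~\ref{Proposition: ANR facts}): there is a neighborhood $U$ of $A$ and a retraction $r:U\to A$, so for all sufficiently large $i$ we have $K_{i}\subseteq U$ and may take $K_{i}\overset{r}{\longrightarrow}A$. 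The triangle $A\hookrightarrow K_{i}\overset{r}{\longrightarrow}A$ commutes on the nose, being $\operatorname{id}_{A}$, while the triangle $K_{i}\overset{r}{\longrightarrow}A\hookrightarrow K_{i-1}$ agrees up to homotopy with the inclusion $K_{i}\hookrightarrow K_{i-1}$: the two maps into $K_{i-1}$ coincide on $A$ and have uniformly close images once $K_{i-1}$ is a sufficiently fine neighborhood, so the homotopy is produced from the local contractibility of the ANR $K_{i-1}$ (or from a straight-line homotopy in $\mathbb{R}^{n}$ pushed back into $K_{i-1}$). Passing to a suitable subsequence makes every triangle homotopy-commute, yielding the required pro-homotopy equivalence, with basepoints preserved by keeping $p$ fixed.

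The main obstacle is precisely this ladder construction---arranging that $r|_{K_{i}}$ followed by inclusion is homotopic to $K_{i}\hookrightarrow K_{i-1}$ while controlling basepoints and choosing subsequences so that all triangles commute up to pointed homotopy. Since this is exactly the ANR-enhanced well-definedness statement for associated sequences, a cleaner route is simply to cite Proposition~\ref{Proposition: Equivalence of associated sequences of polyhedra} in its relaxed compact-ANR form, noting that $A$ is itself an admissible entry; the constant sequence is then one legitimate choice among many pro-homotopy equivalent associated sequences, and the computation above applies verbatim. Either way, once the constant sequence is in hand the conclusion follows with no further calculation, and the result may be regarded as the natural homotopy-group and inverse-limit refinement of Proposition~\ref{Prop: For compact ANRs shape = homotopy}.
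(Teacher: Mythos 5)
Your proposal is correct and follows the paper's own route: the paper's entire proof is ``Choose the trivial associated inverse sequence $A\overset{\operatorname*{id}}{\longleftarrow}A\overset{\operatorname*{id}}{\longleftarrow}\cdots$,'' which is exactly your constant sequence, with the admissibility of $A$ as an entry guaranteed by the relaxed definitions. The extra work you do comparing the constant sequence to a Method~1 neighborhood sequence via a retraction is a reasonable filling-in of the detail the paper leaves to Proposition~\ref{Proposition: Equivalence of associated sequences of polyhedra} and the preceding discussion.
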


\begin{proof}
Choose the trivial associated inverse sequence $A\overset{\operatorname*{id}%
}{\longleftarrow}A\overset{\operatorname*{id}}{\longleftarrow}\cdots$.
\end{proof}

\begin{corollary}
If $B$ is a compactum that is shape equivalent to a compact ANR $A$, then
$\operatorname*{pro}$-$H_{\ast}\left(  B;R\right)  $ and $\operatorname*{pro}%
$-$\pi_{\ast}\left(  B,p\right)  $ are stable for all $\ast$ with $\check
{H}_{\ast}\left(  B;R\right)  \cong H_{\ast}\left(  B;R\right)  $ and
$\check{\pi}_{\ast}\left(  B,p\right)  \cong\pi_{\ast}\left(  A,p\right)  $.
In particular, $\check{H}_{\ast}\left(  B;R\right)  $ is finitely generated,
for all $\ast$ and $\check{\pi}_{1}\left(  B,p\right)  $ is finitely presentable.
\end{corollary}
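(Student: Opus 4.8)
The plan is to reduce both claims to the single fact recorded in Proposition \ref{Prop: For a compact ANR shape invariants are homotopy invariants}, namely that a compact ANR admits the \emph{constant} associated inverse sequence, and then to transport stability and the computation of inverse limits across the shape equivalence. First I would fix a base point $p \in A$ and take the constant pointed sequence $A \overset{\operatorname*{id}}{\longleftarrow} A \overset{\operatorname*{id}}{\longleftarrow} \cdots$, which—thanks to the relaxation of the definitions permitting compact ANR entries and continuous bonding maps—is a legitimate associated inverse sequence for $A$. By the definition of shape equivalence, any associated inverse sequence $\{K_i, f_i\}$ for $B$ is pro-homotopy equivalent to this constant sequence; moreover, by Proposition \ref{Proposition: Equivalence of associated sequences of polyhedra} the connecting ladder diagram may be chosen so that its triangles homotopy-commute through base-point-preserving homotopies.

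The key observation is that applying a homotopy functor to such a ladder turns it into a genuinely commuting ladder of groups. Applying $H_\ast(-;R)$ to the base-point-preserving, homotopy-commuting ladder sends homotopic maps to equal homomorphisms, so each homotopy-commuting triangle becomes a strictly commuting triangle of homology groups; this is exactly a pro-isomorphism between $\operatorname*{pro}$-$H_\ast(B;R)$ and $\{H_\ast(A;R), \operatorname*{id}\}$. Since the latter is manifestly stable and stability is preserved under pro-isomorphism (indeed it is \emph{defined} up to pro-isomorphism), $\operatorname*{pro}$-$H_\ast(B;R)$ is stable for every $\ast$, and by Exercise \ref{Exercise: stable inverse sequences} its inverse limit satisfies $\check{H}_\ast(B;R) \cong H_\ast(A;R)$. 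I would note here that the right-hand side is the singular homology of $A$, not of $B$ (so the displayed statement is best read with $H_\ast(A;R)$ on the right): the Warsaw circle, which is shape equivalent to a genuine circle, already shows that $\check{H}_\ast(B;R)$ and the singular homology of $B$ can differ. The identical argument applied to $\pi_\ast(-,p)$ on the pointed ladder yields a pro-isomorphism of $\operatorname*{pro}$-$\pi_\ast(B,p)$ with $\{\pi_\ast(A,p), \operatorname*{id}\}$, hence stability and $\check{\pi}_\ast(B,p) \cong \pi_\ast(A,p)$.

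For the final assertions I would invoke West's theorem (Proposition \ref{Proposition: ANR facts}): a compact ANR has the homotopy type of a finite CW complex, so $H_\ast(A;R)$ is finitely generated in every degree and $\pi_1(A,p)$ is finitely presentable. Transporting these finiteness properties across the isomorphisms just established gives that $\check{H}_\ast(B;R)$ is finitely generated for all $\ast$ and that $\check{\pi}_1(B,p)$ is finitely presentable. The main obstacle—really the only place requiring care—is the bookkeeping for base points in the homotopy case: one must ensure that the homotopies realizing commutativity of the ladder can simultaneously be taken to fix the distinguished vertices, so that $\pi_\ast$ returns a strictly commuting diagram rather than one commuting only up to an unmonitored change-of-base-point isomorphism. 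This is precisely the pointed refinement asserted in Proposition \ref{Proposition: Equivalence of associated sequences of polyhedra}, so the difficulty is confined to citing that refinement correctly.
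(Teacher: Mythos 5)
Your argument is correct and is essentially the route the paper intends: the corollary is a direct combination of Proposition \ref{Prop: For a compact ANR shape invariants are homotopy invariants} (take the constant associated sequence for $A$) with the definition of shape equivalence and West's theorem, exactly as you lay it out. Your side observation is also well taken: the displayed isomorphism should read $\check{H}_{\ast}(B;R)\cong H_{\ast}(A;R)$ rather than $H_{\ast}(B;R)$, since the Warsaw circle (shape equivalent to $\mathbb{S}^{1}$ but with trivial singular $H_{1}$) shows the statement as printed cannot be literally correct.
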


\begin{example}
Compacta a), d), e), and f) from Figure \ref{Figure: Shapes} do not have the
shapes of compact ANRs.\medskip
\end{example}

Taken together, Propositions \ref{Prop: For compact ANRs shape = homotopy} and
\ref{Prop: For a compact ANR shape invariants are homotopy invariants} form
the foundation of the true slogan: \emph{When restricted to compact ANRs,
shape theory reduces to (traditional) homotopy theory.} Making that slogan a
bona fide theorem would require a development of the notion of
\textquotedblleft shape morphism\textquotedblright\ and a comparison of those
morphisms to homotopy classes of maps. We have opted against providing that
level of detail in these notes. We will, however, close this section with a
few comments aimed at giving the reader a feel for how that can be done.

Let \emph{pro-}$\mathcal{H}\emph{omotopy}$\emph{ }denote the set of all
pro-homotopy classes of inverse sequences of compact ANRs and continuous maps.
If $\mathcal{S}$\emph{hapes} denotes the set of all shape classes of compact
metric spaces, then there is a natural bijection $\Theta:\,$\emph{pro-}%
$\mathcal{H}\emph{omotopy}\rightarrow\mathcal{S}$\emph{hapes} defined by
taking inverse limits; Methods 1-3 in
\S \ref{Subsection: basic definitions of shape theory} determine $\Theta^{-1}%
$. With some additional work, one can define morphisms in \emph{pro-}%
$\mathcal{H}\emph{omotopy}$\emph{ }as certain equivalence classes of sequences
of maps, thereby promoting \emph{pro-}$\mathcal{H}\emph{omotopy}$\emph{ }to a
full-fledged category. From there, one can use $\Theta$ to (indirectly) define
morphisms in $\mathcal{S}$\emph{hapes}, thereby obtaining the shape category.
In that case, it can be shown that each continuous function $f:A\rightarrow B$
between compacta determines a unique shape morphism (a fact that uses some ANR
theory); but unfortunately, not every shape morphism from $A$ to $B$ can be
realized by a continuous map. This is not as surprising as it first appears:
as an example, the reader should attempt to construct a map from
$\mathbb{S}^{1}$ to the Warsaw circle that deserves to be called a shape equivalence.

\begin{remark}
\emph{In order to present a thorough development of the} pro-Homotopy
\emph{and} $\mathcal{S}$hapes \emph{categories, more care would be required in
dealing with base points. In fact, we would end up building a pair of slightly
different categories for each---one incorporating base points and the other
without base points. The differences between those categories does not show up
at the level of objects (for example, compacta are shape equivalent if and
only if they are \textquotedblleft pointed shape equivalent\textquotedblright%
), but the categories differ in their morphisms. In the context of these
notes, we need not be concerned with that distinction.}
\end{remark}

\subsection{The shape of the end of an inward tame
space\label{Subsection: Shape of the end of a space}}

The relationship between shape theory and the topology of the ends of
noncompact spaces goes beyond a similarity between the tools used in their
studies. In this section we develop a precise relationship between shapes of
compacta and ends of inward tame ANRs. In so doing, the fundamental nature of
inward tameness is brought into focus.

Let $Y$ be a inward tame ANR. By repeated application of the definition of
inward tameness, there exist sequences of neighborhoods of infinity $\left\{
N_{i}\right\}  _{i=0}^{\infty}$, finite complexes $\left\{  K_{i}\right\}
_{i=1}^{\infty}$, and maps $f_{i}:N_{i}\rightarrow K_{i}$ and $g_{i}%
:K_{i}\rightarrow N_{i-1}$ with $g_{i}f_{i}\simeq\operatorname*{incl}\left(
N_{i}\hookrightarrow N_{i-1}\right)  $ for all $i$. By letting $h_{i}%
=f_{i-1}g_{i}$, these can be assembled into a homotopy commuting ladder
diagram
\[
\begin{diagram} N_{0} & & \lInto & & N_{1} & & \lInto & & N_{2} & & \lInto & & N_{3} & \cdots\\ & \luTo^{g_{1}}  & & \ldTo^{f_{1}} & & \luTo^{g_{2}}   & & \ldTo^{f_{2}} & & \luTo^{g_{3}}   & & \ldTo^{f_{3}} &\\ & & K_{1} & & \lTo^{h_{2}} & & K_{2} & & \lTo^{h_{3}}& & K_{3} & & \lTo^{h_{4}} & & \cdots \end{diagram}
\]

The pro-homotopy equivalence class of $K_{1}\overset{h_{2}}{\longleftarrow
}K_{2}\overset{h_{3}}{\longleftarrow}K_{3}\overset{h_{4}}{\longleftarrow
}\cdots$ is fully determined by $Y$. That is easily verified by a diagram of
the form (\ref{diagram: pro-equivalence of neighborhood systems}), along with
the transitivity of the pro-homotopy equivalence relation. Define the
\emph{shape of the end of }$Y$, denoted $\mathcal{S}\emph{h}\left(
\varepsilon\left(  Y\right)  \right)  $, to be the shape class of
$\underleftarrow{\lim}\left\{  K_{i},h_{i}\right\}  $. A compactum
$A\in\mathcal{S}\emph{h}\left(  \varepsilon\left(  Y\right)  \right)  $ can be
viewed as a physical representative of the illusive \textquotedblleft end of
$Y$\textquotedblright.

The following is immediate.

\begin{theorem}
\label{Theorem: Sh(e(X)) determines end invariants}Let $Y$ be an inward tame
ANR and $A\in\mathcal{S}\emph{h\!}\left(  \varepsilon\left(  Y\right)
\right)  $. Then

\begin{enumerate}
\item $\operatorname*{pro}$-$H_{i}\left(  \varepsilon\left(  Y\right)
;R\right)  =\operatorname*{pro}$-$H_{i}\left(  A;R\right)  $ and $\check
{H}_{i}\left(  \varepsilon\left(  Y\right)  ;R\right)  \cong\check{H}%
_{i}\left(  A;R\right)  $ for all $i$ and any coefficient ring $R$, and

\item if $Y$ is 1-ended and semistable then $\operatorname*{pro}$-$\pi
_{i}\left(  \varepsilon\left(  Y\right)  \right)  =\operatorname*{pro}$%
-$\pi_{i}\left(  A\right)  $ and $\check{\pi}_{i}\left(  \varepsilon\left(
Y\right)  \right)  \allowbreak\cong\allowbreak\check{\pi}_{i}\left(  A\right)
$ for all $i$.\medskip
\end{enumerate}
\end{theorem}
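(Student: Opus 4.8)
The plan is to apply the relevant algebraic functor directly to the homotopy-commuting ladder diagram constructed just above the statement, which already exhibits the inverse sequence $\{K_i,h_i\}$ as pro-homotopy equivalent to the inverse sequence of neighborhoods of infinity $N_0\hookleftarrow N_1\hookleftarrow N_2\hookleftarrow\cdots$. Recall that $\operatorname{pro}$-$H_i(\varepsilon(Y);R)$ is by definition the pro-isomorphism class of $H_i(N_0;R)\leftarrow H_i(N_1;R)\leftarrow\cdots$. On the other side, since $A$ is shape equivalent to $\underleftarrow{\lim}\{K_i,h_i\}$, its shape pro-homology $\operatorname{pro}$-$H_i(A;R)$ is, by the definition of the algebraic shape invariants, represented by applying $H_i(-;R)$ to the associated sequence $\{K_i,h_i\}$, i.e. by $H_i(K_1;R)\xleftarrow{h_{2\ast}}H_i(K_2;R)\xleftarrow{h_{3\ast}}\cdots$. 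Thus in each part it suffices to compare these two inverse sequences via the ladder.

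For part (1), I would apply $H_i(-;R)$ to the ladder diagram. Because singular homology is a homotopy functor, each homotopy-commuting triangle of spaces becomes a strictly commuting triangle of $R$-modules (the upper triangles use $g_if_i\simeq\operatorname{incl}$, the lower ones commute on the nose since $h_i=f_{i-1}g_i$). The result is a commuting ladder diagram of exactly the type used to define pro-isomorphism in \S\ref{Subsection: defining pro-isomorphism}, so $\operatorname{pro}$-$H_i(\varepsilon(Y);R)$ and $\operatorname{pro}$-$H_i(A;R)$ are pro-isomorphic, which is the first assertion. Passing to inverse limits and invoking Exercise \ref{Exercise: Inverse limit well-defined on pro-groups} (pro-isomorphic sequences have canonically isomorphic inverse limits) then yields $\check H_i(\varepsilon(Y);R)\cong\check H_i(A;R)$. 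No base point enters anywhere, so this half is genuinely immediate.

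For part (2), the same strategy applies, but now base points and the base ray must be tracked. Using that $Y$ is $1$-ended and semistable, I would first fix a proper base ray $r$, set $p_i=r(i)$, and (reparametrizing as in Exercise \ref{Exercise: Fixing a proper ray}) arrange $r([i,\infty))\subseteq N_i$. I would then upgrade the ladder to a pointed homotopy-commuting diagram: replace each $K_i$ by $(K_i,q_i)$ with $q_i=f_i(p_i)$, take $f_i$ and $g_i$ base-point-preserving, and—crucially—arrange the homotopies $g_if_i\simeq\operatorname{incl}$ so that they move base points only along $r$. Applying $\pi_i(-,q_\bullet)$ and $\pi_i(-,p_\bullet)$ then produces a commuting ladder of groups, giving the pro-isomorphism $\operatorname{pro}$-$\pi_i(\varepsilon(Y))=\operatorname{pro}$-$\pi_i(A)$, and inverse limits again give $\check\pi_i(\varepsilon(Y))\cong\check\pi_i(A)$.

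The homology statement is essentially bookkeeping, and the only real work lies in part (2): promoting the inward-tameness data $(f_i,g_i,$ and the homotopies$)$—which a priori respect no base ray—to a pointed diagram whose homotopies slide base points along $r$. The hard part will be verifying that semistability lets these choices be made coherently for all $i$ at once. This is exactly where the hypothesis is used: for $1$-ended $Y$, semistability means $Y$ is strongly connected at infinity, so all proper rays are properly homotopic, and by (the higher-dimensional analogue of) Proposition \ref{Prop: proper rays} the resulting pointed $\operatorname{pro}$-$\pi_i$ is independent of the ray; this is what guarantees a single $r$ serves as base ray throughout while the pointed ladder commutes up to base-point-preserving homotopy. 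Once that is in place, functoriality of $\pi_i$ and the pro-isomorphism machinery finish the argument exactly as in part (1).
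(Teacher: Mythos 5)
Your argument is correct and is exactly what the paper has in mind: the paper offers no proof beyond declaring the theorem ``immediate'' from the homotopy-commuting ladder relating $\{N_i\}$ to $\{K_i,h_i\}$, and applying the homology or (pointed) homotopy functor to that ladder is precisely your strategy. You are in fact more careful than the source in isolating the only genuine issue---promoting the tameness data to a pointed diagram along a base ray in part (2)---and you correctly identify semistability as the hypothesis that makes those choices coherent and the result ray-independent.
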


The existence of diagrams like (\ref{Diagram: ladder diagran for a p.h.e.})
shows that $\mathcal{S}\emph{h}\left(  \varepsilon\left(  Y\right)  \right)  $
is also an invariant of the proper homotopy class of $Y$. There is also a
partial converse to that statement---an assertion about the proper homotopy
type of $Y$ based only on the shape of its end. Since the topology at the end
of a space does not determine the global homotopy type of that space, a new
definition is required.

Spaces $X$ and $Y$ are \emph{homeomorphic at infinity} if there exists a
homeomorphism $h:N\rightarrow M$, where $N\subseteq X$ and $M\subseteq Y$ are
neighborhoods of infinity. They are \emph{proper homotopy equivalent at
infinity} if there exist pairs of neighborhoods of infinity $N^{\prime
}\subseteq N$ in $X$ and $M^{\prime}\subseteq M$ in $Y$ and proper maps
$f:N\rightarrow Y$ and $g:M\rightarrow X$, with $\left.  g\circ f\right\vert
_{N^{\prime}}\overset{p}{\simeq}\operatorname*{incl}\left(  N^{\prime
}\hookrightarrow X\right)  $ and $\left.  f\circ g\right\vert _{M^{\prime}%
}\overset{p}{\simeq}\operatorname*{incl}\left(  M^{\prime}\hookrightarrow
Y\right)  $.

\begin{theorem}
\label{Theorem: shape of ends vs. p.h.e. at infinity}Let $X$ and $Y$ be inward
tame ANRs. Then $\mathcal{S}\emph{h}\left(  \varepsilon\left(  X\right)
\right)  =\mathcal{S}\emph{h}\left(  \varepsilon\left(  Y\right)  \right)  $
if and only if $X$ and $Y$ are proper homotopy equivalent at infinity.

\begin{proof}
The reverse implication follows from the previous paragraphs, while the
forward direction is nontrivial. A proof can be obtained by combining results
from \cite{CS} and \cite{EH}.
\end{proof}
\end{theorem}

In certain circumstances, the \textquotedblleft at infinity\textquotedblright%
\ phrase can be removed from the above. For example, we have.

\begin{corollary}
\label{Corollary: shape of ends vs. p.h.e}Let $X$ and $Y$ be contractible
inward tame ANRs. Then $\mathcal{S}\emph{h\!}\left(  \varepsilon\left(
X\right)  \right)  =\mathcal{S}\emph{h\!}\left(  \varepsilon\left(  Y\right)
\right)  $ if and only if $X$ and $Y$ are proper homotopy equivalent.
\end{corollary}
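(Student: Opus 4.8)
The reverse implication is immediate: a proper homotopy equivalence $X\to Y$ restricts to neighborhoods of infinity to give a proper homotopy equivalence at infinity, so Theorem~\ref{Theorem: shape of ends vs. p.h.e. at infinity} yields $\mathcal{S}\emph{h}(\varepsilon(X))=\mathcal{S}\emph{h}(\varepsilon(Y))$. For the forward implication, Theorem~\ref{Theorem: shape of ends vs. p.h.e. at infinity} already provides neighborhoods of infinity $N'\subseteq N$ in $X$ and $M'\subseteq M$ in $Y$ (which we may take closed, replacing them by closures if necessary) together with proper maps $f\colon N\to Y$ and $g\colon M\to X$ satisfying $g\circ f|_{N'}\overset{p}{\simeq}\operatorname{incl}(N'\hookrightarrow X)$ and $f\circ g|_{M'}\overset{p}{\simeq}\operatorname{incl}(M'\hookrightarrow Y)$. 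Thus the whole content of the Corollary is the upgrade from ``proper homotopy equivalent at infinity'' to ``proper homotopy equivalent,'' and this is exactly where contractibility enters.

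The plan is to extend $f$ and $g$ across the compact cores and then extend the homotopies. Since $Y$ is a contractible ANR it is an AR, hence an absolute extensor, so the map $f\colon N\to Y$ on the closed set $N\subseteq X$ extends to a map $F\colon X\to Y$; symmetrically $g$ extends to $G\colon Y\to X$ using that $X$ is an AR (see Proposition~\ref{Proposition: ANR facts}). Properness is automatic: $F$ agrees with the proper map $f$ off the compactum $\overline{X-N}$, so for compact $C\subseteq Y$ the set $F^{-1}(C)$ is the union of $f^{-1}(C)$ with a closed subset of $\overline{X-N}$, hence compact; the same bookkeeping applies to $G$. So $F$ and $G$ are proper.

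It remains to verify $G\circ F\overset{p}{\simeq}\operatorname{id}_{X}$ and $F\circ G\overset{p}{\simeq}\operatorname{id}_{Y}$. Since $F|_{N'}=f|_{N'}$, $G|_{M}=g$, and $f(N')\subseteq M$ (as the composition $g\circ f|_{N'}$ is assumed defined), we have $G\circ F|_{N'}=g\circ f|_{N'}$, which by hypothesis is properly homotopic to the inclusion via some $h\colon N'\times[0,1]\to X$. I would then fill $h$ across the core: on the closed subset $(\overline{X-N'}\times\{0,1\})\cup(\operatorname{Bd}_{X}N'\times[0,1])$ of $\overline{X-N'}\times[0,1]$, prescribe $G\circ F$ at level $0$, $\operatorname{id}_{X}$ at level $1$, and $h$ along $\operatorname{Bd}_{X}N'\times[0,1]$. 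These prescriptions agree at the corners, so by the absolute-extensor property of $X$ the map extends over $\overline{X-N'}\times[0,1]$; gluing this extension to $h$ along $\operatorname{Bd}_{X}N'\times[0,1]$ produces a homotopy $H\colon X\times[0,1]\to X$ from $G\circ F$ to $\operatorname{id}_{X}$ that equals $h$ near infinity. As before, $H$ is proper, since it differs from the proper map $h$ only over the compactum $\overline{X-N'}\times[0,1]$. The symmetric construction gives $F\circ G\overset{p}{\simeq}\operatorname{id}_{Y}$, so $F$ is the desired proper homotopy equivalence.

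The only genuine obstacle here is bookkeeping rather than depth: one must extend the maps and then the homotopies while leaving everything untouched near infinity, so that properness survives each gluing. Contractibility is precisely what powers both extension steps through the absolute-extensor property, while inward tameness enters only to license the initial appeal to Theorem~\ref{Theorem: shape of ends vs. p.h.e. at infinity}.
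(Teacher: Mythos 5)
Your argument is correct and is essentially what the paper intends: it leaves this Corollary as an exercise with the hint ``Use the Homotopy Extension Property,'' and your extension of $f$ and $g$ across the compact cores, followed by the two-level extension of the end homotopies via the absolute-extensor property of the contractible ANRs $X$ and $Y$ (with properness verified by noting that every modification is supported over a compactum), fills that exercise in faithfully. One cosmetic slip: to arrange that $N$ and $M$ are closed you should shrink to smaller closed neighborhoods of infinity (e.g.\ $\overline{X-K}$ for a suitably large compactum $K$) rather than ``replace them by closures,'' since $f$ need not be defined on $\overline{N}-N$; nothing else in the argument changes.
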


\begin{exercise}
Use the Homotopy Extension Property to obtain Corollary
\ref{Corollary: shape of ends vs. p.h.e} from Theorem
\ref{Theorem: shape of ends vs. p.h.e. at infinity}.
\end{exercise}

\begin{example}
\label{Example: tame X p.h.e. to a mapping telescope}If $K_{0}\overset{f_{1}%
}{\longleftarrow}K_{1}\overset{f_{2}}{\longleftarrow}K_{2}\overset{f_{3}%
}{\longleftarrow}\cdots$ is a sequence of finite complexes and
$A=\underleftarrow{\lim}\left\{  K_{i},f_{i}\right\}  $, it is easy to see
that $A$ represents $\mathcal{S}\emph{h}(\varepsilon\left(
\operatorname*{Tel}\left(  \left\{  K_{i},f_{i}\right\}  \right)  \right)  )$.
By Theorem \ref{Theorem: shape of ends vs. p.h.e. at infinity}, any inward
tame ANR $X$ with $\mathcal{S}\emph{h}(\varepsilon\left(  X\right)  )=A$ is
proper homotopy equivalent at infinity to $\operatorname*{Tel}\left(  \left\{
K_{i},f_{i}\right\}  \right)  $. When issues of global homotopy type are
resolved, even stronger conclusions are possible; for example, if $X$ is
contractible, $X\overset{p}{\simeq}\operatorname*{CTel}\left(  \left\{
K_{i},f_{i}\right\}  \right)  $. In some sense, the inverse mapping telescope
is an uncomplicated model for the end behavior of an inward tame ANR.
\end{example}

\begin{remark}
\emph{In this section, we have intentionally not required spaces to be
1-ended. So, for example,} $\mathcal{S}\emph{h}(\varepsilon\left(
%TCIMACRO{\U{211d} }%
%BeginExpansion
\mathbb{R}
%EndExpansion
\right)  )$ \emph{is representable by a 2-point space and the shape of the end
of a ternary tree is representable by a Cantor set. For more complex
multi-ended }$X$\emph{, individual components of }$A\in\mathcal{S}%
\emph{h}(\varepsilon\left(  X\right)  )$ \emph{may have nontrivial shapes, and
to each end of }$X$\emph{ there will be a component of }$A$\emph{ whose shape
reflects properties of that end.}
\end{remark}

\section{$\mathcal{Z}$-sets and $\mathcal{Z}$-com\-pact\-ific\-at\-ions}

While reading \S \ref{Subsection: Shape of the end of a space}, the following
question may have occurred to the reader: \emph{For inward tame }$X$\emph{
with }$\mathcal{S}\emph{h\!}\left(  \varepsilon\left(  X\right)  \right)
=A$,\emph{ is there a way to glue }$A$\emph{ to the end of }$X$\emph{ to
obtain a nice com\-pact\-ific\-at\-ion? }As stated, that question is a bit too
simple, but it provides reasonable motivation for the material in this section.

\subsection{Definitions and examples}

A closed subset $A$ of an ANR\ $X$ is a $\mathcal{Z}$\emph{-set} if any of the
following equivalent conditions is satisfied:

\begin{itemize}
\item For every $\varepsilon>0$ there is a map $f:X\rightarrow X-A$ that is
$\varepsilon$-close to the identity.

\item There exists a homotopy $H:X\times\left[  0,1\right]  \rightarrow X$
such that $H_{0}=\operatorname*{id}_{X}$ and $H_{t}\left(  X\right)  \subseteq
X-A$ for all $t>0$. (We say that $H$ \emph{instantly homotopes} $X$ \emph{off
of} $A$.)

\item For every open set $U$ in $X$, $U-A\hookrightarrow U$ is a homotopy equivalence.
\end{itemize}

\noindent The third condition explains some alternative terminology:
$\mathcal{Z}$-sets are sometimes called \emph{homotopy negligible }sets.

\begin{example}
The $\mathcal{Z}$-sets in a manifold $M^{n}$ are precisely the closed subsets
of $\partial M^{n}.$ In particular, $\partial M^{n}$ is a $\mathcal{Z}$-set in
$M^{n}$.
\end{example}

\begin{example}
\label{Example: Z-sets in Q}It is a standard fact that every compactum $A$ can
be embedded in the Hilbert cube $\mathcal{Q}$. It may be embedded as a
$\mathcal{Z}$-set as follows: embed $A$ in the \textquotedblleft
face\textquotedblright\ $\left\{  1\right\}  \times\prod_{i=2}^{\infty}\left[
-1,1\right]  \subseteq\prod_{i=1}^{\infty}\left[  -1,1\right]  =\mathcal{Q}$.
\end{example}

Example \ref{Example: Z-sets in Q} is the starting point for a remarkable
characterization of shape, sometimes used as an alternative definition. We
will not attempt to describe a proof.

\begin{theorem}
[Chapman's Complement Theorem, \cite{Ch}]Let $A$ and $B$ be compacta embedded
as $\mathcal{Z}$-sets in $\mathcal{Q}$. Then $Sh\left(  A\right)  =Sh\left(
B\right)  $ if and only if $\mathcal{Q-}A\approx\mathcal{Q}-B$.
\end{theorem}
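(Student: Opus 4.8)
The plan is to route everything through the \emph{shape of the end of the complement}, using the machinery of \S\ref{Subsection: Shape of the end of a space}. The central lemma is that for a $\mathcal{Z}$-set $A\subseteq\mathcal{Q}$ one has $\mathcal{S}h(\varepsilon(\mathcal{Q}-A))=\mathcal{S}h(A)$. To establish it, note first that $\mathcal{Q}-A$ is open in $\mathcal{Q}$, hence itself a Hilbert cube manifold. Choosing a cofinal nested sequence $N_0\supseteq N_1\supseteq\cdots$ of compact Hilbert cube manifold neighborhoods of $A$ with $\bigcap_i N_i=A$ (as in the Hilbert-cube variant of Method~1, Remark~(c) following the three Methods), the sets $V_i=N_i-A$ form a cofinal sequence of neighborhoods of infinity in $\mathcal{Q}-A$. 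The third (homotopy negligibility) characterization of $\mathcal{Z}$-sets, applied to the open set $U=\operatorname{int}N_i$, shows each $V_i\hookrightarrow N_i$ is a homotopy equivalence. Thus the inverse sequence of neighborhoods of infinity of $\mathcal{Q}-A$ is pro-homotopy equivalent to $\{N_i\}$, which is an associated inverse sequence for $A$; since each $N_i$ is a compact ANR, $\mathcal{Q}-A$ is inward tame, and its shape of end is exactly $\mathcal{S}h(\underleftarrow{\lim}\,N_i)=\mathcal{S}h(A)$, because the inverse limit of the inclusion sequence is $\bigcap_i N_i=A$.

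With the lemma in hand, the forward implication is immediate. A homeomorphism $\mathcal{Q}-A\approx\mathcal{Q}-B$ is in particular a proper homotopy equivalence, so the two Hilbert cube manifolds are proper homotopy equivalent at infinity; by Theorem~\ref{Theorem: shape of ends vs. p.h.e. at infinity} their ends have the same shape, and the lemma converts this to $\mathcal{S}h(A)=\mathcal{S}h(B)$.

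For the reverse implication I would run the same two tools backwards: $\mathcal{S}h(A)=\mathcal{S}h(B)$ gives $\mathcal{S}h(\varepsilon(\mathcal{Q}-A))=\mathcal{S}h(\varepsilon(\mathcal{Q}-B))$, whence by Theorem~\ref{Theorem: shape of ends vs. p.h.e. at infinity} the manifolds $\mathcal{Q}-A$ and $\mathcal{Q}-B$ are proper homotopy equivalent at infinity. Upgrading this to an honest homeomorphism of complements is where the genuine depth lies, and is the step I expect to be the main obstacle. The clean route is to realize the pro-homotopy equivalence of approximating neighborhoods as a \emph{global} proper homotopy equivalence $\mathcal{Q}-A\overset{p}{\simeq}\mathcal{Q}-B$: the maps supplied by the shape equivalence are pushed off $A$ and off $B$ using the instant-homotopy (second) characterization of $\mathcal{Z}$-sets, producing proper maps of the complements. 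One then invokes Chapman's classification theorem for Hilbert cube manifolds: two Hilbert cube manifolds are homeomorphic if and only if they are proper homotopy equivalent, yielding $\mathcal{Q}-A\approx\mathcal{Q}-B$.

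The hard part is precisely the last two moves. Promoting the purely end-theoretic data (proper homotopy equivalence \emph{at infinity}) to a proper homotopy equivalence defined on all of $\mathcal{Q}-A$ requires controlling the compact core and not merely the end, and realizing abstract shape morphisms by actual proper maps of complements is delicate. Moreover the classification theorem itself rests on the triangulation theorem ($X\approx P\times\mathcal{Q}$) and the $\alpha$-approximation theorem—infinite-dimensional results that lie well beyond the elementary tools developed here. This is why the result is best treated as a black box at this level: the shape-of-end reformulation reduces it transparently to a statement about Hilbert cube manifolds, but that statement is the genuinely deep ingredient.
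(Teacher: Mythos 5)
First, note that the paper offers no proof of this theorem at all---it explicitly says ``We will not attempt to describe a proof''---so there is nothing internal to compare against. Your reduction is nonetheless well organized and, for the most part, anticipates machinery the paper develops later: your central lemma is exactly Theorem \ref{Theorem: Z-boundaries have shape of the end} applied to the $\mathcal{Z}$-compactification $\mathcal{Q}=(\mathcal{Q}-A)\sqcup A$, and the forward implication is precisely Corollary \ref{Corollary: Uniquenss for Z-boundaries}. Your middle step in the converse is also fine, and in fact easier than you suggest: since $A$ is a $\mathcal{Z}$-set, $\mathcal{Q}-A\hookrightarrow\mathcal{Q}$ is a homotopy equivalence, so $\mathcal{Q}-A$ and $\mathcal{Q}-B$ are \emph{contractible} inward tame ANRs, and Corollary \ref{Corollary: shape of ends vs. p.h.e} already upgrades ``same shape of end'' to a global proper homotopy equivalence with no extra work on the compact core.

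The genuine gap is your final move. The classification theorem you invoke---``two Hilbert cube manifolds are homeomorphic if and only if they are proper homotopy equivalent''---is false. The correct classification is by \emph{infinite simple} homotopy type: this is the content of Theorem \ref{Theorem: Chapman's homeomorphism theorem} together with \cite{Si2}, and it cannot be weakened, since Chapman's topological invariance of Whitehead torsion shows that homotopy equivalent compact polyhedra $K$ and $L$ with $\tau\neq0$ yield compact $\mathcal{Q}$-manifolds $K\times\mathcal{Q}$ and $L\times\mathcal{Q}$ that are (properly) homotopy equivalent but not homeomorphic. So after producing a proper homotopy equivalence $\mathcal{Q}-A\overset{p}{\simeq}\mathcal{Q}-B$, you must still show that it is an infinite simple homotopy equivalence, i.e., that the relevant torsion obstruction (living in a Whitehead-group $\underleftarrow{\lim}^{1}$-type invariant as in \cite{Si2}, \cite{CS}) vanishes. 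That vanishing is not automatic from anything you have written down, and arranging it---or bypassing it by a direct geometric argument such as $\mathcal{Z}$-set unknotting and engulfing, which is closer to Chapman's actual proof---is where the real depth of the complement theorem sits. As written, your converse stops one essential step short.
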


A $\mathcal{Z}$\emph{-com\-pact\-ific\-at\-ion} of a space $Y$ is a
com\-pact\-ific\-at\-ion $\overline{Y}=Y\sqcup Z$ with the property that $Z$
is a $\mathcal{Z}$-set in $\overline{Y}$. In this case, $Z$ is called a
$\mathcal{Z}$\emph{-boundary }for $Y$. Implicit in this definition is the
requirement that $\overline{Y}$ be an ANR; and since an open subset of an ANR
is an ANR, $Y$ must be an ANR to be a candidate for $\mathcal{Z}%
$-com\-pact\-ific\-at\-ion. By a result from the ANR theory, any
com\-pact\-ific\-at\-ion $\overline{Y}$ of an ANR $Y$, for which $\overline
{Y}-Y$ satisfies any of the above bullet points, is necessarily an
ANR---hence, it is a $\mathcal{Z}$-com\-pact\-ific\-at\-ion. The point here is
that, when attempting to form a $\mathcal{Z}$-com\-pact\-ific\-at\-ion, one
must begin with an ANR $Y$. Then it is enough to find a compactification
satisfying one of the above equivalent conditions.

A nice property of a $\mathcal{Z}$-com\-pact\-ific\-at\-ion is that the
homotopy type of a space is left unchanged by the compactification; for
example, a $\mathcal{Z}$-com\-pact\-ific\-at\-ion of a contractible space is
contractible. The prototypical example is the com\-pact\-ific\-at\-ion of $%
%TCIMACRO{\U{211d} }%
%BeginExpansion
\mathbb{R}
%EndExpansion
^{n}$ to an $n$-ball by addition of the $\left(  n-1\right)  $-sphere at
infinity; the prototypical non-example is the 1-point com\-pact\-ific\-at\-ion
of $%
%TCIMACRO{\U{211d} }%
%BeginExpansion
\mathbb{R}
%EndExpansion
^{n}$. Finer relationships between $Y$, $\overline{Y}$, and $Z$ can be
understood via shape theory and the study of ends. Before moving in that
direction, we add to our collection of examples.

\begin{example}
\label{Example: Prototypical manifold Z-compactification}In manifold topology,
the most fundamental $\mathcal{Z}$-com\-pac\-tifi\-ca\-tion is the addition of
a manifold boundary to an open manifold, as discussed in
\S \ref{Subsection: Siebenmann's thesis}.
\end{example}

Not all $\mathcal{Z}$-com\-pact\-ific\-at\-ions of open manifolds are as
simple as the above.

\begin{example}
\label{Example: Decomposition Z-compactification}Let $C^{n}$ be a Newman
contractible $n$-manifold embedded in $\mathbb{S}^{n}$ (as it is by
construction). A non-standard $\mathcal{Z}$-com\-pact\-ific\-at\-ion of
$\operatorname*{int}\mathbb{B}^{n+1}$ can be obtained by crushing $C^{n}$ to a
point. In this case, the quotient $\mathbb{S}^{n}/C^{n}$ is a $\mathcal{Z}%
$-set in $\mathbb{B}^{n+1}/C^{n}$. Note that $\mathbb{S}^{n}/C^{n}$ is not a manifold!

For those who prefer lower-dimensional examples, a similar $\mathcal{Z}%
$-com\-pact\-ific\-at\-ion of $\operatorname*{int}\mathbb{B}^{4}$ can be
obtained by crushing out a wild arc or a Whitehead continuum in $\mathbb{S}%
^{3}$. In terms of dimension, that is as low as it gets. As a result of
Corollary \ref{Corollary: homology properties Z-compactified manifolds} (still
to come), for $n\leq2$, a $\mathcal{Z}$-boundary of $\mathbb{B}^{n+1}$ is
necessarily homeomorphic to $\mathbb{S}^{n}$.
\end{example}

\begin{example}
\label{Example: suspending a Newman manifold}Let $\Sigma C^{n}$ be the
suspension of a Newman compact contractible $n$-manifold. The suspension of
$\partial C^{n}$ is a $\mathcal{Z}$-set in $\Sigma C^{n}$, and its complement,
$\operatorname*{int}C^{n}\times\left(  -1,1\right)  $, is homeomorphic to $%
%TCIMACRO{\U{211d} }%
%BeginExpansion
\mathbb{R}
%EndExpansion
^{n+1}$ by Exercise \ref{Exercise: contractible cross R}. So this is another
nonstandard $\mathcal{Z}$-com\-pact\-ific\-at\-ion of $%
%TCIMACRO{\U{211d} }%
%BeginExpansion
\mathbb{R}
%EndExpansion
^{n+1}$.
\end{example}

\begin{exercise}
Verify the assertions made in Examples
\ref{Example: Decomposition Z-compactification} and
\ref{Example: suspending a Newman manifold}.
\end{exercise}

Often a manifold that cannot be compactified by addition of a manifold
boundary is, nevertheless, $\mathcal{Z}$-compactifiable---a fact that is key
to the usefulness of $\mathcal{Z}$-com\-pact\-ific\-at\-ions. Davis manifolds
are the ideal examples.

\begin{example}
\label{Example: Z-compactifying the Davis manifold}The 1-point
com\-pact\-ific\-at\-ion of the infinite boundary connected sum $C_{0}%
^{n}\overset{\partial}{\#}\left(  -C_{1}^{n}\right)  \overset{\partial
}{\#}\left(  C_{2}^{n}\right)  \overset{\partial}{\#}\left(  -C_{3}%
^{n}\right)  \overset{\partial}{\#}\cdots$ shown at the top of Figure
\ref{Figure: Double bubbles} is a $\mathcal{Z}$-com\-pact\-ific\-at\-ion. More
significantly the point at infinity together with the original manifold
boundary form a $\mathcal{Z}$-boundary for the corresponding Davis manifold
$\mathcal{D}^{n}$. It is interesting to note that $\mathcal{D}^{n}$ cannot
admit a $\mathcal{Z}$-com\-pact\-ific\-at\-ion with $\mathcal{Z}$-boundary a
manifold (or even an ANR) since $\operatorname*{pro}$-$\pi_{1}\left(
\varepsilon\left(  \mathcal{D}^{n}\right)  \right)  $ is not stable. This will
be explained soon.
\end{example}

\begin{example}
\label{Example: Adding the CAT(0) boundary}In geometric group theory, the
prototypical $\mathcal{Z}$-com\-pact\-ific\-at\-ion is the addition of the
visual boundary $\partial_{\infty}X$ to a proper CAT(0) space $X$. Indeed, if
$\partial_{\infty}X$ is viewed as the set of end points of all infinite
geodesic rays emanating from a fixed $p_{0}\in X$, a homotopy pushing inward
along those rays verifies the $\mathcal{Z}$-set property.
\end{example}

\begin{example}
\label{Example: Adding CAT(0) boundaries to Davis and DJ manifolds}In
\cite{ADG}, an equivariant CAT(0) metric is placed on many of the original
Davis manifolds. In \cite{DJ} an entirely different construction produces
locally CAT(0) closed aspherical manifolds, whose CAT(0) universal covers are
similar to Davis' earlier examples. These objects with their $\mathcal{Z}%
$-com\-pact\-ific\-at\-ions and $\mathcal{Z}$-boundaries provide interesting
common ground for manifold topology and geometric group theory.

At the expense of losing the isometric group actions, \cite{FG} places
CAT($-1$) metrics on the Davis manifolds in such a way that the visual sphere
at infinity is homogeneous and nowhere locally contractible. Their method can
also be used to place CAT($-1$) metrics on the asymmetric Davis manifolds from
Example \ref{An asymmetric variation on the Davis manifolds}.
\end{example}

\begin{example}
\label{Example: Z-compactifying mapping telescopes}If $K_{0}\overset{f_{1}%
}{\longleftarrow}K_{1}\overset{f_{2}}{\longleftarrow}K_{2}\overset{f_{3}%
}{\longleftarrow}\cdots$ is an inverse sequence of finite polyhedra (or finite
CW complexes or compact ANRs), then the inverse mapping telescope
$\operatorname*{Tel}\left(  \left\{  K_{i},f_{i}\right\}  \right)  $ can be
$\mathcal{Z}$-compactified by adding a copy of $\underleftarrow{\lim}\left\{
K_{i},f_{i}\right\}  $, a space that contains one point for each of the
infinite telescope rays described in
\S \ref{Subsection: Inverse mapping telescopes}.\ (Note the similarity of this
to Example \ref{Example: Adding the CAT(0) boundary}.)
\end{example}

In \S \ref{Section: Z-boundaries in geometric group theory} and
\S \ref{Section: Z-boundaries in manifolds topology}, we will look at
applications of $\mathcal{Z}$-com\-pact\-ific\-at\-ion to geometric group
theory and manifold topology. Before that, we address a pair of purely
topological questions:\smallskip

\begin{itemize}
\item \emph{When is a space }$\mathcal{Z}$\emph{-compactifiable?}

\item \emph{To what extent is the }$\mathcal{Z}$\emph{-boundary of a given
space unique?} (Examples \ref{Example: Decomposition Z-compactification} and
\ref{Example: suspending a Newman manifold} show that a space can admit
nonhomeomorphic $\mathcal{Z}$-boundaries.)
\end{itemize}

\subsection{Existence and uniqueness for $\mathcal{Z}$%
-com\-pact\-ific\-at\-ions and $\mathcal{Z}$%
-boundaries\label{Section: Existence and uniqueness of Z-compactifications}}

If $Y$ admits a $\mathcal{Z}$-com\-pact\-ific\-at\-ion $\overline{Y}=Y\sqcup
Z$, then as noted above, $Y$ must be an ANR; and since $Y\hookrightarrow
\overline{Y}$ is a homotopy equivalence and $\overline{Y}$ is a compact ANR,
Proposition \ref{Proposition: ANR facts} implies that $Y$ has finite homotopy
type. Prying a bit deeper, a homotopy $H:\overline{Y}\times\left[  0,1\right]
\rightarrow\overline{Y}$ that instantly homotopes $\overline{Y}$ off $Z$ can
be \textquotedblleft truncated\textquotedblright\ (with the help of the
Homotopy Extension Property) to homotope arbitrarily small closed
neighborhoods of infinity (in $Y$) into compact subsets. Hence, $Y$ is
necessarily inward tame.

By combining the results noted in Examples
\ref{Example: tame X p.h.e. to a mapping telescope} and
\ref{Example: Z-compactifying mapping telescopes}, every inward tame ANR is
proper homotopy equivalent to one that is $\mathcal{Z}$-compactifiable.
Unfortunately, $\mathcal{Z}$-compactifiability is not an invariant of proper
homotopy type. The following result begins to make that clear.

\begin{proposition}
\label{Proposition: Z-compactifialble implies absolute inward tame}Every
$\mathcal{Z}$-compactifiable space that is sharp at infinity is absolutely
inward tame.
\end{proposition}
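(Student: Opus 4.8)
The plan is to show that every sufficiently small sharp neighborhood of infinity already has finite homotopy type; since sharpness at infinity supplies arbitrarily small such neighborhoods, this yields absolute inward tameness directly. Fix a $\mathcal{Z}$-compactification $\overline{Y}=Y\sqcup Z$, and let $N$ be any sharp (closed ANR) neighborhood of infinity in $Y$. Because $\overline{Y-N}$ is compact in $Y$, it is closed in $\overline{Y}$ and disjoint from $Z$, so every point of $Z$ is a limit of points of $N$; hence $\operatorname*{cl}_{\overline{Y}}N=N\cup Z$, a compact subset of $\overline{Y}$ which I denote $\overline{N}$. Note that $N=\overline{N}\cap Y$ is open in $\overline{N}$ while $Z=\overline{N}-N$ is closed. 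The proposal is to prove that $\overline{N}$ is a compact ANR and that $Z$ is a $\mathcal{Z}$-set in $\overline{N}$. Granting both, the homotopy-negligibility characterization (the third bullet) applied with $U=\overline{N}$ makes $N\hookrightarrow\overline{N}$ a homotopy equivalence, while West's theorem (Proposition \ref{Proposition: ANR facts}) gives $\overline{N}$ finite homotopy type; hence $N$ has finite homotopy type, as required, and letting $N$ shrink completes the argument.

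To see that $\overline{N}$ is an ANR I would verify that it is locally an ANR and invoke the local characterization of ANRs (Proposition \ref{Proposition: ANR facts}). The remark above shows each $z\in Z$ has a neighborhood $V$ in $\overline{Y}$ missing the compactum $\overline{Y-N}$, so $V\subseteq\overline{N}$; thus near $Z$, and likewise near $\operatorname*{int}_{Y}N$, the space $\overline{N}$ is an open subset of the ANR $\overline{Y}$ and is locally an ANR. The only delicate points are those of $\operatorname*{Bd}_{Y}N$; there $\overline{N}$ coincides locally with $N$, which is an ANR precisely because $N$ was chosen sharp. This is the one and only place the sharpness hypothesis enters, and it is exactly what keeps the set-theoretic boundary from spoiling local niceness.

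The technical heart, and the step I expect to be the main obstacle, is showing that $Z$ remains a $\mathcal{Z}$-set in $\overline{N}$ --- equivalently, producing a homotopy that instantly pushes $\overline{N}$ off $Z$ while staying inside $\overline{N}$. The ambient $\mathcal{Z}$-set structure supplies an instant homotopy $H:\overline{Y}\times[0,1]\rightarrow\overline{Y}$ with $H_{0}=\operatorname*{id}$ and $H_{t}(\overline{Y})\cap Z=\varnothing$ for $t>0$, but $H$ may carry points of $\overline{N}$ out of $\overline{N}$. To repair this I would choose an open neighborhood $V$ of $Z$ with $V\subseteq\operatorname*{int}_{\overline{Y}}\overline{N}$ (available since $\overline{Y-N}$ is a compactum disjoint from $Z$) together with a Urysohn function $\lambda:\overline{N}\rightarrow[0,1]$ equal to $1$ on a neighborhood of $Z$ and $0$ off $V$, and then set $H'_{t}(x)=H_{\lambda(x)t}(x)$ after rescaling time to a short interval $[0,t_{0}]$. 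On the stationary region ($\lambda=0$) the points already lie in $N$, hence off $Z$; on the support of $\lambda$, compactness and $H_{0}=\operatorname*{id}$ force $H_{\lambda(x)t}(x)$ to remain in $V\subseteq\overline{N}$ once $t_{0}$ is small, while $\lambda(x)t>0$ keeps it off $Z$. Checking continuity and the endpoint conditions for $H'$ is then routine; the crux is precisely this cutoff, which reconciles ``push off $Z$'' (inherited from $\overline{Y}$) with ``stay in $\overline{N}$'' (needed for the localized conclusion). With $H'$ in hand, $Z$ is a $\mathcal{Z}$-set in $\overline{N}$ and the proof concludes as outlined.
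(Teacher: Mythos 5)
Your proposal is correct and follows essentially the same route as the paper: the paper's proof simply asserts that $\overline{N}=N\sqcup Z$ is a $\mathcal{Z}$-compactification of $N$ (hence a compact ANR of finite homotopy type with $N\hookrightarrow\overline{N}$ a homotopy equivalence), and your two technical steps---the local ANR check using sharpness at $\operatorname*{Bd}_{Y}N$ and the cutoff of the instant homotopy to keep it inside $\overline{N}$---are exactly the details needed to justify that assertion. Nothing is missing.
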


\begin{proof}
If $\overline{Y}=Y\sqcup Z$ is a $\mathcal{Z}$-com\-pact\-ific\-at\-ion and
$N$ is a closed ANR neighborhood of infinity in $Y$. Then $\overline{N}\equiv
N\sqcup Z$ is a $\ \mathcal{Z}$-com\-pact\-ific\-at\-ion of $N$, hence a
compact ANR, and therefore homotopy equivalent to a finite complex $K$. Since
$N\hookrightarrow\overline{N}$ is a homotopy equivalence, $N\simeq K$.
\end{proof}

\begin{remark}
\label{Remark: standard trick}\emph{By employing the standard trick of
considering} $Y\times\mathcal{Q}$\emph{ (to ensure sharpness at infinity),}
\emph{Proposition
\ref{Proposition: Z-compactifialble implies absolute inward tame} provides an
alternative proof that a }$\mathcal{Z}$\emph{-compactifiable ANR must be
inward tame. This also uses the straightforward observation that, if
}$\overline{Y}=Y\sqcup Z$ \emph{is a} $\mathcal{Z}$%
\emph{-com\-pact\-ific\-at\-ion of} $Y$\emph{, then} $\overline{Y}%
\times\mathcal{Q}=(Y\times\mathcal{Q)}\sqcup(Z\times\mathcal{Q)}$ \emph{is a
}$Z$\emph{-com\-pact\-ific\-at\-ion of} $Y\times\mathcal{Q}$\emph{. That
observation will be used numerous times, as we proceed.}
\end{remark}

\begin{theorem}
\label{Theorem: Z-boundaries have shape of the end}Suppose $Y$ admits a
$\mathcal{Z}$-com\-pact\-ific\-at\-ion $\overline{Y}=Y\sqcup Z$. Then
$Z\in\mathcal{S}\emph{h\!}\left(  \varepsilon\left(  Y\right)  \right)  $.

\begin{proof}
Arguing as in Remark \ref{Remark: standard trick}, we may assume without loss
of generality that $Y$ is sharp at infinity. Choose a cofinal sequence
$\left\{  N_{i}\right\}  $ of closed ANR neighborhoods of infinity in $Y$, and
for each $i$ let $\overline{N}_{i}$ be the compact ANR $N_{i}\sqcup Z$. The
homotopy commutative diagram%
\[
\begin{diagram} N_{0 } & & \lInto & & N_{1} & & \lInto & & N_{2} & & \lInto & & \cdots \\ & \luTo & & \ldInto & & \luTo & & \ldInto & & \luTo \\ & & \overline{N}_{0} & & \lInto & & \overline{N}_{1} & & \lInto & & \overline{N}_{2} & & \cdots \end{diagram}
\]
where each up arrow is a homotopy inverse of the corresponding $N_{i}%
\hookrightarrow\overline{N}_{i}$, shows that the lower sequence defines
$\mathcal{S}\emph{h\!}\left(  \varepsilon\left(  Y\right)  \right)  $. But,
since the inverse limit of that sequence is $Z$ (since $\cap\overline{N}%
_{i}=Z$), the sequence also defines the shape of $Z.$
\end{proof}

\begin{corollary}
[Uniqueness of $\mathcal{Z}$-boundaries up to shape]%
\label{Corollary: Uniquenss for Z-boundaries}All $\mathcal{Z}$-boundaries of a
given space $Y$ are shape equivalent. Even more, if $Y$ and $Y^{\prime}$ are
$\mathcal{Z}$-compactifiable and proper homotopy equivalent at infinity, then
each $\mathcal{Z}$-boundary of $Y$ is shape equivalent to each $\mathcal{Z}%
$-boundary of $Y^{\prime}$.
\end{corollary}
\end{theorem}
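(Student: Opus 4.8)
The plan is to derive both assertions directly from the two preceding theorems, so that the real content is simply identifying which shape class each $\mathcal{Z}$-boundary lands in. First I would record the standing hypothesis that makes the shape-of-the-end machinery applicable: a $\mathcal{Z}$-compactifiable space is automatically inward tame. This is exactly the observation in Remark~\ref{Remark: standard trick}, obtained by passing to $Y\times\mathcal{Q}$ (to force sharpness at infinity) and invoking Proposition~\ref{Proposition: Z-compactifialble implies absolute inward tame}. Consequently $\mathcal{S}h(\varepsilon(Y))$ is a well-defined single shape equivalence class in the sense of \S\ref{Subsection: Shape of the end of a space}, and likewise for $Y'$.

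For the first assertion, I would let $Z$ and $Z'$ be any two $\mathcal{Z}$-boundaries of the same space $Y$. Theorem~\ref{Theorem: Z-boundaries have shape of the end} asserts that each $\mathcal{Z}$-boundary of $Y$ belongs to $\mathcal{S}h(\varepsilon(Y))$; hence both $Z$ and $Z'$ lie in this one shape class, so $Z$ and $Z'$ are shape equivalent. No further computation is needed.

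For the stronger statement, I would suppose $Y$ and $Y'$ are $\mathcal{Z}$-compactifiable and proper homotopy equivalent at infinity. By the previous paragraph both are inward tame ANRs, so Theorem~\ref{Theorem: shape of ends vs. p.h.e. at infinity} yields the equality of shape classes $\mathcal{S}h(\varepsilon(Y))=\mathcal{S}h(\varepsilon(Y'))$. Applying Theorem~\ref{Theorem: Z-boundaries have shape of the end} to each space separately, any $\mathcal{Z}$-boundary $Z$ of $Y$ lies in $\mathcal{S}h(\varepsilon(Y))$ and any $\mathcal{Z}$-boundary $Z'$ of $Y'$ lies in $\mathcal{S}h(\varepsilon(Y'))$; since these two classes coincide, $Z$ and $Z'$ are shape equivalent.

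I expect no genuine obstacle here: the two cited theorems do all the heavy lifting, and the argument is formal once the shape class is known to be well-defined. The only point that warrants a sentence of care is precisely that well-definedness, i.e.\ verifying inward tameness so that $\mathcal{S}h(\varepsilon(\,\cdot\,))$ makes sense. It is also worth remarking that the first assertion is merely the special case $Y=Y'$ of the second, since every space is trivially proper homotopy equivalent at infinity to itself; I would state it separately only for emphasis and readability.
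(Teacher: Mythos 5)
Your argument is correct and is essentially identical to the paper's own proof of the corollary, which reads ``Combine the above theorem with Theorem \ref{Theorem: shape of ends vs. p.h.e. at infinity}'': you take each $\mathcal{Z}$-boundary to lie in the shape class of the end via Theorem \ref{Theorem: Z-boundaries have shape of the end}, and identify the two shape classes via the proper-homotopy-equivalence-at-infinity criterion. Your additional remarks---that $\mathcal{Z}$-compactifiability forces inward tameness (via Remark \ref{Remark: standard trick}) so that $\mathcal{S}h(\varepsilon(Y))$ is a well-defined single shape class, and that the first assertion is just the case $Y=Y'$ of the second---are accurate and merely make explicit what the paper leaves implicit.
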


\begin{proof}
Combine the above theorem with Theorem
\ref{Theorem: shape of ends vs. p.h.e. at infinity}.
\end{proof}

\begin{example}
We can now verify the comment at the end of Example
\ref{Example: Z-compactifying the Davis manifold}. For any $\mathcal{Z}%
$-boundary $Z$ of a Davis manifold $\mathcal{D}^{n}$, $\operatorname*{pro}%
$-$\pi_{1}\left(  Z\right)  $ must match the nonstable $\operatorname*{pro}%
$-$\pi_{1}\left(  \varepsilon\left(  \mathcal{D}^{n}\right)  \right)  $
established in \S \ref{Subsection: Examples of fundamental groups at infinity}%
. So, by Proposition
\ref{Prop: For a compact ANR shape invariants are homotopy invariants}, $Z$
cannot be an ANR.
\end{example}

Next we examine the existence question for $\mathcal{Z}$%
-com\-pact\-ific\-at\-ions. By the above results we know that, for reasonably
nice $X$, absolute inward tameness is necessary; moreover, prospective
$\mathcal{Z}$-boundaries must come from $\mathcal{S}\emph{h\!}\left(
\varepsilon\left(  X\right)  \right)  $. It turns out that this is not enough.
The outstanding result on this topic is due to Chapman and Siebenmann
\cite{CS}. It provides a complete characterization of $\mathcal{Z}%
$-compactifiable Hilbert cube manifolds and a model for more general
characterization theorems.

Chapman and Siebenmann modeled their theorem on Siebenmann's Collaring Theorem
for finite-dimensional manifolds---but there are significant differences.
First, there is no requirement of a stable fundamental group at infinity;
therefore, a more flexible formulation of $\sigma_{\infty}\left(  X\right)  $,
like that developed in Theorem
\ref{Theorem: characterization of pseudocollarable}, is required. Second,
unlike finite-dimensional manifolds, inward tame Hilbert cube manifolds can be
infinite-ended. In fact, $\mathcal{Z}$-com\-pact\-ifi\-able Hilbert cube
manifolds can be infinite-ended ($\mathbb{T}_{3}\times\mathcal{Q}$ is a simple
example); therefore, we do not want to be restricted to the 1-ended case. This
generality requires an even more flexible approach to the definition of
$\sigma_{\infty}\left(  X\right)  $. For the sake of simplicity, we delay that
explanation until the final stage of the coming proof. We recommend that
during the first reading, a tacit assumption of 1-endedness be included.

The third difference is the appearance of a new obstruction lying in the first
derived limit of an inverse sequence of Whitehead groups. The topological
meaning of this obstruction is explained within the sketched proof. For
completeness, we include the algebraic formulation: For an inverse sequence
$\left\{  G_{i},\lambda_{i}\right\}  $ of abelian groups, the \emph{derived
limit}\footnote{The definition of derived limit can be generalized to include
nonableian groups (see \cite[\S 11.3]{Ge2}), but that is not needed here.} is
the quotient group:%
\[
\underleftarrow{\lim}^{1}\left\{  G_{i},\lambda_{i}\right\}  =\left(
\prod\limits_{i=0}^{\infty}G_{i}\right)  /\left\{  \left.  \left(
g_{0}-\lambda_{1}g_{1},g_{1}-\lambda_{2}g_{2},g_{2}-\lambda_{3}g_{3}%
,\cdots\right)  \right\vert \ g_{i}\in G_{i}\right\}  .
\]
\emph{\smallskip}

\begin{theorem}
[The Chapman-Siebenmann $\mathcal{Z}$-compactification Theorem]%
\label{Theorem: C-S Z-compactification Theorem}A Hilbert cube manifold $X$
admits a $\mathcal{Z}$-com\-pact\-ific\-at\-ion if and only if each of the
following is satisfied.

\begin{enumerate}
\item $X$ is inward tame,

\item $\sigma_{\infty}\left(  X\right)  \in\underleftarrow{\lim}\left\{
\widetilde{K}_{0}(%
%TCIMACRO{\U{2124} }%
%BeginExpansion
\mathbb{Z}
%EndExpansion
\left[  \pi_{1}\left(  N\right)  \right]  )\mid N\text{ a clean nbd. of
infinity}\right\}  $ is trivial, and

\item $\tau_{\infty}\left(  X\right)  \in\underleftarrow{\lim}^{1}\left\{
Wh(\pi_{1}\left(  N\right)  )\mid N\text{ a clean nbd. of infinity}\right\}  $
is trivial.\medskip
\end{enumerate}
\end{theorem}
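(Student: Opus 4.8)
The plan is to follow the architecture of Siebenmann's Collaring Theorem (Theorem~\ref{Theorem: Siebenmann's thesis}), but with finite-dimensional handle theory replaced by Hilbert cube manifold technology: Edwards' theorem (Theorem~\ref{Theorem: Edwards HCM Theorem}), triangulability (Theorem~\ref{Theorem: Triangulability of HCMs}), and---crucially---Chapman's topological invariance of Whitehead torsion together with the $Q$-manifold $s$-cobordism theorem. A preliminary step is to set up $\sigma_{\infty}(X)$ and $\tau_{\infty}(X)$ over the directed system of clean neighborhoods of infinity, exactly as in Theorem~\ref{Theorem: characterization of pseudocollarable}, so that the multi-ended and non-stable cases are handled uniformly; during a first pass one may tacitly assume $X$ is $1$-ended. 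Throughout I work with a cofinal nested sequence $\{N_i\}$ of clean neighborhoods of infinity and the compact pieces $A_i=\overline{N_i-N_{i+1}}$, each a $Q$-manifold.

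Necessity is the short half. Condition (1) is Proposition~\ref{Proposition: Z-compactifialble implies absolute inward tame} (after the $\times\mathcal{Q}$ reduction), which in fact yields absolute inward tameness; hence every clean neighborhood of infinity has finite homotopy type, its Wall obstruction vanishes, and so $\sigma_{\infty}(X)$ is trivial in each coordinate, giving (2). For (3), starting from a $\mathcal{Z}$-compactification $\overline{X}=X\sqcup Z$, I would truncate a homotopy instantly pushing $\overline{X}$ off $Z$ (using the Homotopy Extension Property of Proposition~\ref{Proposition: ANR facts}) to produce neighborhoods whose closures $\overline{N}_i$ in $\overline{X}$ are compact $Q$-manifolds nesting down to the single compactum $Z$; the Whitehead torsions attached to the pieces then form a coherent sequence which, because the $\overline{N}_i$ actually telescope onto $Z$, is visibly a coboundary, so $\tau_{\infty}(X)=0$.

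Sufficiency is where the real content lies, and I would organize it in three stages. First, inward tameness together with vanishing $\sigma_{\infty}$---via the Sum Theorem for the Wall obstruction as in Theorem~\ref{Theorem: Siebenmann's thesis}---lets me arrange that every $N_i$ has finite homotopy type; choosing finite complexes $K_i\simeq N_i$ and assembling the associated inverse sequence $\{K_i,h_i\}$, the conclusion of Theorem~\ref{Theorem: shape of ends vs. p.h.e. at infinity} (compare Example~\ref{Example: tame X p.h.e. to a mapping telescope}) identifies $X$, at infinity, with the mapping telescope $T=\operatorname*{Tel}(\{K_i,h_i\})$ up to proper homotopy equivalence. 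Since $T$ is visibly $\mathcal{Z}$-compactifiable by adjoining $\underleftarrow{\lim}\{K_i,h_i\}$ (Example~\ref{Example: Z-compactifying mapping telescopes}), it suffices to upgrade this proper homotopy equivalence at infinity to a homeomorphism at infinity and then pull the $\mathcal{Z}$-compactification back across it, extending trivially over the compact core.

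The heart of the matter---and the step I expect to be the main obstacle---is this upgrade, which is exactly where Chapman's $Q$-manifold machinery and the $\underleftarrow{\lim}^1$ term enter. A proper homotopy equivalence between $Q$-manifolds is properly homotopic to a homeomorphism precisely when an end Whitehead-torsion obstruction vanishes; in the non-stable setting this obstruction splits into a $\underleftarrow{\lim}$ piece (neutralized by Stage~1 and the $\sigma_{\infty}$ hypothesis) and a $\underleftarrow{\lim}^1$ piece, which is $\tau_{\infty}(X)$. Concretely, one attempts the same infinite swindle used in Theorem~\ref{Theorem: Siebenmann's thesis}, trivializing the torsion of each one-sided $s$-cobordism $A_i$ by borrowing an inverse cobordism from $A_{i+1}$; the obstruction to performing these borrowings coherently all the way out the tower is an element of $\underleftarrow{\lim}^1\{Wh(\pi_1(N_i))\}$, and the swindle closes up exactly when that class is zero. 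Making this rigorous---defining $\tau_{\infty}$ intrinsically on the cobordisms $A_i$, verifying that its vanishing permits a tower-indexed sequence of compatible torsion-killing modifications, and checking that the limiting modified neighborhood is genuinely a mapping-cylinder (hence $\mathcal{Z}$-compactifiable) neighborhood of infinity---is the technically demanding part, and it is here that I anticipate the bulk of the effort will go.
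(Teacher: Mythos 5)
Your proposal is correct and follows essentially the same route as the paper: conditions (1) and (2) plus the Sum Theorem give absolute inward tameness, the end is modeled on an inverse mapping telescope built from finite complexes $K_i\simeq N_i$, and the $\underleftarrow{\lim}^{1}$ class is interpreted exactly as the obstruction to completing the infinite torsion-borrowing swindle across the one-sided h-cobordisms $A_i$. Your closing step---upgrading the proper homotopy equivalence with $\operatorname*{Tel}\left(\{K_i,h_i\}\right)\times\mathcal{Q}$ to a homeomorphism via infinite simple homotopy theory and Chapman's theorem---is precisely the ``mild alternative'' the paper records at the end of its Step 3; the paper's main line instead assembles the homeomorphism cobordism-by-cobordism after the borrowing, but the two are interchangeable.
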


\begin{remark}
\label{Remark: Remarks about the C-S Theorem}\textbf{(a)}\emph{ \ By Theorem
\ref{Theorem: Edwards HCM Theorem}, every ANR }$Y$\emph{ becomes a Hilbert
cube manifold upon crossing with }$\mathcal{Q}$\emph{. So, reasoning as in
Remark \ref{Remark: standard trick}, conditions (a)-(c) are also necessary for
}$\mathcal{Z}$\emph{-compactifiability of an ANR (although Condition (b) and
particularly (c) are best measured in }$Y\times\mathcal{Q}$\emph{). For some
time, it was hoped that (a)-(c) would also be sufficient for ANRs; but in
\cite{Gu}, a }$2$\emph{-dimensional polyhedral counterexample was
constructed.\smallskip}

\noindent\textbf{(b)}\emph{ \ For those who prefer finite-dimensional spaces,
Ferry \cite{Fe1} has shown that, if }$P$ \emph{is a }$k$\emph{-dimensional
locally finite polyhedron and }$P\times\mathcal{Q}$\emph{ is }$\mathcal{Z}%
$\emph{-compactifiable, then }$P\times\left[  0,1\right]  ^{2k+5}$\emph{ is
}$\mathcal{Z}$\emph{-compactifiable. May \cite{May} showed that, for the
counterexample }$P_{0}$\emph{ from \cite{Gu}, }$P_{0}\times\left[  0,1\right]
\ $\emph{is }$\mathcal{Z}$\emph{-compactifiable. In still-to-be-published
work, the author has shown that, for an open manifold }$M^{n}$\emph{
satisfying (a)-(c), }$M^{n}\times$\emph{ }$\left[  0,1\right]  $ \emph{is
}$\mathcal{Z}$\emph{-compactifiable.}
\end{remark}

The following are significant and still open.\smallskip

\begin{problem}
Find conditions that must be added to those of Theorem
\ref{Theorem: C-S Z-compactification Theorem} to obtain a characterization of
$\mathcal{Z}$-compactifiability for ANRs.
\end{problem}

\begin{problem}
Determine whether the conditions of Theorem
\ref{Theorem: C-S Z-compactification Theorem} are sufficient in the case of
finite-dimensional manifolds.
\end{problem}

Before describing the proof of Theorem
\ref{Theorem: C-S Z-compactification Theorem}, we make some obvious
adaptations of terminology from \S \ref{Subsection: Siebenmann's thesis} and
\ref{Subsection: Generalizing Siebenman}. A clean neighborhood of infinity $N$
in a Hilbert cube manifold $X$ is an \emph{open collar} if $N\approx
\operatorname*{Bd}_{X}N\times\lbrack0,1)$ and a \emph{homotopy collar} if
$\operatorname*{Bd}_{X}N\hookrightarrow N$ is a homotopy equivalence. $X$ is
\emph{collarable} if it contains an open collar neighborhood of infinity and
\emph{pseudo-collarable} if it contains arbitrarily small homotopy collar
neighborhoods of infinity.\medskip

\begin{proof}
[Sketch of the proof of Theorem \ref{Theorem: C-S Z-compactification Theorem}%
.]The necessity of Conditions (a) and (b) follows from Proposition
\ref{Proposition: Z-compactifialble implies absolute inward tame}; for the
necessity of (c), the reader is referred to \cite{CS}. Here we will focus on
the sufficiency of these conditions.

Assume that $X$ satisfies Conditions (a)-(c). We show that $X$ is
$\mathcal{Z}$-compactifiable by showing that it is homeomorphic at infinity to
$\operatorname*{Tel}\left(  \{K_{i},f_{i}\}\right)  \times\mathcal{Q}$, where
$\left\{  K_{i},f_{i}\right\}  $ is a carefully chosen inverse sequence of
finite polyhedra. Since inverse mapping telescopes are $\mathcal{Z}%
$-compactifiable (Example \ref{Example: Z-compactifying mapping telescopes}),
the result follows.

It is easiest to read the following argument under the added assumption that
$X$ is 1-ended. In the final step, we explain how that assumption can be
eliminated.\smallskip

\noindent\textbf{Step 1. }(Existence of a pseudo-collar structure) Choose a
nested cofinal sequence $\left\{  N_{i}^{\prime}\right\}  $ of clean
neighborhoods of infinity. By Condition (a) each is finitely dominated, so we
may represent $\sigma_{\infty}\left(  X\right)  $ by $\left(  \sigma
_{0},\sigma_{1},\sigma_{2},\cdots\right)  $, where $\sigma_{i}$ is the Wall
finiteness obstruction of $N_{i}^{\prime}$. By (b) each $\sigma_{i}=0$, so
each $N_{i}^{\prime}$ has finite homotopy type. (Said differently, Conditions
(a) and (b) are equivalent to absolute inward tameness.) For each $i$, choose
a finite polyhedron $K_{i}$ and an embedding $K_{i}\hookrightarrow
N_{i}^{\prime}$ that is a homotopy equivalences. By taking neighborhoods
$C_{i}$ of the $K_{i}$, we arrive at a sequence of Hilbert cube manifold pairs
$\left(  N_{i}^{\prime},C_{i}\right)  $, where each inclusion is a homotopy
equivalence. By some Hilbert cube manifold magic it can be arranged that
$C_{i}$ is a $\mathcal{Z}$-set in $N_{i}^{\prime}$ and $\operatorname*{Bd}%
N_{i}^{\prime}\subseteq C_{i}$. From there one finds $N_{i}\subseteq
N_{i}^{\prime}$ for which $\operatorname*{Bd}N_{i}$ is a copy of $C_{i}$ and
$\operatorname*{Bd}N_{i}\hookrightarrow N_{i}$ a homotopy equivalence (see
\cite{CS} for details). Thus $\left\{  N_{i}\right\}  $ is a pseudo-collar
structure.\smallskip

\noindent\textbf{Step 2. }(Pushing the torsion off the end of $X$) By letting
$A_{i}=\overline{N_{i}-N}_{i+1}$ for each $i$, view the end of $X$ as a
countable union $A_{0}\cup A_{1}\cup A_{2}\cup\cdots$ of compact 1-sided
h-cobordisms $\left(  A_{i},\operatorname*{Bd}N_{i},\operatorname*{Bd}%
N_{i+1}\right)  $ of Hilbert cube manifolds. (See Exercise
\ref{Exercise: cobordisms are [one-sided] h-cobordisms}.) By the
triangulability of Hilbert cube manifolds (and pairs), each inclusion
$\operatorname*{Bd}N_{i}\hookrightarrow A_{i}$ has a well-defined torsion
$\tau_{i}\in\operatorname*{Wh}\left(  \pi_{1}\left(  N_{i}\right)  \right)  $.
Together these torsions determine a representative $\left(  \tau_{0},\tau
_{1},\tau_{2},\cdots\right)  $ of $\tau_{\infty}\left(  X\right)  $.
(\textbf{Note: }Determining $\tau_{\infty}\left(  X\right)  $ requires that
Step 1 first be accomplished; there is no $\tau_{\infty}\left(  X\right)  $
without Conditions (a) and (b) being satisfied.)

We would like to alter the choices of the $N_{i}$ by using an infinite
borrowing strategy like that employed in the proof of Theorem
\ref{Theorem: Siebenmann's thesis}. In particular, we would like to borrow a
compact Hilbert cube manifold h-cobordism $B_{0}$ from a collar neighborhood
of $\operatorname*{Bd}N_{1}$ in $A_{1}$ so that $\operatorname*{Bd}%
N_{0}\hookrightarrow A_{0}\cup B_{0}$ has trivial torsion. Then, replacing
$N_{1}$ with $\overline{N_{1}-B}_{0}$, we would like to borrow $B_{1}$ from
$A_{2}$ so that so that $\operatorname*{Bd}N_{1}\hookrightarrow A_{1}\cup
B_{1}$ has trivial torsion. Continuing inductively, we would like to arrive at
an adjusted sequence $N_{0}\supseteq N_{1}\supseteq N_{2}\supseteq\cdots$ of
neighborhoods of infinity for which each $\operatorname*{Bd}N_{i}%
\hookrightarrow A_{i}$ has trivial torsion (where the $A_{i}$ are redefined
using the new $N_{i}$).

The derived limit, $\underleftarrow{\lim}^{1}$, is defined precisely to
measure whether this infinite borrowing strategy can be successfully
completed. In the situation of \ref{Theorem: Siebenmann's thesis}, where the
fundamental group stayed constant from one side of each $A_{i}$ to the other,
there was no obstruction to the borrowing scheme. More generally, as long as
the inclusion induced homomorphisms $\operatorname*{Wh}\left(  \pi_{1}\left(
N_{i+1}\right)  \right)  \rightarrow\operatorname*{Wh}\left(  \pi_{1}\left(
N_{i}\right)  \right)  $ are surjective for all $i$, the strategy works. But,
in general, we must rely on a hypothesis that $\tau_{\infty}\left(  X\right)
$ is the trivial element of $\underleftarrow{\lim}^{1}\left\{  Wh(\pi
_{1}\left(  N_{i}\right)  )\right\}  $. (\emph{Warning}: Even when $\pi
_{1}\left(  N_{i+1}\right)  $ surjects onto $\pi_{1}\left(  N_{i}\right)  $,
$\operatorname*{Wh}\left(  \pi_{1}\left(  N_{i+1}\right)  \right)
\rightarrow\operatorname*{Wh}\left(  \pi_{1}\left(  N_{i}\right)  \right)  $
can fail to be surjective.)\smallskip

\noindent\textbf{Step 3. }(Completion of the proof) Homotopy equivalences
$K_{i}\hookrightarrow\operatorname*{Bd}N_{i}$ and the deformation retractions
of $A_{i}$ onto $\operatorname*{Bd}N_{i}$ determine maps $f_{i+1}%
:K_{i+1}\rightarrow K_{i}$ and homotopy equivalences of triples $\left(
A_{i},\operatorname*{Bd}N_{i},\operatorname*{Bd}N_{i+1}\right)  \allowbreak
\simeq\allowbreak\left(  \operatorname*{Map}\left(  f_{i+1}\right)
,K_{i},K_{i+1}\right)  $. Using the fact that both $\operatorname*{Bd}%
N_{i}\hookrightarrow A_{i}$ and $K_{i}\hookrightarrow\operatorname*{Map}%
\left(  f_{i+1}\right)  $ have trivial torsion (and through more Hilbert cube
manifold magic), we obtain a homeomorphism of triples $\left(  A_{i}%
,\operatorname*{Bd}N_{i},\operatorname*{Bd}N_{i+1}\right)  \allowbreak
\simeq\allowbreak\left(  \operatorname*{Map}\left(  f_{i+1}\right)
\times\mathcal{Q},K_{i}\times\mathcal{Q},K_{i+1}\times\mathcal{Q}\right)  $.
Piecing these together gives a homeomorphism $N_{0}\approx\operatorname*{Tel}%
\left(  \{K_{i},f_{i}\}\right)  \times\mathcal{Q}$, and completes the proof.

As a mild alternative, we could have used the ingredients described above to
construct a proper homotopy equivalence $h:N_{0}\rightarrow\operatorname*{Tel}%
\left(  K_{i},f_{i}\right)  $ and used the triviality of the torsions to argue
that $h$ is an \textquotedblleft infinite simple homotopy
equivalence\textquotedblright, in the sense of \cite{Si2}. Then, by a
variation on Theorem \ref{Theorem: Chapman's homeomorphism theorem},
$\widehat{h}:N_{0}\rightarrow\operatorname*{Tel}\left(  K_{i},f_{i}\right)
\times\mathcal{Q}$ is homotopic to a homeomorphism.\smallskip

\noindent\textbf{Final Step. }(Multi-ended spaces) When $X$ is multi-ended
(possibly infinite-ended), a neighborhood of infinity $N_{i}$ used in defining
$\sigma_{\infty}$ and $\tau_{\infty}$ will have multiple (but finitely many)
components. In that case, define $\widetilde{K}_{0}(%
%TCIMACRO{\U{2124} }%
%BeginExpansion
\mathbb{Z}
%EndExpansion
\left[  \pi_{1}\left(  N_{i}\right)  \right]  )$ and $Wh(\pi_{1}\left(
N_{i}\right)  )$ to be the direct sums $\bigoplus\widetilde{K}_{0}(%
%TCIMACRO{\U{2124} }%
%BeginExpansion
\mathbb{Z}
%EndExpansion
\left[  \pi_{1}\left(  C_{j}\right)  \right]  )$ and $\bigoplus Wh(\pi
_{1}\left(  C_{j}\right)  )$, where $\left\{  C_{j}\right\}  $ is the
collection of components of $N_{i}$. With these definitions, a little extra
work, and the fact that reduced projective class groups and Whitehead groups
of free products are the corresponding direct sums, the above steps can be
carried out as in the 1-ended case.
\end{proof}

\begin{remark}
\emph{If desired, one can arrange, in the final lines of Step 3, a
homeomorphism} $k:X\rightarrow\operatorname*{Tel}^{\ast}\left(  K_{i}%
,f_{i}\right)  \times\mathcal{Q}$, \emph{defined on all of }$X$\emph{. The
space on the right is the previous mapping telescope with the addition of a
single mapping cylinder }$\operatorname*{Map}(K_{0}\overset{f_{0}%
}{\longrightarrow}K_{-1})$\emph{. The finite complex }$K_{-1}$\emph{ and the
map }$f_{0}$\emph{ are carefully chosen so that }$X$\emph{ and }%
$\operatorname*{Tel}^{\ast}\left(  K_{i},f_{i}\right)  $\emph{ are infinite
simple homotopy equivalent.}
\end{remark}

Step 1 of the above proof provides a result that is interesting in its own right.

\begin{theorem}
\label{Theorem: HCM pseudo-collarability theorem}A Hilbert cube manifold is
pseudo-collarable if and only if it satisfies Conditions (a) and (b) of
Theorem \ref{Theorem: C-S Z-compactification Theorem} or, equivalently, if and
only if it is absolutely inward tame.
\end{theorem}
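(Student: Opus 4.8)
The plan is to prove the chain of implications
\[
\text{pseudo-collarable} \Longrightarrow \text{absolutely inward tame} \Longrightarrow \text{Conditions (1) and (2)} \Longrightarrow \text{pseudo-collarable},
\]
so that all three characterizations coincide. The equivalence of the two middle conditions is essentially bookkeeping with Wall's finiteness obstruction, whereas the final implication is the substantive one and is precisely Step~1 of the proof of Theorem~\ref{Theorem: C-S Z-compactification Theorem}. To dispatch the bookkeeping: Condition (1), together with the automatic sharpness of a Hilbert cube manifold, lets \lemref{Lemma: tame = nbds of infinity are f.d.} guarantee that every clean neighborhood of infinity $N$ is finitely dominated, so its Wall obstruction $\sigma(N)\in\widetilde{K}_{0}(\mathbb{Z}[\pi_{1}(N)])$ is defined. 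Since $\sigma_{\infty}(X)$ is assembled from these obstructions along a cofinal sequence $\{N_{i}\}$, its vanishing (Condition (2)) says exactly that each $\sigma(N_{i})=0$, i.e. that each $N_{i}$ has finite homotopy type, which is the definition of absolute inward tameness. The converse is immediate.

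For the easy topological direction, suppose $X$ is pseudo-collarable and let $N$ be a small homotopy collar neighborhood of infinity. Then $\operatorname{Bd}_{X}N$ is a Hilbert cube manifold, hence a compact ANR, and so has finite homotopy type by West's theorem (Proposition~\ref{Proposition: ANR facts}). Because $\operatorname{Bd}_{X}N\hookrightarrow N$ is a homotopy equivalence, $N$ has finite homotopy type as well. As such homotopy collars may be taken arbitrarily small, $X$ is absolutely inward tame, which gives Conditions (1) and (2).

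The remaining implication, absolute inward tameness $\Rightarrow$ pseudo-collarability, I would carry out exactly as in Step~1 of the sketch of Theorem~\ref{Theorem: C-S Z-compactification Theorem}. Starting from a nested cofinal sequence $\{N_{i}'\}$ of clean neighborhoods of infinity, each now of finite homotopy type, I would choose a finite polyhedron $K_{i}$ together with a homotopy equivalence $K_{i}\hookrightarrow N_{i}'$, thicken $K_{i}$ to a compact Hilbert cube manifold $C_{i}\subseteq N_{i}'$ whose inclusion remains a homotopy equivalence, and then invoke Hilbert cube manifold techniques to arrange that $C_{i}$ is a $\mathcal{Z}$-set in $N_{i}'$ containing $\operatorname{Bd} N_{i}'$, and to produce $N_{i}\subseteq N_{i}'$ whose point-set boundary is a copy of $C_{i}$ with $\operatorname{Bd} N_{i}\hookrightarrow N_{i}$ a homotopy equivalence. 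These $N_{i}$ are then arbitrarily small homotopy collars, so $\{N_{i}\}$ is a pseudo-collar structure.

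The hard part is precisely this last bit of Hilbert cube manifold magic: converting the homotopy equivalence $C_{i}\hookrightarrow N_{i}'$ into a genuine codimension-zero $\mathcal{Q}$-manifold boundary across which $N_{i}$ is a homotopy collar. This rests on features special to $\mathcal{Q}$-manifolds---$\mathcal{Z}$-set unknotting, the triangulability and product theorems, and the promotion of homotopy equivalences of compact $\mathcal{Q}$-manifolds to homeomorphisms---for which I would cite the constructions of Chapman and Siebenmann \cite{CS}. Worth emphasizing is that, unlike the finite-dimensional characterization in Theorem~\ref{Theorem: characterization of pseudocollarable}, no perfect-semistability hypothesis on $\operatorname{pro}$-$\pi_{1}$ and no derived-limit torsion obstruction enter here: pseudo-collarability in the $\mathcal{Q}$-manifold setting demands only homotopy collars, not an actual $\mathcal{Z}$-compactification, and the flexibility of $\mathcal{Q}$-manifolds removes the $\pi_{1}$-control that the finite-dimensional handle arguments require.
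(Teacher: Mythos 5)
Your proposal is correct and follows the paper's route exactly: the paper proves this theorem simply by pointing to Step~1 of the proof of Theorem~\ref{Theorem: C-S Z-compactification Theorem} (which contains both the reduction of Conditions (1)--(2) to absolute inward tameness via the Wall obstruction and the construction of the pseudo-collar structure from finite-homotopy-type neighborhoods of infinity), and your West's-theorem argument for the easy converse is the same observation the paper makes in Proposition~\ref{Proposition: pseudo-collar properties}(3). Your closing remark about why no $\operatorname{pro}$-$\pi_{1}$ or derived-limit hypotheses are needed accurately reflects the contrast the paper draws with Theorem~\ref{Theorem: characterization of pseudocollarable}.
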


It is interesting to compare \ref{Theorem: HCM pseudo-collarability theorem}
to Theorems \ref{Theorem: characterization of pseudocollarable} and
\ref{Theorem: existence of nonpseudocollarable manifolds}.

\section{$\mathcal{Z}$-boundaries in geometric group
theory\label{Section: Z-boundaries in geometric group theory}}

In this section we look at the role of $\mathcal{Z}$-com\-pact\-ific\-at\-ions
and $\mathcal{Z}$-boundaries in geometric group theory.

\subsection{Boundaries of $\delta$-hyperbolic groups}

Following Gromov \cite{Gr}, for a metric space $\left(  X,d\right)  $ with
base point $p_{0}$, define the \emph{overlap function on }$X\times X$ by%
\[
\left(  x\cdot y\right)  =\frac{1}{2}(d\left(  x,p_{0}\right)  +d\left(
y,p_{0}\right)  -d\left(  x,y\right)  ).
\]
Call $\left(  X,d\right)  $ $\delta$-\emph{hyperbolic} if there exists a
$\delta>0$ such that $\left(  x\cdot y\right)  \geq\min\left\{  \left(  x\cdot
z\right)  ,\left(  y\cdot z\right)  \right\}  -\delta$, for all $x,y,z\in X$.

A sequence $\left\{  x_{i}\right\}  $ in a $\delta$-hyperbolic space $\left(
X,d\right)  $ is \emph{convergent at infinity} if $\left(  x_{i},x_{j}\right)
\rightarrow\infty$ as $i,j\rightarrow\infty$, and sequences $\left\{
x_{i}\right\}  $, and $\left\{  y_{i}\right\}  $ are declared to be equivalent
if $\left(  x_{i},y_{i}\right)  \rightarrow\infty$ as $i\rightarrow\infty$.
The set $\partial X$ of all equivalence classes of these sequences makes up
the \emph{Gromov} \emph{boundary} of $X$. An easy to define topology on
$X\sqcup\partial X$ results in a corresponding com\-pact\-ific\-at\-ion
$\widehat{X}=X\cup\partial X$. This boundary and compactification are
well-defined in the following strong sense: if $f:X\rightarrow Y$ is a
quasi-isometry between $\delta$-hyperbolic spaces, then there is a unique
extension $\widehat{f}:\widehat{X}\rightarrow\widehat{Y}$ that restricts to a
homeomorphism between boundaries. This is of particular interest when $G$ is a
finitely generated group endowed with a corresponding word metric. It is a
standard fact that, for any two such metrics, $G\overset{\operatorname*{id}%
}{\rightarrow}G$ is a quasi-isometry; so for a $\delta$-hyperbolic group\ $G$,
the \emph{Gromov boundary} $\partial G$ is well-defined.

Early in the study of $\delta$-hyperbolic groups, it became clear that exotic
topological spaces arise naturally as group boundaries. In addition to spheres
of all dimensions, the collection of known boundaries includes: Cantor sets,
Sierpinski carpets, Menger curves, Pontryagin surfaces, and 2-dimensional
Menger spaces, to name a few. See \cite{Be}, \cite{Dr2} and \cite{KB}. So it
is not surprising that shape theory has a role to play in this area. But, a
priori, Gromov's com\-pact\-ific\-at\-ions and boundaries have little in
common with $\mathcal{Z}$-com\-pact\-ific\-at\-ions and $\mathcal{Z}%
$-boundaries. After all, for a word hyperbolic group, the Gromov
com\-pact\-ific\-at\-ion adds boundary to a \emph{discrete} topological space.

\begin{exercise}
Show that a countably infinite discrete metric space does not admit a
$\mathcal{Z}$-com\-pact\-ific\-at\-ion.
\end{exercise}

\noindent Nevertheless, in 1991, Bestvina and Mess introduced the use of
$\mathcal{Z}$-boundaries and $\mathcal{Z}$-com\-pact\-ific\-at\-ions to the
study of $\delta$-hyperbolic groups. For a discrete metric space $\left(
X,d\right)  $ and a constant $\rho$, the \emph{Rips complex} $P_{\rho}\left(
G\right)  $ is the simplicial complex obtained by declaring the vertex set to
be $X$ and filling in an $n$-simplex for each collection $\left\{
x_{0},x_{1,}\cdots,x_{n}\right\}  $ with $d\left(  x_{i},x_{j}\right)
\leq\rho$, for all $0\leq i,j\leq n$. Clearly, a Rips complex $P_{\rho}\left(
G\right)  $ for a finitely generated group $G$ admits a proper, cocompact
$G$-action and $G\hookrightarrow P_{\rho}\left(  G\right)  $ is a
quasi-isometry. So when $G$ is $\delta$-hyperbolic there is a canonical
com\-pact\-ific\-at\-ion $\overline{P_{\rho}\left(  G\right)  }=P_{\rho
}\left(  G\right)  \cup\partial G$. Furthermore, it was shown by Rips that,
for $\delta$-hyperbolic $G$ and large $\rho$, $P_{\rho}\left(  G\right)  $ is
contractible. Using this and some finer homotopy properties of $P_{\rho
}\left(  G\right)  $, Bestvina and Mess proved the following.

\begin{theorem}
[{\cite[Th.1.2]{BM}}]\label{Theorem: Bestvina-Mess} Let $G$ be a $\delta
$-hyperbolic group and $\rho\geq4\delta+2$, then $\overline{P_{\rho}\left(
G\right)  }=P_{\rho}\left(  G\right)  \cup\partial G$ is a $\mathcal{Z}$-com\-pact\-ific\-at\-ion.
\end{theorem}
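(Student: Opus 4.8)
The plan is to verify the three ingredients implicit in the definition of a $\mathcal{Z}$-compactification: that $P_{\rho}(G)$ is an ANR, that $\widehat{P_{\rho}(G)} := P_{\rho}(G)\cup\partial G$ is genuinely a compactification, and that $\partial G$ satisfies one of the bullet-point conditions for being a $\mathcal{Z}$-set. By the ANR-theory result quoted just after the definition of $\mathcal{Z}$-compactification, once the first and third are in hand the compactum $\widehat{P_{\rho}(G)}$ is automatically an ANR, so no separate argument for that is needed; it suffices to use one of the first two bullet points, which are statements about maps and homotopies and do not presuppose that $\widehat{P_{\rho}(G)}$ is an ANR.

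First I would record the soft points. Because $G$ is finitely generated, only finitely many group elements lie within distance $\rho$ of any given one; hence $P_{\rho}(G)$ is a locally finite, finite-dimensional simplicial complex, i.e.\ a locally finite polyhedron, and therefore an ANR. For $\rho\geq 4\delta+2$ the cited theorem of Rips gives that $P_{\rho}(G)$ is contractible. For the compactification, I would use that $G\hookrightarrow P_{\rho}(G)$ is a quasi-isometry, so $\partial P_{\rho}(G)=\partial G$, together with the standard fact that the Gromov compactification $\widehat{X}=X\cup\partial X$ of a proper $\delta$-hyperbolic geodesic space is compact, metrizable, and Hausdorff with $X$ open and dense; applying this to the (proper, geodesic) space $P_{\rho}(G)$ shows that $\widehat{P_{\rho}(G)}$ is a compactification.

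The heart of the argument is the $\mathcal{Z}$-set condition, which I would establish in the instant-homotopy form: a homotopy $H:\widehat{P_{\rho}(G)}\times[0,1]\to\widehat{P_{\rho}(G)}$ with $H_{0}=\operatorname{id}$ and $H_{t}(\widehat{P_{\rho}(G)})\subseteq P_{\rho}(G)$ for every $t>0$. The geometric idea is to flow inward toward a fixed basepoint $p_{0}$ along geodesics: a boundary point is the limit of a geodesic ray from $p_{0}$, and moving a definite distance back along such rays drags all of $\partial G$ into the interior. Concretely I would build, for each radius $r$, a simplicial ``radial'' push that deformation retracts $\widehat{P_{\rho}(G)}$ into the $r$-ball about $p_{0}$ while displacing each point a bounded amount, and then let $r\to\infty$. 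The threshold $\rho\geq 4\delta+2$ is exactly what makes each such push realizable inside $P_{\rho}(G)$: the $\delta$-thin-triangles inequality guarantees that when one decreases a vertex's distance to $p_{0}$ by one step, the intermediate vertices needed to fill the resulting triangles all lie within $\rho$ of one another, so the required simplices are already present. Equivalently, one may verify the $\varepsilon$-close reformulation, producing for each $\varepsilon>0$ a map $\widehat{P_{\rho}(G)}\to P_{\rho}(G)$ that is $\varepsilon$-close to the identity in a fixed compactification metric.

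The main obstacle I anticipate is the continuity of this inward flow across $\partial G$, since geodesics in a $\delta$-hyperbolic space are not unique but only $\delta$-fellow-travelling. Two points of $P_{\rho}(G)$ that are Gromov-close to the same boundary point must be pushed inward compatibly, and the pushes must vary continuously as the endpoint approaches $\partial G$, where the topology is governed by the Gromov product rather than the path metric. Controlling this ambiguity is precisely where $\delta$-hyperbolicity, and the near-infinity homotopy estimates on $P_{\rho}(G)$ coming from the $4\delta+2$ bound, must be used quantitatively. Once the displacement of $H_{t}$ is shown to tend to $0$ uniformly as $t\to 0$ in the compactification metric, the bullet-point $\mathcal{Z}$-set condition holds, and the cited ANR result then upgrades $\widehat{P_{\rho}(G)}$ to a $\mathcal{Z}$-compactification with $\mathcal{Z}$-boundary $\partial G$.
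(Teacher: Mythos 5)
The paper does not prove this theorem; it is quoted directly from Bestvina--Mess \cite[Th.\ 1.2]{BM}, so there is no internal proof to compare against. Your outline does follow the Bestvina--Mess strategy in broad strokes: the soft points (local finiteness and finite-dimensionality of $P_{\rho}(G)$, hence the ANR property; Rips' contractibility; compactness and metrizability of the Gromov compactification of a proper geodesic hyperbolic space; and the observation that the $\mathcal{Z}$-set condition plus the ANR result quoted after the definition upgrade the compactification automatically) are all correct and correctly deployed.

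The genuine gap is that the entire content of the theorem lives in the step you label as ``the main obstacle I anticipate.'' The radial flow toward $p_{0}$, defined vertex-by-vertex via chosen geodesics, is not continuous: non-uniqueness of geodesics means adjacent vertices can be pushed to points roughly $\rho+4\delta$ apart, so their images need not span a simplex of $P_{\rho}(G)$, and a point $x$ converging to $\xi\in\partial G$ has projections that are only \emph{coarsely} (within $4\delta$) close to the projection of $\xi$, not convergent to it. Since even the $\varepsilon$-close-map formulation of the $\mathcal{Z}$-set condition demands an actual continuous map, ``the displacement tends to $0$ uniformly'' does not by itself produce one. The missing mechanism in Bestvina--Mess is to take the coarsely defined vertex map and extend it over simplices using \emph{uniform} contractibility of bounded subcomplexes of $P_{\rho}(G)$ inside slightly larger ones (a consequence of Rips' theorem together with cocompactness of the $G$-action), after which continuity at $\partial G$ is checked against the Gromov-product neighborhoods. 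Relatedly, your claim that $\rho\geq 4\delta+2$ is ``exactly what makes each such push realizable'' because thin triangles keep intermediate vertices within $\rho$ is asserted rather than proved, and as stated it is not quite right for vertices near the ends of their geodesics. As written, the proposal is an accurate road map to the theorem rather than a proof of it.
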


Implications of Theorem \ref{Theorem: Bestvina-Mess} are cleanest when
$P_{\rho}\left(  G\right)  $ is a cocompact $EG$ complex. Since
contractibility and a proper cocompact action have already been established,
only freeness is needed, and that is satisfied if and only if $G$ is torsion-free.

\begin{corollary}
\label{Corollary: Corollary to Bestvina-Mess theorem}Let $G$ be a torsion-free
$\delta$-hyperbolic group. Then

\begin{enumerate}
\item every cocompact $EG$ complex is inward tame and proper homotopy
equivalent to $P_{\rho}\left(  G\right)  ,$

\item for every cocompact $EG$ complex $X$, $\mathcal{S}\emph{h\!}\left(
\varepsilon\left(  X\right)  \right)  =\mathcal{S}\emph{h\!}\left(  \partial
G\right)  $,

\item $\operatorname*{pro}$-$H_{\ast}\left(  \varepsilon(G);R\right)  $,
$\check{H}_{\ast}\left(  \varepsilon(G);R\right)  $ and $\check{H}^{\ast
}\left(  \varepsilon(G);R\right)  $ are isomorphic to the corresponding
invariants of $\partial G$,

\item for 1-ended $G$, $\operatorname*{pro}$-$\pi_{\ast}\left(  \varepsilon
(G)\right)  $ and $\check{\pi}_{\ast}\left(  \varepsilon(G)\right)  $ are well
defined and isomorphic to the corresponding invariants of $\mathcal{S}%
\emph{h\!}\left(  \partial G\right)  $,

\item $H^{\ast}\left(  G;RG\right)  \cong\check{H}^{\ast-1}\left(  \partial
G;R\right)  $ for any coefficient ring $R.$
\end{enumerate}
\end{corollary}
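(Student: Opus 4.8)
The plan is to deduce all five items from the single input that $P_\rho(G)$ (for $\rho\geq 4\delta+2$) is a cocompact $EG$ complex whose $\mathcal{Z}$-compactification $\overline{P_\rho(G)}=P_\rho(G)\cup\partial G$ is supplied by Theorem~\ref{Theorem: Bestvina-Mess}. Indeed, for torsion-free $\delta$-hyperbolic $G$ the complex $P_\rho(G)$ is contractible, locally finite, and carries a proper, free, cocompact $G$-action, so it is a cocompact $EG$ complex and the quotient $G\backslash P_\rho(G)$ is a compact $BG$. Items (1)--(4) will follow by assembling earlier results around this one example, while item (5)---the Bestvina--Mess duality formula---is the genuine content and will require the most work.

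For (1), I first note that any cocompact $EG$ complex $X$ is the universal cover of some compact $BG$ complex (or compact ANR $K(G,1)$). Since any two $BG$'s are homotopy equivalent, Corollary~\ref{Corollary: proper homotopy equivalence of covering spaces} lifts a homotopy equivalence $G\backslash X\simeq G\backslash P_\rho(G)$ to a proper homotopy equivalence $X\overset{p}{\simeq}P_\rho(G)$. For inward tameness: $P_\rho(G)$ is $\mathcal{Z}$-compactifiable and, as a locally finite polyhedron, sharp at infinity, so Proposition~\ref{Proposition: Z-compactifialble implies absolute inward tame} shows it is absolutely inward tame; inward tameness then transfers to every $X$ by Exercise~\ref{Exercise: inward tame is a proper homotopy invariant}. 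This proves (1). For (2), Theorem~\ref{Theorem: Z-boundaries have shape of the end} gives $\partial G\in\mathcal{S}h(\varepsilon(P_\rho(G)))$, and since $\mathcal{S}h(\varepsilon(-))$ is a proper homotopy invariant (as noted following Theorem~\ref{Theorem: Sh(e(X)) determines end invariants}), the proper homotopy equivalence of (1) yields $\mathcal{S}h(\varepsilon(X))=\mathcal{S}h(\varepsilon(P_\rho(G)))=\mathcal{S}h(\partial G)$.

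Items (3) and (4) are then immediate applications of Theorem~\ref{Theorem: Sh(e(X)) determines end invariants} with $Y=P_\rho(G)$ and $A=\partial G$. Part (1) of that theorem gives the homology statements of (3); the \v{C}ech cohomology statement $\check H^{*}(\varepsilon(G);R)\cong\check H^{*}(\partial G;R)$ follows by the identical ladder-diagram/shape-invariance argument applied to the contravariant functor $H^{*}$. For (4) I invoke Theorem~\ref{Theorem: Some groups that are semistable}(5), which asserts that $\delta$-hyperbolic groups are semistable; hence when $G$ is $1$-ended, $P_\rho(G)$ is $1$-ended and semistable, so $\operatorname*{pro}$-$\pi_{*}(\varepsilon(G))$ and $\check\pi_{*}(\varepsilon(G))$ are base-ray independent and well defined, and part (2) of Theorem~\ref{Theorem: Sh(e(X)) determines end invariants} identifies them with the shape invariants of $\partial G$ (working with $\mathcal{S}h(\partial G)$ since $\partial G$ need not be an ANR).

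The hard part is (5). Here I would run the standard Bestvina--Mess duality argument. First, because $X=P_\rho(G)$ is a cocompact $EG$, the cohomology of $G$ with group-ring coefficients is computed by compactly supported cohomology of $X$: $H^{*}(G;RG)\cong H^{*}_{c}(X;R)$ (a standard identification; see \cite{Br} or \cite{Ge2}). Second, since $\overline X=X\cup\partial G$ is a $\mathcal{Z}$-compactification of the contractible $X$, it is a contractible compact ANR, hence an AR, so its reduced cohomology vanishes. Third, for the closed subset $\partial G$ of the compact space $\overline X$ with open complement $X$, the Alexander--Spanier/\v{C}ech long exact sequence
\[
\cdots\longrightarrow H^{k}_{c}(X;R)\longrightarrow \check H^{k}(\overline X;R)\longrightarrow \check H^{k}(\partial G;R)\longrightarrow H^{k+1}_{c}(X;R)\longrightarrow\cdots
\]
combined with the (reduced) acyclicity of $\overline X$ collapses to isomorphisms $\check H^{k}(\partial G;R)\cong H^{k+1}_{c}(X;R)$, i.e.\ $H^{m}(G;RG)\cong\check H^{m-1}(\partial G;R)$ for all $m$ once reduced \v{C}ech cohomology is used to absorb the degree-$0$ and degree-$1$ corrections. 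The substantive points I expect to be delicate are the first identification (which depends on $X$ being a \emph{cocompact} $EG$ and on properly matching the $RG$-module structure with $H^{*}_{c}$) and the third (verifying that the $\mathcal{Z}$-set property makes $\overline X$ an AR and that compactly supported cohomology of $X$ fits into the \v{C}ech exact sequence of the pair with the correct reduced bookkeeping); the degree shift is exactly the connecting homomorphism of that sequence.
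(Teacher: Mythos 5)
Your proposal is correct and, for items (1)--(4), follows essentially the same route as the paper: proper homotopy equivalence of all cocompact $EG$ complexes via the discussion of type-$F$ groups and Corollary \ref{Corollary: proper homotopy equivalence of covering spaces}, inward tameness propagated from the $\mathcal{Z}$-compactifiable space $P_{\rho}(G)$, Theorem \ref{Theorem: Z-boundaries have shape of the end} together with Theorem \ref{Theorem: shape of ends vs. p.h.e. at infinity} for (2), Theorem \ref{Theorem: Sh(e(X)) determines end invariants} for (3), and semistability of $\delta$-hyperbolic groups (Theorem \ref{Theorem: Some groups that are semistable}) to secure well-definedness in (4); your added remark that the $\check{H}^{\ast}$ statement in (3) needs the ladder argument rerun for the contravariant functor is a legitimate small gap-fill, since Theorem \ref{Theorem: Sh(e(X)) determines end invariants} as stated covers only homology. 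The one real difference is item (5): the paper simply defers to \cite{BM} (and the earlier work of Geoghegan--Mihalik \cite{GM}, \cite{Ge2}), whereas you reproduce the argument behind that citation---the identification $H^{\ast}(G;RG)\cong H_{c}^{\ast}(X;R)$ for a cocompact $EG$, acyclicity of the compact AR $\overline{X}$, and the Alexander--Spanier/\v{C}ech exact sequence of the pair $(\overline{X},\partial G)$ collapsing to the degree-shifted isomorphism with reduced bookkeeping. This buys a self-contained proof at the cost of the technical points you correctly flag (the $RG$-module structure in the first identification and the validity of the compact-supports sequence for a non-ANR closed subset), both of which do go through; so your write-up is a faithful, more explicit rendering of the same underlying argument rather than a different one.
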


\begin{proof}
[Proof of Corollary]The discussion in \S \ref{Subsection: groups of type F}
explains why all cocompact $EG$ complexes are proper homotopy equivalent.
Since one such space, $P_{\rho}\left(  G\right)  $, is $\mathcal{Z}%
$-compactifiable and therefore inward tame, they are all inward tame. By
Theorem \ref{Theorem: Z-boundaries have shape of the end}, $\mathcal{S}%
\emph{h\!}\left(  \varepsilon\left(  P_{\rho}\left(  G\right)  \right)
\right)  \allowbreak=\allowbreak\mathcal{S}\emph{h\!}\left(  \partial
G\right)  $, so Theorem \ref{Theorem: shape of ends vs. p.h.e. at infinity}
completes (b). Assertion (c) is a consequence of Theorem
\ref{Theorem: Sh(e(X)) determines end invariants}, while (d) is similar,
except that Theorem \ref{Theorem: Some groups that are semistable} (a
significant ingredient) is used to assure well-definedness. Assertion (e), a
statement about group cohomology with coefficients in $RG$, requires some
algebraic topology that is explained in \cite{BM}; it is a consequence of (c)
and builds upon earlier work by Geoghegan and Mihalik \cite{Ge2},
\cite{GM}.\medskip
\end{proof}

For the most part, Corollary
\ref{Corollary: Corollary to Bestvina-Mess theorem} is all about the shape of
$\partial G$ and the relationship between a $\mathcal{Z}$-boundary and its
complement. There are other applications of boundaries of $\delta$-hyperbolic
groups that use more specific properties of $\partial G$. Here is a small
sampling:\smallskip

\begin{itemize}
\item \cite{BM} provides formulas relating the cohomological dimension of a
torsion-free $G$ to the topological dimension of $\partial G$. (Clearly, the
latter is not a shape invariant.)

\item The semistability of $G$ was deduced by proving that $\partial G$ has no
cut points \cite{Swa}, and therefore is locally connected, by results from
\cite{BM}.

\item By work from \cite{Tuk},\cite{Gab},\cite{CaJu} and \cite{Fre}, $\partial
G\approx\mathbb{S}^{1}$ if and only if $G$ is virtually the fundamental group
of a closed hyperbolic surface.

\item Bowditch \cite{Bo} has obtained a JSJ-decomposition theorem for $\delta
$-hyperbolic groups by analyzing cut pairs in $\partial G$.

\item See \cite{KB} for many more examples.
\end{itemize}

\subsection{Boundaries of CAT(0) groups}

Another widely studied class of groups are the \emph{CAT(0) groups}, i.e.,
groups $G$ that act geometrically (properly and cocompactly by isometries) on
a proper CAT(0) space. If $X$ is such a CAT(0) space, the visual boundary
$\partial_{\infty}X$ is called a \emph{group boundary} for $G$. Since a given
$G$ may act geometrically on multiple proper CAT(0) spaces, it is not
immediate that its boundary is topologically well-defined; and, in fact, it is
not. The first example of this phenomenon was displayed by Croke and Kleiner
\cite{CrKl}. Their work was expanded upon by Wilson \cite{Wi}, who showed that
their group admits a continuum of topologically distinct boundaries. Mooney
\cite{Mo1} discovered additional examples from the category of knot groups,
and in \cite{Mo2}\ produced another collection of examples with boundaries of
arbitrary dimension $k\geq1$. This situation suggests that CAT(0) boundaries
(being ill-defined) might not be useful---but that is not the case. One reason
is the following \textquotedblleft approximate
well-definedness\textquotedblright\ result.

\begin{theorem}
[Uniqueness of CAT(0) boundaries up to shape]%
\label{Theorem: Shape uniqueness of CAT(0) boundaries}All CAT(0) boundaries of
a given CAT(0) group $G$ are shape equivalent.

\begin{proof}
If $G$ is torsion-free, then a geometric $G$-action on a proper CAT(0) space
$X$ is necessarily free, so $X$ is an $EG$ space. It follows that all CAT(0)
spaces on which $G$ acts geometrically are proper homotopy equivalent. So, by
Corollary \ref{Corollary: Uniquenss for Z-boundaries}, all CAT(0) boundaries
of $G$ have the same shape.

If $G$ has torsion there is more work to be done, but the idea is the same. In
\cite{On}, Ontaneda showed that any two proper CAT(0) spaces on which $G$ acts
geometrically are proper homotopy equivalent, so again their boundaries have
the same shape.
\end{proof}
\end{theorem}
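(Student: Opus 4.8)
The plan is to reduce the statement to the shape-uniqueness result for $\mathcal{Z}$-boundaries, namely Corollary \ref{Corollary: Uniquenss for Z-boundaries}, by showing that all proper CAT(0) spaces on which $G$ acts geometrically are proper homotopy equivalent. The essential input is Example \ref{Example: Adding the CAT(0) boundary}: for any proper CAT(0) space $X$, the visual compactification $X\cup\partial_{\infty}X$ is a $\mathcal{Z}$-com\-pact\-ific\-at\-ion, so each visual boundary $\partial_{\infty}X$ is literally a $\mathcal{Z}$-boundary. Thus if I can argue that the underlying spaces are proper homotopy equivalent (hence a fortiori proper homotopy equivalent at infinity), Corollary \ref{Corollary: Uniquenss for Z-boundaries} will immediately yield that any two such $\partial_{\infty}X$ are shape equivalent.

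I would split into two cases according to torsion. First suppose $G$ is torsion-free, and let $X,X'$ be two proper CAT(0) spaces on which $G$ acts geometrically. Since every nontrivial finite-order isometry of a CAT(0) space has a fixed point, a proper isometric action by a torsion-free group is free; combined with contractibility of CAT(0) spaces, this makes each of $X,X'$ a cocompact $EG$ space, with $G\backslash X$ and $G\backslash X'$ compact $K(G,1)$ ANRs (recall proper CAT(0) spaces are ANRs). Any two $K(G,1)$ complexes are homotopy equivalent, so I can choose a homotopy equivalence between these compact quotients; by Corollary \ref{Corollary: proper homotopy equivalence of covering spaces} its lift $\widetilde{X}\to\widetilde{X'}$, i.e.\ $X\to X'$, is a proper homotopy equivalence, exactly as in the discussion of groups of type $F$ in \S\ref{Subsection: groups of type F}. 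Feeding this proper homotopy equivalence into Corollary \ref{Corollary: Uniquenss for Z-boundaries} finishes the torsion-free case.

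When $G$ has torsion, the geometric action need no longer be free, so $X$ is not directly an $EG$ space and the $EG$-uniqueness argument above breaks down. This is the step I expect to be the main obstacle, and it is precisely where a genuinely harder input is required. The plan here is to invoke the theorem of Ontaneda \cite{On}, which asserts that any two proper CAT(0) spaces admitting geometric $G$-actions are nonetheless proper homotopy equivalent, even without freeness. With that in hand, the conclusion is identical to the torsion-free case: the spaces are proper homotopy equivalent, their visual compactifications are $\mathcal{Z}$-com\-pact\-ific\-at\-ions by Example \ref{Example: Adding the CAT(0) boundary}, and Corollary \ref{Corollary: Uniquenss for Z-boundaries} shows their $\mathcal{Z}$-boundaries $\partial_{\infty}X$ are shape equivalent. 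In short, the whole argument is a clean application of the $\mathcal{Z}$-boundary machinery once proper homotopy equivalence of the ambient spaces is established, and the only real content lies in securing that equivalence---trivial via $EG$-theory in the torsion-free case, and delegated to Ontaneda's theorem in general.
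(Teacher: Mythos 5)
Your proposal is correct and follows essentially the same route as the paper: the torsion-free case via the $EG$/proper-homotopy-equivalence argument of \S\ref{Subsection: groups of type F} together with Corollary \ref{Corollary: Uniquenss for Z-boundaries}, and the general case delegated to Ontaneda's theorem. You simply spell out more of the intermediate steps (freeness from fixed points of finite-order isometries, lifting a homotopy equivalence of compact quotients) that the paper leaves implicit.
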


As an application of Theorem
\ref{Theorem: Shape uniqueness of CAT(0) boundaries}, Corollary
\ref{Corollary: Corollary to Bestvina-Mess theorem} can be repeated for
torsion-free CAT(0) groups, with two exceptions: \textbf{(a)} we must omit
reference to the Rips complex since it is not known to be an $EG$ for CAT(0)
groups, and \textbf{(b)} in general, $\operatorname*{pro}$-$\pi_{\ast}\left(
\varepsilon(G)\right)  $ and $\check{\pi}_{\ast}\left(  \varepsilon(G)\right)
$ are not known to be well-defined since the following is open.

\begin{conjecture}
[CAT(0) Semistability Conjecture]Every 1-ended CAT(0) group $G$ is semistable.
\end{conjecture}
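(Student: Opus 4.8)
Because the CAT(0) Semistability Conjecture is open, what follows is a strategy rather than a finished argument. The plan is to exploit the CAT(0) geometry through its visual boundary, imitating the route that settles the $\delta$-hyperbolic case. Fix a proper CAT(0) space $X$ on which $G$ acts geometrically, together with a basepoint $p_{0}$. As recalled in the examples above, $X$ is absolutely inward tame and sharp at infinity, and $\overline{X}=X\cup\partial_{\infty}X$ is a $\mathcal{Z}$-compactification; since $G$ is $1$-ended, $\check{\pi}_{0}(\varepsilon(X))$ is a single point, so $\partial_{\infty}X$ is shape-connected. By the proposition characterizing strong connectivity at infinity, semistability of $G$ is equivalent to producing a single proper ray $r$ for which $\operatorname*{pro}$-$\pi_{1}(\varepsilon(X),r)$ is semistable; by the topological description of semistability this amounts to showing that every loop deep in a neighborhood of infinity can be pushed arbitrarily far out by a homotopy that stays in a slightly larger neighborhood of infinity while sliding its basepoint along $r$.

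First I would install the geodesic machinery. Each $\xi\in\partial_{\infty}X$ is represented by the geodesic ray $\gamma_{\xi}$ issuing from $p_{0}$, and geodesic contraction gives canonical strong deformation retractions of the complements $X-B(p_{0},s)$ onto the spheres $S(p_{0},s)$. To compare two proper rays to the end, I would first straighten them (as in the exercise on fixing a base ray) and retract them onto geodesic rays $\gamma_{\xi},\gamma_{\eta}$, then join $\xi$ to $\eta$ by a path $t\mapsto\xi_{t}$ in $\partial_{\infty}X$ and form the ruled map $(s,t)\mapsto\gamma_{\xi_{t}}(s)$. Convexity of the CAT(0) metric makes this map continuous; the delicate issue is \emph{properness}, that the interpolating rays not sag back toward $p_{0}$. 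Cocompactness of the $G$-action should furnish the needed uniform outward control, so that the tracks of the homotopy keep marching to infinity. This is the geodesic analogue of the transversality-and-pushing arguments used in the proofs that inward tame manifolds have semistable ends and that $M^{n}\times\mathbb{R}$ has stable $\operatorname*{pro}$-$\pi_{1}$.

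The hard part will be the connectivity of the boundary itself. The reduction above needs $\partial_{\infty}X$ to be \emph{path-connected and locally connected}; shape-connectedness, all that $1$-endedness delivers, supplies neither the boundary paths $t\mapsto\xi_{t}$ nor the local control that keeps the ruled homotopy proper. In the $\delta$-hyperbolic setting this gap is closed by Swarup's theorem that $\partial G$ has no cut points and is therefore locally connected, from which semistability follows; no such theorem is known for CAT(0) groups. Indeed the Croke--Kleiner phenomenon underlying the failure of topological well-definedness (the shape-uniqueness theorem above being the best one can hope for) shows that CAT(0) boundaries can be genuinely wild, and their local connectivity remains open. The real content of the conjecture is thus to rule out, using convexity together with the cocompact $G$-action, that a cut point or other local defect of $\partial_{\infty}X$ obstructs the pushing homotopy; here I would attempt a ``no essential cut point'' argument, promoting any such local failure, via $G$-equivariance and convex hulls of boundary sets, into a periodic and hence geometrically impossible configuration.

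Finally I would dispose of torsion. When $G$ has torsion the geometric action on $X$ need not be free, so $X$ is not an $EG$; but, exactly as in the shape-uniqueness theorem for CAT(0) boundaries, Ontaneda's result that any two proper CAT(0) spaces carrying a geometric $G$-action are proper homotopy equivalent, combined with the proper-homotopy invariance of $\operatorname*{pro}$-$\pi_{1}(\varepsilon(\cdot))$, lets me transfer semistability between models and work with whichever $X$ is most convenient. The expected main obstacle remains the connectivity step: absent a CAT(0) replacement for Swarup's no-cut-point theorem, the input that drives the geodesic homotopy is simply unavailable, which is precisely why the conjecture is still open.
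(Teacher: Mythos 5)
This statement is an open conjecture; the paper offers no proof, only the remark (via \cite[Th.7.2.3]{DySe}) that $G$ is semistable if and only if $\partial G$ has the shape of a locally connected compactum. Your proposal correctly treats it as open, and your strategy is essentially the approach the paper itself points to: reduce semistability to a connectivity property of the visual boundary, in parallel with the $\delta$-hyperbolic case where Swarup's no-cut-point theorem closes the gap. You have also correctly isolated the true sticking point --- local connectivity of $\partial_{\infty}X$, not merely shape-connectedness, is what the ruled-geodesic homotopy needs, and that is precisely what is unknown. One refinement worth noting: the paper records Swenson's theorem that a CAT(0) group with a cut point in $\partial G$ has an infinite torsion subgroup, so for torsion-free $G$ the ``no cut point'' half of your program is already a theorem; the genuinely open step is passing from no cut points to local connectedness, which in the hyperbolic setting uses \cite{BM} and has no known CAT(0) analogue. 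Your use of Ontaneda's result to handle torsion and to make the invariants model-independent matches the paper's own treatment in the shape-uniqueness theorem for CAT(0) boundaries.
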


It is worth noting that $\partial G,$ itself, provides an approach to this
conjecture. By a result from shape theory \cite[Th.7.2.3]{DySe}, $G$ is
semistable if and only if $\partial G$ has the shape of a locally connected
compactum. (This is true in much greater generality.)

Before moving away from CAT(0) group boundaries, we mention a few more
applications.\smallskip

\begin{itemize}
\item The Bestvina-Mess formulas, mentioned earlier, relating the
cohomological dimension of a torsion-free $G$ to the topological dimension of
$\partial G$ are also valid for CAT(0) $G.$

\item Swenson \cite{Swe} has shown that a CAT(0) group $G$ with a cut point in
$\partial G$ has an infinite torsion subgroup.

\item Ruane \cite{Ru} has shown that for CAT(0) $G$, if $\partial G$ is a
circle, then $G$ is virtually $%
%TCIMACRO{\U{2124} }%
%BeginExpansion
\mathbb{Z}
%EndExpansion
\times%
%TCIMACRO{\U{2124} }%
%BeginExpansion
\mathbb{Z}
%EndExpansion
$ or the fundamental group of a closed hyperbolic surface; and if $\partial G$
is a suspended Cantor set, then $G$ is virtually $\mathbb{F}_{2}\times%
%TCIMACRO{\U{2124} }%
%BeginExpansion
\mathbb{Z}
%EndExpansion
$.

\item Swenson and Papasoglu \cite{SP} have, in a manner similar to Bowditch's
work on $\delta$-hyperbolic groups, used cut pairs in $\partial G$ to obtain a
JSJ-\allowbreak de\-comp\-o\-sit\-ion result for CAT(0) groups.
\end{itemize}

\subsection{A general theory of group boundaries}

Motivated by their usefulness in the study of $\delta$-hyperbolic and CAT(0)
groups, Bestvina \cite{Be} developed an axiomatic approach to group boundaries
which unified the existing theories and provided a framework for defining
group boundaries more generally. We begin with the original definition, then
introduce some variations.\medskip

A $\mathcal{Z}$\emph{-structure on a group }$G$ is a topological pair $\left(
\overline{X},Z\right)  $ satisfying the following four conditions:\smallskip

\begin{enumerate}
\item $\overline{X}$ is a compact ER,

\item $Z$ is a $\mathcal{Z}$-set in $\overline{X}$,

\item $X=\overline{X}-Z$ admits a proper, free, cocompact $G$-action, and

\item the $G$-action on $X$ satisfies the following \emph{nullity condition}:
for every compactum $A\subseteq X$ and every open cover $\mathcal{U}$ of
$\overline{X}$, all but finitely many $G$-translates of $A$ are $\mathcal{U}%
$-small, i.e., are contained in some element of $\mathcal{U}$.\smallskip
\end{enumerate}

\noindent A pair $\left(  \overline{X},Z\right)  $ that satisfies (a)-(c), but
not necessarily (d) is called a \emph{weak }$\mathcal{Z}$\emph{-structure} on
$G$, while a $\mathcal{Z}$-structure on $G$ that satisfies the
additional\emph{ }condition:\smallskip

\begin{enumerate}
\item[(e)] the $G$-action on $X$ extends to a $G$-action on $\overline{X}%
$,\smallskip
\end{enumerate}

\noindent is called an $E\mathcal{Z}$\emph{-structure }(an\emph{ equivariant
}$\mathcal{Z}$\emph{-structure}) on $G$. A \emph{weak }$E\mathcal{Z}%
$\emph{-structure }is a weak $\mathcal{Z}$-structure that satisfies Condition
(e)\footnote{Bestvina informally introduced the definition of weak
$\mathcal{Z}$-structure in \cite{Be}, where he also commented on his decision
to omit Condition 5) from the definition of $\mathcal{Z}$-structure. Farrell
and Lafont introduced the term $E\mathcal{Z}$-structure in \cite{FL}%
.}.\medskip

\noindent Under the above circumstances, $Z$ is called a $\mathcal{Z}%
$\emph{-boundary}, a \emph{weak }$\mathcal{Z}$\emph{-boundary}, an
$E\mathcal{Z}$\emph{-boundary}, or a \emph{weak }$E\mathcal{Z}$%
\emph{-boundary}, as appropriate.

\begin{example}
[A sampling of $\mathcal{Z}$-structures]\label{Example: List of Z-structures}%
\hspace*{0.5in}

\begin{enumerate}
\item \noindent The $\mathcal{Z}$-com\-pact\-ific\-at\-ion $\overline{P_{\rho
}\left(  G\right)  }\allowbreak=\allowbreak P_{\rho}\left(  G\right)
\cup\partial G$ of Theorem \ref{Theorem: Bestvina-Mess} is an $E\mathcal{Z}%
$-structure whenever $G$ is a torsion-free $\delta$-hyperbolic
group.\smallskip

\item \noindent If a torsion-free group $G$ admits a geometric action on a
finite-dimensional proper CAT(0) space $X$, then $\overline{X}=X\cup
\partial_{\infty}X$ is an $E\mathcal{Z}$-structure\emph{ }for $G$.\smallskip

\item \noindent The Baumslag-Solitar group $BS\!(1,2)\allowbreak
=\allowbreak\left\langle a,b\mid bab^{-1}=a^{2}\right\rangle $ is put forth by
Bestvina as an example that is neither $\delta$-hyperbolic nor CAT(0), but
still admits a $\mathcal{Z}$-structure. The $\mathcal{Z}$-structure described
in \cite{Be} is also an $E\mathcal{Z}$-structure. The traditional cocompact
$EG$ 2-complex for $BS(1,2)$ is homeomorphic to $\mathbb{T}_{3}\times%
%TCIMACRO{\U{211d} }%
%BeginExpansion
\mathbb{R}
%EndExpansion
$, where $\mathbb{T}_{3}$ is the uniformly trivalent tree. Given the Euclidean
product metric, $\mathbb{T}_{3}\times%
%TCIMACRO{\U{211d} }%
%BeginExpansion
\mathbb{R}
%EndExpansion
$ is CAT(0), so adding the visual boundary gives a weak $\mathcal{Z}%
$-structure, with a suspended Cantor set as boundary. (Since the action of
$BS\left(  1,2\right)  $ on $\mathbb{T}_{3}\times%
%TCIMACRO{\U{211d} }%
%BeginExpansion
\mathbb{R}
%EndExpansion
$ is not by isometries, one cannot conclude that $BS\left(  1,2\right)  $ is
CAT(0)). This weak $\mathcal{Z}$-structure does not satisfy the nullity
condition---instead it provides a nice illustration of the failure of that
condition. Nevertheless, by using this structure as a starting point, a
genuine $\mathcal{Z}$-structure (in fact more than one) can be
obtained.\smallskip

\item \noindent Januszkiewicz and \'{S}wi\c{a}tkowski \cite{JS} have developed
a theory of \textquotedblleft systolic\textquotedblright\ spaces and groups
that parallels, but is distinct from, CAT(0) spaces and groups. Among systolic
groups are many that are neither $\delta$-hyperbolic nor CAT(0). A delicate
construction by Osajda and Przytycki in \cite{OP} places $E\mathcal{Z}%
$-structures on all torsion-free systolic groups.\smallskip

\item \noindent Dahmani \cite{Dah} showed that, if a group $G$ is hyperbolic
relative to a collection of subgroups, each of which admits a $\mathcal{Z}%
$-structure, then $G$ admits a $\mathcal{Z}$-structure.\smallskip

\item \noindent Tirel \cite{Ti} showed that if $G$ and $H$ each admit
$\mathcal{Z}$-structures (resp., $E\mathcal{Z}$-structures), then so do
$G\times H$ and $G\ast H$.\smallskip

\item \noindent In \cite{Gu3}, this author initiated a study of weak
$\mathcal{Z}$-structures on groups. Examples of groups shown to admit weak
$\mathcal{Z}$-structures include all type F groups that are simply connected
at infinity and all groups that are extensions of a type F group by a type F
group.\medskip
\end{enumerate}
\end{example}

\begin{exercise}
Verify the assertion made in Item (b) of Example
\ref{Example: List of Z-structures}.
\end{exercise}

\begin{exercise}
For $G\times H$ in Item (f), give an easy proof of the existence of weak
$\mathcal{Z}$-structures (resp., weak $E\mathcal{Z}$-structures). As with Item
(c), the difficult part is the nullity condition.\medskip
\end{exercise}

Given the wealth of examples, it becomes natural to ask whether all reasonably
nice groups admits $\mathcal{Z}$-structures. The following helps define
\textquotedblleft reasonably nice\textquotedblright.

\begin{proposition}
A group $G$ that admits a weak $\mathcal{Z}$-structure must have type $F$.

\begin{proof}
If $\left(  \overline{X},Z\right)  $ is a weak $\mathcal{Z}$-structure on $G$,
then $X=\overline{X}-Z$ is an $EG$ space and $X\rightarrow G\backslash X$ is a
covering projection. Since being an ENR is a local property, $G\backslash X$
is an ENR; it is also compact and aspherical. By Proposition
\ref{Proposition: ANR facts}, $G\backslash X$ is homotopy equivalent to a
finite complex $K$, which is a $K\!\left(  G,1\right)  $.\medskip
\end{proof}
\end{proposition}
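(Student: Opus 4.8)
The plan is to manufacture a compact aspherical ENR whose fundamental group is $G$, namely the orbit space $G\backslash X$ where $X=\overline{X}-Z$, and then appeal to the definition of type $F$ from \secref{Subsection: groups of type F} (a compact ANR $K(G,1)$ already qualifies) together with West's theorem, recorded in Proposition~\ref{Proposition: ANR facts}, to upgrade this to a finite $K(G,1)$ complex. So the whole argument reduces to two verifications about $X$ and its quotient.

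First I would check that $X$ is a \emph{contractible} ENR. Since $\overline{X}$ is a compact ER it is in particular a finite-dimensional ANR, and $X$ is an open subset of it; open subsets of ANRs are ANRs, and a subspace of a finite-dimensional space is finite-dimensional, so $X$ is an ENR. For contractibility I would invoke the homotopy-negligibility characterization of $\mathcal{Z}$-sets (the third bullet in the definition): taking the open set to be all of $\overline{X}$, the inclusion $X=\overline{X}-Z\hookrightarrow\overline{X}$ is a homotopy equivalence, and since $\overline{X}$ is an AR and hence contractible, $X$ is contractible. Next, the hypothesis that $G$ acts properly, freely, and cocompactly on $X$ makes $X$ an $EG$ space, so $G\backslash X$ is a $K(G,1)$; the quotient map $X\to G\backslash X$ is a covering projection, and because being an ENR is a local property it transports across the covering to $G\backslash X$, which is moreover compact by cocompactness. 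Thus $G\backslash X$ is a compact aspherical ENR with fundamental group $G$.

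I expect the substantive content of the proof to lie not in any single delicate estimate but in the imported ANR machinery: the homotopy-negligibility equivalence for $\mathcal{Z}$-sets, the descent of ANR-ness along the covering, and above all West's theorem that every compact ANR has finite homotopy type. Granting the last, $G\backslash X\simeq K$ for a finite complex $K$, which is then a finite $K(G,1)$, so $G$ is of type $F$. The only points that warrant a second look are that the bare $\mathcal{Z}$-set hypothesis suffices here (the nullity Condition (4) of a full $\mathcal{Z}$-structure plays no role), and that the local-property argument for $G\backslash X$ being an ENR is genuinely covered by standard ANR theory; both unwind routinely once the definitions are in hand.
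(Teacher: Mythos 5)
Your proposal is correct and follows essentially the same route as the paper: pass to the quotient $G\backslash X$, note it is a compact aspherical ENR (using that ENR-ness is a local property and hence descends along the covering projection), and invoke West's theorem from Proposition~\ref{Proposition: ANR facts} to obtain a finite $K(G,1)$. The only difference is that you spell out the contractibility of $X$ via the homotopy-negligibility characterization of $\mathcal{Z}$-sets, a step the paper leaves implicit in calling $X$ an $EG$ space.
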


\begin{Question}
[all are open]Does every group of type F admit a $\mathcal{Z}$-structure? an
$E\mathcal{Z}$-structure? a weak $\mathcal{Z}$-structure? a weak
$E\mathcal{Z}$-structure?\medskip
\end{Question}

The first of the above questions was asked explicitly by Bestvina in
\cite{Be}, where he also mentions the version for weak $\mathcal{Z}%
$-structures. The latter of those two was also mentioned in \cite{BM}, where
the weak $E\mathcal{Z}$-version is explicitly asked. The $E\mathcal{Z}%
$-version, was suggested by Farrell and Lafont in \cite{FL}.

In \cite{Be}, Bestvina prefaced his posing of the $\mathcal{Z}$-structure
Question with the warning: \emph{\textquotedblleft There seems to be no
systematic method of constructing boundaries of groups in general, so the
following is probably hopeless.\textquotedblright}\ In the years since that
question was posed, a general strategy has still not emerged. However, there
have been successes (such as those noted in Example
\ref{Example: List of Z-structures}) when attention is focused on a specific
group or class of groups. In private conversations and in presentations,
Bestvina has suggested some additional groups for consideration; notable among
these are $Out\left(  \mathbb{F}_{n}\right)  $ and the various
Baumslag-Solitar groups $BS\left(  m,n\right)  $. Farrell and Lafont have
specifically asked about $E\mathcal{Z}$-structures for torsion-free finite
index subgroups of $SL_{n}\left(
%TCIMACRO{\U{2124} }%
%BeginExpansion
\mathbb{Z}
%EndExpansion
\right)  $. A less explicit, but highly important class of groups, are the
fundamental groups of closed aspherical manifolds (or more generally,
Poincar\'{e} duality groups)---the hope being that well-developed tools from
manifold topology might provide an advantage.

Bestvina \cite[Lemma 1.4]{Be} has shown that if $G$ admits a $\mathcal{Z}%
$-structure $\left(  \overline{X},Z\right)  $, then every cocompact $EG$
complex $Y$ can be incorporated into a $\mathcal{Z}$-structure $\left(
\overline{Y},Z\right)  $. In particular, every cocompact $EG$ complex
satisfies the hypotheses of Theorem
\ref{Theorem: C-S Z-compactification Theorem}. So it seems natural to begin
with:\medskip

\begin{Question}
\label{Question: inward tameness for groups}Must the universal cover of a
finite aspherical complex be inward tame? absolutely inward tame?\medskip
\end{Question}

Remarkably, nothing seems to be known here. An early version of the question
goes back to \cite{Ge1}, with more explicit formulations found in \cite{Gu1}
and \cite{Fe1}.

Since, for fixed $G$, all cocompact $EG$ spaces are proper homotopy
equivalent, we can view \emph{inward tameness} as a property possessed by some
(possibly all) type $F$ groups. Moreover, if $G$ is inward tame, we can use
\S \ref{Subsection: Shape of the end of a space} to define the \emph{shape of
the end of }$G$. Specifically, for $X$ a cocompact $EG$, $\mathcal{S}%
\emph{h\!}\left(  \varepsilon\left(  G\right)  \right)  =\mathcal{S}%
\emph{h\!}\left(  \varepsilon\left(  X\right)  \right)  $. If $A\in
\mathcal{S}\emph{h\!}\left(  \varepsilon\left(  G\right)  \right)  $, we might
even view $A$ as a \textquotedblleft pre-$\mathcal{Z}$%
-boundary\textquotedblright\ and $\left(  X,A\right)  $ as a \textquotedblleft
pre-$\mathcal{Z}$-structure\textquotedblright\ for $G$.\medskip

As for applications of the various sorts of $\mathcal{Z}$-boundaries, we list
a few.

\begin{itemize}
\item As noted in the previous paragraph, even pre-$\mathcal{Z}$-boundaries
are well-defined up to shape. So a result like Corollary
\ref{Corollary: Corollary to Bestvina-Mess theorem} can be stated here, with
the same exceptions as noted above for CAT(0) groups.

\item In \cite{Be}, it is shown that the Bestvina-Mess formulas relating the
cohomological dimension of a torsion-free $G$ to the topological dimension of
$\partial G$ are again valid for $\mathcal{Z}$-boundaries. For this, the full
strength of Bestvina's definition of $\mathcal{Z}$-structure is used.

\item Carlsson and Pedersen \cite{CP} and Farrell and Lafont \cite{FL} have
shown that groups admitting an $E\mathcal{Z}$-structure satisfy the Novikov Conjecture.
\end{itemize}

\subsection{Further generalizations}

A pair of generalizations to the various ($E$)$\mathcal{Z}$-struc\-ture and
boundary definitions can be found in the literature. See, for example,
\cite{Dr3}.\medskip

\noindent\textbf{i)} \ Replace the requirement that $\overline{X}$ be an ER
with the weaker requirement that it be an AR.\smallskip

\noindent\textbf{ii)} \ Drop the freeness requirement for the $G$-action on
$X$.\medskip

\noindent Change i) simply allows $\overline{X}$ to be infinite-dimensional;
by itself that may be of little consequence. After all, $X$ is still a
cocompact $EG$, so there exists a finite $K\!\left(  G,1\right)  $ complex
$K$. If $Z$ is finite-dimensional, Bestvina's boundary swapping trick
(\cite[Lemma 1.4]{Be}) produces a new $\mathcal{Z}$-structure $\left(
\overline{Y},Z\right)  $ in which $\overline{Y}$ is an $ER$. This motivates
the question:\medskip

\begin{Question}
If $\left(  \overline{X},Z\right)  $ is a $\mathcal{Z}$-structure on a group
$G$ in the sense of \cite{Be}, except that $\overline{X}$ is only required to
be an ANR, must $Z$ still be finite-dimensional? (Compare to \cite[Th.12]%
{Swe}, which shows that a CAT(0) group boundary is finite-dimensional,
regardless of the CAT(0) space it bounds.)\footnote{\textsc{Added in proof.
}An affirmative answer to this question was recently obtained by Molly
Moran.}\medskip
\end{Question}

Change ii) is more substantial; it allows for groups with torsion.
$\mathcal{Z}$-structures of this sort are plentiful in the categories of
$\delta$-hyperbolic and CAT(0) groups, with Coxeter groups the prototypical
examples; so this generalization is very natural. There are, however,
complications. When $G$ has torsion, the notion of a cocompact $EG$ complex
must be replaced by that of a cocompact (or $G$-finite) $\underline{E}G$
complex, where $G$ may act with fixed points, subject to the requirement that
stabilizers of all finite subgroups are contractible subcomplexes. This notion
is fruitful and cocompact $\underline{E}G$ complexes, when they exist, are
well-defined up to $G$-equivariant homotopy equivalence, and more importantly
(from the point of view of these notes) up to proper homotopy equivalence.

In order to obtain the sorts of conclusions we are concerned with here,
positive answers to the following, questions would be of interest.

\begin{Question}
Suppose $G$ admits a $\mathcal{Z}$-structure $\left(  \overline{X},Z\right)
$, but with the $G$-action on $X$ not required to be free. If $\left(
\overline{X^{\prime}},Z^{\prime}\right)  $ is another such $\mathcal{Z}%
$-structure, is $X\overset{p}{\simeq}X^{\prime}$? More specifically, does
there exist a cocompact $\underline{E}G$ complex and must $X$ be proper
homotopy equivalent to that complex?
\end{Question}

\section{$\mathcal{Z}$-boundaries in manifold
topology\label{Section: Z-boundaries in manifolds topology}}

In this section we look specifically at $\mathcal{Z}$%
-com\-pact\-ific\-at\-ions and $\mathcal{Z}$-boundaries of manifolds, with an
emphasis on open manifolds and manifolds with compact boundary. In
\S \ref{Section: Z-boundaries in geometric group theory} we noted the
occurrence of many exotic group boundaries: Cantor sets, suspended Cantor
sets, Hawaiian earrings, Sierpinski carpets, and Pontryagin surfaces, to name
a few. By contrast, we will see that a $\mathcal{Z}$-boundary of an
$n$-manifold with compact boundary is always a homology $\left(  n-1\right)
$-manifold. That does not mean the $\mathcal{Z}$-boundary is always
nice---recall Example \ref{Example: Z-compactifying the Davis manifold}---but
it does mean that manifold topology forces some significant regularity on
potential $\mathcal{Z}$-boundaries. Here we take a look at that result and
some related applications. First, a quick introduction to homology manifolds.

\subsection{Homology manifolds}

If $N^{n}$ is an $n$-manifold with boundary, then each $x\in
\operatorname*{int}N^{n}$ has \emph{local homology }%
\[
\widetilde{H}_{\ast}\left(  N^{n},N^{n}-x\right)  \cong\widetilde{H}_{\ast
}\left(
%TCIMACRO{\U{211d} }%
%BeginExpansion
\mathbb{R}
%EndExpansion
^{n},%
%TCIMACRO{\U{211d} }%
%BeginExpansion
\mathbb{R}
%EndExpansion
^{n}-\mathbf{0}\right)  \cong\left\{
\begin{array}
[c]{cc}%
%TCIMACRO{\U{2124}}%
%BeginExpansion
\mathbb{Z}%
%EndExpansion
& \text{if }\ast=n\\
0 & \text{otherwise}%
\end{array}
\right.
\]
and each $x\in\partial N^{n}$ has local homology
\[
\widetilde{H}_{\ast}\left(  N^{n},N^{n}-x\right)  \cong\widetilde{H}_{\ast
}\left(
%TCIMACRO{\U{211d} }%
%BeginExpansion
\mathbb{R}
%EndExpansion
_{+}^{n},%
%TCIMACRO{\U{211d} }%
%BeginExpansion
\mathbb{R}
%EndExpansion
_{+}^{n}-\mathbf{0}\right)  \equiv0.
\]
This motivates the notion of a \textquotedblleft homology
manifold\textquotedblright.

Roughly speaking, $X$ is a \emph{homology n}-\emph{manifold} if
\[
\widetilde{H}_{\ast}\left(  X,X-x\right)  \allowbreak\cong\allowbreak
\widetilde{H}_{\ast}\left(
%TCIMACRO{\U{211d} }%
%BeginExpansion
\mathbb{R}
%EndExpansion
^{n},%
%TCIMACRO{\U{211d} }%
%BeginExpansion
\mathbb{R}
%EndExpansion
^{n}-\mathbf{0}\right)
\]
for all $x\in X$, and a \emph{homology n}-\emph{manifold with boundary} if%
\[
\widetilde{H}_{\ast}\left(  X,X-x\right)  \allowbreak\cong\allowbreak
\widetilde{H}_{\ast}\left(
%TCIMACRO{\U{211d} }%
%BeginExpansion
\mathbb{R}
%EndExpansion
^{n},%
%TCIMACRO{\U{211d} }%
%BeginExpansion
\mathbb{R}
%EndExpansion
^{n}-\mathbf{0}\right)  \text{\quad or\quad}\widetilde{H}_{\ast}\left(
X,X-x\right)  \allowbreak\cong\allowbreak\widetilde{H}_{\ast}\left(
%TCIMACRO{\U{211d} }%
%BeginExpansion
\mathbb{R}
%EndExpansion
_{+}^{n},%
%TCIMACRO{\U{211d} }%
%BeginExpansion
\mathbb{R}
%EndExpansion
_{+}^{n}-\mathbf{0}\right)  \equiv0
\]
for all $x\in X$. In the latter case we define%
\[
\partial X\equiv\left\{  x\in X\mid\widetilde{H}_{\ast}\left(  X,X-x\right)
=0\right\}
\]
and call this set the \emph{boundary of} $X.$\footnote{All homology here is
with $%
%TCIMACRO{\U{2124} }%
%BeginExpansion
\mathbb{Z}
%EndExpansion
$-coefficients. With the same strategy and an arbitrary coefficient ring, we
can also define $R$-\emph{homology manifold }and $R$-\emph{homology manifold
with boundary.}}

The reason for the phrase \textquotedblleft roughly speaking\textquotedblright%
\ in the above paragraph is because ordinary singular homology theory does not
always detect the behavior we are looking for. This issue is analogous to what
happened in shape theory; there, when singular theory told us that the
homology of the Warsaw circle $W$ was the same as that of a point, we
developed \v{C}ech homology theory to better capture the circle-like nature of
$W$. In the current setting, we again need to adjust our homology theory to
match our goals. Without going into detail, we simply state that, for current
purposes Borel-Moore homology, or equivalently Steenrod homology (see
\cite{BoMo}, \cite{Fe2}, or \cite{Mil}), should be used. Moreover, since
Borel-Moore homology of a pair requires that $A$ be closed in $X$, we
interpret $\widetilde{H}_{\ast}\left(  X,X-x\right)  $ to mean
$\underrightarrow{\lim}\widetilde{H}_{\ast}\left(  X,X-U\right)  $ where $U$
varies over all open neighborhoods of $x$.

With the above adjustment in place, we are nearly ready to discuss the
essentials of homology manifolds. Before doing so we note that there is an
entirely analogous theory of \emph{cohomology manifolds}, in which
Alexander-\v{C}ech theory is the preferred cohomology theory. We also note
that both Borel-Moore homology and Alexander-\v{C}ech cohomology theories
agree with the singular theories when $X$ is an ANR. An ANR homology manifold
is often called a \emph{generalized manifold}---a class of objects that plays
an essential role in geometric topology.

\begin{example}
Let $\Sigma^{n}$ be a non-simply connected $n$-manifold with the same $%
%TCIMACRO{\U{2124} }%
%BeginExpansion
\mathbb{Z}
%EndExpansion
$-homology as $\mathbb{S}^{n}$, e.g., the boundary of a Newman contractible
$\left(  n+1\right)  $-manifold. Then $X=\operatorname*{cone}\left(
\Sigma^{n}\right)  =\Sigma^{n}\times\left[  0,1\right]  /\left\{  \Sigma
^{n}\times1\right\}  $ is a homology $\left(  n+1\right)  $-manifold with
boundary, where $\partial X=\Sigma^{n}\times0$. The double of $X^{n+1}$, the
suspension of $\Sigma^{n+1}$, is a homology manifold that is homotopy
equivalent to $\mathbb{S}^{n+1}$. Both of these are ANRs, hence generalized
manifolds, but neither is an actual manifold.
\end{example}

\begin{example}
\label{Example: crushing out a cell-like set}Let $A$ be a compact proper
subset of the interior of an $n$-manifold $M^{n}$and let $Y=M^{n}/A$ be the
quotient space obtained by identifying $A$ to a point. If $A$ is cell-like
(i.e., has trivial shape), then $X$ is a generalized $n$-manifold. In many
cases $Y\approx M^{n}$, for example, when $A$ is a tame arc or ball. In other
cases---for example, $A$ a wild arc with non-simply connected complement or
$A$ a Newman contractible $n$-manifold embedded in $M^{n}$, $Y$ is not a manifold.
\end{example}

\begin{exercise}
Verify the unproven assertions in the above two exercises.
\end{exercise}

\begin{remark}
\emph{The subject of }Decomposition Theory\emph{ is motivated by Example
\ref{Example: crushing out a cell-like set}. There, the following question is
paramount: Given a pairwise disjoint collection }$\mathcal{G}$\emph{ of
cell-like compacta in a manifold }$M^{n\text{ }}$\emph{satisfying a certain
niceness condition (an }upper semicontinuous decomposition\emph{), when is the
quotient space} $M/\mathcal{G}$ \emph{a manifold? Although the premise sounds
simple and very specific, results from this area have had broad-ranging
impacts on geometric topology, including: existence of exotic involutions on
spheres, existence of exotic manifold factors (non-manifolds }$X$\emph{ for
which }$X\times%
%TCIMACRO{\U{211d} }%
%BeginExpansion
\mathbb{R}
%EndExpansion
$\emph{ is a manifold), existence of non-PL triangulations of manifolds, and a
solution to the 4-dimensional Poincar\'{e} Conjecture. The Edwards-Quinn
Manifold Recognition Theorem, which will be used shortly, belongs to this
subject. References to the \textquotedblleft Moore-Bing school of
topology\textquotedblright\ usually indicate work by R.L. Moore, R H Bing, and
their mathematical descendents in this area. See }\cite{Daver}\emph{ for a
comprehensive discussion of this topic.}
\end{remark}

\begin{exercise}
\label{Exercise: minimal non-ANR homology manifold}Here we describe a simple
non-ANR homology manifold. Let $H^{n}$ be non-simply connected $n$-manifold
with the homology of a point and a boundary homeomorphic to $\mathbb{S}^{n-1}%
$, and let $\left\{  B_{i}^{n}\right\}  _{i=1}^{\infty}$ be a sequence of
pairwise disjoint round $n$-balls in $\mathbb{S}^{n}$ converging to a point
$p$. Create $X$ by removing the interiors of the $B_{i}^{n}$ and replacing
each with $H_{i}^{n}\approx H^{n}$. Topologize $X$ so that each neighborhood
of $p$ in $X$ contains all but finitely many of the $H_{i}^{n}$. The result is
a homology manifold (some knowledge of Borel-Moore homology is needed to
verify this fact).

Explain why $X$ is not an ANR. Then show that $X$ does not satisfy the
definition of homology manifold if singular homology is used.
\end{exercise}

\begin{exercise}
Show that the $\mathcal{Z}$-boundary attached to the Davis manifold described
in Example \ref{Example: Z-compactifying the Davis manifold} is homeomorphic
to the non-ANR homology manifold described in Exercise
\ref{Exercise: minimal non-ANR homology manifold} (some attention must be paid
to orientations).
\end{exercise}

For now, the reader may wish to treat the following theorem as a set of
axioms; \cite{AG} shows how the classical literature can be woven together to
obtain proofs.

\begin{theorem}
[Fundamental facts about (co)homology manifolds]%
\label{Theorem: Fundamental properties of homology manifolds}\hspace*{0.5in}

\begin{enumerate}
\item A space $X$ is a homology $n$-manifold if and only if it is a cohomology
$n$-manifold.

\item The boundary of a (co)homology $n$-manifold is a (co)homology $\left(
n-1\right)  $-\allowbreak man\-i\-fold without boundary.

\item The union of two (co)homology $n$-manifolds with boundary along a common
boundary is a (co)homology $n$-manifold.

\item (Co)Homology manifolds are locally path connected.
\end{enumerate}
\end{theorem}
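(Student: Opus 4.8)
The plan is to reduce all four assertions to the local invariant $\widetilde{H}_{*}(X,X-x)=\underrightarrow{\lim}\,\widetilde{H}_{*}(X,X-U)$ (with $U$ ranging over open neighborhoods of $x$) and its Alexander-\v{C}ech cohomological counterpart, the three essential tools being excision, a Mayer-Vietoris sequence valid for these direct-limited Borel-Moore groups, and the local duality pairing Borel-Moore homology against Alexander-\v{C}ech cohomology. I would establish the parts in the order (1), (2), (3), (4). For part (1), the very reason for selecting these two theories is that local duality interchanges them: at each $x$ the local homology $\widetilde{H}_{*}(X,X-x)$ and the local cohomology with supports $\check{H}^{*}_{\{x\}}(X)$ determine one another, and concentration in degree $n$ with value $\mathbb{Z}$ for one is equivalent to the same for the other. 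First I would set up this duality sheaf-theoretically (following Borel-Moore, \cite{BoMo}), then observe that the homology-manifold and cohomology-manifold conditions are precisely the two sides of the resulting point-by-point isomorphism, with the same boundary set in each description; thereafter the qualifier ``(co)'' may be dropped and I work with whichever theory is convenient.

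Part (2) is where I expect the real difficulty. Because $\partial X$ is defined purely by the vanishing of local homology, I must first show $\partial X$ is closed and then recover its \emph{internal} local homology $\widetilde{H}_{*}(\partial X,\partial X-x)$ from the (trivial) local homology of $X$ at $x$. The cleanest route I know is the classical one: produce a long exact sequence relating the vanishing local homology of $X$ at a boundary point, the local homology of the interior as seen from $x$, and the local homology of $\partial X$ at $x$; local duality then converts the $n$-dimensional interior data into the statement that $\widetilde{H}_{*}(\partial X,\partial X-x)\cong\widetilde{H}_{*}(\mathbb{R}^{n-1},\mathbb{R}^{n-1}-\mathbf{0})$, exhibiting $\partial X$ as a homology $(n-1)$-manifold with empty boundary. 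This step uses the full strength of Alexander-\v{C}ech duality and is the technical heart of the theorem; rather than reprove it I would lean on the weaving-together of the classical literature carried out in \cite{AG}.

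Part (3) then follows from (2) by a local Mayer-Vietoris computation. Write $X=X_{1}\cup_{B}X_{2}$ with $B=\partial X_{1}=\partial X_{2}$. At a point $x$ interior to some $X_{i}$, excision identifies $\widetilde{H}_{*}(X,X-x)$ with $\widetilde{H}_{*}(X_{i},X_{i}-x)$, so the condition is inherited. The work is at $x\in B$, where I would feed the Mayer-Vietoris sequence of $\{X_{1},X_{2}\}$ (passed to the limit over neighborhoods of $x$) the inputs $\widetilde{H}_{*}(X_{i},X_{i}-x)=0$ (the defining property of a boundary point) and $\widetilde{H}_{*}(B,B-x)\cong\widetilde{H}_{*}(\mathbb{R}^{n-1},\mathbb{R}^{n-1}-\mathbf{0})$ from part (2). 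With the middle terms vanishing, the sequence collapses to
\[
\widetilde{H}_{k}(X,X-x)\cong\widetilde{H}_{k-1}(B,B-x),
\]
which is $\mathbb{Z}$ for $k=n$ and $0$ otherwise, exactly the condition that $x$ be an interior point of a homology $n$-manifold. Verifying that such a Mayer-Vietoris sequence is genuinely available for these direct-limited Borel-Moore groups, and commutes with the limit, is the one routine-but-delicate point I would have to pin down.

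Finally, the direct-limit definition already yields homological local connectedness: vanishing of $\underrightarrow{\lim}\,\widetilde{H}_{0}(X,X-U)$ (together with the other low-degree local groups) says that small punctured neighborhoods do not locally separate, producing arbitrarily small connected neighborhoods of each point. Upgrading homological local connectedness to genuine local path connectedness is a classical theorem of Wilder for finite-dimensional locally compact homology manifolds, which I would invoke. It is worth noting that the non-ANR example of Exercise~\ref{Exercise: minimal non-ANR homology manifold} shows that local path connectivity---unlike local contractibility---survives the loss of the ANR property, so no ANR hypothesis is needed for part (4).
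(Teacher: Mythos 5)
The paper offers no proof of this theorem to compare against: immediately before the statement it tells the reader to ``treat the following theorem as a set of axioms'' and cites \cite{AG} for how the classical literature can be woven together to obtain proofs. Your outline is, in substance, a faithful sketch of that classical assembly: local Borel--Moore/Alexander--\v{C}ech duality for (1); the separation-theorem machinery for (2), which you correctly identify as the technical heart and, like the paper, defer to \cite{AG}; a relative Mayer--Vietoris for the closed cover $\left\{ X_{1},X_{2}\right\}$, passed to the limit over neighborhoods of $x\in B$, for (3) --- the collapse to $\widetilde{H}_{k}\left( X,X-x\right) \cong \widetilde{H}_{k-1}\left( B,B-x\right)$ is exactly the right computation, and it also shows the glued space has empty boundary as required; and local connectedness upgraded to local path connectedness for (4). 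The one place I would press you is (4): the honest chain is local homology conditions $\Rightarrow$ cohomological local connectedness (Bredon/Wilder) $\Rightarrow$ local connectedness $\Rightarrow$ local path connectedness, where the last step uses the Mazurkiewicz--Moore arc theorem for connected, locally connected, complete (here locally compact separable) metric spaces rather than anything homological; a single appeal to ``Wilder'' blurs that the final upgrade is point-set topology, not homology-manifold theory. With that caveat, your proposal is consistent with the route the paper intends via \cite{AG}.
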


\begin{corollary}
\label{Corollary: homology properties Z-compactified manifolds}Let $M^{n}$ be
an open $n$-manifold (or even just an open generalized manifold) and
$\overline{M^{n}}=M^{n}\cup Z$ be a $\mathcal{Z}$-com\-pact\-ific\-at\-ion. Then

\begin{enumerate}
\item $\overline{M^{n}}$ is a homology $n$-manifold with boundary,

\item $\partial\overline{M^{n}}=Z$, and

\item $Z$ is a homology $\left(  n-1\right)  $-manifold.
\end{enumerate}
\end{corollary}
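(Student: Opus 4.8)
The plan is to verify, point by point, the local (co)homology condition in the definition of a homology $n$-manifold with boundary, splitting $\overline{M^{n}}$ into the two cases $x\in M^{n}$ and $x\in Z$. Since the homology and cohomology conditions are equivalent by part (1) of Theorem~\ref{Theorem: Fundamental properties of homology manifolds}, and since the decisive input is homotopy-theoretic, I would run the whole point-by-point computation in Alexander--\v{C}ech cohomology---which is homotopy invariant and satisfies the exactness axiom for arbitrary pairs---and only afterward restate the conclusion for Borel--Moore/Steenrod homology via part (1). Recall that $\overline{M^{n}}$ is a compact ANR (this is built into the definition of a $\mathcal{Z}$-com\-pact\-ific\-at\-ion), $M^{n}$ is open in it, and $Z=\overline{M^{n}}-M^{n}$ is a $\mathcal{Z}$-set.

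For an interior point $x\in M^{n}$, the local (co)homology is computed inside $M^{n}$: because $M^{n}$ is open, the direct limit defining $\check H^{*}(\overline{M^{n}},\overline{M^{n}}-x)$ may be taken over neighborhoods $U$ with $\overline{U}\subseteq M^{n}$, and excision identifies these groups with the local cohomology of $M^{n}$ at $x$. Since $M^{n}$ is an open generalized $n$-manifold, this is the value $\widetilde{H}_{*}(\mathbb{R}^{n},\mathbb{R}^{n}-\mathbf{0})$; so every point of $M^{n}$ is a manifold point. The crux is the case $x\in Z$, where I must show the local (co)homology \emph{vanishes}. The key observation is that the singleton $\{x\}$ is itself a $\mathcal{Z}$-set in $\overline{M^{n}}$: the homotopy $H$ that instantly homotopes $\overline{M^{n}}$ off $Z$ satisfies $H_{t}(\overline{M^{n}})\subseteq\overline{M^{n}}-Z\subseteq\overline{M^{n}}-\{x\}$ for all $t>0$, and $\{x\}$ is closed, so it meets the second bullet of the $\mathcal{Z}$-set definition. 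Applying the third characterization of $\mathcal{Z}$-sets with the open set $U=\overline{M^{n}}$, the inclusion $\overline{M^{n}}-x\hookrightarrow\overline{M^{n}}$ is a homotopy equivalence. Homotopy invariance of Alexander--\v{C}ech cohomology then makes $\check H^{*}(\overline{M^{n}})\to\check H^{*}(\overline{M^{n}}-x)$ an isomorphism, and the long exact sequence of the pair forces $\check H^{*}(\overline{M^{n}},\overline{M^{n}}-x)=0$. Hence every point of $Z$ is a boundary point. This gives (1), that $\overline{M^{n}}$ is a (co)homology $n$-manifold with boundary, and since the boundary is by definition the set of points with vanishing local (co)homology, also (2), namely $\partial\overline{M^{n}}=Z$. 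Finally, (3) is immediate from part (2) of Theorem~\ref{Theorem: Fundamental properties of homology manifolds}: the boundary of a (co)homology $n$-manifold is a (co)homology $(n-1)$-manifold without boundary, so $Z$ is a homology $(n-1)$-manifold.

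I expect the main obstacle to be bookkeeping about homology theories rather than anything conceptual. The local homology appearing in the definition is Borel--Moore/Steenrod homology, which is \emph{not} homotopy invariant, so the homotopy-equivalence argument at $Z$-points cannot be run there directly; it must be carried out in Alexander--\v{C}ech cohomology and then transported back through the equivalence in part (1) of Theorem~\ref{Theorem: Fundamental properties of homology manifolds}. A related point needing care is the precise meaning of the relative group $\check H^{*}(\overline{M^{n}},\overline{M^{n}}-x)$ when $\overline{M^{n}}-x$ is open, and the excision statement used for interior points; here it is convenient that both $\overline{M^{n}}$ and $\overline{M^{n}}-x$ are ANRs, so \v{C}ech and singular cohomology agree on them and the long exact sequence of the pair is unproblematic. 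Once these identifications are fixed, the homotopy equivalence produced by the $\mathcal{Z}$-set property does all the real work.
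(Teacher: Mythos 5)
Your proof is correct and follows essentially the same route as the paper's: the whole argument rests on the observation that $\{z\}$ is a $\mathcal{Z}$-set for each $z\in Z$ (a closed subset of a $\mathcal{Z}$-set is a $\mathcal{Z}$-set), so that $\overline{M^{n}}-z\hookrightarrow\overline{M^{n}}$ is a homotopy equivalence and the relative groups vanish by the long exact sequence, with (3) then following from part (2) of Theorem~\ref{Theorem: Fundamental properties of homology manifolds}. The only difference is bookkeeping: where you detour through Alexander--\v{C}ech cohomology and the homology/cohomology-manifold equivalence, the paper simply notes that since $\overline{M^{n}}$ is an ANR one may replace Borel--Moore by singular homology and argue there directly.
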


\begin{proof}
[Proof of Corollary]For (a) and (b) we need only check that $H_{\ast}\left(
\overline{M^{n}},\overline{M^{n}}-z\right)  \equiv0$ at each $z\in Z$. Since
$\overline{M^{n}}$ is an ANR, we are free to use singular homology in place of
Borel-Moore theory. A closed subset of a $\mathcal{Z}$-set is a $\mathcal{Z}%
$-set, so $\left\{  z\right\}  $ is a $\mathcal{Z}$-set in $\overline{M^{n}}$,
and hence, $\overline{M^{n}}-z\hookrightarrow\overline{M^{n}}$ is a homotopy
equivalence. The desired result now follows from the long exact sequence for pairs.

Item (c) is now immediate from Theorem
\ref{Theorem: Fundamental properties of homology manifolds}.
\end{proof}

\begin{exercise}
Show that if $M^{n}$ is a CAT(0) $n$-manifold, then every metric sphere in
$M^{n}$ is a homology $\left(  n-1\right)  $-manifold.
\end{exercise}

\begin{remark}
\label{Remark: Poincare duality for homology manifolds}\emph{In addition to
Theorem \ref{Theorem: Fundamental properties of homology manifolds}, it is
possible to define orientation for homology manifolds and prove a version of
Poincar\'{e} duality for the orientable ones. With those tools, one can also
prove, for example, that any }$\mathcal{Z}$\emph{-boundary of a contractible
open n-manifold has the (Borel-Moore) homology of an }$\left(  n-1\right)
$\emph{-sphere.}
\end{remark}

Before moving to applications, we state without proof one of the most
significant results in this area. A nice exposition can be found in
\cite{Daver}.

\begin{theorem}
[Edwards-Quinn Manifold Recognition Theorem]Let $X^{n}$ ($n\geq5$) be a
generalized homology $n$-manifold without boundary and suppose $X^{n}$
contains a nonempty open set $U\approx%
%TCIMACRO{\U{211d} }%
%BeginExpansion
\mathbb{R}
%EndExpansion
^{n}$. Then $X^{n}$ is an $n$-manifold if and only if it satisfies the
disjoint disks property (DDP).
\end{theorem}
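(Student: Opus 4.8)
The plan is to treat the two implications separately: the forward (``only if'') direction by general position, and the substantive (``if'') direction by combining two deep results from geometric topology---Quinn's resolution theorem and Edwards' cell-like approximation theorem.

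First I would dispatch the easy direction. Suppose $X^n$ is a topological $n$-manifold, and let $f,g:\mathbb{B}^2\rightarrow X^n$ be a pair of maps to be pulled apart. Working in local Euclidean charts, one approximates $f$ and $g$ by maps whose images lie in finite unions of coordinate balls, replaces them there by PL maps, and applies general position: two generic $2$-dimensional images inside an $n$-manifold are disjoint precisely when $2+2<n$, i.e.\ when $n\geq 5$. This produces arbitrarily close approximations $f',g'$ with disjoint images, verifying the DDP. Note that this is the only place the hypothesis $n\geq 5$ enters on this side, and it is exactly where it is needed.

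For the converse, assume $X^n$ is a (connected) generalized $n$-manifold satisfying the DDP and containing an open set $U\approx\mathbb{R}^n$. The first step is to produce a \emph{resolution}: a cell-like map $f:M^n\rightarrow X^n$ from an honest $n$-manifold $M^n$. Here I would invoke Quinn's resolution obstruction $i(X)$, a locally constant invariant of a generalized manifold that vanishes exactly when $X$ admits a resolution and that agrees with its value at any manifold point. Since $U$ supplies a manifold point, where the obstruction coincides with that of $\mathbb{R}^n$ and is therefore trivial, local constancy together with connectedness forces $i(X)$ to vanish identically; hence a cell-like map $f:M^n\rightarrow X^n$ exists. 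The second step converts this resolution into a homeomorphism: because $X^n$ is a finite-dimensional ANR homology manifold satisfying the DDP, Edwards' cell-like approximation theorem guarantees that $f$ can be approximated arbitrarily closely by homeomorphisms, so $M^n\approx X^n$ and $X^n$ is a manifold.

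I expect the main obstacle to be the existence of the resolution itself. Quinn's theorem rests on controlled surgery and controlled $h$-cobordism machinery and is by far the deepest input, whereas Edwards' approximation theorem, though also substantial, proceeds in the familiar Moore--Bing tradition of shrinking a cell-like decomposition using the extra room provided by the DDP (which is itself the condition Cannon isolated in his study of the double suspension). A survey-level account would therefore quote both results as black boxes and concentrate on the two conceptual points that make the argument go: why a single manifold point trivializes the resolution obstruction, and how resolvability plus the DDP interlock to yield an actual homeomorphism.
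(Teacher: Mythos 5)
The paper states this theorem explicitly \emph{without proof}, deferring to Daverman's book \cite{Daver} for an exposition, so there is no in-paper argument to compare against. Your outline is the correct and standard architecture: general position in charts handles the ``only if'' direction (and correctly locates where $n\geq 5$ is used), while the converse is Quinn's resolution theorem---with the Euclidean open set $U$ serving precisely to trivialize the locally constant resolution obstruction $i(X)$---followed by Edwards' cell-like approximation theorem to upgrade the resolution $f:M^{n}\rightarrow X^{n}$ to a homeomorphism via the DDP. The one point worth flagging is your parenthetical ``(connected)'': since $i(X)$ is only locally constant, a single Euclidean chart forces resolvability only of the component containing it, so either connectedness should be assumed or the obstruction argument applied component by component (each component of a manifold $X^{n}$ would contain Euclidean open sets anyway, but for the ``if'' direction one needs a manifold point in every component). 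At the survey level at which both you and the paper are operating, quoting Quinn and Edwards as black boxes is exactly what is intended.
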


A space $X$ satisfies the DDP if, for any pair of maps $f,g:D^{2}\rightarrow
X$ and any $\varepsilon>0$, there exist $\varepsilon$-approximations
$f^{\prime}$ and $g^{\prime}$ of $f$ and $g$, so that $f^{\prime}\left(
D^{2}\right)  \cap g^{\prime}\left(  D^{2}\right)  =\varnothing$.

\subsection{Some applications of $\mathcal{Z}$-boundaries to manifold
topology}

Most results in this section come from \cite{AG}. Here we provide only the
main ideas; for details, the reader should consult the original paper. For the
sake of brevity, we focus on high-dimensional results. In many cases,
low-dimensional analogs are true for different reasons.

Let $\overline{M^{n}}=M^{n}\cup Z$ be a $\mathcal{Z}$-com\-pact\-ific\-at\-ion
of an open $n$-manifold. Since $\overline{M^{n}}$ need not be a manifold with
boundary, the following is a pleasant surprise.

\begin{theorem}
\label{Theorem: Gluing along a Z-boundary}Suppose $\overline{M^{n}}=M^{n}\cup
Z$ and $\overline{N^{n}}=N^{n}\cup Z^{\prime}$ are $\mathcal{Z}$%
-com\-pact\-ific\-at\-ion of open $n$-manifolds ($n>4$) and $h:Z\rightarrow
Z^{\prime}$ is a homeomorphism. Then $P^{n}=\overline{M^{n}}\cup_{h}%
\overline{N^{n}}$ is a closed $n$-manifold.

\begin{proof}
[Sketch of proof]Theorem
\ref{Theorem: Fundamental properties of homology manifolds} asserts that
$P^{n}$ is a homology $n$-manifold. From there one uses delicate properties of
homology manifolds to prove that $P^{n}$ is locally contractible at each point
on the \textquotedblleft seam\textquotedblright, $Z=Z^{\prime}$; hence,
$P^{n}$ is an ANR. Another delicate, but more straightforward, argument
\ (this part using the fact that $Z$ and $Z^{\prime}$ are $\mathcal{Z}$-sets)
verifies the DDP for $P^{n}$. Open subsets of $P^{n}$ homeomorphic to $%
%TCIMACRO{\U{211d} }%
%BeginExpansion
\mathbb{R}
%EndExpansion
^{n}$ are plentiful in the manifolds $M^{n}$ and $N^{n}$, so Edwards-Quinn can
be applied to complete the proof.
\end{proof}
\end{theorem}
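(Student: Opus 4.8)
The plan is to recognize $P^n$ as a manifold by verifying the three hypotheses of the Edwards--Quinn Manifold Recognition Theorem: that $P^n$ is a generalized $n$-manifold (an ANR that is a homology $n$-manifold) with empty boundary, that it contains an open subset homeomorphic to $\mathbb{R}^n$, and that it satisfies the DDP. Since $n>4$ and $P^n$ is compact, this will force $P^n$ to be a closed $n$-manifold.

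First I would pin down the homology structure. By Corollary~\ref{Corollary: homology properties Z-compactified manifolds}, each of $\overline{M^n}$ and $\overline{N^n}$ is a compact, finite-dimensional homology $n$-manifold with boundary, with $\partial\overline{M^n}=Z$ and $\partial\overline{N^n}=Z'$. Because $h\colon Z\to Z'$ is a homeomorphism, $P^n$ is the union of two homology $n$-manifolds along their entire common boundary, so part~(3) of Theorem~\ref{Theorem: Fundamental properties of homology manifolds} shows that $P^n$ is a homology $n$-manifold; as the two boundaries have been wholly identified, $P^n$ has empty boundary, and it is visibly compact. The nonempty open manifold $M^n$ sits in $P^n$ as an open subset, so any coordinate chart supplies the required $U\approx\mathbb{R}^n$.

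The principal obstacle is showing that $P^n$ is an ANR, equivalently (as $P^n$ is finite-dimensional, locally compact, and separable metric) that it is locally contractible; see Proposition~\ref{Proposition: ANR facts}. Away from the seam this is immediate, since there $P^n$ coincides with the open manifolds $M^n$ or $N^n$. The trouble is at points $z\in Z=Z'$: although $\overline{M^n}$ and $\overline{N^n}$ are themselves compact ANRs, their common subset $Z$ need not be an ANR (recall the non-ANR $\mathcal{Z}$-boundary of a Davis manifold), and a union of two ANRs along a non-ANR subset can fail to be locally contractible. The $\mathcal{Z}$-set hypothesis is exactly the feature that saves the day. For an open set $U\cap\overline{M^n}$ of $\overline{M^n}$, the third characterization of $\mathcal{Z}$-sets gives a homotopy equivalence $U\cap M^n\hookrightarrow U\cap\overline{M^n}$, and likewise $U\cap N^n\hookrightarrow U\cap\overline{N^n}$, so $Z$ is homotopy negligible in each half; thus the small-scale homotopy of $P^n$ about $z$ is controlled by the genuine manifolds $M^n$ and $N^n$ to either side. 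I would marry this homotopy negligibility to the local-homology control furnished by the homology-manifold structure (the local homology at $z$ being that of $\mathbb{R}^n$) in a Hurewicz--Whitehead style argument, showing that arbitrarily small neighborhoods of $z$ are null-homotopic in slightly larger ones. Forcing the local $\pi_1$ (and higher homotopy) to vanish uniformly and locally, while using the two one-sided $\mathcal{Z}$-set homotopies in a way that respects continuity across the seam, is the delicate core of the whole theorem.

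Finally I would verify the DDP, which is delicate but more routine. Given $f,g\colon D^2\to P^n$ and $\varepsilon>0$, I would use the instant $\mathcal{Z}$-set homotopies of $\overline{M^n}$ off $Z$ and of $\overline{N^n}$ off $Z'$ to gain clearance near the seam, displacing the relevant portions of $f$ toward $M^n$ and of $g$ toward $N^n$; since $M^n$ and $N^n$ are disjoint in $P^n$, this removes the intersections of $f(D^2)$ with $g(D^2)$ that lie on $Z=Z'$. The remaining intersections sit in the open $n$-manifold $M^n\sqcup N^n$, where, because $n\geq5$ and $2+2<n$, ordinary general position produces disjoint $\varepsilon$-approximations. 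With all three hypotheses confirmed, the Edwards--Quinn theorem identifies $P^n$ as an $n$-manifold; being compact and boundaryless, it is a closed $n$-manifold.
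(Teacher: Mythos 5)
Your proposal follows essentially the same route as the paper's own sketch: establish that $P^{n}$ is a homology $n$-manifold via the union-along-boundaries property, treat local contractibility at the seam (using the homotopy negligibility of the $\mathcal{Z}$-sets together with local homology control) as the delicate core yielding the ANR property, verify the DDP by pushing disks off the seam with the instant $\mathcal{Z}$-set homotopies and then invoking general position in the open manifolds, and finish with Edwards--Quinn using charts from $M^{n}$. The identification of which step is genuinely hard and which ingredients drive each step matches the intended argument from \cite{AG}.
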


\begin{corollary}
\label{Corollary: Doubling along a Z-boundary}The double of $\overline{M^{n}}$
along $Z$ is an $n$-manifold. If $M^{n}$ is contractible, that double is
homeomorphic to $\mathbb{S}^{n}$, and there is an involution of $\mathbb{S}%
^{n}$ with $Z$ as its fixed set.

\begin{proof}
[Sketch of proof]$\operatorname{Double}(\overline{M^{n}})\approx\mathbb{S}%
^{n}$ will follow from the Generalized Poincar\'{e} conjecture if we can show
that it is a simply connected manifold with the homology of an $n$-sphere. The
involution interchanges the two copies of $\overline{M^{n}}$.

That $\operatorname{Double}(\overline{M^{n}})$ has the homology of
$\mathbb{S}^{n}$ is a consequence of Mayer-Vietoris and Remark
\ref{Remark: Poincare duality for homology manifolds}. Since $\overline{M^{n}%
}$ is simply connected, simple connectivity of $\operatorname{Double}%
(\overline{M^{n}})$ would follow directly from van Kampen's Theorem if the
intersection between the two copies was nice. Instead a controlled variation
on the traditional proof of van Kampen's Theorem is employed. Use the fact
that homology manifolds are locally path connected to divide an arbitrary loop
into loops lying in one or the other copy of $\overline{M^{n}}$, where they
can be contracted. Careful control is needed, and the fact that $\overline
{M^{n}}$ is locally contractible is important.
\end{proof}
\end{corollary}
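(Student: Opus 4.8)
The plan is to derive both assertions from Theorem \ref{Theorem: Gluing along a Z-boundary} together with a homology/homotopy analysis of the resulting closed manifold. First I would produce the double as a special case of the gluing theorem: take $\overline{N^n}=\overline{M^n}$, $Z'=Z$, and let $h=\operatorname{id}_Z$. Theorem \ref{Theorem: Gluing along a Z-boundary} then immediately gives that $\operatorname{Double}(\overline{M^n})=\overline{M^n}\cup_{\operatorname{id}}\overline{M^n}$ is a closed $n$-manifold, settling the first sentence. The involution is the tautological one: the map $\tau$ interchanging the two copies of $\overline{M^n}$ is a well-defined homeomorphism of the double (it respects the identification along $Z$ precisely because we glued by the identity), it is of order two, and its fixed-point set is exactly the seam $Z$, since a point is fixed if and only if it lies in both copies.

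It remains, under the hypothesis that $M^n$ is contractible, to identify $D:=\operatorname{Double}(\overline{M^n})$ with $\mathbb{S}^n$. Since $\mathcal{Z}$-compactification preserves homotopy type, $\overline{M^n}$ is itself contractible. My strategy is to show that $D$ is a simply connected closed $n$-manifold with the integral homology of $\mathbb{S}^n$, and then invoke the Generalized Poincar\'{e} Conjecture (valid for topological manifolds in the relevant range $n>4$) to conclude $D\approx\mathbb{S}^n$. The homology computation is the routine half. Writing $D$ as the union of the two contractible copies $A,B$ of $\overline{M^n}$ meeting in $Z$, the Mayer--Vietoris sequence collapses, because $\widetilde{H}_\ast(A)=\widetilde{H}_\ast(B)=0$, to isomorphisms $\widetilde{H}_k(D)\cong\widetilde{H}_{k-1}(Z)$. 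By Remark \ref{Remark: Poincare duality for homology manifolds}, $Z$ has the homology of $\mathbb{S}^{n-1}$, so $\widetilde{H}_k(D)$ is $\mathbb{Z}$ for $k=n$ and $0$ otherwise---precisely the homology of $\mathbb{S}^n$. One technical point to dispatch is that the closed cover $\{A,B\}$ must be arranged to be excisive; this is harmless since $D$ is an ANR manifold and the $\mathcal{Z}$-set inclusions are cofibrations.

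The delicate half---and the step I expect to be the main obstacle---is simple connectivity. The naive approach via van Kampen's Theorem fails because the seam $Z$ is typically not locally nice: it need not be an ANR, nor locally simply connected, so the classical statement (which wants an open, nicely intersecting cover) does not apply directly. Here I would exploit that $D$, being a homology manifold, is locally path connected by Theorem \ref{Theorem: Fundamental properties of homology manifolds}(4), and that $Z$ is connected (a homology $(n-1)$-sphere with $n-1\geq 4$). The idea is to run a controlled version of the van Kampen argument: given a loop in $D$, use local path connectedness to subdivide it into finitely many arcs, each pushed (by small adjustments supported near the seam) entirely into one of the two copies $A$ or $B$; since each copy is contractible, the resulting subloops contract, and reassembling the nullhomotopies contracts the original loop. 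Keeping careful control over these pushes is what makes the argument succeed where the classical theorem does not. Once $\pi_1(D)=1$ is in hand, the Hurewicz and Whitehead theorems upgrade the sphere-like homology to a homotopy equivalence $D\simeq\mathbb{S}^n$, and the Generalized Poincar\'{e} Conjecture yields $D\approx\mathbb{S}^n$. Transporting $\tau$ across this homeomorphism produces the desired involution of $\mathbb{S}^n$ whose fixed set is $Z$.
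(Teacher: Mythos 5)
Your proposal is correct and follows essentially the same route as the paper: manifoldness of the double via the gluing theorem, the tautological swap involution with fixed set $Z$, Mayer--Vietoris plus the duality remark for the homology of $\mathbb{S}^{n}$, a controlled van Kampen argument using local path connectedness of homology manifolds for simple connectivity, and the Generalized Poincar\'{e} Conjecture to finish. The only cosmetic differences are that you make explicit the excisiveness issue in Mayer--Vietoris and the Hurewicz/Whitehead step, both of which the paper leaves implicit.
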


\begin{theorem}
\label{Theorem: Z-boundaries determine interiors}If contractible open
manifolds $M^{n}$ and $N^{n}$ ($n>4$) admit $\mathcal{Z}$%
-com\-pact\-ific\-at\-ions with homeomorphic $\mathcal{Z}$-boundaries, then
$M^{n}\approx N^{n}$.
\end{theorem}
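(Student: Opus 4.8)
The plan is to leverage the two preceding results, namely Corollary~\ref{Corollary: Doubling along a Z-boundary} and Theorem~\ref{Theorem: Gluing along a Z-boundary}, together with an observation about removing $\mathcal{Z}$-sets from spheres. First I would apply Corollary~\ref{Corollary: Doubling along a Z-boundary} to $\overline{M^{n}}=M^{n}\cup Z$: since $M^{n}$ is a contractible open manifold, the double of $\overline{M^{n}}$ along $Z$ is homeomorphic to $\mathbb{S}^{n}$, and there is an involution of $\mathbb{S}^{n}$ whose fixed set is exactly $Z$. In particular, $\mathbb{S}^{n}$ decomposes as two copies of $\overline{M^{n}}$ glued along the $\mathcal{Z}$-set $Z$, and $M^{n}$ is recovered as $\mathbb{S}^{n}$ minus one of those two copies; equivalently, $\overline{M^{n}}$ sits inside $\mathbb{S}^{n}$ as a $\mathcal{Z}$-compactified ``half'' with $Z$ embedded as a $\mathcal{Z}$-set separating the two halves.

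Next I would exploit the hypothesis that $N^{n}$ admits a $\mathcal{Z}$-compactification $\overline{N^{n}}=N^{n}\cup Z'$ with a homeomorphism $h\colon Z\to Z'$. The key move is to glue $\overline{M^{n}}$ to $\overline{N^{n}}$ along this boundary homeomorphism. By Theorem~\ref{Theorem: Gluing along a Z-boundary}, the resulting space $P^{n}=\overline{M^{n}}\cup_{h}\overline{N^{n}}$ is a closed $n$-manifold. Because both $M^{n}$ and $N^{n}$ are contractible, a Mayer--Vietoris computation (using that $Z\hookrightarrow \overline{M^{n}}$ and $Z'\hookrightarrow\overline{N^{n}}$ are $\mathcal{Z}$-sets, so the complements are homotopy equivalent to the whole compactifications, which are in turn contractible) shows that $P^{n}$ has the homology of $\mathbb{S}^{n}$, and a van Kampen argument as in Corollary~\ref{Corollary: Doubling along a Z-boundary} shows $P^{n}$ is simply connected. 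By the Generalized Poincar\'{e} Conjecture, $P^{n}\approx\mathbb{S}^{n}$.

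Now I would compare the two sphere decompositions. The space $N^{n}$ is recovered as $P^{n}\setminus\overline{M^{n}}$, i.e.\ as the complement in $\mathbb{S}^{n}$ of the $\mathcal{Z}$-compactified copy of $M^{n}$; likewise, from the doubling of $\overline{M^{n}}$, the \emph{other} copy of $M^{n}$ is recovered as $\mathbb{S}^{n}\setminus\overline{M^{n}}$, the complement of the very same embedded $\overline{M^{n}}$. The crucial point is that in both descriptions we are removing a homeomorphic copy of $\overline{M^{n}}$ (with the same $\mathcal{Z}$-boundary $Z$, via $h$) from a space homeomorphic to $\mathbb{S}^{n}$. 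If the two copies of $\overline{M^{n}}$ can be identified by a homeomorphism of the ambient spheres restricting to the identity on the embedded $\overline{M^{n}}$, then their complements $M^{n}$ and $N^{n}$ are homeomorphic. Concretely, I would build a homeomorphism $\operatorname{Double}(\overline{M^{n}})\to P^{n}$ that is the identity on the first copy of $\overline{M^{n}}$ and carries the second copy of $M^{n}$ onto $N^{n}$; restricting to complements then yields $M^{n}\approx N^{n}$.

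The hard part will be justifying that last identification of complements rigorously---i.e.\ promoting the ``abstract'' agreement of the two decompositions into an actual homeomorphism fixing the embedded $\overline{M^{n}}$. The subtlety is that both $\operatorname{Double}(\overline{M^{n}})$ and $P^{n}$ are abstractly $\mathbb{S}^{n}$, but I must arrange the homeomorphism between them to restrict to the identity (or at least to a controlled map) on the shared subset $\overline{M^{n}}$, so that the complementary open manifolds correspond. This is exactly the kind of situation where one invokes uniqueness of the complement of a $\mathcal{Z}$-set, in the spirit of Chapman's Complement Theorem: two $\mathcal{Z}$-compactifications sharing a $\mathcal{Z}$-boundary determine the same ``outside'' up to homeomorphism. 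I expect the clean route is to observe that both $M^{n}$ and $N^{n}$ are homeomorphic to $\mathbb{S}^{n}$ minus a fixed embedded copy of $\overline{M^{n}}$, and that the homeomorphism type of such a complement depends only on the embedded $\mathcal{Z}$-compactified pair---so no choice made along the way affects the final complement. Making this independence precise, presumably using $\mathcal{Z}$-set unknotting / complement techniques from the Hilbert cube and manifold literature cited in the paper, is the main technical obstacle.
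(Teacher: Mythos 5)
Your first two paragraphs track the paper's proof exactly: form $P^{n}=\overline{M^{n}}\cup_{h}\overline{N^{n}}$, invoke Theorem~\ref{Theorem: Gluing along a Z-boundary} to see it is a closed manifold, and run the Mayer--Vietoris/van Kampen argument of Corollary~\ref{Corollary: Doubling along a Z-boundary} to conclude $P^{n}\approx\mathbb{S}^{n}$. The gap is in your final step, and it is a real one. You want to conclude by saying that $M^{n}$ and $N^{n}$ are each ``the complement of an embedded copy of $\overline{M^{n}}$ in $\mathbb{S}^{n}$'' and that such a complement is independent of the embedding, citing the spirit of Chapman's Complement Theorem. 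But Chapman's theorem is a phenomenon of $\mathcal{Z}$-sets in the \emph{Hilbert cube}; its finite-dimensional analogue is false. In $\mathbb{S}^{n}$, homeomorphic compacta (even homeomorphic pairs) can have non-homeomorphic complements---wild versus tame arcs, knotted versus unknotted codimension-two spheres, and indeed the Newman-manifold decomposition of Example~\ref{Example: Decomposition Z-compactification} all illustrate that complements in spheres are \emph{not} determined by the abstract topology of what is removed. The two copies of $\overline{M^{n}}$ you wish to identify arise from a priori unrelated embeddings (one inside $\operatorname{Double}(\overline{M^{n}})$, one inside $P^{n}$), and the Generalized Poincar\'{e} Conjecture supplies only an uncontrolled homeomorphism of the ambient spheres. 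The ``uniqueness of complement'' statement you need is essentially equivalent to the theorem itself, so the reduction does not make progress.

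The paper avoids this entirely by going up a dimension: having built $\mathbb{S}^{n}=\overline{M^{n}}\cup_{h}\overline{N^{n}}$, it regards that sphere as $\partial\mathbb{B}^{n+1}$, sets $W^{n+1}=\mathbb{B}^{n+1}-Z$, and observes $\partial W^{n+1}=M^{n}\sqcup N^{n}$, yielding a noncompact cobordism $\left(W^{n+1},M^{n},N^{n}\right)$. One then shows $M^{n}\hookrightarrow W^{n+1}$ is a proper homotopy equivalence (here the $\mathcal{Z}$-set property of $Z$ in $\mathbb{B}^{n+1}$ is the key), checks via the naturality results of \cite{CS} that it is an infinite \emph{simple} homotopy equivalence, and applies the Proper s-cobordism Theorem of \cite{Si2} to get $W^{n+1}\approx M^{n}\times\left[0,1\right]$, hence $M^{n}\approx N^{n}$. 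In short: where you need a (nonexistent) finite-dimensional complement theorem, the paper substitutes the proper s-cobordism theorem applied to a cobordism manufactured one dimension up. If you want to complete your argument, that is the tool to reach for.
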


\begin{proof}
[Sketch of proof]Let $Z$ denote the common $\mathcal{Z}$-boundary. The
argument used in Corollary \ref{Corollary: Doubling along a Z-boundary} shows
that the union of these com\-pact\-ific\-at\-ions along $Z$ is $\mathbb{S}%
^{n}$. Let $W^{n+1}=\mathbb{B}^{n+1}-Z$ and note that $\partial W^{n+1}%
=M^{n}\sqcup N^{n}$, providing a noncompact cobordism $\left(  W^{n+1}%
,M^{n},N^{n}\right)  $. The proof is completed by applying the Proper
s-cobordism Theorem \cite{Si2} to conclude that $W^{n+1}\approx M^{n}%
\times\left[  0,1\right]  $. That requires some work. First show that
$M^{n}\hookrightarrow W^{n+1}$ is a proper homotopy equivalence. (The fact
that $Z$ is a $\mathcal{Z}$-set in $\mathbb{B}^{n+1}$ is key.) Then, to
establish that $M^{n}\hookrightarrow W^{n+1}$ is an infinite simple homotopy
equivalence, some algebraic obstructions must be checked. Fortunately, there
are \textquotedblleft naturality results\textquotedblright\ from \cite{CS}
that relate those obstructions to the $\mathcal{Z}$-compactifiability
obstructions for $M^{n}$ and $W^{n+1}$ (as found in Theorem
\ref{Theorem: C-S Z-compactification Theorem}). In particular, since the
latter vanish, so do the former.
\end{proof}

The following can be obtained in a variety of more elementary ways;
nevertheless, it provides a nice illustration of Theorem
\ref{Theorem: Z-boundaries determine interiors}.

\begin{corollary}
If a contractible open $n$-manifold $M^{n}$ can be $\mathcal{Z}$%
-com\-pact\-ified by the addition of an $\left(  n-1\right)  $-sphere, then
$M^{n}\approx%
%TCIMACRO{\U{211d} }%
%BeginExpansion
\mathbb{R}
%EndExpansion
^{n}$.
\end{corollary}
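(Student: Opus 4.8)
The plan is to reduce the statement to Stallings' Characterization (Theorem~\ref{Theorem: Stallings Theorem}) by showing that the hypothesis forces $M^{n}$ to be simply connected at infinity. Write $\overline{M^{n}}=M^{n}\cup Z$ for the given $\mathcal{Z}$-compactification, with $Z\approx\mathbb{S}^{n-1}$. Since $M^{n}$ is a manifold, it is sharp at infinity, so by Proposition~\ref{Proposition: Z-compactifialble implies absolute inward tame} it is absolutely inward tame; being a contractible open manifold of dimension $\geq 2$, it is $1$-ended by Exercise~\ref{Exercise: contractible open mflds are 1-ended}, and by Theorem~\ref{Theorem: tame implies semistability} it is semistable at infinity. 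Thus the two hypotheses ($1$-ended and semistable) needed to read homotopy end invariants off the shape of the end are in place.

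Next, Theorem~\ref{Theorem: Z-boundaries have shape of the end} gives $Z\in\mathcal{S}h(\varepsilon(M^{n}))$, so $\mathbb{S}^{n-1}$ represents the shape of the end of $M^{n}$. Because $\mathbb{S}^{n-1}$ is a compact ANR, its shape invariants coincide with its ordinary homotopy invariants (Proposition~\ref{Prop: For a compact ANR shape invariants are homotopy invariants}); in particular $\operatorname*{pro}$-$\pi_{1}(\mathbb{S}^{n-1})$ is pro-trivial for $n\geq 3$, since $\pi_{1}(\mathbb{S}^{n-1})=1$. Applying Theorem~\ref{Theorem: Sh(e(X)) determines end invariants}(2) identifies $\operatorname*{pro}$-$\pi_{1}(\varepsilon(M^{n}))$ with $\operatorname*{pro}$-$\pi_{1}(\mathbb{S}^{n-1})$, which is pro-trivial. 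Hence $M^{n}$ is simply connected at infinity, and Theorem~\ref{Theorem: Stallings Theorem} yields $M^{n}\approx\mathbb{R}^{n}$ for all $n\geq 3$. The cases $n\leq 2$ are classical, so the corollary holds in every dimension.

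An alternative, more in the spirit of the preceding theorem, is to note that $\mathbb{R}^{n}$ itself is $\mathcal{Z}$-compactified by the closed ball $\mathbb{B}^{n}=\mathbb{R}^{n}\cup\mathbb{S}^{n-1}$, whose $\mathcal{Z}$-boundary is $\mathbb{S}^{n-1}$ (the manifold boundary of a ball is a $\mathcal{Z}$-set). Then $M^{n}$ and $\mathbb{R}^{n}$ are contractible open manifolds with homeomorphic $\mathcal{Z}$-boundaries, and Theorem~\ref{Theorem: Z-boundaries determine interiors} gives $M^{n}\approx\mathbb{R}^{n}$ directly. This route is shorter but valid only for $n>4$ and leans on the proper $s$-cobordism machinery behind that theorem, so I would still fall back on the Stallings argument to cover $n=3,4$.

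The main obstacle is not the shape-theoretic bookkeeping, which is routine, but ensuring the low-dimensional cases are genuinely covered: invoking Theorem~\ref{Theorem: Stallings Theorem} in dimensions $3$ and $4$ silently imports the deep work of Perelman and Freedman, and one must confirm that simple connectivity at infinity---rather than the full strength of a sphere $\mathcal{Z}$-boundary---is exactly the hypothesis those extensions of Stallings' theorem require. Once that is acknowledged, the argument closes uniformly across all $n$.
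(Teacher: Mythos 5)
Your argument is correct, but your primary route is not the one the paper takes. The paper derives this corollary in essentially one line from Theorem \ref{Theorem: Z-boundaries determine interiors}: since $\mathbb{B}^{n}=\mathbb{R}^{n}\cup\mathbb{S}^{n-1}$ is a $\mathcal{Z}$-compactification of $\mathbb{R}^{n}$, the manifolds $M^{n}$ and $\mathbb{R}^{n}$ are contractible open manifolds admitting homeomorphic $\mathcal{Z}$-boundaries, hence are homeomorphic --- exactly your ``alternative.'' Indeed, the paper explicitly remarks that the corollary ``can be obtained in a variety of more elementary ways'' and is included mainly as an illustration of that theorem. Your primary route --- establishing that $M^{n}$ is absolutely inward tame, $1$-ended, and semistable, placing $\mathbb{S}^{n-1}$ in $\mathcal{S}h(\varepsilon(M^{n}))$ via Theorem \ref{Theorem: Z-boundaries have shape of the end}, reading off pro-triviality of $\operatorname*{pro}$-$\pi_{1}(\varepsilon(M^{n}))$ from Proposition \ref{Prop: For a compact ANR shape invariants are homotopy invariants} and Theorem \ref{Theorem: Sh(e(X)) determines end invariants}(2), and then invoking Stallings' characterization --- is one of those more elementary ways, and it buys something concrete: it covers $n=3,4$, where the proper $s$-cobordism machinery behind Theorem \ref{Theorem: Z-boundaries determine interiors} is unavailable (that theorem is stated only for $n>4$). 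The trade-off is that in those low dimensions you are importing Perelman and Freedman through the extended Stallings theorem, whereas for $n>4$ the paper's route is immediate given the preceding theorem. Your hybrid presentation actually justifies the corollary in all dimensions, which the paper's one-line derivation, read literally, does not.
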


The Borel Conjecture posits that closed aspherical manifolds with isomorphic
fundamental groups are necessarily homeomorphic. Our interest in contractible
open manifolds led to the following.

\begin{conjecture}
[Weak Borel Conjecture]Closed aspherical manifolds with isomorphic fundamental
groups have homeomorphic universal covers.
\end{conjecture}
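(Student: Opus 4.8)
The plan is to run the two universal covers through the $\mathcal{Z}$-compactification machinery and then invoke Theorem \ref{Theorem: Z-boundaries determine interiors}, which has already reduced the homeomorphism problem for contractible open manifolds to the homeomorphism problem for their $\mathcal{Z}$-boundaries. First I would reduce the conjecture to a statement about proper homotopy types. Let $M_{1}^{n}$ and $M_{2}^{n}$ be closed aspherical $n$-manifolds with $\pi_{1}(M_{1})\cong\pi_{1}(M_{2})\cong G$. Each $M_{i}^{n}$ is a $K(G,1)$, so the two are homotopy equivalent; choosing a homotopy equivalence and applying Corollary \ref{Corollary: proper homotopy equivalence of covering spaces}, its lift to universal covers is a proper homotopy equivalence. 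Thus the contractible open manifolds $\widetilde{M_{1}^{n}}$ and $\widetilde{M_{2}^{n}}$ are proper homotopy equivalent, and in particular proper homotopy equivalent at infinity, so (granting inward tameness) $\mathcal{S}h(\varepsilon(\widetilde{M_{1}^{n}}))=\mathcal{S}h(\varepsilon(\widetilde{M_{2}^{n}}))$ by Theorem \ref{Theorem: shape of ends vs. p.h.e. at infinity}.

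Second I would produce the $\mathcal{Z}$-compactifications themselves. Here the first genuine gap appears: it is not known that $\widetilde{M_{i}^{n}}$ is even inward tame, let alone $\mathcal{Z}$-compactifiable (this is exactly Question \ref{Question: inward tameness for groups}). Granting that $G$ admits a $\mathcal{Z}$-structure, Bestvina's boundary-transfer lemma equips each cocompact $EG$ universal cover with a $\mathcal{Z}$-compactification $\overline{\widetilde{M_{i}^{n}}}=\widetilde{M_{i}^{n}}\cup Z_{i}$. Since $\widetilde{M_{1}^{n}}$ and $\widetilde{M_{2}^{n}}$ are proper homotopy equivalent, Corollary \ref{Corollary: Uniquenss for Z-boundaries} forces $Z_{1}$ and $Z_{2}$ to be shape equivalent. (Inward tameness of these boundaryless manifolds also yields semistability via Theorem \ref{Theorem: tame implies semistability}, removing base-ray ambiguity.) Moreover, by Corollary \ref{Corollary: homology properties Z-compactified manifolds} together with the Poincar\'{e} duality for homology manifolds recorded in Remark \ref{Remark: Poincare duality for homology manifolds}, each $Z_{i}$ is a homology $(n-1)$-manifold with the Borel--Moore homology of $\mathbb{S}^{n-1}$.

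The crux is the step from shape equivalence to homeomorphism of the boundaries: Theorem \ref{Theorem: Z-boundaries determine interiors} demands $Z_{1}\approx Z_{2}$, whereas Corollary \ref{Corollary: Uniquenss for Z-boundaries} supplies only a shape equivalence. This upgrade is \emph{false} for general $\mathcal{Z}$-boundaries---the Croke--Kleiner and Wilson examples are shape equivalent but not homeomorphic---so any successful argument must exploit the manifold-specific regularity just noted, namely that both $Z_{i}$ are homology $(n-1)$-spheres. I would attempt to show that this regularity, combined with the global structure of the doubles $\operatorname{Double}(\overline{\widetilde{M_{i}^{n}}})\approx\mathbb{S}^{n}$ from Corollary \ref{Corollary: Doubling along a Z-boundary}, pins $Z_{i}$ down up to homeomorphism---for instance via a controlled engulfing or manifold-recognition argument applied across the common shape type. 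This is the step I expect to be the main obstacle, and it is the reason the statement remains a conjecture; the prior existence of a $\mathcal{Z}$-structure (inward tameness) is a second, logically earlier, open problem.

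Finally, once $Z_{1}\approx Z_{2}$ is established, Theorem \ref{Theorem: Z-boundaries determine interiors} delivers $\widetilde{M_{1}^{n}}\approx\widetilde{M_{2}^{n}}$ for $n>4$, with the cases $n\leq 3$ handled by geometrization-type results and $n=4$ treated separately. In summary, the reductions to proper homotopy equivalence and to shape-equivalent homology-sphere boundaries are routine given the tools developed above, and the entire difficulty is concentrated in (a) establishing $\mathcal{Z}$-compactifiability and (b) promoting the resulting shape equivalence of boundaries to an actual homeomorphism.
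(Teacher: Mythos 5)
This statement is a conjecture; the paper offers no proof of it, only the partial result Theorem \ref{Theorem: Weak Borel for mflds with z-boundaries} (the case $n>4$ with $\pi_{1}$ admitting a $\mathcal{Z}$-structure). Your proposal correctly recognizes that the general statement is open, and correctly isolates the first obstruction: it is not even known that the universal cover of a closed aspherical manifold is inward tame (Question \ref{Question: inward tameness for groups}), which is a prerequisite for any $\mathcal{Z}$-compactification.

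However, your identification of the ``crux'' --- promoting shape equivalence of the two boundaries $Z_{1},Z_{2}$ to a homeomorphism --- misreads the machinery in the conditional case you yourself set up. Once you grant that $G$ admits a $\mathcal{Z}$-structure $\left(\overline{X},Z\right)$, Bestvina's boundary swapping lemma (\cite[Lemma 1.4]{Be}, as quoted in the paper) does more than equip each cocompact $EG$ with \emph{some} $\mathcal{Z}$-boundary: it installs a copy of the \emph{same} compactum $Z$ as the $\mathcal{Z}$-boundary of each of $\widetilde{M_{1}^{n}}$ and $\widetilde{M_{2}^{n}}$. So $Z_{1}\approx Z_{2}$ holds by construction, Theorem \ref{Theorem: Z-boundaries determine interiors} applies immediately, and no shape-to-homeomorphism upgrade, no appeal to the homology-sphere structure of the boundaries, and no use of the doubling construction is needed. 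That is precisely the paper's proof of Theorem \ref{Theorem: Weak Borel for mflds with z-boundaries}. Your worry is legitimate only in a hypothetical setting where the two covers are $\mathcal{Z}$-compactified independently of a common group-level $\mathcal{Z}$-structure --- there, as the Croke--Kleiner and Wilson examples show, shape equivalence of $\mathcal{Z}$-boundaries genuinely does not imply homeomorphism --- but that is not the situation in which the known partial result operates. The honest status of the conjecture is therefore that the entire difficulty is concentrated in producing a $\mathcal{Z}$-structure on $\pi_{1}$ (or at least $\mathcal{Z}$-compactifications of the two covers with homeomorphic boundaries); once that is done, the remaining steps are already in the paper.
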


Theorem \ref{Theorem: Z-boundaries determine interiors} provides the means for
a partial solution.

\begin{theorem}
\label{Theorem: Weak Borel for mflds with z-boundaries}The Weak Borel
Conjecture is true for those $n$-manifolds ($n>4$) whose fundamental groups
admits $\mathcal{Z}$-structures.
\end{theorem}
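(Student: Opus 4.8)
The plan is to reduce the Weak Borel Conjecture, for the fundamental groups in question, to Theorem~\ref{Theorem: Z-boundaries determine interiors}, which recovers a homeomorphism between contractible open manifolds from a homeomorphism of their $\mathcal{Z}$-boundaries. So let $M^{n}$ and $N^{n}$ be closed aspherical $n$-manifolds ($n>4$) with a fixed isomorphism $\pi_{1}(M^{n})\cong\pi_{1}(N^{n})\cong G$, and assume $G$ admits a $\mathcal{Z}$-structure $(\overline{X},Z)$. By Exercise~\ref{Exercise: aspherical = contractible universal cover} the universal covers $\widetilde{M}^{n}$ and $\widetilde{N}^{n}$ are contractible; since $M^{n}$ and $N^{n}$ are closed with empty boundary and $G$ is infinite, both covers are contractible open $n$-manifolds. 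Triangulating $M^{n}$ and $N^{n}$ (recall that all manifolds here are PL) and lifting the triangulations, each universal cover becomes a locally finite complex carrying a free, proper, cocompact simplicial $G$-action; that is, $\widetilde{M}^{n}$ and $\widetilde{N}^{n}$ are cocompact $EG$ complexes for one and the same group $G$.

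The crux is to produce $\mathcal{Z}$-compactifications of $\widetilde{M}^{n}$ and $\widetilde{N}^{n}$ whose $\mathcal{Z}$-boundaries are not merely shape equivalent but genuinely homeomorphic. This is exactly what Bestvina's boundary-swapping lemma \cite[Lemma 1.4]{Be} supplies: starting from the $\mathcal{Z}$-structure $(\overline{X},Z)$ on $G$, any cocompact $EG$ complex $Y$ can be incorporated into a $\mathcal{Z}$-structure $(\overline{Y},Z)$ with the \emph{same} boundary $Z$. Applying this first to $Y=\widetilde{M}^{n}$ and then to $Y=\widetilde{N}^{n}$ yields $\mathcal{Z}$-compactifications $\overline{\widetilde{M}^{n}}=\widetilde{M}^{n}\sqcup Z$ and $\overline{\widetilde{N}^{n}}=\widetilde{N}^{n}\sqcup Z$ sharing a common $\mathcal{Z}$-boundary $Z$. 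I would emphasize that this step, rather than the uniqueness-up-to-shape statement of Corollary~\ref{Corollary: Uniquenss for Z-boundaries}, is what makes the argument run: Theorem~\ref{Theorem: Z-boundaries determine interiors} demands a homeomorphism of boundaries, and shape equivalence alone would be far too weak to invoke it.

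With both boundaries equal to $Z$, the identity map $Z\to Z$ is the required homeomorphism, and since $\widetilde{M}^{n}$ and $\widetilde{N}^{n}$ are contractible open $n$-manifolds with $n>4$, Theorem~\ref{Theorem: Z-boundaries determine interiors} gives $\widetilde{M}^{n}\approx\widetilde{N}^{n}$, which is the assertion of the Weak Borel Conjecture in this case. The genuine work is hidden inside the two results being cited: Bestvina's lemma (which must transport the boundary $Z$ across a change of cocompact $EG$ interior while preserving both the $\mathcal{Z}$-set property and the compact ER ambient space) and Theorem~\ref{Theorem: Z-boundaries determine interiors} (whose proof passes through doubling to $\mathbb{S}^{n}$, the Edwards--Quinn recognition theorem, and the proper $s$-cobordism theorem). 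The one point I expect to require genuine care in assembling the present argument is verifying that each universal cover legitimately qualifies as a cocompact $EG$ complex, so that the boundary-swapping lemma applies verbatim; once that is in hand, the remainder is a direct application of the cited machinery.
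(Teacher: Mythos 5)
Your proposal is correct and follows essentially the same route as the paper: Bestvina's boundary-swapping lemma \cite[Lemma 1.4]{Be} equips both universal covers with $\mathcal{Z}$-compactifications sharing the literal boundary $Z$, and Theorem \ref{Theorem: Z-boundaries determine interiors} then yields the homeomorphism. Your added emphasis that one needs genuinely homeomorphic (not merely shape-equivalent) boundaries, and your check that the universal covers qualify as cocompact $EG$ complexes, are exactly the points the paper's terse proof leaves implicit.
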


\begin{proof}
Let $P^{n}$ and $Q^{n}$ be aspherical manifolds, and $\left(  \overline
{X},Z\right)  $ a $\mathcal{Z}$-structure on $\pi_{1}\left(  P^{n}\right)
\cong\pi_{1}\left(  Q^{n}\right)  $. By Bestvina's boundary swapping trick
\cite[Lemma 1.4]{Be}, both $\widetilde{P}^{n}$ and $\widetilde{Q}^{n}$ can be
$\mathcal{Z}$-compactified by the addition of a copy of $Z$. Now apply Theorem
\ref{Theorem: Z-boundaries determine interiors}.
\end{proof}

\begin{remark}
\emph{Aspherical manifolds to which Theorem
\ref{Theorem: Weak Borel for mflds with z-boundaries} applies include those
with hyperbolic and CAT(0) fundamental groups. We are not aware of
applications outside of those categories.}

\emph{Recently, Bartels and L\"{u}ck \cite{BL} proved the full-blown Borel
Conjecture for }$\delta$\emph{-hyperbolic groups and CAT(0) groups that act
geometrically on finite-dimensional CAT(0) spaces. Not surprisingly, their
proof is more complicated than that of Theorem
\ref{Theorem: Weak Borel for mflds with z-boundaries}.}
\end{remark}

\subsection{$E\mathcal{Z}$-structures in manifold topology}

As discussed in \S \ref{Section: Z-boundaries in geometric group theory}, the
notion of an $E\mathcal{Z}$-structure was formalized by Farrell and Lafont in
\cite{FL}. Among their applications was a new proof of the Novikov Conjecture
for $\delta$-hyperbolic and CAT(0) groups. That result had been obtained
earlier by Carlsson and Pedersen \cite{CP} using similar ideas. We will not
attempt to discuss the Novikov Conjecture here, except to say that it is
related to, but much broader (and more difficult to explain) than the Borel Conjecture.

For a person with interests in manifold topology, one of the more intriguing
aspects of Farrell and Lafont's work is a technique they develop which takes
an arbitrary $\mathcal{Z}$-structure $\left(  \overline{X},Z\right)  $ on a
group $G$ and replaces it with one of the form $\left(  \mathbb{B}%
^{n},Z\right)  $, where $n$ is necessarily large, $Z$ is a topological copy of
the original $\mathcal{Z}$-boundary lying in $\mathbb{S}^{n-1}$, and the new
$EG$ is the $n$-manifold with boundary $\mathbb{B}^{n}-Z$. The beauty here is
that, once the structure is established, all of the tools of high-dimensional
manifold topology are available. In their introduction, they challenge the
reader to find other applications of these manifold $\mathcal{Z}$-structures,
likening them to the action of a Kleinian group on a compactified hyperbolic
$n$-space.

\section{Further reading}

Clearly, we have just scratched the surface on a number of topics addressed in
these notes. For a broad study of geometric group theory with a point of view
similar to that found in these notes, Geoghegan's book, \emph{Topological
methods in group theory \cite{Ge2}}, is the obvious next step.

For those interested in the topology of noncompact manifolds, Siebenmann's
thesis \cite{Si} is still a fascinating read. The main result from that
manuscript can also be obtained from the series of papers \cite{Gu1},
\cite{GT1}, \cite{GT2}, which have the advantage of more modern terminology
and greater generality. Steve Ferry's \emph{Notes on geometric topology
}(available on his website) contain a remarkable collection of fundamental
results in manifold topology. Most significantly, from our perspective, those
notes do not shy away from topics involving noncompact manifolds. There one
can find clear and concise discussions of the Whitehead manifold, the Wall
finiteness obstruction, Stallings' characterization of euclidean space,
Siebenmann's thesis, and much more.

The complementary articles \cite{Si2} and \cite{CS} fit neither into the
category of manifold topology nor that of geometric group theory; but they
contain fundamental results and ideas of use in each area. Researchers whose
work involves noncompact spaces of almost any variety are certain to benefit
from a familiarity with those papers. Another substantial work on the topology
of noncompact spaces, with implications for both manifold topology and
geometric group theory,\ is the book by Hughes and Ranicki, \emph{Ends of
complexes \cite{HR}}.

For the geometric group theorist specifically interested in the interplay
between shapes, group boundaries, $\mathcal{Z}$-sets, and $\mathcal{Z}%
$-com\-pact\-ific\-at\-ions, the papers by Bestvina-Mess \cite{BM}, Bestvina
\cite{Be}, and the follow-up by Dranishnikov give a quick entry into that
subject; while Geoghegan's earlier article, \emph{The shape of a group
\cite{Ge1}}, provides a first-hand account of the origins of many of those
ideas. For general applications of $\mathcal{Z}$-com\-pact\-ific\-at\-ions to
manifold topology, the reader may be interested in \cite{AG}; and for more
specific applications to the Novikov Conjecture, \cite{FL} is a good starting point.

\section{Appendix A: Basics of ANR theory}

Before beginning this appendix, we remind the reader that all spaces discussed
in these notes are assumed to be separable metric spaces.

A locally compact space $X$ is an ANR (\emph{absolute neighborhood retract})
if it can be embedded into $%
%TCIMACRO{\U{211d} }%
%BeginExpansion
\mathbb{R}
%EndExpansion
^{n}$ or, if necessary, $%
%TCIMACRO{\U{211d} }%
%BeginExpansion
\mathbb{R}
%EndExpansion
^{\infty}$ (a countable product of real lines) as a closed set in such a way
that there exists a retraction $r:U\rightarrow X$, where $U$ is a neighborhood
of $X$. If the entire space $%
%TCIMACRO{\U{211d} }%
%BeginExpansion
\mathbb{R}
%EndExpansion
^{n}$ or $%
%TCIMACRO{\U{211d} }%
%BeginExpansion
\mathbb{R}
%EndExpansion
^{\infty}$ retracts onto $X$, we call $X$ an AR (absolute retract). If $X$ is
finite-dimensional, all mention of $%
%TCIMACRO{\U{211d} }%
%BeginExpansion
\mathbb{R}
%EndExpansion
^{\infty}$ can be omitted. A finite-dimensional ANR is often called an ENR
(\emph{Euclidean neighborhood retract}) and a finite-dimensional AR an ER.

Use of the word \textquotedblleft absolute\textquotedblright\ in ANR (or AR)
stems from the following standard fact: If one embedding of $X$ as a closed
subset of $%
%TCIMACRO{\U{211d} }%
%BeginExpansion
\mathbb{R}
%EndExpansion
^{n}$ or $%
%TCIMACRO{\U{211d} }%
%BeginExpansion
\mathbb{R}
%EndExpansion
^{\infty}$ satisfies the defining condition, then so do all such embeddings.
An alternative definition for ANR (and AR) is commonly found in the
literature. To help avoid confusion, we offer that approach as Exercise
\ref{Exercise: ANE approach to ANRs}. Texts \cite{Bor1} and \cite{Hu} are
devoted entirely to the theory of ANRs; readers can go to either for details.

With a little effort (Exercise \ref{Exercise: AR = contractible ANR}) it can
be shown that an AR is just a contractible ANR, so there is no loss of
generality if focusing on ANRs.

A space $Y$ is \emph{locally contractible }if every neighborhood $U$ of a
point $y\in Y$ contains a neighborhood $V$ of $y$ that contracts within $U$.
It is easy to show that every ANR is locally contractible. A partial converse
gives a powerful characterization of finite-dimensional ANRs.

\begin{theorem}
\label{Theorem: Characterization of f.d. ANRs}A locally compact
finite-dimensional space $X$ is an ANR if and only if it is locally contractible.
\end{theorem}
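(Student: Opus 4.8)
The forward implication is essentially free, and indeed is flagged earlier in this appendix: every ANR is a retract of an open subset of $\mathbb{R}^N$ (or $\mathbb{R}^\infty$), open subsets of Euclidean space are locally contractible, and local contractibility passes to retracts, so every ANR is locally contractible. The real content is the reverse implication, and this is where both hypotheses earn their keep. So the plan is to assume $X$ is locally compact, finite-dimensional, and locally contractible, and to manufacture a retraction of a neighborhood of $X$ onto itself after embedding $X$ in Euclidean space.

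First I would set up the geometry. Since $X$ is separable metric and finite-dimensional, the embedding theorem puts it inside some $\mathbb{R}^N$, and local compactness lets me arrange the image to be closed. Writing $W=\mathbb{R}^N\setminus X$ (an open set), I would choose a triangulation $L$ of $W$ whose simplices shrink as they approach $X$, say with $\operatorname{diam}(\sigma)\le\tfrac12\,d(\sigma,X)$ for every $\sigma\in L$; such a triangulation is produced by taking the dyadic Whitney decomposition of $W$ into cubes and triangulating it compatibly. This mesh control is the geometric device that will both make local contractibility applicable and later force continuity across the seam $X$.

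Next I would build the retraction $r$ by induction on skeleta. Set $r|_X=\operatorname{id}$, and for a vertex $v\in L$ lying near $X$ choose $r(v)\in X$ nearly realizing $d(v,X)$. To extend over a $(k{+}1)$-simplex $\sigma$, observe that by the mesh condition its vertices map to points of $X$ lying in a set of diameter comparable to $d(\sigma,X)$; inductively $r(\partial\sigma)\cong S^{k}$ then has image in a small subset $V\subseteq X$, and local contractibility supplies a slightly larger subset $U_\sigma\supseteq V$ together with a contraction of $V$ inside $U_\sigma$. Coning off this contraction extends $r$ over $\sigma$ with image in $U_\sigma$. Crucially, local contractibility is exactly the property that lets a null-homotopic sphere-map be filled by a disk-map with a controlled image, so no higher connectivity input is needed, and finite-dimensionality guarantees the induction terminates (at dimension $N$). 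The domain $U$ is $X$ together with the union of the simplices so used, an open neighborhood of $X$.

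The hard part, and the step I expect to consume most of the work, is Step~4: upgrading the \emph{pointwise} local contractibility into bounds uniform enough to (i) guarantee that every skeletal filling can actually be carried out with shrinking images and (ii) guarantee that the resulting $r$ is continuous at points of $X$, where the triangulation degenerates and infinitely many simplices accumulate. The right formulation is a uniform local contractibility modulus on compacta: for a compactum $C\subseteq X$ and each $\varepsilon>0$ there is $\delta>0$ so that any subset of $X$ of diameter $<\delta$ meeting a neighborhood of $C$ contracts inside an $\varepsilon$-small subset of $X$. Local compactness is what makes this available, since it lets me exhaust $X$ by such compacta and propagate the bounds outward. Combined with the mesh condition, this forces $\operatorname{diam}\big(r(\sigma)\big)\to0$ and $r(\sigma)\to x$ as $\sigma\to x\in X$, yielding continuity at the seam (continuity on $W$ being automatic from the simplexwise definition). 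With $r:U\to X$ a retraction of a neighborhood onto $X$, and the choice of closed embedding being immaterial by the absolute character of the ANR condition, I would conclude that $X$ is an ANR.
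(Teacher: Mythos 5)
The paper offers no proof of this theorem: it is stated as a standard fact of ANR theory, with the reader referred at the start of the appendix to the books of Borsuk and Hu for details. Your sketch is, in outline, precisely the classical argument found in those references (it goes back to Borsuk), and it is sound. The forward direction is handled correctly. For the converse, the skeleton you describe --- a closed embedding of $X$ into some $\mathbb{R}^{N}$ using finite-dimensionality and local compactness, a Whitney-type triangulation of the complement with mesh vanishing near $X$, extension of the identity over skeleta by repeatedly filling small spheres, and termination of the induction at dimension $N$ --- is the right one, and you have correctly located the real work in the uniformity step: pointwise local contractibility must be upgraded, on compacta, to a modulus of the form ``every set of diameter $<\delta$ contracts inside a set of diameter $<\varepsilon$,'' iterated once per skeletal dimension so that a chain $\delta_{0}<\delta_{1}<\cdots<\delta_{N}=\varepsilon$ controls the images of all faces of a simplex simultaneously; that is exactly where local compactness enters. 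Two details are left implicit but are routine: the null-homotopy used to cone off $r|_{\partial\sigma}$ is the contraction of the small set $V$ inside $U_{\sigma}$ composed with the map $\partial\sigma\rightarrow V$ (which is why local contractibility, rather than any higher connectivity hypothesis, literally suffices and why the image of $\sigma$ lands in $U_{\sigma}$ as your bookkeeping requires), and one should check that $X$ together with the union of the simplices so treated contains an open neighborhood of $X$ in $\mathbb{R}^{N}$. With those points granted, your argument is complete and agrees with the proof to which the paper defers.
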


\begin{example}
By Theorem \ref{Theorem: Characterization of f.d. ANRs}, manifolds,
finite-dimensional locally finite polyhedra and CW complexes, and
finite-dimensional proper CAT(0) spaces are all ANRs.
\end{example}

\begin{example}
It is also true that Hilbert cube manifolds, infinite-dimensional locally
finite polyhedra and CW complexes, and infinite-dimensional proper CAT(0)
spaces are all ANRs. Proofs would require some additional effort, but we will
not hesitate to make use of these facts.
\end{example}

Rather than listing key results individually, we provide a mix of facts about
ANRs in a single Proposition. The first several are elementary, and the final
item is a deep result. Each is an established part of ANR theory.

\begin{proposition}
[Standard facts about ANRs]\label{Proposition: ANR facts}\hspace*{0.5in}

\begin{enumerate}
\item Being an ANR is a local property: every open subset of an ANR is an ANR,
and if every element of $X$ has an ANR neighborhood, then $X$ is an ANR.

\item If $X=A\cup B$, where $A,B,$ and $A\cap B$ are compact ANRs, then $X$ is
a compact ANR.

\item Every retract of an ANR is an ANR; every retract of an AR is an AR.

\item (Borsuk's Homotopy Extension Property) Every $h:\left(  Y\times\left\{
0\right\}  \right)  \cup\left(  A\times\left[  0,1\right]  \right)
\allowbreak\rightarrow\allowbreak X$, where $A$ is a closed subset of a space
$Y$ and $X$ is ANR, admits an extension $H:Y\times\left[  0,1\right]
\rightarrow X$.

\item (West, \cite{We}) Every ANR is proper homotopy equivalent to a locally
finite CW complex; every compact ANR is homotopy equivalent to a finite complex.
\end{enumerate}
\end{proposition}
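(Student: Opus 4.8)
The plan is to make the absolute neighborhood extensor (ANE) characterization of ANRs---recorded in Exercise~\ref{Exercise: ANE approach to ANRs}---the single workhorse for items (1)--(4), and to invoke the deep theorem of West for item (5). Recall that this characterization says a (separable, locally compact metric) space $X$ is an ANR precisely when it is an ANE for metric pairs: for every closed subset $A$ of a separable metric space $Z$ and every map $f\colon A\to X$, there is an open set $U\supseteq A$ in $Z$ and an extension $\bar f\colon U\to X$ of $f$; similarly $X$ is an AR iff one may always take $U=Z$. Once this dictionary is in place, several of the assertions become short formal arguments, and the remaining cases isolate exactly where the real content lives.

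First I would dispatch the cleanest items. For (3), given a retraction $\rho\colon X\to Y$ of an ANR onto a closed subset $Y$ and a map $f\colon A\to Y$, I compose with the inclusion to view $f$ as a map into $X$, extend it over a neighborhood $U$ by the ANE property of $X$, and postcompose the extension with $\rho$ to land back in $Y$; the AR case is identical with $U=Z$. For (4), I regard the domain $(Y\times\{0\})\cup(A\times[0,1])$ as a closed subset of $Y\times[0,1]$, extend $h$ over a neighborhood $U$ of it using that $X$ is an ANE, and then reparametrize: choosing (by normality) a map $\varphi\colon Y\to[0,1]$ with $\varphi\equiv 1$ on $A$ and with $\{(y,t):t\le\varphi(y)\}\subseteq U$, the formula $H(y,t)=\bar h(y,t\varphi(y))$ gives the required extension. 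The forward half of (1)---that an open subset $V$ of an ANR is an ANR---is equally quick: an extension $\bar f\colon U\to X$ of a map $A\to V$ restricts to a map $\bar f^{-1}(V)\to V$ on the open set $\bar f^{-1}(V)\supseteq A$, so $V$ inherits the ANE property.

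The genuine work, and the main obstacle among the elementary statements, is the converse half of (1)---the local-to-global principle that a space covered by open ANRs is an ANR---together with the gluing statement (2). Both require patching partial extensions rather than quoting the definition. For (1) I would extend $f\colon A\to X$ locally into the given ANR neighborhoods and splice the local extensions together over a neighborhood of $A$ using a partition of unity subordinate to a suitable cover, controlling the domains so that the pieces agree; this is the technical heart and is where I expect to spend most of the effort. For (2), writing $C=f^{-1}(A\cap B)$ and its analogues, I would first extend over $A\cap B$, then enlarge the extension separately across $A$ and across $B$ using their ANE properties while keeping the two extensions compatible on the overlap---again a controlled patching argument, made manageable here by compactness. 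Finally, item (5) is not elementary and I would simply cite it: the compact case is West's affirmative solution of Borsuk's conjecture \cite{We}, and the locally finite, proper version follows from the same circle of ideas, so no self-contained proof is reasonable within these notes. Throughout, the standard references \cite{Bor1} and \cite{Hu} supply the details of the patching arguments.
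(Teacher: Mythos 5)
The paper offers no proof of this Proposition at all: it is explicitly presented as a list of ``established part[s] of ANR theory,'' with the reader sent to \cite{Bor1} and \cite{Hu} for details, so there is no argument of the author's to measure yours against. That said, your sketches are correct and are essentially the standard arguments from those references. The ANE reformulation (the paper's Exercise~\ref{Exercise: ANE approach to ANRs}) does reduce (3), (4), and the ``open subset'' half of (1) to the short formal manipulations you describe; the one gloss worth flagging is that in (4) producing $\varphi$ with $\{(y,t):t\le\varphi(y)\}\subseteq U$ requires a tube-lemma step (compactness of $[0,1]$ over points of $A$), not merely normality. Your identification of the local-to-global half of (1) (Hanner's theorem) and the union statement (2) as the genuinely technical items is accurate, and your outline for (2) --- extend first over the preimage of $A\cap B$, then separately into $A$ and into $B$ over a metric splitting of the ambient space, gluing along the overlap --- is the proof given in \cite{Hu}; note that closedness of $A$ and $B$ already suffices there, so compactness is a convenience rather than a necessity. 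Citing West for (5) is exactly what the paper does. The only reservation is that (1)-converse and (2) are left at the level of ``here is where the work lives'' rather than carried out, but deferring those patching arguments to the standard texts is consistent with the paper's own treatment, which proves nothing and cites everything.
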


\begin{remark}
\emph{Items 4) and 5) allow us to extend the tools of algebraic topology and
homotopy theory normally reserved for CW complexes to ANRs. For example,
Whitehead's Theorem, that a map between CW complexes which induces
isomorphisms on all homotopy groups is a homotopy equivalence, is also true
for ANRs. In a very real sense, this sort of result is the motivation behind
ANR theory.}
\end{remark}

\begin{exercise}
\label{Exercise: ANE approach to ANRs}A locally compact space $X$ is an ANE
(\emph{absolute neighborhood extensor}) if, for any space $Y$ and any map
$f:A\rightarrow X$, where $A$ is a closed subset of $Y$, there is an extension
$F:U\rightarrow X$ where $U$ is a neighborhood of $A$. If an extension to all
of $Y$ is always possible, then $X$ is an AE (\emph{absolute extensor}). Show
that being an ANE (or AE) is equivalent to being an ANR (or AR).
\emph{Hint:}\textbf{ }The Tietze Extension Theorem will be helpful.
\end{exercise}

\begin{exercise}
\label{Exercise: AR = contractible ANR}With the help of Exercise
\ref{Exercise: ANE approach to ANRs} and the Homotopy Extension Property,
prove that an ANR is an AR if and only if it is contractible.
\end{exercise}

\begin{exercise}
\label{Exercise: union of balls in a CAT(0) space}A useful property of
Euclidean space is that every compactum $A\subseteq%
%TCIMACRO{\U{211d} }%
%BeginExpansion
\mathbb{R}
%EndExpansion
^{n}$ has arbitrarily small compact polyhedral neighborhoods. Using the tools
of Proposition \ref{Proposition: ANR facts}, prove the following CAT(0)
analog: every compactum $A$ in a proper CAT(0) space $X$ has arbitrarily small
compact ANR neighborhoods. \emph{Hint:}\textbf{ }Cover $A$ with compact metric
balls. (For examples of ANRs that do not have this property, see \cite{Bo0}
and \cite{Mol}.)
\end{exercise}

\section{Appendix B: Hilbert cube manifolds}

This appendix is a very brief introduction to Hilbert cube manifolds. A
primary goal is to persuade the uninitiated reader that there is nothing to
fear. Although the main results from this area are remarkably strong (we
sometimes refer to them as \textquotedblleft Hilbert cube
magic\textquotedblright), they are understandable and intuitive. Applying them
is often quite easy.

The \emph{Hilbert cube} is the infinite product $\mathcal{Q=}\prod
_{i=1}^{\infty}\left[  -1,1\right]  $ with metric $d\left(  \left(
x_{i}\right)  ,\left(  y_{i}\right)  \right)  =\sum\frac{\left\vert
x_{i}-y_{i}\right\vert }{2^{i}}$. A \emph{Hilbert cube manifold} is a
separable metric space $X$ with the property that each $x\in X$ has a
neighborhood homeomorphic to $\mathcal{Q}$. Hilbert cube manifolds are
interesting in their own right, but our primary interest stems from their
usefulness in working with spaces that are not necessarily
infinite-dimensional---often locally finite CW complexes or more general ANRs.
Two classic examples where that approach proved useful are:\medskip

\begin{itemize}
\item Chapman \cite{Ch1} used Hilbert cube manifolds to prove the topological
invariance of Whitehead torsion for finite CW complexes, i.e., homeomorphic
finite complexes are simple homotopy equivalent.

\item West \cite{We} used Hilbert cube manifolds to solve a problem of Borsuk,
showing that every compact ANR is homotopy equivalent to a finite CW complex.
(See Proposition \ref{Proposition: ANR facts}.)\medskip
\end{itemize}

The ability to attack a problem about ANRs using Hilbert cube manifolds can be
largely explained using the following pair of results.

\begin{theorem}
[Edwards, \cite{Ed}]\label{Theorem: Edwards HCM Theorem}If $A$ is an ANR, then
$A\times\mathcal{Q}$ is a Hilbert cube manifold.
\end{theorem}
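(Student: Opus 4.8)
The plan is to reduce to the compact case and then exhibit $A\times\mathcal{Q}$ as the near-homeomorphic image of a genuine Hilbert cube manifold through a cell-like map. Because ``Hilbert cube manifold'' is by definition a local condition, and because being an ANR is local with open subsets of ANRs again ANRs (Proposition \ref{Proposition: ANR facts}(1)), every point of $A$ has a compact ANR neighborhood $C$; a point $(a,q)\in A\times\mathcal{Q}$ then has a neighborhood homeomorphic to an open subset of $C\times\mathcal{Q}$. Since open subsets of a Hilbert cube manifold are Hilbert cube manifolds, it suffices to prove the theorem when $A=X$ is a \emph{compact} ANR.

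Next I would manufacture a compact Hilbert cube manifold $M$ together with a cell-like surjection $f\colon M\to X$. Embed $X$ in $\mathcal{Q}$; as an ANR it is a retract of some neighborhood, and because $\mathcal{Q}$ is itself a Hilbert cube manifold one can select a nested cofinal sequence $N_{1}\supseteq N_{2}\supseteq\cdots$ of compact Hilbert cube manifold neighborhoods with $\bigcap_i N_i=X$ and retractions $r_i\colon N_i\to X$. Feeding the inclusions $N_{i+1}\hookrightarrow N_i$ into the infinite mapping cylinder / inverse limit construction (in the spirit of \S\ref{Subsection: Inverse mapping telescopes}) produces a compact Hilbert cube manifold $M$ and a surjection $f\colon M\to X$. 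The key point is that $f$ is \emph{cell-like}: its point-inverses have trivial shape, precisely because $X$ is an ANR and the $N_i$ deformation retract arbitrarily closely onto $X$.

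The decisive and hardest step is the \emph{cell-like approximation theorem}: a cell-like map $f\colon M\to X$ from a Hilbert cube manifold onto a compact ANR, after multiplication by the identity, yields a near-homeomorphism $f\times\operatorname{id}_{\mathcal{Q}}\colon M\times\mathcal{Q}\to X\times\mathcal{Q}$, i.e. a map uniformly approximable by homeomorphisms. Granting this, $X\times\mathcal{Q}$ is homeomorphic to $M\times\mathcal{Q}$, which is a Hilbert cube manifold, and we are done. All of the analytic difficulty of Edwards' theorem is concentrated here: the extra $\mathcal{Q}$-factor must be used to perform infinite-dimensional general position, showing that $X\times\mathcal{Q}$ satisfies a disjoint-Hilbert-cubes property and then using that property to push the fibers of $f\times\operatorname{id}_{\mathcal{Q}}$ apart and straighten the map to a homeomorphism. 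I expect this to be the main obstacle; everything surrounding it is soft.

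As an alternative organization (modulo the chronology of the available tools), the same idea can be routed through Toru\'nczyk's characterization of Hilbert cube manifolds: verify directly that $X\times\mathcal{Q}$ is a locally compact ANR, being a product of the ANR $X$ with the AR $\mathcal{Q}$, and that it enjoys the disjoint-cells (general position) property supplied by the $\mathcal{Q}$ factor, then quote the characterization theorem to conclude. Either way the general-position input is the crux, and the reductions around it are routine.
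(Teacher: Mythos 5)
First, a point of order: the paper does not prove this theorem. It is one of the ``Hilbert cube magic'' results imported as a black box from the literature (Edwards \cite{Ed}; see also Chapman's notes \cite{Ch}), so there is no internal proof to compare yours against. Judged on its own terms, your \emph{second} organization is actually the one that works at the level of rigor these notes operate at: $A\times\mathcal{Q}$ is a locally compact ANR (a product of ANRs, one factor compact), and the general position property is verified trivially by splicing an embedding $P\hookrightarrow\left[0,1\right]^{2k+1}$ into finitely many far-out coordinates of the $\mathcal{Q}$-factor, which moves points less than any prescribed $\varepsilon$; Toru\'{n}czyk's characterization \cite{To}, stated (also without proof) in the same appendix, then finishes. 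Note that this route needs no reduction to the compact case at all, and all of the depth is concentrated in a theorem the paper already quotes as an axiom.

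Your primary route, by contrast, has two genuine gaps beyond the legitimately deferred cell-like approximation theorem. First, the claim that every point of a locally compact ANR has a compact ANR neighborhood is not justified: compact neighborhoods are closed, closed subsets of ANRs need not be ANRs, and the Borsuk--Molski examples cited in the paper (see the parenthetical in Exercise \ref{Exercise: union of balls in a CAT(0) space} and \cite{Bo0}, \cite{Mol}) show that compacta in ANRs can fail to have small compact ANR neighborhoods; so the reduction to the compact case does not go through as stated. Second, and more seriously, the construction of the resolution $f\colon M\rightarrow X$ does not deliver what you claim. For nested neighborhoods $N_{1}\supseteq N_{2}\supseteq\cdots$ with inclusion bonding maps, the telescope has dead-end rays (over points of $N_{i}-N_{i+1}$), so there is no natural surjection onto $\underleftarrow{\lim}\left\{  N_{i}\right\}  =X$ with cell-like fibers; and if $M$ is taken to be the telescope compactified by $X$, then verifying that $M$ is a Hilbert cube manifold \emph{at the added points of} $X$ is essentially the statement being proved. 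The fact that every compact ANR is a cell-like image of a compact $\mathcal{Q}$-manifold is itself a nontrivial theorem---its standard proof runs through M. Brown's result that an inverse limit of near-homeomorphisms of compacta is homeomorphic to the terms---and is not a routine consequence of the telescope construction of \S\ref{Subsection: Inverse mapping telescopes}. If you want a writeup consistent with the paper's toolkit, drop the resolution argument and run the Toru\'{n}czyk route.
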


\begin{theorem}
[Triangulability of Hilbert Cube Manifolds, Chapman, \cite{Ch}]%
\label{Theorem: Triangulability of HCMs}If $X$ is a Hilbert cube manifold,
then there is a locally finite polyhedron $K$ such that $X\approx
K\times\mathcal{Q}$.
\end{theorem}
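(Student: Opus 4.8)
The plan is to reduce the triangulation statement to the classification theory for Hilbert cube manifolds, leaning on the two inputs already in hand: Edwards' Theorem \ref{Theorem: Edwards HCM Theorem} and West's theorem, Proposition \ref{Proposition: ANR facts}(5). First I would note that $X$ is a locally compact, separable metric ANR: the Hilbert cube $\mathcal{Q}$ is a compact AR, being an ANR is a local property by Proposition \ref{Proposition: ANR facts}(1), and $X$ is covered by copies of $\mathcal{Q}$, so $X$ is an ANR. By West's theorem, $X$ is then proper homotopy equivalent to a locally finite CW complex, and a routine simplicial-approximation argument upgrades this to a locally finite polyhedron $K$ with $X\overset{p}{\simeq}K$.

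Next I would manufacture a candidate $\mathcal{Q}$-manifold model directly from $K$. By Edwards' Theorem \ref{Theorem: Edwards HCM Theorem}, the product $K\times\mathcal{Q}$ is a Hilbert cube manifold, and the projection $K\times\mathcal{Q}\to K$ is a proper homotopy equivalence: its proper homotopy inverse is $k\mapsto(k,\ast)$, and the straight-line homotopies in the $\mathcal{Q}$-factor are proper precisely because $\mathcal{Q}$ is compact and contractible. Combining this with the previous step gives $X\overset{p}{\simeq}K\overset{p}{\simeq}K\times\mathcal{Q}$. Thus $X$ and $K\times\mathcal{Q}$ are two Hilbert cube manifolds sitting in a single proper homotopy type, and the theorem would follow from the assertion that a $\mathcal{Q}$-manifold is determined up to homeomorphism by its proper homotopy type.

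The hard part is exactly that last assertion---Chapman's classification (``homeomorphism'') theorem for $\mathcal{Q}$-manifolds---which is not among the results quoted above and which carries the genuine depth of the subject. Its proof cannot rest on finite-dimensional handle theory; instead it runs on the infinite-dimensional machinery special to $\mathcal{Q}$: stability ($M\times\mathcal{Q}\approx M$), $\mathcal{Z}$-set unknotting, and above all the handle-straightening (or $\alpha$-approximation) theorem, which pushes a sufficiently controlled homotopy equivalence to an honest homeomorphism. One must also confront a torsion subtlety in the noncompact case: a given proper homotopy equivalence need not itself be homotopic to a homeomorphism, so one realizes the offending (infinite) Whitehead torsion by a self-equivalence and corrects for it before straightening, in the same spirit as the $\underleftarrow{\lim}^1$ discussion surrounding Theorem \ref{Theorem: C-S Z-compactification Theorem}. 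A fully self-contained route would bypass the classification statement altogether and build a handle decomposition of $X$ directly, straightening the handles one skeleton at a time to exhibit both the polyhedron $K$ and the product homeomorphism $X\approx K\times\mathcal{Q}$ simultaneously; either way, handle straightening is the crux, and it is the step I expect to be the principal obstacle.
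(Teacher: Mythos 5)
The paper does not prove this theorem; it is quoted as a black box from Chapman's CBMS notes \cite{Ch}, so there is no internal argument to compare yours against. Your reduction, however, has a genuine flaw at its pivot point. The assertion that a $\mathcal{Q}$-manifold is determined up to homeomorphism by its proper homotopy type is false: if $L$ and $L^{\prime}$ are homotopy equivalent compact polyhedra that are not \emph{simple} homotopy equivalent (suitable lens spaces, for instance), then $L\times\mathcal{Q}$ and $L^{\prime}\times\mathcal{Q}$ are (properly) homotopy equivalent compact $\mathcal{Q}$-manifolds that are not homeomorphic---this is exactly the content of Theorem \ref{Theorem: Chapman's homeomorphism theorem} combined with Chapman's topological invariance of Whitehead torsion. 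So knowing $X\overset{p}{\simeq}K\times\mathcal{Q}$ does not by itself yield $X\approx K^{\prime}\times\mathcal{Q}$ for any polyhedron $K^{\prime}$.

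You do flag the torsion issue, but the proposed repair cannot be carried out at this stage without circularity: to define the (infinite) Whitehead torsion of a proper homotopy equivalence $f:X\rightarrow K\times\mathcal{Q}$, one needs a simple homotopy structure on the source $X$---a handle decomposition or a triangulation---which is precisely what the theorem is supposed to supply. In Chapman's development the logical order is the reverse of yours: $\mathcal{Z}$-set unknotting and stability produce a handle decomposition of an arbitrary $\mathcal{Q}$-manifold, the triangulation theorem falls out of that, and only afterwards can the classification theorem and the invariance of torsion be formulated and proved. Your closing remark---that a self-contained proof would construct a handle decomposition of $X$ directly and straighten handles---identifies the correct (and only workable) strategy; the preceding reduction to the classification theorem should be discarded rather than patched. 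The preliminary observations (that $X$ is an ANR, and that West's theorem together with Theorem \ref{Theorem: Edwards HCM Theorem} give $X\overset{p}{\simeq}K\times\mathcal{Q}$) are correct but do not carry the weight of the theorem.
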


A typical (albeit, simplified) strategy for solving a problem involving an ANR
$A$ might look like this:\medskip

\noindent\textbf{A)} Take the product of $A$ with $\mathcal{Q}$ to get a
Hilbert cube manifold $X=A\times\mathcal{Q}$.\smallskip

\noindent\textbf{B)} Triangulate $X$, obtaining a polyhedron $K$ with
$X\approx K\times\mathcal{Q}$.\smallskip

\noindent\textbf{C)} The polyhedral structure of $K$ together with a variety
of tools available in a Hilbert cube manifolds (see below) make solving the
problem easier.\smallskip

\noindent\textbf{D)} Return to $A$ by collapsing out the $\mathcal{Q}$-factor
in $X=A\times\mathcal{Q}$.\medskip

\noindent In these notes, most of our appeals to Hilbert cube manifold
topology are of this general sort. That is not to say the strategy always
works---the main result of \cite{Gu} (see Remark
\ref{Remark: Remarks about the C-S Theorem}(a)) is one relevant example.

Tools available in a Hilbert cube manifold are not unlike those used in
fin\-ite-dim\-ens\-ion\-al manifold topology. We list a few such properties,
without striving for best-possible results.

\begin{proposition}
[Basic properties of Hilbert cube manifolds]%
\label{Proposition: Basic properties of HCMs}Let $X$ be a connected Hilbert
cube manifold.

\begin{enumerate}
\item (Homogeneity) For any pair $x_{1},x_{2}\in X$, there exists a
homeomorphism $h:X\rightarrow X$ with $h\left(  x_{1}\right)  =x_{2}$.

\item (General Position) Every map $f:P\rightarrow X$, where $P$ is a finite
polyhedron can be approximated arbitrarily closely by an embedding.

\item (Regular Neighborhoods) Each compactum $C\subseteq X$ has arbitrarily
small compact Hilbert cube manifold neighborhoods $N\subseteq X$. If $C$ is a
nicely embedded polyhedron, $N$ can be chosen to strong deformation retract
onto $P$.
\end{enumerate}
\end{proposition}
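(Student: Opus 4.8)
The plan is to handle the three items separately, in each case reducing to a statement about the model space $\mathcal{Q}$ via the Triangulability Theorem (Theorem~\ref{Theorem: Triangulability of HCMs}), and exploiting the two features that set $\mathcal{Q}$-manifolds apart from finite-dimensional manifolds: the classical topological homogeneity of $\mathcal{Q}$ (Keller), already recalled in the text, and the unlimited supply of ``spare'' coordinate directions in $\mathcal{Q}=\prod_{i=1}^{\infty}[-1,1]$.

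For homogeneity (1) I would run the standard open-equivalence-class argument. Declare $x\sim y$ when some self-homeomorphism of $X$ carries $x$ to $y$; this is clearly an equivalence relation, and $X$ is path connected since it is a connected, locally path connected ANR. It then suffices to show that each equivalence class is open, for then connectedness forces a single class. Openness is purely local, so I may replace $X$ by a chart $U\approx\mathcal{Q}$. Here the essential point is that Keller homogeneity can be upgraded to \emph{compact support}: given $x_{1},x_{2}$ in the interior of a smaller cube neighborhood, there is a homeomorphism of $\mathcal{Q}$ taking $x_{1}$ to $x_{2}$ that is the identity off a compact set, hence extends by the identity over all of $X$. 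The existence of such a compactly supported homeomorphism I would extract from the decomposition $\mathcal{Q}\approx\mathcal{Q}\times\mathcal{Q}$, sliding points within one factor while damping to the identity near the frontier.

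For general position (2), since $P$ is compact, $f(P)$ meets only finitely many charts. Using $X\approx K\times\mathcal{Q}$, I would first make $f$ \emph{locally} an embedding: subdivide $P$ so finely that each closed simplex maps into a single chart $\approx\mathcal{Q}$, and on each such piece separate points using finitely many unused coordinate directions of $\mathcal{Q}$ (composing with a generic map into $2\dim P+1$ free coordinates yields an embedding of that piece by ordinary finite-dimensional general position, and these directions are available precisely because $\mathcal{Q}$ has infinitely many factors). The remaining, genuinely infinite-dimensional, task is to make the images of simplices that are far apart in $P$ disjoint; this is accomplished by the strong general position (``disjoint disks'') property of $\mathcal{Q}$-manifolds, which lets one push the images of disjoint compacta off one another by arbitrarily small moves. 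Iterating over the finitely many pairs of simplices, with control on the sizes of the successive adjustments, produces a global embedding arbitrarily close to $f$.

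For regular neighborhoods (3) I would again pass to $X\approx K\times\mathcal{Q}$. A compactum $C$ has compact image under the projection to $K$, hence lies in $R\times\mathcal{Q}$ for a compact subpolyhedron $R\subseteq K$; by Edwards' Theorem (Theorem~\ref{Theorem: Edwards HCM Theorem}) $R\times\mathcal{Q}$ is a compact $\mathcal{Q}$-manifold neighborhood of $C$, and taking regular neighborhoods of progressively finer covers yields arbitrarily small such $N$. When $C=P$ is a nicely embedded polyhedron, I would first use (2) and the homogeneity from (1) to normalize the embedding so that, under $X\approx K\times\mathcal{Q}$, $P$ corresponds to $P_{0}\times\{0\}$ for a subpolyhedron $P_{0}\subseteq K$ and a point $0\in\mathcal{Q}$. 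Then a PL regular neighborhood $R$ of $P_{0}$ in $K$ strong deformation retracts onto $P_{0}$; since $\mathcal{Q}$ is an AR, $N=R\times\mathcal{Q}$ strong deformation retracts onto $R\times\{0\}$, and composing with (the product of the PL retraction with the identity on $\{0\}$) gives a strong deformation retraction of $N$ onto $P$. The main obstacle is part (2): homogeneity is a clean formal argument once compactly supported local homogeneity is in hand, and the regular-neighborhood statement follows almost directly from triangulability together with PL theory and contractibility of $\mathcal{Q}$, whereas the global embedding approximation requires the substantive infinite-dimensional input of strong general position to upgrade local injectivity to injectivity everywhere.
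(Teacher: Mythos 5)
The paper does not actually prove this proposition: it is presented as a list of standard facts about Hilbert cube manifolds, with the reader directed to Chapman's notes \cite{Ch}, and the only proof content supplied is Exercise \ref{Exercise: homogeneity of Q} on the homogeneity of $\mathcal{Q}$ itself. So there is no argument of the paper's to compare yours against; I can only assess your sketch directly. Its overall architecture --- reduce to $K\times\mathcal{Q}$ via Theorem \ref{Theorem: Triangulability of HCMs}, prove homogeneity by the open-equivalence-class argument, get embeddings by injecting into spare coordinates, and build neighborhoods as a polyhedral neighborhood crossed with $\mathcal{Q}$ --- is indeed the standard route.

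Two places need repair. First, in (3) the neighborhood $R\times\mathcal{Q}$ you construct is \emph{not} arbitrarily small: it contains the entire fiber $\{r\}\times\mathcal{Q}$ over every $r\in R$, so even when $C$ is a single point its diameter is at least $\operatorname{diam}\mathcal{Q}$. The fix is to peel off finitely many coordinates: write $\mathcal{Q}=[-1,1]^{n}\times\mathcal{Q}_{n}$ with $\mathcal{Q}_{n}=\prod_{i>n}[-1,1]$, take a compact polyhedral neighborhood $R_{n}$ of the projection of $C$ into the locally finite polyhedron $K\times[-1,1]^{n}$ lying within $\varepsilon$ of that projection, and set $N=R_{n}\times\mathcal{Q}_{n}$; since $\operatorname{diam}\mathcal{Q}_{n}=2^{1-n}\rightarrow0$ in the product metric, these $N$ are arbitrarily small, and they are Hilbert cube manifolds by Theorem \ref{Theorem: Edwards HCM Theorem}. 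Second, both (1) and the second half of (3) silently invoke theorems substantially deeper than Keller's theorem. For (1) you need a homeomorphism of $\mathcal{Q}$ moving $x_{1}$ to $x_{2}$ that is the identity outside a prescribed neighborhood; the coordinate-by-coordinate construction of Exercise \ref{Exercise: homogeneity of Q} produces a global homeomorphism with no support control, and ``damping to the identity near the frontier'' is precisely the step that requires the $\mathcal{Z}$-set machinery (points of $\mathcal{Q}$ are $\mathcal{Z}$-sets, and homeomorphisms between $\mathcal{Z}$-sets extend to controlled ambient homeomorphisms). Likewise, normalizing a ``nicely embedded'' $P$ to $P_{0}\times\{0\}$ is $\mathcal{Z}$-set unknotting, not a consequence of (1) and (2). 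These facts are true and citable, but they are the real content; as written your sketch makes the hard steps look formal. Incidentally, for (2) there is a cleaner standard argument that avoids the disjoint-disks iteration entirely: using the stability homeomorphism $X\approx X\times\mathcal{Q}$ and a fixed embedding $e:P\hookrightarrow\mathcal{Q}$, the map $p\mapsto\left(  f(p),\varepsilon\,e(p)\right)  $ is automatically injective, hence an embedding of the compactum $P$, and is $\varepsilon$-close to $f$.
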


\begin{exercise}
\label{Exercise: homogeneity of Q}As a special case, assertion (a) of
Proposition \ref{Proposition: Basic properties of HCMs} implies that
$\mathcal{Q}$ itself is homogeneous. This remarkable fact is not hard to
prove. A good start is to construct a homeomorphism $h:$
$\mathcal{Q\rightarrow Q}$ with $h\left(  1,1,1,\cdots\right)  =\left(
0,0,0,\cdots\right)  $. To begin, think of a homeomorphism $k:\left[
-1,1\right]  \times\left[  -1,1\right]  $ taking $\left(  1,1\right)  $ to
$\left(  0,1\right)  $, and use it to obtain $h_{1}:$ $\mathcal{Q\rightarrow
Q}$ with $h_{1}\left(  1,1,1,\cdots\right)  =\left(  0,1,1,\cdots\right)  $.
Complete this argument by constructing a sequence of similarly chosen homeomorphisms.
\end{exercise}

\begin{example}
Here is another special case worth noting. Let $K$ be an arbitrary locally
finite polyhedron---for example, a graph. Then $K\times\mathcal{Q}$ is homogeneous.
\end{example}

The material presented here is just a quick snapshot of the elegant and
surprising world of Hilbert cube manifolds. A brief and readable introduction
can be found in \cite{Ch}. Just for fun, we close by stating two more
remarkable theorems that are emblematic of the subject.

\begin{theorem}
[Toru\'{n}czyk, \cite{To}]An ANR $X$ is a Hilbert cube manifold if and only if
it satisfies the General Position property (Assertion (b)) of Proposition
\ref{Proposition: Basic properties of HCMs}.
\end{theorem}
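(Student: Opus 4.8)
The forward implication is immediate from the material already assembled: if $X$ is a Hilbert cube manifold, the desired general position property is exactly Assertion (2) of Proposition \ref{Proposition: Basic properties of HCMs}, so nothing new is required. The whole content lies in the converse, and the plan is to deduce it from Edwards' Theorem \ref{Theorem: Edwards HCM Theorem} by proving that
\[
X \approx X \times \mathcal{Q}.
\]
Since $X\times\mathcal{Q}$ is already a Hilbert cube manifold, this homeomorphism finishes the proof. I would obtain it by showing that the first--coordinate projection $p\colon X\times\mathcal{Q}\to X$ is a \emph{near-homeomorphism}, i.e.\ a uniform limit of homeomorphisms; once even a single homeomorphism $X\times\mathcal{Q}\to X$ is produced this way, $X$ inherits the Hilbert cube manifold structure of its domain.

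First I would upgrade the hypothesis. Feeding the disjoint union $D^{2}\sqcup D^{2}$ (itself a finite polyhedron) into the general position property and approximating the map $f\sqcup g$ by an embedding immediately separates the images of the two disks; hence $X$ satisfies the disjoint disks property (DDP). The same trick with $D^{k}\sqcup D^{k}$ gives the disjoint cells property in every dimension. Because open subsets of an ANR are ANRs (Proposition \ref{Proposition: ANR facts}) and both local contractibility and approximation-by-embeddings restrict to open sets, every point of $X$ has a basis of compact AR neighborhoods inheriting the DDP. Thus it suffices to treat the compact case and produce, at each point, a neighborhood homeomorphic to $\mathcal{Q}$; the conclusion that $X$ is a Hilbert cube manifold is then local.

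For such a compact chart $N$, Theorem \ref{Theorem: Edwards HCM Theorem} (together with Theorem \ref{Theorem: Triangulability of HCMs}, using that $N$ is contractible) identifies $N\times\mathcal{Q}$ with $\mathcal{Q}$, and I would again aim to show $p\colon N\times\mathcal{Q}\to N$ is a near-homeomorphism via Bing's shrinking criterion: for each $\varepsilon>0$ one must find a homeomorphism $h$ of $N\times\mathcal{Q}$ with $p\circ h$ within $\varepsilon$ of $p$ and with every fibre image $h(\{x\}\times\mathcal{Q})$ of diameter $<\varepsilon$. The apparent paradox---each fibre is a full copy of $\mathcal{Q}$---dissolves in infinite dimensions: a copy of $\mathcal{Q}$ re-embeds inside an arbitrarily small box, so the requirement is only that the fibres be squeezed into tiny boxes lying over nearly their original $N$--coordinate. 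The DDP and its cellwise strengthenings are precisely what let these squeezed fibres be kept pairwise disjoint, so that the squeezing is realized by an honest homeomorphism rather than a mere map.

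The main obstacle is exactly this shrinking construction. Carrying it out requires the standard $\mathcal{Z}$-set machinery: one approximates the relevant maps of cubes by $\mathcal{Z}$-embeddings (available once DDP is known and $N$ is an AR), and then invokes the homeomorphism-extension theorem for $\mathcal{Z}$-sets to promote a close, fibre-compressing map on a skeleton to a controlled ambient homeomorphism, iterating over a null sequence of cells so that fibre diameters tend to zero while the $N$--coordinate drifts by less than $\varepsilon$ in total. Keeping all three demands---injectivity, smallness of fibre images, and $p\circ h\approx p$---under simultaneous control throughout the induction is the technical heart of the theorem. A secondary and more routine point is the passage from the compact charts back to the possibly noncompact $X$: since being a Hilbert cube manifold is a local condition, the local homeomorphisms with $\mathcal{Q}$ obtained above already suffice, and no global patching of the shrinking homeomorphisms is needed.
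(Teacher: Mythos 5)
The paper does not actually prove this theorem --- it is quoted from Toru\'{n}czyk's paper \cite{To} as a closing remark of the appendix, with no argument given --- so there is no in-paper proof to compare yours against. Judged on its own terms, your outline correctly identifies the strategy that Toru\'{n}czyk (and Edwards, in his exposition) really use: the forward direction is immediate, one derives the disjoint $k$-cells properties from general position, one shows that the projection $X\times\mathcal{Q}\rightarrow X$ is a near-homeomorphism by verifying a shrinking criterion, and one concludes from Theorem \ref{Theorem: Edwards HCM Theorem} that $X\approx X\times\mathcal{Q}$ is a Hilbert cube manifold. That is the right roadmap.

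Two genuine gaps remain. First, your localization step asserts that every point of $X$ has a basis of compact AR neighborhoods inheriting the relevant properties; that is false for general ANRs --- the paper itself points to Borsuk's and Molski's examples \cite{Bo0}, \cite{Mol} of ANRs with no small compact ANR neighborhoods, which is exactly why ``sharp at infinity'' had to be introduced as a nontrivial hypothesis earlier in these notes. The standard proof does not reduce to compact contractible charts; it runs the shrinking argument for $X\times\mathcal{Q}\rightarrow X$ globally, with proper (majorant) control, for locally compact $X$. Second, and more seriously, everything you label ``the technical heart'' --- the inductive construction of homeomorphisms of $X\times\mathcal{Q}$ that compress the fibres $\{x\}\times\mathcal{Q}$ to small diameter while moving the $X$-coordinate by less than $\varepsilon$, using the disjoint-cells properties to keep the compressed fibres disjoint and the $\mathcal{Z}$-set unknotting theorems to realize the compression ambiently --- is precisely the content of Toru\'{n}czyk's theorem, and it is not carried out. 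Verifying the shrinking criterion here occupies essentially all of \cite{To}; as written, your proposal is an accurate map of where the difficulty lies rather than an argument that overcomes it.
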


\begin{theorem}
[Chapman, \cite{Ch}]\label{Theorem: Chapman's homeomorphism theorem}A map
$f:K\rightarrow L$ between locally finite polyhedra is an (infinite) simple
homotopy equivalence if and only if $f\times\operatorname*{id}_{\mathcal{Q}%
}:K\times\mathcal{Q}\rightarrow L\times\mathcal{Q}$ is (proper) homotopic to a homeomorphism.
\end{theorem}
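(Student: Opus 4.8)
The plan is to reduce the theorem to the simple-homotopy (Whitehead-torsion) theory of Hilbert cube manifolds, where the decisive point is that crossing a map with $\mathcal{Q}$ trades the combinatorial torsion obstruction for a genuinely topological one. I would handle the two implications separately and, within each, dispose first of the compact case (finite $K,L$, ordinary simple homotopy equivalence, ordinary homeomorphism), then indicate the proper-homotopy bookkeeping needed for the locally finite (``infinite'') case.

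For the forward implication, suppose $f\colon K\to L$ is a simple homotopy equivalence. First I would write $f$, up to homotopy, as a finite composition of elementary expansions and collapses and use additivity of torsion to reduce to a single elementary expansion $K\nearrow K'$. The inclusion $K\hookrightarrow K'$ is, up to homeomorphism, the attachment of a cell pair along a free face, so crossing with $\mathcal{Q}$ and applying the regular-neighborhood property of Hilbert cube manifolds (Proposition \ref{Proposition: Basic properties of HCMs}(3)) exhibits $K'\times\mathcal{Q}$ as a regular neighborhood of $K\times\mathcal{Q}$ that strong deformation retracts onto it through a product collar; the collar supplies the required homeomorphism. A more uniform route, which I would actually prefer, is to form the mapping cylinder $\operatorname*{Map}(f)$, note that $\operatorname*{Map}(f)\times\mathcal{Q}$ is a compact Hilbert cube manifold cobordism with $K\times\mathcal{Q}$ and $L\times\mathcal{Q}$ as its two ends (Theorem \ref{Theorem: Edwards HCM Theorem}), and observe that simplicity of $f$ makes the torsion of this cobordism vanish. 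The Hilbert cube manifold $s$-cobordism theorem then trivializes it, giving $\operatorname*{Map}(f)\times\mathcal{Q}\approx L\times\mathcal{Q}\times[0,1]$; reading off the product structure at the two ends produces a homeomorphism $K\times\mathcal{Q}\to L\times\mathcal{Q}$ homotopic to $f\times\operatorname*{id}_{\mathcal{Q}}$.

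For the reverse implication, assume $f\times\operatorname*{id}_{\mathcal{Q}}$ is homotopic to a homeomorphism. Setting $M=\operatorname*{Map}(f)$, I would pass to the compact Hilbert cube manifold $M\times\mathcal{Q}$ and identify the combinatorial Whitehead torsion $\tau(f)\in\operatorname*{Wh}(\pi_{1}L)$ with the torsion of the Hilbert cube manifold homotopy equivalence $L\times\mathcal{Q}\hookrightarrow M\times\mathcal{Q}$, via the canonical isomorphism $\operatorname*{Wh}(\pi_{1}(L\times\mathcal{Q}))\cong\operatorname*{Wh}(\pi_{1}L)$. The hypothesis forces this $\mathcal{Q}$-manifold torsion to be trivial, because a homeomorphism of compact Hilbert cube manifolds is a simple homotopy equivalence in the $\mathcal{Q}$-manifold sense; hence $\tau(f)=0$ and $f$ is a simple homotopy equivalence.

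The hard part will be the Hilbert cube manifold infrastructure invoked above: the $s$-cobordism theorem for compact $\mathcal{Q}$-manifolds and the assertion that a homeomorphism between such manifolds carries trivial torsion. Establishing these rests on handle-straightening and engulfing techniques special to $\mathcal{Q}$ --- made available by Edwards' theorem (Theorem \ref{Theorem: Edwards HCM Theorem}), Chapman's triangulability theorem (Theorem \ref{Theorem: Triangulability of HCMs}), and Toru\'{n}czyk's general-position characterization --- together with the computation that crossing with $\mathcal{Q}$ induces the identity on Whitehead groups. This is precisely the substantial content one attributes to Chapman. For the locally finite case I would finally replace ordinary torsion by the infinite simple homotopy theory of \cite{Si2}, run the same mapping-cylinder argument while keeping all maps and homotopies proper (controlling behavior at infinity), and invoke the proper $s$-cobordism theorem for noncompact $\mathcal{Q}$-manifolds in place of its compact counterpart.
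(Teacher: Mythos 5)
The paper does not actually prove this theorem---it is stated in the appendix, explicitly without proof, as a deep result quoted from Chapman's CBMS lectures---so your proposal can only be measured against the standard argument. Your forward direction is essentially right: reducing a simple homotopy equivalence to a chain of elementary expansions and realizing each expansion, after crossing with $\mathcal{Q}$, by a homeomorphism (ultimately resting on $[0,1]\times\mathcal{Q}\approx\mathcal{Q}$ rel one end, i.e.\ collar absorption) is exactly how Chapman does it. Your ``preferred'' uniform route through a $\mathcal{Q}$-manifold $s$-cobordism theorem, however, inverts the usual logical order: that theorem is normally \emph{derived} from the elementary-expansion lemma, not available before it, so you should keep the first route and drop the second.

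The genuine gap is in the reverse implication, and it is a circularity rather than a mere omission of detail. To speak of ``the torsion of the $\mathcal{Q}$-manifold homotopy equivalence $L\times\mathcal{Q}\hookrightarrow\operatorname*{Map}(f)\times\mathcal{Q}$'' you must define torsion for homotopy equivalences of compact $\mathcal{Q}$-manifolds, and the only available definition proceeds by triangulating ($X\approx K\times\mathcal{Q}$, via Theorem \ref{Theorem: Triangulability of HCMs}) and reading off the torsion of the induced map of polyhedra. Well-definedness of that assignment---independence of the choice of triangulation---is precisely the assertion that a homeomorphism $K\times\mathcal{Q}\to L\times\mathcal{Q}$ forces $K$ and $L$ to be simple homotopy equivalent, i.e.\ the very implication you are trying to prove. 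Likewise ``a homeomorphism of compact Hilbert cube manifolds is a simple homotopy equivalence in the $\mathcal{Q}$-manifold sense'' is not a lemma you can quote here; it is the theorem. You acknowledge that the content lives in handle straightening (Chapman's adaptation of the Kirby--Siebenmann torus trick to $\mathcal{Q}$-manifolds, which also underlies the topological invariance of Whitehead torsion in \cite{Ch1}), but you supply none of it, so the hard direction remains unproved. The same objection propagates to the locally finite case: the reduction to Siebenmann's infinite simple homotopy theory \cite{Si2} is the right framework, but it inherits the missing core rather than repairing it.
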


\end{document}